\numberwithin{equation}{section}
\numberwithin{subsection}{section}
\newtheorem*{namedtheorem}{\theoremname}
\newcommand{\theoremname}{testing}
\theoremstyle{plain}
\newtheorem{theorem}{Theorem}[section]
\newtheorem{proposition}[theorem]{Proposition}
\newtheorem{proposition-definition}[theorem]{Proposition-Definition}
\newtheorem{lemma-definition}[theorem]{Lemma-Definition}
\newtheorem{corollary}[theorem]{Corollary}
\newtheorem{lemma}[theorem]{Lemma}
\newtheorem{assumptions}[theorem]{Assumptions}
\theoremstyle{definition}
\newtheorem{definition}[theorem]{Definition}
\newtheorem{notation}[theorem]{Notation}
\newtheorem{example}[theorem]{Example}
\newtheorem{remark}[theorem]{Remark}
\newtheorem{remarks}[theorem]{Remarks}
\newtheorem{construction}[theorem]{Construction}
\theoremstyle{remark}
\newtheorem*{claim}{Claim}
\newcommand\ul[1]{\underline{#1}}
\newcommand\out{\mathrm{out}}
\newcommand\fs{\mathrm{fs}}
\newcommand\btau{{\boldsymbol{\tau}}}
\newcommand\tD{\widetilde{D}}
\newcommand\scrM{\mathscr{M}}
\newcommand\sch{\mathrm{sch}}
\newcommand\Int{\operatorname{Int}}
\newcommand\tX{\widetilde{X}}
\newcommand{\ev}{\mathrm{ev}}
\newcommand\cA{\mathcal{A}}
\newcommand\shA{\mathcal{A}}
\newcommand\shD{\mathcal{D}}
\newcommand\shL{\mathcal{L}}
\newcommand\cM{\mathcal{M}}
\newcommand\shM{\mathcal{M}}
\newcommand\cO{\mathcal{O}}
\newcommand\cX{\mathcal{X}}
\newcommand\shX{\mathcal{X}}
\newcommand\Div{\mathrm{Div}}
\renewcommand\AA{\mathbb{A}}
\newcommand\GG{\mathbb{G}}
\newcommand\kk{\Bbbk}
\newcommand\NN{\mathbb{N}}
\newcommand\PP{\mathbb{P}}
\renewcommand\P{\mathscr{P}}
\newcommand\QQ{\mathbb{Q}}
\newcommand\RR{\mathbb{R}}
\newcommand\ZZ{\mathbb{Z}}
\newcommand\bA{\mathbf{A}}
\newcommand\bE{\mathbf{E}}
\newcommand\bg{\mathbf{g}}
\newcommand\bsigma{\boldsymbol{\sigma}}
\newcommand\bL{\mathbf{L}}
\newcommand\bu{\mathbf{u}}
\newcommand\fC{\mathfrak{C}}
\newcommand\fM{\mathfrak{M}}
\newcommand\foM{\mathfrak{M}}
\newcommand\tfM{\widetilde{\mathfrak{M}}}
\newcommand\tfC{\widetilde{\mathfrak{C}}}
\newcommand\Pic{\mathrm{Pic}}
\newcommand\arr{\ifinner\to\else\longrightarrow\fi}
\def\displaytimes_#1{\mathrel{\mathop{\times}\limits_{#1}}}
\def\displayotimes_#1{\mathrel{\mathop{\bigotimes}\limits_{#1}}}
\newcommand\ext{\operatorname{Ext}}
\newcommand\coker{\operatorname{coker}}
\newcommand\Aut{\operatorname{Aut}}
\newcommand\tors{{\mathrm{tors}}}
\newcommand\Spec{\operatorname{Spec}}
\newcommand\codim{\operatorname{codim}}
\newcommand\virt{{\operatorname{virt}}}
\newcommand\doublelong[2]{\mathbin{\xymatrix{{}\ar@<3pt>[r]^{#1}
\ar@<-3pt>[r]_{#2}&}}}
\newlength{\ignora}
\newcommand{\red}{{\mathrm{red}}}
\newcommand{\gp}{{\mathrm{gp}}}
\newcommand{\trop}{{\mathrm{trop}}}
\renewcommand{\setminus}{\smallsetminus}
\newcommand{\Hom}{{\operatorname{Hom}}}
\begin{document}

\title{Remarks on gluing punctured logarithmic maps}

\author{Mark Gross}

\date{\today}
\begin{abstract}
We consider some well-behaved cases of the gluing formalism for 
punctured stable log maps of \cite{ACGSI,ACGSII}. This gives a
gluing formula for log Gromov-Witten invariants in a diverse
range of situations. While this does not give a complete solution
to the general stable log map gluing problem, the cases considered
here are sufficient for many purposes. The gluing formulae of 
\cite{LR, Li, KLR} becomes an easy special case. The last section gives an application
of this gluing formalism to canonical wall structures for K3 surfaces
as constructed in \cite{Walls}.
\end{abstract} 

\maketitle

\setcounter{tocdepth}{2}
\tableofcontents

\section{Introduction}
One of the key motivations for developing logarithmic Gromov-Witten theory
\cite{AC,Chen,JAMS}
was to generalize the gluing formulae of Li--Ruan \cite{LR} and
Jun Li \cite{Li} to more general degenerations. The original gluing formulae
consider flat families $\pi:X\rightarrow B$, where $B$ is a non-singular
curve with a special point $b_0\in B$, $\pi$ is a normal crossing
morphism with $\pi|_{\pi^{-1}(B\setminus \{b_0\})}$ smooth and
$\pi^{-1}(b_0)=:X_0$ a union of two irreducible divisors $Y_1,Y_2$
meeting transerversally. The above-mentioned gluing formulae then
relate the Gromov-Witten invariants of the general fibre of
$\pi$ to relative Gromov-Witten invariants of the pairs
$(Y_1,D)$, $(Y_2,D)$, where $D=Y_1\cap Y_2$. In principal, one would
like to allow the fibre $X_0$ to have many irreducible components
and deeper strata, where more than two irreducible components meet.

Logarithmic Gromov-Witten theory defines the notion of a logarithmic
stable map to such targets $X\rightarrow B$; more generally, this morphism
just needs to be toroidal rather than normal crossings, i.e., log smooth.
However, completely satisfactory generalizations of the gluing formulae 
have remained elusive. The work of Abramovich, Chen, Gross and Siebert
\cite{ACGSI, ACGSII} sets up a framework for thinking about gluing
formulae. This required, in particular, the development of
\emph{punctured}
stable maps, further generalizing logarithmic stable maps. The essential
reason for this is that, if given a log smooth curve, the restriction of the
log structure to an irreducible component need not yield a log smooth
curve. In
particular, this requires allowing somewhat more general domains. As
a side benefit, this generalization introduces
the notion of negative contact order. In turn, this gives
a richer set of invariants which have proved invaluable for mirror
symmetry constructions, see e.g., \cite{Assoc,Walls}. 

The basic setup for a gluing problem for log or punctured maps with log smooth
target $X\rightarrow B$ is given by the data of
a \emph{decorated tropical type}, reviewed in \S\ref{sec:preliminaries}.
This is data $\btau:=(G,\bsigma,\bu,\bA)$.
Here, $G$ is a dual intersection graph for a domain curve, with vertices
$V(G)$ corresponding to (unions of) irreducible components, edges $E(G)$
corresponding to nodes and legs $L(G)$ corresponding to marked or 
punctured points. The map $\bsigma:V(G)\cup E(G)\cup L(G)\rightarrow 
\Sigma(X)$ records which stratum of $X$ (strata of $X$ being indexed
by cones in the tropicalization $\Sigma(X)$ of $X$) the corresponding
curve feature maps into. The data $\bu$ records the contact orders
associated to edges and legs, and the data $\bA$ associates a curve
class in $X$ to each vertex of $G$. Together, this data determines
a moduli space $\scrM(X/B,\btau)$ of punctured log maps marked by
$\btau$, as defined in
\cite{ACGSII}. This moduli space is a proper Deligne-Mumford
stack over $B$, assuming $X$ is projective over $B$, and carries a
virtual fundamental class $[\scrM(X/B,\btau)]^{\virt}$.

The question of gluing is then as follows. Suppose given a decorated
tropical type $\btau$ as above, and a subset of edges $\mathbf{E}\subseteq
E(G)$. By splitting $G$ at the edges of $\mathbf{E}$, we obtain connected
graphs $G_1,\ldots,G_r$. Here, each edge $E\in \mathbf{E}$ with endpoints
$v_1,v_2$ is replaced by two legs with endpoints $v_1,v_2$ respectively.
By restricting $\bsigma,\bu$ and $\bA$ to $G_i$, we obtain types
$\btau_1,\ldots,\btau_r$. Further, there is a canonical splitting map
\[
\scrM(X/B,\btau)\rightarrow \prod_{i=1}^r \scrM(X/B,\btau_i).
\]
This splitting map is defined by normalizing the domain curves
at the nodes corresponding to edges of $\mathbf{E}$.
The key question is then: can we describe $[\scrM(X/B,\btau)]^{\virt}$
in terms of $\prod_{i=1}^r [\scrM(X/B,\btau_i)]^{\virt}$?

There have been a number of approaches to this question. First,
Kim, Lho and Ruddat
\cite{KLR} proved the Li--Ruan and Jun Li degeneration formula
in the context of logarithmic Gromov-Witten theory. Yixian Wu, in 
\cite{Wu}, gave a very general gluing formula, under the hypothesis
that all gluing strata are toric. This has already proven to be very
useful in \cite{Walls}, but doesn't give a proof of the Li--Ruan/Jun Li
formula (unless the divisor $D$ is in fact a point). 

A very different approach has been pursued by Dhruv Ranganathan, 
using expanded degenerations, in \cite{Ra}. There, the theoretical
difficulties of gluing are removed by allowing target expansions
in such a way that the gluing always happens along codimension one
strata.
However, this can result in an explosion in combinatorial complexity
of the problem. It is not clear whether such an explosion can be
avoided in any general approach to the gluing problem.

The basic problem is that the description of the glued moduli
space $\scrM(X/B,\btau)$ in terms of the moduli spaces $\scrM(X/B,\btau_i)$
involves a fibre product in the category of fs log schemes; this is
roughly encapsulated in one of the main gluing theorems of \cite{ACGSII},
quoted here in Theorem \ref{Thm: Gluing theorem}. One of the basic
difficulties in log geometry is that the underlying scheme of an fs
fibre product can be quite far from the underlying scheme of the
ordinary fibre product. Indeed, the classical gluing situation investigated
in the log category in \cite{KLR} works because in this case the relevant
fs fibre product is \'etale over the ordinary fibre product, while this
is definitely not true in complete generality.

Here, we consider a case of gluing in which, at the virtual level,
the ordinary fibre product and fs fibre product are still not wildly
divergent. Understanding fs fibre products even of log points
is non-trivial. 
Results about fibre products of points in
the category of schemes are known as four-point lemmas, and generalisations
of such to the fs log category have been explored to some extent 
in \cite{Ogus}.
In \S\ref{sec:four point}, we give some general
results in a slightly different direction about such fibre products
helpful for our situation. 
In particular, we give a sufficient
criteria for non-emptiness of an fs fibre product of log points,
as well as a computation for the number of connected components of
this fs fibre product if it is non-empty.

Happily, these criteria for non-emptiness have a simple tropical
interpretation. In \S\ref{sec:gluing one curve}, we consider
gluing a single curve. In other words, we consider a type $\btau$
as described above, a set of splitting edges $\mathbf{E}$, and
the types $\btau_1,\ldots,\btau_r$ obtained from splitting at the
edges of $\mathbf{E}$. We consider log points $W_i$ and punctured
log maps $f_i:C_i^{\circ}/W_i \rightarrow X$ of type $\btau_i$,
$1\le i \le r$. If $E\in \mathbf{E}$ is an edge with vertices
$v_1,v_2$, $v_j\in V(G_{i_j})$, $j=1,2$, let $p_{v_j,E}
\in C_{i_j}^{\circ}$ be the punctured point corresponding to the leg
of $G_{i_j}$ indexed by the flag $v_j\in E$. Assume that we have
for each $E$ the equality
$f_{i_1}(p_{v_1,E})=f_{i_2}(p_{v_2,E})$. Then the maps $f_i$
may be glued schematically. The question is then: is there
a logarithmic gluing and if so, how many logarithmic
gluings of the $f_i$'s are there?

In Definition
\ref{def:tropical gluing map}, we define a map of lattices,
the \emph{tropical gluing map} $\Psi$, which 
depends only on the tropical data of $\btau, \mathbf{E}$, and define
the \emph{tropical multiplicity} $\mu(\btau,\mathbf{E})$ as the order
of the torsion part of $\coker\Psi$. This lattice
map gives the obstruction to gluing tropical maps of types $\tau_1,
\ldots,\tau_q$ to obtain a tropical map of type $\tau$. In addition,
let $f:C^{\circ}/W\rightarrow X$ be the universal
gluing of the punctured maps $f_i$. Using the four-point lemmas of
\S\ref{sec:four point}, we obtain the first result, 
Theorem \ref{thm:gluing count}, which is:

\begin{theorem} 
If $W$ is non-empty, then it has $\mu(\btau,\mathbf{E})$ connected components.
\end{theorem}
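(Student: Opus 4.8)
The plan is to realize the universal gluing $W$ as an fs fibre product of the log points $W_i$ and then to count its connected components by means of the four-point lemmas of \S\ref{sec:four point}. For each $E\in\mathbf{E}$ with endpoints $v_1\in V(G_{i_1})$, $v_2\in V(G_{i_2})$, the schematic hypothesis $f_{i_1}(p_{v_1,E})=f_{i_2}(p_{v_2,E})$ lets one glue the domains $C_i^\circ$ pairwise at the punctured points $p_{v_j,E}$, producing a prestable punctured curve whose log structure must restrict compatibly from the two branches at each of the new nodes. Directly from the construction of the universal gluing (equivalently, by unwinding Theorem \ref{Thm: Gluing theorem} in this special case), one sees that $W$ is computed as the fs fibre product
\[
W \;=\; \Big(\prod_{i=1}^r W_i\Big)\ \times_{\prod_{E\in\mathbf{E}}(X\times X)}\ \prod_{E\in\mathbf{E}}X,
\]
the first map having $E$-component $(\ev_{v_1,E},\ev_{v_2,E})$ built from the punctured evaluation maps and the second being the diagonal; pulling back the diagonal along the $E$-factor imposes exactly the compatibility of the node log structures, with the contact orders $\bu$ built in. Since the $\underline{W_i}$ have a common underlying point, $\underline W$ is a finite disjoint union of points, so its number of connected components is literally the number of these points, and it suffices to compute that.

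\emph{Reduction to a lattice computation.} Everything is now local around the punctured points, so I would pass to ghost sheaves: set $Q_i=\overline{\mathcal M}_{W_i}$ and, for $E\in\mathbf{E}$, let $P_E=\overline{\mathcal M}_{X,\bsigma(v_1)}=\overline{\mathcal M}_{X,\bsigma(v_2)}$ be the monoid of the common stratum, so that the punctured evaluation maps induce homomorphisms $P_E\to Q_{i_1}$ and $P_E\to Q_{i_2}$. Applied to the fibre product above, the four-point lemma of \S\ref{sec:four point} (which, under the assumption that the fs fibre product is non-empty, describes its connected components) says that the set of connected components of $W$ is a torsor under the torsion subgroup of the cokernel of the induced map of lattices
\[
\Phi\colon\ \bigoplus_{E\in\mathbf{E}}P_E^{\gp}\ \longrightarrow\ \bigoplus_{i=1}^r Q_i^{\gp},
\]
whose $E$-summand sends $p$ to the difference $(\dots,\ev_{v_1,E}(p),\dots,-\ev_{v_2,E}(p),\dots)$ of its two pullbacks; being non-empty by hypothesis, $W$ therefore has exactly $|\tors\coker\Phi|$ connected components. (The prototype is the fs fibre product of two standard log points over a standard log point along multiplication by $a$ and by $b$: here $\Phi=(a,-b)\colon\ZZ\to\ZZ^2$, $\tors\coker\Phi\cong\ZZ/\gcd(a,b)$, and the fibre product has $\gcd(a,b)$ points.)

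\emph{Matching $\coker\Phi$ with $\coker\Psi$.} It remains to identify $\Phi$ with the tropical gluing map $\Psi$ of Definition \ref{def:tropical gluing map}. Dually, $\Hom(\bigoplus_i Q_i,\ZZ)$ is the lattice of $r$-tuples of tropical maps of types $\btau_i$ over the cone points $W_i$, while $\bigoplus_{E}P_E^{\gp}$ records, for each split edge, the position of the would-be-glued vertex together with its slope; the transpose of $\Phi$ then assigns to an $r$-tuple of tropical maps the collection of discrepancies $\ev_{v_1,E}-\ev_{v_2,E}$ at the split edges — i.e.\ exactly the obstruction to gluing the $\btau_i$-tropical maps into a single $\btau$-tropical map, which (up to the conventions of Definition \ref{def:tropical gluing map}) is how $\Psi$ is defined. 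Since the torsion subgroup of the cokernel of a homomorphism of finite-rank free abelian groups is isomorphic to that of its transpose, $|\tors\coker\Phi|=|\tors\coker\Psi|=\mu(\btau,\mathbf{E})$, and together with the previous paragraph this proves the theorem.

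\emph{Main obstacle.} The delicate point is the first step together with the correct treatment of the \emph{punctured} log structures: at a punctured point the ghost sheaf of $C^\circ$ is not free but only a submonoid of its groupification (this is precisely what negative contact orders encode), so some care is needed to check that the evaluation homomorphisms entering the fibre product — hence $\Phi$ — are the right ones, and that the universal gluing really is the \emph{fs} fibre product, not the fine or the underlying schematic one. Once that identification is secured, the four-point lemma supplies the count and the remainder of the comparison with $\Psi$ is a routine lattice computation.
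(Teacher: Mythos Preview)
Your overall strategy---realize the gluing as an fs fibre product of log points and read off the number of components via the four-point lemma---is the paper's approach, but there is a genuine gap in your execution that makes the lattice you write down the wrong one.

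The point you miss is that there are no logarithmic evaluation maps $W_i\to X$ at the punctures: the punctured sections $p_{E,v}$ are only sections of $\ul{C_i^\circ}\to\ul{W_i}$, not of $C_i^\circ\to W_i$ in the log category. To obtain logarithmic evaluations one must first enlarge the log structure on $W_i$ by pulling back the log structures at the punctured sections, giving $\widetilde W_i$ with $\overline{\cM}_{\widetilde W_i}^{\gp}\cong Q_i^{\gp}\oplus\bigoplus_{(E,v)\in\mathbf L_i}\ZZ$. The fibre product of Theorem~\ref{Thm: Gluing theorem} is taken with $\prod_i\widetilde W_i$, and those extra $\ZZ$-summands are precisely the source of the edge-length parameters $\ell_E$ in $\Psi$. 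Consequently your map $\Phi^t:\bigoplus_iQ_i^*\to\bigoplus_EP_E^*$ is \emph{not} $\Psi$: compare the domains in Definition~\ref{def:tropical gluing map}, where $\Psi$ has an additional $\prod_E\ZZ$ factor and the essential $\ell_E\,\mathbf{u}(E)$ terms. For a concrete discrepancy take one splitting edge $E$, $Q_1=Q_2=\NN$, $P_E=\NN^2$, $\ev_{v_E}(1)=(2,0)$, $\ev_{v'_E}(1)=(0,2)$ and $\mathbf{u}(E)=(1,-1)$: then your $\Phi^t$ has cokernel $(\ZZ/2\ZZ)^2$ while $\coker\Psi\cong\ZZ/2\ZZ$.

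In the paper's proof one carries out your outline with $\widetilde W_i$ in place of $W_i$; the resulting map $\theta^t$ of \eqref{eq:theta t} then differs from $\Psi$ by further harmless factors (an extra $\prod_EP_E^*$ coming from the diagonal, and two length parameters $\ell_{E,v_1},\ell_{E,v_2}$ per edge rather than one), and a snake-lemma diagram chase (diagram~\eqref{eq:big N diagram}) is needed to show $\coker\theta^t\cong\coker\Psi$. That comparison is exactly where the discrepancy above gets absorbed, and is not the ``routine lattice computation'' you suggest.
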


We remark that this is not complete information about $W$, as it may
have some non-reduced structure. However, when gluing questions are
set up properly, this becomes unimportant, as is seen in \cite{Wu} or
\S\ref{sec:degeneration gluing}.

We say the gluing situation is \emph{tropically transverse} if
$\coker\Psi$ is in fact finite. In this case, we have
Theorem \ref{thm:tropically transverse case}, again from the
four-point lemmas of \S\ref{sec:four point}:

\begin{theorem}
If the gluing situation is tropically transverse, then $W$ is non-empty.
\end{theorem}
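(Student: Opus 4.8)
\emph{Proof plan.}
The plan is to exhibit $W$ as an fs fibre product of log points, invoke the non-emptiness criterion of \S\ref{sec:four point}, and observe that its tropical reformulation is exactly the tropical transversality hypothesis.

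First I would specialise the gluing theorem, Theorem \ref{Thm: Gluing theorem}, to the one-curve situation of \S\ref{sec:gluing one curve}. After base change to a geometric point of the given schematic gluing of the $f_i$, the base $W$ of the universal logarithmic gluing $f\colon C^{\circ}/W\to X$ is identified with an fs fibre product of log points: for each splitting edge $E\in\mathbf E$ there is an evaluation log point $\mathbf P_E$ recording the stratum of $X$ into which $p_{v_1,E}$ and $p_{v_2,E}$ are sent, together with the smoothing parameter $\rho_E$, and there are evaluation morphisms $W_{i_1}\to\mathbf P_E$ and $W_{i_2}\to\mathbf P_E$; then $W$ is the fs fibre product of $\prod_i W_i$ with the diagonal $\prod_{E\in\mathbf E}\mathbf P_E$ over $\prod_{E\in\mathbf E}(\mathbf P_E\times\mathbf P_E)$, all taken relative to $B$. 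It therefore suffices to prove this fs fibre product is non-empty.

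Next I would analyse that fs fibre product through its diagram of sharp fs ghost monoids: $Q_i:=\ol M_{W_i}$, the monoids $P_E:=\ol M_{\mathbf P_E}$, and the two homomorphisms induced at each $E$. The underlying scheme of the fs fibre product is governed by the fs pushout of this diagram, whose associated group is controlled by the cokernel of the tropical gluing map $\Psi$ of Definition \ref{def:tropical gluing map}. When the situation is tropically transverse, $\coker\Psi$ is finite, so this pushout is a finite group and the fs fibre product is the spectrum, over the ground field, of the group algebra of a finite group; in particular it is non-empty. This is the non-emptiness criterion of \S\ref{sec:four point}, and it dovetails with Theorem \ref{thm:gluing count}: in the transverse case $W$ is a finite, possibly non-reduced scheme with $\mu(\btau,\mathbf E)=|\coker\Psi|$ connected components.

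The main obstacle is the first step. The gluing theorem of \cite{ACGSII} describes $\scrM(X/B,\btau)$ in terms of the $\scrM(X/B,\btau_i)$ as a fibre product of fs log \emph{stacks}, and turning this into an honest fs fibre product of log \emph{points} --- correctly bookkeeping the smoothing parameters, the contact orders, and the puncturing submonoids, which is precisely where the delicate features of punctured maps intervene --- is the real work. A secondary subtlety is that $\Psi$ is built from the basic tropical data of $(\btau,\mathbf E)$ alone, whereas the $Q_i$ may strictly contain the basic monoids of the $\btau_i$; one must check that enlarging them contributes only extra free directions and no new obstructions, so that finiteness of $\coker\Psi$ still forces the hypothesis of \S\ref{sec:four point} for the actual $Q_i$. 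Once the correct fs fibre product is isolated, these reduce to bookkeeping, since a tropical gluing always exists --- the one coming from $\btau$ itself --- and tropical transversality records exactly that it is rigid up to finite torsion.
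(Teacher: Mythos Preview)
Your overall architecture is right: reduce to the fs fibre product of log points already set up in the proof of Theorem~\ref{thm:gluing count} (the diagram~\eqref{eq:point gluing product}), and then invoke the four-point material of \S\ref{sec:four point}. That is exactly what the paper does. The gap is in your justification of non-emptiness.

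You write that when $\coker\Psi$ is finite, ``this pushout is a finite group and the fs fibre product is the spectrum, over the ground field, of the group algebra of a finite group; in particular it is non-empty.'' Both halves of this are problematic. First, the group in question is $\coker\theta$ with $\theta=(\theta_1,-\theta_2)\colon P^{\gp}\to Q_1^{\gp}\oplus Q_2^{\gp}$, and tropical transversality says $\coker\theta^t\cong\coker\Psi$ is finite, equivalently that $\theta$ is \emph{injective}; it does not say $\coker\theta$ is finite, and in the gluing situation $\coker\theta$ typically has positive rank. Second, and more seriously, the description of $(W_1\times_X^{\fs}W_2)_{\red}$ as $\Spec\kappa[(\coker\theta)_{\tors}]$ in Lemma~\ref{lem:point product} is conditional on non-emptiness: integralization of a log point passes to a closed subscheme of $\Spec\kappa$ and can yield the empty scheme. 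Whether it does depends on the $\kappa^\times$-data carried by the morphisms $f_i$, not just on the ghost monoids. The ordinary double point example in \S\ref{sec:gluing one curve} shows this explicitly: the fs fibre product is empty unless certain $\kappa^\times$-parameters match.

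The paper closes this gap with Lemma~\ref{lem:point product nonempty}, applied with $W_i'=X'=\Spec\kk$. Condition~(1) is then trivial; condition~(2), that the projections $Q_1^\vee\times_{P^\vee}Q_2^\vee\to Q_i^\vee$ meet the interiors, follows from realizability of $\tau$; and condition~(3) becomes precisely the injectivity of $\theta$, which is equivalent to finiteness of $\coker\theta^t\cong\coker\Psi$ via~\eqref{eq:big N diagram}. The content of Lemma~\ref{lem:point product nonempty} is that injectivity of $\theta$, together with divisibility of $\kappa^\times$, lets one solve the $\kappa^\times$-compatibility~\eqref{eq:psi12 nonprimed commutativity} and hence construct an honest morphism $\Spec\kappa^\dagger\to W_1\times_X^{\fs}W_2$. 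Your plan should invoke this lemma and these three conditions rather than the monoid-only heuristic; in particular, do not omit the realizability input for condition~(2). Your worry about the $Q_i$ versus basic monoids is not an issue here: in the setup of \S\ref{sec:gluing one curve} the $f_i$ are basic, and the enlargement to $\widetilde W_i$ only adds free $\ZZ$-factors at the group level, already accounted for in~\eqref{eq:theta t} and~\eqref{eq:big N diagram}.
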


These results can be viewed as a generalization of the more hands-on
constructions of stable log maps beginning with work of Nishinou--Siebert
\cite{NS}, Arg\"uz \cite{Arguz}, Cheung--Fantini--Park--Ulirsch
\cite{CFPU}, Mandel--Ruddat \cite{MR}, and \cite[\S4.2]{ACGSI}. In fact,
versions of tropical gluing maps already appeared in \cite{NS}.

In \S\ref{sec:degeneration gluing}, we now apply these observations
to gluing moduli spaces, with an aim to describe $[\scrM(X/B,\btau)]^{\virt}$
in terms of $\prod_{i=1}^r [\scrM(X/B,\btau_i)]^{\virt}$. We define
an intermediate moduli space $\scrM^{\mathrm{sch}}(X,\btau)$. A point
in this moduli space is represented by a point in $\prod_{i=1}^r 
\scrM(X/B,\btau_i)$ corresponding to a collection of punctured maps
$f_i:C_i^{\circ}/W_i\rightarrow X$, such that the $f_i$ glue schematically.
This moduli stack can be defined via a Cartesian diagram in the category
of ordinary stacks, see Theorem \ref{thm:gluing factorization} for details.
The results of \S\ref{sec:gluing one curve} then apply to give us
some information about the natural map $\phi':\scrM(X/B,\btau)\rightarrow
\scrM^{\mathrm{sch}}(X/B,\btau)$. Unfortunately, in general it is difficult
to extract useful results from this. However, tropical transversality of
the gluing situation, along with a flatness hypothesis which can also
be tested tropically,
implies virtual surjectivity of this map, with
degree given by the tropical multiplicity, so that
\[
\phi'_*[\scrM(X/B,\btau)]^{\virt} = \mu(\btau,\mathbf{E}) [\scrM^{\mathrm{sch}}
(X/B,\btau)]^{\virt}.
\]
On the other hand, if the gluing strata are sufficiently nice (e.g., 
smooth) and certain other conditions hold, then $[\scrM^{\mathrm{sch}}(X/B,
\btau)]^{\virt}$ can be calculated as an ordinary Gysin pull-back
of $\prod_{i=1}^r [\scrM(X/B,\btau_i)]^{\virt}$. See Remark 
\ref{rem:this is the best in general} for details.

In \S\ref{subsec:rigid}, we specialize to the degeneration situation considered
in \cite{ACGSI}. Here, $B$ is a curve or spectrum of a DVR over $\kk$,
with divisorial log structure coming from a closed point $b_0\in B$. Let
$X_0$ be the fibre over $b_0$.
In \cite{ACGSI}, we showed that virtual irreducible components of
$\scrM(X_0/b_0)$ were indexed by rigid tropical curves. We recast the
earlier discussion in the gluing situation provided by a rigid tropical
curve.

While these results are not yet the dreamed-of general
gluing formula, they in fact appear to 
be strong enough to be useful in many circumstances. In particular, in 
\S\ref{sec:classical},
we give a very short proof of the Li-Ruan/Jun Li degeneration formula.
This is not new even in the logarithmic setup: \cite{KLR} first obtained
this result. However, it is pleasant to see that the more general setup
proves this special case without pain. The reason this works easily
is that the gluing situation is always tropically transverse in this
case.

Along the way, in \S\ref{sec:punctured versus relative}, we first
prove a more generally useful comparison result between 
punctured and log invariants for irreducible components of degenerations.
Explicitly, given $X\rightarrow B$ as in the degeneration situation,
often one needs to look at moduli spaces of punctured maps into strata
of $X_0$. In general, this may involve additional information, but
if the stratum is an irreducible component $Y\subseteq X_0$, life becomes
simpler. In particular, $Y$ carries two possible log structures, one
induced from $X$, and one the divisorial log structure coming from the
union of substrata of $Y$. We write this latter log structure as
$\overline{Y}$. We then obtain in Theorem \ref{thm:punctured to relative} 
an isomorphism of underlying stacks
$\ul{\scrM(Y/b_0,\btau)}\cong \ul{\scrM(\overline{Y},\bar\btau)}$,
where the type $\bar\btau$ is derived from the type $\btau$.
Happily, the latter type does not involve punctures, and hence gives
a more familiar moduli space. We note these stacks are not isomorphic
as log stacks, but they do carry the same virtual fundamental class.

The final section is an extended application of the gluing techniques
given in this paper. We study the genus zero punctured Gromov-Witten theory of
maximally unipotent degenerations of K3 surfaces, giving an inductive
description of the so-called canonical wall structure of \cite{Walls}.
This, along with a number of other results proved using our gluing technology,
will be of use in \cite{K3}, which explores mirror symmetry for
K3 surfaces and uses the mirror construction of \cite{Walls} to
build geometrically meaningful compactifications of the moduli space
of K3 surfaces.

One of the key reasons our approach to gluing is applicable in this
case is that the degenerate varieties being considered only have
at worst triple points. It was originally observed by Brett Parker
in \cite{Parker} that this was a particularly amenable situation for
gluing.
\medskip

\emph{Conventions}:
All logarithmic schemes and stacks are defined over an algebraically closed 
field $\kk$ of characteristic $0$.
We follow the convention that if $X$ is a log scheme or
stack, then $\ul{X}$ is the underlying scheme or stack. We almost always
write $\cM_X$ for the sheaf of monoids on $X$ and $\alpha_X:\cM_X
\rightarrow\cO_X$ for the structure map. If $P$ is a
monoid, we write $P^{\vee}:=\Hom(P,\NN)$ and $P^*=\Hom(P,\ZZ)$.

\medskip

\emph{Acknowledgements}: Some of the material here, in an earlier form, was
originally written with the intention of appearing in \cite{ACGSII}.
So it has had a lot of influence from my co-authors of that project,
Dan Abramovich, Qile Chen and Bernd Siebert. In addition, the
last section was greatly influenced by discussions with my coauthors
on \cite{K3}, i.e., Paul Hacking, Sean Keel, and Bernd Siebert.
This paper has also benefited very much from discussions with 
Evgeny Goncharov, Sam Johnston, Xuanchun Lu, 
Dhruv Ranganathan, Yu Wang, and Peter Zaika,
and
comments from Helge Ruddat.
It was supported by the ERC Advance Grant MSAG.

\section{Preliminaries}
\label{sec:preliminaries}

\subsection{Tropical maps and moduli of punctured curves}

We will work with a relative target space $X\rightarrow B$, a
projective log smooth morphism. We further assume
that the log structure on $X$ is Zariski, and that $X$ satisfies
assumptions required to guarantee finite type moduli spaces
of punctured curves. At the moment, \cite{ACGSII} requires that
$\overline\shM_X^{\gp}\otimes_{\ZZ}\QQ$ be generated by global
sections, so what follows will be written with this assumption.
However, see \cite{SJ} for finiteness results without this condition.
Typically $B$ itself is taken to be an affine scheme and 
$B$ is either log smooth over $\Spec\kk$ 
or a log point $\Spec(Q_B\rightarrow \kk)$. 

\medskip

We briefly review notation from \cite{ACGSI,ACGSII} for tropical maps
to $\Sigma(X)$ and punctured log maps to $X$
as developed in \cite{JAMS}, \cite[\S2.5]{ACGSI} and \cite[\S2.2]{ACGSII}.

In what follows, $\mathbf{Cones}$ denotes the category of
rational polyhedral cones with integral structure, i.e., objects
are rational polyhedral cones $\omega \subseteq 
N_{\omega}\otimes_{\ZZ}\RR$ for
$N_{\omega}$
the lattice of integral tangent vectors to $\omega$.
Morphisms are maps of cones induced by maps of the corresponding lattices.
We write $\omega_{\ZZ}=\omega\cap N_{\omega}$ for the set of integral
points of $\omega$.

A \emph{generalized cone complex} is a topological space with a presentation
as the colimit of an arbitrary diagram in the category 
$\mathbf{Cones}$ with all morphisms being face morphisms. If $\Sigma$
is such a generalized cone complex, we write $\sigma\in \Sigma$ if $\sigma$ is
a cone in the presentation and $|\Sigma|$ for the underlying topological
space. A morphism of generalized cone complexes is a continuous map
$f:|\Sigma|\rightarrow |\Sigma'|$ such that for each
$\sigma\in \Sigma$, the induced map $\sigma\rightarrow |\Sigma'|$
factors through a cone map $\sigma\rightarrow\sigma'\in\Sigma'$.

There is a functor from fine saturated log schemes to generalized
cone complexes, written as $X\mapsto \Sigma(X)$. There is a one-to-one
correspondence between elements in the presentation $\Sigma(X)$
and logarithmic strata of $X$. If $\shM_X$ denotes the log structure
on $X$ with ghost sheaf $\overline\shM_X$, 
and $\bar\eta$ is a geometric generic point of a log stratum, then the
corresponding cone is $\Hom(\overline\shM_{X,\bar\eta},\RR_{\ge 0})$.
If $\sigma\in\Sigma(X)$, we write $X_{\sigma}\subseteq X$ for the corresponding
(closed) stratum.

We consider graphs $G$, with sets of vertices $V(G)$, edges $E(G)$ and legs
$L(G)$. In what follows, we will frequently confuse $G$ with its topological
realisation $|G|$. Legs will correspond to marked or punctured points of
punctured curves, and are rays in the marked case and compact line segments in
the punctured case. We view a compact leg as having only one vertex. An abstract
tropical curve over $\omega\in \mathbf{Cones}$ is data $(G, {\mathbf g}, \ell)$
where ${\mathbf g}:V(G)\rightarrow\NN$ is a genus function and
$\ell:E(G)\rightarrow \Hom(\omega_{\ZZ},\NN)\setminus\{0\}$ determines edge
lengths. 

Associated to the data
$(G,\ell)$ is a generalized cone complex 
$\Gamma(G,\ell)$ along with a morphism of cone complexes
$\Gamma(G,\ell)\rightarrow \omega$ with fibre over $s\in\Int(\omega)$
being a tropical curve, i.e., a metric graph, with underlying graph
$G$ and affine edge length of $E\in E(G)$ being $\ell(E)(s)\in \RR_{\ge 0}$.
Associated to each vertex $v\in V(G)$
of $G$ is a copy $\omega_v$ of $\omega$
in $\Gamma(G,\ell)$. Associated to each edge or leg $E\in E(G) \cup L(G)$
is a cone $\omega_E \in \Gamma(G,\ell)$ with
$\omega_E\subseteq \omega\times\RR_{\ge 0}$ and the map to $\omega$
given by projection onto the first coordinate. This projection
fibres $\omega_E$ in compact intervals or rays over $\omega$
(rays only in the case of a leg representing a marked point).

A \emph{family of tropical maps}
to $\Sigma(X)$ over $\omega\in \mathbf{Cones}$ is a morphism of cone complexes
\[
h:\Gamma(G,\ell)\rightarrow \Sigma(X).
\]
If $s\in\Int(\omega)$, we may view $G$ as the fibre of
$\Gamma(G,\ell)\rightarrow \omega$ over $s$ as a metric graph, and
write
\[
h_s:G\rightarrow \Sigma(X)
\]
for the corresponding tropical map with domain $G$.
The \emph{type} of such a family consists of the data
$\tau:=(G,\mathbf{g},\bsigma,\mathbf{u})$ where
\[
\bsigma:V(G)\cup E(G)\cup L(G)\rightarrow \Sigma(X)
\]
associates to $x\in V(G)\cup E(G)\cup L(G)$ the minimal
cone of $\Sigma(X)$ containing $h(\omega_x)$. Further,
$\mathbf{u}$ associates to each (oriented) edge or leg $E\in E(G)\cup L(G)$
the corresponding \emph{contact order} $\mathbf{u}(E)\in N_{\bsigma(E)}$,
the image of the tangent vector $(0,1)\in N_{\omega_E}=
N_{\omega}\oplus\ZZ$ under the map $h$. Legs are always oriented
away from their unique vertex.

As we shall only consider tropicalizations of
pre-stable punctured curves (see \cite[Def.\ 2.6]{ACGSII},
following \cite[Prop.\ 2.23]{ACGSII} we may assume that for $L\in L(G)$
with adjacent vertex $v\in V(G)$
giving $\omega_L,\omega_v\subseteq \Gamma(G,\ell)$, we have
\begin{equation}
\label{eq:leg}
h(\omega_L)=(h(\omega_v)+\RR_{\ge 0}{\bf u}(L))\cap \bsigma(L)
\subseteq N_{\bsigma(L),\RR}.
\end{equation}
In other words, the images of legs extend as far as possible inside their
cones.

A \emph{decorated type} is data $\btau=(G,\mathbf{g},
\bsigma,\mathbf{u},\mathbf{A})$
where $\mathbf{A}:V(G)\rightarrow H_2(X)$ associates a curve class to
each vertex of $G$. The \emph{total curve class} of ${\bf A}$
is $A=\sum_{v\in V(G)} {\bf A}(v)$.\footnote{We recall that $H_2(X)$
represents some choice of group of curve classes. It could be integral
homology of $X$, or the group of curve classes modulo algebraic or
numerical equivalence, but other choices are also possible. See e.g.,
\cite[\S2.3.8]{ACGSI} for a discussion.}

We also have a notion of a contraction morphism of types
$\phi:\tau\rightarrow \tau'$, see \cite[Def.\ 2.24]{ACGSI}. This
is a contraction of edges on the underlying graphs, and the
additional data satisfies some relations as follows.
If $x\in V(G)\cup E(G)\cup L(G)$, then $\bsigma'(\phi(x))
\subseteq \bsigma(x)$ (if $x$ is an edge, it may be contracted
to a vertex by $\phi$). Further, if $E\in E(G)\cup L(G)$
then $\mathbf{u}(E)=\mathbf{u}'(\phi(E))$ under the inclusion
$N_{\bsigma'(\phi(E))}\subseteq N_{\bsigma(E)}$, provided that $E$ is
not an edge contracted by $\phi$.

We say a type $\tau$ is \emph{realizable} if there exists a
family of tropical maps to $\Sigma(X)$ of type $\tau$.
We also say $\btau=(\tau,{\bf A})$ is realizable if $\tau$ is
realizable.  In this paper, we will only
deal with realizable types. As a consequence, we will not need the more
general notion of global type discussed in \cite[\S3]{ACGSII}.
However, in the case that there are no punctures, but only marked points,
we will use the notion of a \emph{class of logarithmic map}
$\beta$, consisting of data of a genus $g$, a curve class
$A$, and contact orders $u_i\in \Int(\sigma_i)$ with
$\sigma_i\in\Sigma(X)$ for $1\le i\le k$
for $k$ marked points. This may be viewed as a decorated type $(G,\mathbf{g},
\bsigma,\mathbf{u},\mathbf{A})$ where the underlying graph $G$
has only one vertex $v$ and no edges, $\mathbf{g}(v)=g$,
$\bsigma(v)=\{0\}$, $\bsigma(L_i)=\sigma_i$, $\mathbf{u}(L_i)=u_i$,
and $\mathbf{A}(v)=A$.

If a type $\tau$ is realizable, then there is a universal family of
tropical maps of type $\tau$, parameterized by an object of
$\mathbf{Cones}$. Hopefully without confusion, we will generally write
this cone as $\tau$. Hence we have a cone complex $\Gamma(G,\ell)$ equipped
with a map to $\tau$ and a map of cone complexes $h=h_{\tau}:\Gamma(G,\ell)
\rightarrow \Sigma(X)$. Generally we write $h$ rather than $h_{\tau}$
when unambiguous. Note that for each $x\in E(G)\cup L(G)\cup V(G)$,
we thus obtain $\tau_x \in \Gamma(G,\ell)$ the corresponding cone.

\medskip
We write $\cA_X$ for the Artin fan of $X$, see
\cite{ACMW17}, as well as \cite[\S2.2]{ACGSI} for a summary. With
$X\rightarrow B$ log smooth with $X$ Zariski, we obtain a morphism
of Artin fans $\shA_X\rightarrow\shA_B$ and define
\[
\shX:= \shA_X\times_{\shA_B} B.
\]

We refer to \cite[Defs.\ 2.11, 2.14, 2.15]{ACGSII} for the notion of a family
$\pi:C^{\circ}\rightarrow W$ of punctured curves and pre-stable or
stable punctured log maps $f:C^{\circ}/W\rightarrow X$ or
$f:C^{\circ}/W\rightarrow \cX$ defined over $B$. 

Given a punctured log map with domain $C^\circ\rightarrow W$ and
$W=\Spec(Q\rightarrow\kappa)$ for $\kappa$ an algebraically closed
field and target $X$ or $\cX$, we obtain by
functoriality of tropicalizations a family of tropical maps
\begin{equation}
\label{eq:trop diag}
\xymatrix@C=30pt
{
\Sigma(C)=\Gamma(G,\ell)\ar[d]\ar[r]& \Sigma(X)\ar[d]\\
\Sigma(W)=\omega=Q^{\vee}_{\RR}\ar[r] & \Sigma(B)
}
\end{equation}
parameterized by $W$.
The \emph{type} of the punctured map is then the type
$\tau=(G,\mathbf{g},\bsigma,\mathbf{u})$
of this family of tropical maps. We recall that the punctured
map $f:C^{\circ}/W\rightarrow X$ is \emph{basic} if \eqref{eq:trop diag}
is the universal family of tropical maps of type $\tau$.

Given a type $\tau=(G,\mathbf{g},\bsigma,\mathbf{u})$, 
\cite[Def.\ 3.4]{ACGSII}
defines the notion of a \emph{marking} or \emph{weak marking} 
of a punctured map by $\tau$.\footnote{The cited reference applies to
global types, but by \cite[Lem.~3.5]{ACGSII}, giving a realizable global
type is the same as giving a realizable type. Since all our types are
realizable here, we ignore the notion of global type.}
Roughly, a \emph{weak marking} of a punctured map $f:C^{\circ}/W \rightarrow X$
involves the following information.
(1) A marking of the underlying domain curve $\ul{C}$ by $G$. In other words, 
we have a pre-stable curve $\ul{C}_v$ for each $v\in V(G)$ of 
genus $\mathbf{g}(v)$, a marked point $p_L\in \ul{C}_v$ for each leg
$L\in L(G)$ adjacent to $v$, and marked points $q_{E,1}, q_{E,2}$
in $\ul{C}_{v_1},\ul{C}_{v_2}$ for each edge $E$ connecting $v_1$ to $v_2$.
Further, $\ul{C}$ is obtained as a marked curve by identifying 
pairs $q_{E,1},q_{E,2}$ for all $E\in E(G)$.
(2) For each subcurve or punctured or nodal
section $Z$ of $\ul{C}$, indexed by an element $x\in V(G)\cup L(G)\cup E(G)$, 
the
morphism $\ul{f}|_Z$ factors through the closed stratum $X_{\bsigma(x)}$
of $X$. (3) For any geometric point $\bar w\rightarrow W$
giving a curve of type $\tau_{\bar w}=(G_{\bar w}, \mathbf{g}_{\bar w},
\bsigma_{\bar w}, \mathbf{u}_{\bar w})$, 
the contraction morphism $G_{\bar w}\rightarrow G$
induced by the marking of the domain yields
a contraction morphism of types $\tau_{\bar w}\rightarrow \tau$.

A \emph{marking} of a punctured map $f:C^{\circ}/W \rightarrow X$
is a weak marking satisfying an additional requirement that a certain
natural monoid ideal on $\shM_W$ defines an idealized log structure on
$W$. See \cite[Def.~3.4]{ACGSII} for full details.

In either case, if further $\btau=(\tau,\mathbf{A})$ is a decoration
of $\tau$, then $f:C^\circ/W\rightarrow X$ is (weakly) $\btau$-marked
if in addition to being (weakly) $\tau$-marked, for each $v\in V(G)$,
the curve class associated the the stable map $\ul{f}$ restricted
to the subcurve indexed by $v$ is $\mathbf{A}(v)$.

In particular, this gives rise to the following moduli spaces:
\begin{enumerate}
\item $\scrM(X,\btau)$ (resp.\ $\scrM'(X,\btau)$) the moduli space
of (weakly) $\btau$-marked stable punctured maps.
\item $\fM(\shX,\tau)$ (resp.\ $\fM'(\shX,\tau)$) the moduli space
of (weakly) $\btau$-marked punctured maps to $\shX$.
\item $\fM(\shX,\btau)$ (resp.\ $\fM'(\shX,\btau)$) the moduli space
of (weakly) $\btau$-marked punctured maps to $\shX$. Note here
that while curve classes in $\shX$ are meaningless, the decoration
$\mathbf{A}$ on $\tau$ affects the notion of isomorphism in the
categories $\foM(\shX,\btau)$ or $\foM'(\shX,\btau)$, and there is
an \'etale morphism $\foM(\shX,\btau)\rightarrow \foM(\shX,\tau)$.
\end{enumerate}

In general, we are always working over $B$, but when we need to be more
precise, we write $\scrM(X/B,\btau)$. We say type $\tau$ is
\emph{realizable over $B$} if $\foM(\shX/B,\tau)$ has a geometric
point corresponding to a map of type $\tau$, see \cite[Def.~3.28]{ACGSII}.
By \cite[Prop.~3.29]{ACGSII}, this is equivalent to a combinatorial condition
requiring $\tau$ to be realisable by a family of tropical maps defined
over $\Sigma(B)$ with an additional condition which is easily checked.

By \cite[Thm.\ 3.10]{ACGSII}, all of the above moduli spaces
are algebraic stacks and $\scrM(X,\btau)$ and $\scrM'(X,\btau)$ 
are Deligne-Mumford. Further, there are natural morphisms
\begin{equation}
\label{def:varepsilon}
\begin{split}
\varepsilon:\scrM(X,\btau) & \rightarrow \foM(\shX,\btau)\\
\varepsilon:\scrM'(X,\btau) & \rightarrow \foM'(\shX,\btau)
\end{split}
\end{equation}
given by composing a punctured log map $C^{\circ}\rightarrow X$ with the
canonical map $X\rightarrow\shX$. \cite[\S4]{ACGSII} then gives a perfect
relative obstruction theory for $\varepsilon$. 

In general, it appears that the $\btau$-marked moduli spaces are more
important than the weakly $\btau$-marked moduli spaces. In particular,
\cite[Prop.~3.33]{ACGSII} shows that $\foM(\shX,\tau)$ is a closed
substack of $\foM'(\shX,\tau)$ defined by a nilpotent ideal. Further,
in the cases of greatest interest for this paper 
($\btau$ realizable, $B$ log smooth over $\Spec\kk$ or $B=\Spec\kk^{\dagger}$, 
the standard log point),
$\foM(\shX,\tau)$ is actually reduced and pure-dimensional, see
\cite[Prop.~3.30]{ACGSII}.
In fact, while $\foM(\shX,\tau)$ may be quite poorly behaved globally, 
it has a simple local structure coming from the fact that it is idealized log
smooth over $B$, see \cite[Thm.\ 3.25, Rem.\ 3.27]{ACGSII}. 
The main point for including the weakly marked moduli spaces is that
they naturally occur in the gluing formalism.

If there is a contraction morphism between decorated global types
$\phi:\btau\rightarrow \btau'$, we obtain a forgetful
map $\scrM(X,\btau)\rightarrow \scrM(X,\btau')$.
This gives rise to a stratified description of these moduli spaces,
see \cite[Rem.\ 3.31]{ACGSII}.

\medskip

If $\btau=(G,\bsigma,{\bf u},\mathbf{A})$ denotes a
choice of decorated type, and
$I\subseteq
E(G)\cup L(G)$ is a collection of edges and legs, then we write
\[
\foM^{\ev}(\shX,\btau)=\foM^{\ev(I)}(\shX,\btau)
:= \foM(\shX,\btau)\times_{\ul{\shX}^I} \ul{X}^I.
\]
Here $\ul{\shX}^I$ denotes the product of $|I|$ copies of $\ul{\shX}$
over $\ul{S}$, and similarly $\ul{X}^I$; the morphism
$\foM(\shX,\btau)\rightarrow \ul{\shX}^I$ is given by evaluation
at the nodes and punctured points indexed by elements of $I$,
and $\ul{X}^I\rightarrow \ul{\shX}^I$ is induced by the canonical smooth
map $\ul{X}\rightarrow\ul{\shX}$. The map $\varepsilon$ then factors
as
\begin{equation}
\label{eq:epsilon ev}
\xymatrix@C=30pt
{
\scrM(X,\btau)\ar[r]^{\varepsilon^{\ev}} & \foM^{\ev}(\shX,\btau)
\ar[r] & \foM(\shX,\btau).
}
\end{equation}
The second morphism is smooth, while $\varepsilon^{\ev}$ also possesses
a relative obstruction theory compatible with the morphism
$\varepsilon$ of \eqref{def:varepsilon}, see
\cite[\S4.2]{ACGSII}.

\medskip

We end by reviewing a basic result which encodes a generalisation of
the tropical balancing condition. The following is
\cite[Prop.~2.29]{ACGSII}.

\begin{proposition}
\label{prop:balancing}
Suppose given a punctured map $f:C^\circ/W\rightarrow X$ with $W$
a log point. Let $\tau=(G,\bsigma,\mathbf{u})$ be the corresponding
tropical type. If $v\in V(G)$, let $\ul{C}_v\subseteq \ul{C}$
be the corresponding irreducible component, and let 
$E_1,\ldots,E_n$ be the edges and
legs adjacent to $v$, oriented away from $v$. Let $s\in \Gamma(X,\overline
\shM_X^{\gp})$, and let $\shL_s$ be the corresponding line bundle,
i.e., the line bundle associated to the torsor given by the inverse image
of $s$ under the quotient map $\shM_X\rightarrow \overline\shM_X$.
For an integral tangent vector $v$ of $\sigma=\Hom(\overline\shM_{X,x},
\RR_{\ge 0})\in \Sigma(X)$, we write $\langle v,s\rangle$ for
the evaluation of $v$, as an element of $\Hom(\overline\shM_{X,x},\ZZ)$,
on the germ of $s$ at $x$. Then 
\[
\deg (\ul{f}^*\shL_s)|_{\ul{C}_v} = -\sum_{i=1}^n \langle \mathbf{u}(E_i),
s\rangle.
\]
\end{proposition}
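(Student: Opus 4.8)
The plan is to reduce the statement to the classical computation of the degree of a line bundle on a nodal curve in terms of local data at the special points, by tracking the log structure carefully. First I would choose, locally near each point $x$ in the image of $\ul C_v$, a chart for $s$: since $s \in \Gamma(X,\overline\shM_X^{\gp})$, lift $s$ locally to a section $\tilde s$ of $\shM_X^{\gp}$, and recall that $\shL_s$ is by definition the line bundle attached to the $\cO_X^*$-torsor $\alpha_X^{-1}(s) \subseteq \shM_X^{\gp}$; equivalently $\shL_s = \alpha_X(\tilde s)\cdot\cO_X$ where defined, glued via the transition functions coming from different lifts. Pulling back along $\ul f$ and restricting to $\ul C_v$, the section $\ul f^{\flat}$ of the ghost sheaf gives, at the generic point $\eta_v$ of $\ul C_v$, an element of $\overline\shM_{C,\eta_v}^{\gp} = \overline\shM_{W}^{\gp}$ (since $W$ is a log point, the ghost sheaf is constant $= Q^{\gp}$ along the interior of each component), so $\ul f^*\shL_s$ is a line bundle on $\ul C_v$ which is trivial on the complement of the nodes and punctures $E_1,\dots,E_n$, and whose degree is therefore a sum of local contributions at the $E_i$.

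Next I would compute the local contribution at each $E_i$. At the corresponding point $p_i \in \ul C_v$, choose a local coordinate $z$ on $\ul C_v$ vanishing at $p_i$. The log structure of the punctured curve near $p_i$ is generated (over $\shM_W$) by an element mapping to $z$, call it $t$, with $\overline\shM_{C,p_i}$ (or its stalk on $\ul C_v$, which may be only a group for a puncture) receiving a map from $Q$ and a generator $\ul t$; the contact order $\mathbf u(E_i)$ is precisely the tangent vector recording the coefficient of $\ul t$ in $f^{\flat}(\overline s)$, i.e. $f^{\flat}(\overline{\tilde s}_x)$ expressed in $\overline\shM_{C,p_i}^{\gp}$ equals $(\text{something in } Q^{\gp}) + \langle \mathbf u(E_i), s\rangle\cdot \ul t$. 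Since $\ul t \mapsto z$ under $\alpha$, the section of $\ul f^*\shL_s$ near $p_i$ obtained from the lift $\tilde s$ differs from a local generator by the factor $z^{\langle \mathbf u(E_i),s\rangle}$ (times a unit). Hence the order of vanishing of this section at $p_i$ is $-\langle \mathbf u(E_i),s\rangle$ — the sign coming from the convention that $E_i$ is oriented away from $v$, so $\ul t$ is the coordinate on the $v$-side and the monoid generator contributes negatively, as fixed by the sign conventions in \cite[\S2.1--2.2]{ACGSII} for how $\overline\shM$ pairs with tangent vectors. Summing over $i$, $\deg(\ul f^*\shL_s)|_{\ul C_v} = -\sum_{i=1}^n \langle \mathbf u(E_i),s\rangle$.

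The main obstacle, and the step deserving the most care, is the bookkeeping of signs and of the possibly-non-effective (group, not monoid) nature of the log structure at punctured points: at an ordinary node or marked point $\overline\shM$ is a genuine monoid and the local picture is standard, but at a puncture one works with $\overline\shM^{\gp}$ and $\langle\mathbf u(E_i),s\rangle$ may be negative, so the "section $z^{\langle\mathbf u(E_i),s\rangle}$" is a rational section with a pole; one must check that the torsor/line bundle formalism handles this uniformly, which it does because $\shL_s$ is defined via $\shM_X^{\gp}$ throughout. I would also need to confirm that the contributions from the node coordinate on the \emph{other} side of an edge $E_i$ do not enter here — they don't, because we restrict $\ul f^*\shL_s$ to $\ul C_v$ only, and the balancing across the node is exactly what relates the two one-sided contributions, but that is a separate statement. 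Modulo these conventions, everything else is the standard fact that a line bundle on a smooth projective curve, trivialized away from finitely many points with prescribed orders of vanishing of the trivializing section, has degree equal to minus the sum of those orders.
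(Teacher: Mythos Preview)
Your argument is correct and is precisely the computation underlying \cite[Prop.~2.27]{ACGSII}, which is all the paper invokes. The paper's proof differs only in first splitting the punctured map at the nodes on $\ul C_v$ via \cite[Prop.~5.2]{ACGSII} before applying that proposition, so that every special point of the resulting component is a marked or punctured point and the local picture ``$\overline\shM_{C,p_i}$ generated over $Q$ by one element $\bar t$'' holds on the nose. You correctly observe that the other branch at a node does not interfere once one restricts to $\ul C_v$, so the splitting is a convenience rather than a necessity; either way the substance is the same local degree computation you outline.

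One point where your write-up should be tightened: the rational section of $\ul f^*\shL_s|_{\ul C_v}$ whose divisor you are computing is not the one ``obtained from the lift $\tilde s$'' near $p_i$ --- that lift furnishes a \emph{local generator} of the line bundle at $p_i$, not the global trivialization. The trivializing section on the open locus is rather the one coming from a lift $m_q\in\shM_W^{\gp}$ of the constant value $q=\bar f^\flat(s)|_{\eta_v}\in Q^{\gp}$. Near $p_i$ a local generator of the torsor is $m_q\cdot z^{k}$ with $k=\langle\mathbf u(E_i),s\rangle$, whence $m_q=(m_q\cdot z^k)\cdot z^{-k}$ has order $-k$ at $p_i$, and summing gives the formula with the stated sign directly, without needing to invoke orientation conventions to repair the sign. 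As written, your sentence ``differs from a local generator by the factor $z^{\langle\mathbf u(E_i),s\rangle}$, hence the order of vanishing is $-\langle\mathbf u(E_i),s\rangle$'' conflates these two sections.
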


In the case that $\ul{X}$ is non-singular and the log structure on $X$
comes from a simple normal crossings divisor $D=D_1+\cdots+D_s$ 
such that all intersections of irreducible components of $D$ are
connected, then this has a particularly nice interpretation as saying
that the contact orders determine intersection numbers of the curve
class with the divisors $D_i$. More precisely, in this case 
we may view $\Sigma(X)$ as in \cite[Ex.~1.4]{Assoc} as follows.
Let $\Div_D(X)=\bigoplus_i \ZZ D_i$ be the group of divisors supported
on $D$, and write $\Div_D(X)^*=\Hom(\Div_D(X),\ZZ)=\bigoplus_i \ZZ D_i^*$
for the dual lattice and $\Div_D(X)^*_{\RR}=\Div_D(X)^*\otimes_{\ZZ}\RR$.
Then
\[
\Sigma(X) = \left\{\sum_{i\in I}\RR_{\ge 0}D_i^*\Big| 
\hbox{$I\subseteq\{1,\ldots,s\}$ an index set with $\bigcap_{i\in I}D_i
\not=\emptyset$}\right\}.
\]
Thus a contact order $u$ may be written as $\sum_i a_i D_i^*$, with
$a_i\in\ZZ$, with $a_i$ denoting the order of tangency with $D_i$.
We then have the immediate corollary

\begin{corollary}
\label{cor:114}
Let $X,D$ be as above, and $f, \tau, E_1\ldots,E_n$ as in 
Proposition~\ref{prop:balancing}. Write $\mathbf{u}(E_i)=\sum_j a_{ij}D_j^*$.
Then 
\[
\deg (\ul{f}^*\cO_X(D_j))|_{\ul{C}_v}=\sum_{i=1}^n a_{ij}.
\]
\end{corollary}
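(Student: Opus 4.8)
The plan is to deduce Corollary~\ref{cor:114} directly from Proposition~\ref{prop:balancing} by feeding in the right choice of global section $s$ of $\overline\shM_X^{\gp}$. First I would observe that, under the stated hypotheses on $X$ and $D$ (so that $\overline\shM_X$ is the sheaf whose stalk at a point lying on $D_{i_1}\cap\cdots\cap D_{i_k}$ is $\NN^k$, generated by the local equations of the $D_{i}$), the line bundle $\shL_s$ attached to a global section $s\in\Gamma(X,\overline\shM_X^{\gp})$ is exactly $\cO_X(D)$ when $s$ is the section corresponding to the divisor $D=D_1+\cdots+D_s$, and more generally $\shL_s=\cO_X(D_j)$ when $s=s_j$ is the section "$D_j$", i.e.\ the global section of $\overline\shM_X^{\gp}$ that locally equals the class of a defining equation of $D_j$ on a neighbourhood meeting $D_j$ and equals $0$ away from $D_j$. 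The fact that this $s_j$ is a genuine \emph{global} section of $\overline\shM_X^{\gp}$ uses the simple normal crossings hypothesis together with the connectedness assumption on the intersections of components of $D$; this is precisely the identification $\Sigma(X)=\{\sum_{i\in I}\RR_{\ge0}D_i^*\}$ recalled just before the corollary, under which $\Gamma(X,\overline\shM_X^{\gp})\supseteq\Div_D(X)$ with $s_j\leftrightarrow D_j$.

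Next I would unwind the pairing $\langle\mathbf{u}(E_i),s_j\rangle$ appearing in Proposition~\ref{prop:balancing}. Writing $\mathbf{u}(E_i)=\sum_j a_{ij}D_j^*\in N_{\bsigma(E_i)}\subseteq\Div_D(X)^*$, and identifying $s_j$ with the germ at the relevant stratum of the functional "coordinate $D_j$" on $\overline\shM_{X}$, we have by definition of the dual basis that $\langle\mathbf{u}(E_i),s_j\rangle=a_{ij}$ — one just has to check that the germ of $s_j$ at the point $x$ in whose stratum $\bsigma(E_i)$ the leg/edge $E_i$ lands is the $j$-th coordinate functional (and is $0$ if $D_j$ does not pass through that stratum, in which case $a_{ij}=0$ as well, so the identity still holds). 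Substituting $s=s_j$ into the displayed formula of Proposition~\ref{prop:balancing} then gives
\[
\deg(\ul f^*\shL_{s_j})|_{\ul C_v}=-\sum_{i=1}^n\langle\mathbf{u}(E_i),s_j\rangle=-\sum_{i=1}^n a_{ij}.
\]

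Finally I would reconcile the sign. Proposition~\ref{prop:balancing} attaches to $s$ the torsor $\alpha^{-1}(s)$ for $\shM_X\to\overline\shM_X$ and thence a line bundle $\shL_s$; with the conventions in force, the section $s_j$ corresponding to "order of vanishing along $D_j$" has $\shL_{s_j}=\cO_X(-D_j)$ rather than $\cO_X(D_j)$ (the sub-log-structure of sections vanishing along $D_j$ is, as a torsor, the one whose associated line bundle is the ideal sheaf $\cO_X(-D_j)$). Hence $\deg(\ul f^*\cO_X(D_j))|_{\ul C_v}=-\deg(\ul f^*\shL_{s_j})|_{\ul C_v}=\sum_{i=1}^n a_{ij}$, which is the claim. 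The only genuinely delicate point — and the step I would treat most carefully — is precisely this bookkeeping of signs and the identification of $\shL_{s_j}$ with $\pm\cO_X(D_j)$, i.e.\ making sure the orientation conventions in the definition of contact order (legs oriented away from $v$, tangent vector $(0,1)$) and in the definition of $\shL_s$ in Proposition~\ref{prop:balancing} are matched so that the net sign comes out as stated in Corollary~\ref{cor:114}; everything else is a routine unravelling of the combinatorial description of $\Sigma(X)$ and $\overline\shM_X$.
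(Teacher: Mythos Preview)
Your proposal is correct and is exactly the unravelling the paper has in mind: the paper states Corollary~\ref{cor:114} as an ``immediate corollary'' of Proposition~\ref{prop:balancing} with no further argument, and what you have written is precisely that immediate deduction---take $s=s_j$ the global section of $\overline\shM_X^{\gp}$ corresponding to $D_j$, compute $\langle\mathbf{u}(E_i),s_j\rangle=a_{ij}$, and absorb the minus sign via $\shL_{s_j}\cong\cO_X(-D_j)$. Your caution about the sign bookkeeping is well placed, and your identification $\shL_{s_j}=\cO_X(-D_j)$ is the correct one.
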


\subsection{The gluing formalism}

We may now describe the key gluing formalism of \cite{ACGSII}.
We begin with the \emph{standard gluing situation}.

\begin{notation}[The standard gluing situation]
\label{not:standard gluing}
We fix a target $X\rightarrow B$, a
proper log smooth morphism. We further assume
that the log structure on $X$ is Zariski, and that $X$ satisfies
assumptions required to guarantee finite type moduli spaces
of punctured curves. Further we assume that $B$ is either
log smooth over $\Spec\kk$ or $B=\Spec\kk^{\dagger}$, the standard
log point. Fix a realizable
type $\tau=(G,\mathbf{g},\bsigma,\mathbf{u})$ of tropical
map to $\Sigma(X)/\Sigma(B)$. We select a set of splitting edges
$\mathbf{E}\subseteq E(G)$, and let $G_1,\ldots,G_r$ be the connected
components of the graph obtained by splitting $G$ at the edges
of $\mathbf{E}$, i.e., replacing each edge $E\in \mathbf{E}$ with
endpoints $v_1,v_2$ with two legs with endpoints $v_1,v_2$ respectively.
We write these two legs as flags $(E,v_1)$ and $(E,v_2)$.
We then let $\tau_1,\ldots,\tau_r$ be the induced
set of decorated types with underlying graphs $G_1,\ldots,G_r$.
Let $\mathbf{L}\subseteq \bigcup_{i=1}^r L(G_i)$ be the subset of
all legs obtained from splitting edges, and $\mathbf{L}_i=\mathbf{L}\cap
L(G_i)$.
For $v\in V(G)$, let $i(v)\in \{1,\ldots,r\}$ denote the connected
component $G_i$ containing $v$.
\end{notation}

For each $E\in\bE$ denote by $\fM'_E(\cX,\tau)$ the image of the nodal section
$s_E:\ul{\fM'}(\cX,\tau)\arr \ul{\fC'}^\circ(\cX,\tau)$ with the restriction
of the log structure on the universal domain $\fC'^\circ(\cX,\tau)$.
Denote further by $\tfM'(\cX,\tau)$ the fs fiber product
\begin{equation}
\label{Eqn: tilde log structure}
\tfM'(\cX,\tau) = \fM'_{E_1}(\cX,\tau)\times^\fs_{\fM'(\cX,\tau)} \cdots
\times^\fs_{\fM'(\cX,\tau)} \fM'_{E_r}(\cX,\tau),
\end{equation}
where $E_1,\ldots,E_r\in E(G)$ are the edges in $\bE$. With
this enlarged log structure, the pull-back $\tfC'^\circ(\cX,\tau)\arr
\tfM'(\cX,\tau)$ of the universal domain has sections $\tilde s_E$, $E\in
\bE$, in the category of log stacks. Moreover, $\ul\ev_\bE$ lifts to a
logarithmic evaluation morphism
\begin{equation}
\label{Eqn: ev_bE}
\textstyle
\ev_\bE: \tfM'(\cX,\tau) \arr \prod_{E\in \bE} \cX,
\end{equation}
with $E$-component equal to $\tilde f\circ \tilde s_E$ for $\tilde f:
\tfC'^\circ(\cX,\tau)\arr \cX$ the universal punctured morphism.

Similarly, for each of the types $\tau_i=(G_i,\bg_i,\bsigma_i,\bu_i)$
obtained by splitting and $L\in L(G_i)$, denote by $\fM'_L(\cX,\tau_i)$ the
image of the punctured section $s_L:\ul\fM'(\cX,\tau_i)\arr
\ul{\fC'}^\circ(\cX,\tau_0)$ defined by $L$, again endowed with the pull-back
of the log structure on ${\fC'}^\circ(\cX,\tau_0)$. With $L_1,\ldots,L_s$ the
legs of $\mathbf{L}_i$, define the stack
\[
\widetilde\fM'(\cX,\tau_i)=
\big(\fM'_{L_1}(\cX,\tau_i)\times^\mathrm{f}_{\fM'(\cX,\tau_i)}
\cdots \times^\mathrm{f}_{\fM'(\cX,\tau_i)} \fM'_{L_s}(\cX,\tau_i)\big)^{\mathrm{sat}},
\]
where $\mathrm{sat}$ denotes saturation and 
$\times^{\mathrm{f}}$ denotes fibre product in the category of fine
log stacks. This stack differs from $\fM'(\cX,\tau_i)$ by
adding the pull-back of the log structure of each puncture, so that the
pull-back $\tfC'^\circ(\cX,\tau_i) \arr
\widetilde\fM'\cX,\tau_i)$ of the universal curve now has punctured
sections in the category of log stacks. We define the evaluation morphism
\begin{equation}
\label{Eqn: ev_bL}
\textstyle
\ev_\bL: \prod_{i=1}^r\tfM'(\cX,\tau_i) \arr \prod_{E\in\bE} \cX\times \cX,
\end{equation}
by taking as $E$-component the evaluation at the corresponding two sections
$s_{E,v_1},s_{E,v_2}$, where $v_1,v_2$ are the endpoints of $E$.
Note that this involves a choice of ordering of the endpoints of $E$,
i.e., a choice of orientation of $E$.

It is worth noting the following (see \cite[Prop.~5.7]{ACGSII}).

\begin{proposition}
\label{prop:reduced stacks}
The canonical map 
$\widetilde\foM'(\shX,\tau)\rightarrow \foM'(\shX,\tau)$ induces
an isomorphism of underlying stacks, while the canonical maps
$\widetilde\foM'(\shX,\tau_i)\rightarrow \foM'(\shX,\tau_i)$ induces
an isomorphism on reductions.
\end{proposition}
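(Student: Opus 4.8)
The plan is to reduce the statement to a purely local question about adding a log structure pulled back along a section, and then to invoke the structural results on $\foM'(\shX,\tau)$ and $\foM'(\shX,\tau_i)$ already in the literature. Recall that $\widetilde\foM'(\shX,\tau)$ is built from $\foM'(\shX,\tau)$ by iteratively taking the fs fibre product with the stacks $\fM'_{E_j}(\shX,\tau)$, each of which is the nodal section $s_{E_j}$ equipped with the restriction of the log structure on the universal punctured curve. The underlying stack of an fs fibre product $Z_1\times^{\fs}_Y Z_2$ differs from the underlying fibre product $\ul{Z_1}\times_{\ul Y}\ul{Z_2}$ only by the operations of fine-ification and saturation, which are affine (in fact finite, on the saturation side) and, crucially, are isomorphisms on underlying topological spaces when one of the maps is already strict or, more to the point here, when the "new" log structure being added is already fine and saturated on the relevant charts. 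So the first step is to observe that $\ul{\foM'_{E_j}(\shX,\tau)}\to\ul{\foM'(\shX,\tau)}$ is a closed immersion (it is the image of the nodal section, which is a closed substack), hence the underlying stack of the fs fibre product $\foM'_{E_1}(\shX,\tau)\times^{\fs}_{\foM'(\shX,\tau)}\cdots$ over $\foM'(\shX,\tau)$ has the same underlying stack as $\foM'(\shX,\tau)$ itself — one is just decorating $\foM'(\shX,\tau)$ with the extra log structures coming from the branches at the nodes $E_j$, not changing the scheme/stack. Making this precise: étale-locally the ghost sheaf at a point of $\foM'(\shX,\tau)$ is a fine saturated monoid $Q$, and the nodal section contributes the free monoid on one generator glued to $Q$ along the node-smoothing parameter; the resulting monoid is again fine and saturated, so no fine-ification or saturation is needed and the fibre product does not alter the underlying stack. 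This gives the first assertion.

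The second assertion, about the $\foM'(\shX,\tau_i)$, is the point where the behaviour genuinely differs, and I expect this to be the main obstacle. For a puncture $L$, the section $s_L:\ul{\foM'}(\shX,\tau_i)\to\ul{\fC'}^{\circ}(\shX,\tau_i)$ carries the log structure of a \emph{punctured} point, whose ghost monoid need not be sharp/saturated in the naive sense — this is precisely the phenomenon that motivated punctured maps. Consequently, when one forms $\foM'_{L_1}(\shX,\tau_i)\times^{\mathrm f}_{\foM'(\shX,\tau_i)}\cdots$ in fine log stacks and then saturates, the saturation step can be a genuinely non-trivial finite surjection, and fine-ification can kill nilpotents; this changes the underlying stack. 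What survives is only an isomorphism \emph{on reductions}: the saturation map is finite and surjective with nilpotent kernel on structure sheaves after reduction, because the added log structure at a puncture, modulo nilpotents, contributes a monoid that becomes saturated after the reduction is taken. Concretely I would argue: both $\widetilde\foM'(\shX,\tau_i)$ and $\foM'(\shX,\tau_i)$ are finite type over $B$, the canonical map is a finite, schematically surjective morphism (finite because saturation of fine log structures is finite, surjective because fibre product followed by saturation does not lose points over a saturated base), and on geometric points it is a bijection — a geometric point of $\foM'(\shX,\tau_i)$ lifts uniquely to $\widetilde\foM'(\shX,\tau_i)$ since over a geometric point the punctured log structure is already as saturated as it will get. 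A finite, universally bijective morphism of stacks of finite type over a field of characteristic zero is an isomorphism on reductions, which is exactly the claim.

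To organize the write-up I would first fix notation by recalling that all four stacks in question are idealized log smooth over $B$ (by \cite[Thm.~3.24]{ACGSII}), so étale-locally their underlying stacks are smooth and in particular reduced in the cases of interest; this lets me compare underlying stacks by comparing geometric points and local rings. Then I would treat the $\tau$ case and the $\tau_i$ case separately as above. Since the proposition is quoted as \cite[Prop.~5.5]{ACGSII}, the cleanest route in this paper is simply to cite that reference and give the one-line conceptual reason — the node case adds a log structure whose ghost monoid is already fs, the puncture case adds one that is only fs after reduction — rather than reproving it; that is almost certainly what the author intends here, and I would phrase the "proof" as a short pointer to \cite[Prop.~5.5]{ACGSII} together with this explanatory remark.
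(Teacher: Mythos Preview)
Your final recommendation is exactly what the paper does: the proposition is stated with a parenthetical ``(see \cite[Prop.~5.5]{ACGSII})'' and no proof is given here. So as far as matching the paper goes, you are on target.

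Your heuristic for the nodal case is essentially right: the ghost sheaf at a node is $Q\oplus_\NN\NN^2$, which is already fs, and iterated pushouts of such over $Q$ remain fs, so the fs fibre product does not alter the underlying stack.

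However, your heuristic for the punctured case contains a genuine error. The assertion ``a finite, universally bijective morphism of stacks of finite type over a field of characteristic zero is an isomorphism on reductions'' is false. The normalization $\AA^1\to\{y^2=x^3\}$ is finite and universally bijective (check $K$-points: $(a,b)\mapsto b/a$ for $a\neq 0$, and $0\mapsto 0$, gives the inverse), both source and target are reduced, yet the map is not an isomorphism. So this criterion cannot close the argument. The actual reason the map $\widetilde\foM'(\shX,\tau_i)\to\foM'(\shX,\tau_i)$ is an isomorphism on reductions uses more specific structure: the underlying map of the pre-saturation fine fibre product is a closed immersion into $\ul{\foM'(\shX,\tau_i)}$ hitting every point (so a nilpotent thickening), and the saturation step is controlled by the fact that the relevant groups $Q^{\gp}\oplus\ZZ^s$ are torsion-free together with the particular shape of the puncture monoids $Q_L\subseteq Q\oplus\ZZ$. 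Your phrase ``over a geometric point the punctured log structure is already as saturated as it will get'' is not an argument and does not address either of these points. If you want to include an explanatory gloss rather than just the citation, you would need to engage with these specifics rather than invoke a general principle that does not hold.
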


There are evaluation space versions of this. We set
\[
\widetilde\foM'^{\ev}(\shX,\tau_i)=\foM'^{\ev(\mathbf{L}_i)}(\shX,
\tau_i)\times_{\foM'(\shX,\tau_i)} \widetilde\foM'(\shX,\tau_i)
\]
and
\[
\widetilde\foM'^{\ev}(\shX,\tau)=\foM'^{\ev(\mathbf{E})}(\shX,
\tau_i)\times_{\foM'(\shX,\tau_i)} \widetilde\foM'(\shX,\tau_i).
\]

There is a natural splitting map
\begin{equation}
\label{eq:bad splitting}
\delta_{\foM}:\foM'(\shX,\tau)\rightarrow \prod_{i=1}^r \foM'(\shX,\tau_i)
\end{equation}
as defined in \cite[Prop.~5.4]{ACGSII}, taking a $\tau$-marked map and splitting it at the
nodes marked by the edges in $\mathbf{E}$. This lifts to 
\begin{equation}
\label{Eqn: splitting morphism}
\delta_{\foM}:\widetilde\foM'^{\ev(\mathbf{E})}(\shX,\tau)\rightarrow 
\prod_{i=1}^r \widetilde\foM'^{\ev(\mathbf{L}_i)}(\shX,\tau_i),
\end{equation}
and is shown to be a finite morphism in \cite[Cor.~5.15]{ACGSII}. 
Passing to the $\ev$-spaces
is key here: the morphism in \eqref{eq:bad splitting} is rarely finite
or even proper. This also gives an upgrading of the evaluation morphisms:
\begin{align}
\label{eq:ev}
\begin{split}
\ev_\bE: & \tfM'^{\ev(\mathbf{E})}(\cX,\tau) \arr \prod_{E\in \bE} X,\\
\ev_\bL: & \prod_{i=1}^r\tfM'^{\ev(\mathbf{L}_i)}(\cX,\tau_i) \arr \prod_{E\in\bE} X\times X.
\end{split}
\end{align}

We review the key gluing results of \cite{ACGSII}. We first describe
the glued moduli space as an fs fibre product:

\begin{theorem}
\label{Thm: Gluing theorem}
Suppose given a gluing situation as in Notation \ref{not:standard gluing}. 
Then the commutative diagram
\[
\xymatrix{
\tfM'^{\ev(\mathbf{L})}(\shX,\tau) \ar[r]^(.43){\delta_\fM}\ar[d]_{\ev_\bE} &
\prod_{i=1}^r \tfM'^{\ev(\mathbf{L}_i)}(\cX,\tau_i)\ar[d]^{\ev_\bL}\\
\prod_{E\in \bE} X\ar[r]^(.43)\Delta& \prod_{E\in \bE} X\times X 
}
\]
with $\Delta$ the product of diagonal embeddings and the other arrows defined in
\eqref{Eqn: splitting morphism} and \eqref{eq:ev}, is
cartesian in the category of fs log stacks. We remind the reader that all
products in this square are taken over $B$.

An analogous statement holds for $\tau$ replaced by a decorated global type
$\btau=(\tau,\bA)$, or replacing $\widetilde\foM'^{\ev}(\shX,\tau)$,
$\widetilde\foM'^{\ev}(\shX,\tau_i)$ with the analogous moduli spaces
of stable maps to $X/B$, $\widetilde\scrM'(X,\tau)$, $\widetilde\scrM'(X,
\tau_i)$.
\end{theorem}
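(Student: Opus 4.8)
The plan is to identify the statement we want to prove as a known theorem of \cite{ACGSII}, namely \cite[Thm.~5.10]{ACGSII} (or the version appearing there for the gluing of stable log/punctured maps), and to reduce the present formulation to it. Concretely, Theorem~\ref{Thm: Gluing theorem} as stated establishes the cartesian square in the category of fs log stacks for the Artin-fan moduli spaces $\tfM'^{\ev}(\shX,\tau)$ and $\prod_i\tfM'^{\ev}(\shX,\tau_i)$. The first step is therefore to recall the decomposition of the $\varepsilon$-morphism in \eqref{def:varepsilon}: a $\btau$-marked stable punctured map to $X/B$ is the same data as a $\btau$-marked punctured map to $\shX$ together with a lift of the underlying map to $X$ along the strict smooth morphism $\ul X\to\ul\shX$, subject to stability. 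This yields a cartesian-type description
\[
\scrM(X,\btau) \cong \foM(\shX,\btau)\times_{\foM(\shX,\btau)} \scrM(X,\btau),
\]
or more usefully, the fibre-product presentations of $\tfM'^{\ev}(\cdot,\tau)$ versus $\widetilde\scrM'^{\ev}(X,\tau)$ through the evaluation morphisms to $\ul\shX^{\mathbf E}$ and $\ul X^{\mathbf E}$ that were set up just before Proposition~\ref{prop:balancing}.

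Second, I would argue that forming the fs fibre product along $\Delta$ commutes with passing from the Artin-fan moduli spaces to the stable-map moduli spaces. The key point is that the $X$-lift data lives over the \emph{underlying} schemes/stacks $\ul\shX$, $\ul X$, and the morphisms $\ul X\to\ul\shX$ are strict; so base-changing the fs-cartesian square of Theorem~\ref{Thm: Gluing theorem} along the strict smooth morphisms $\ul X^{\mathbf E}\to\ul\shX^{\mathbf E}$ (respectively $\ul X^{\mathbf E}\times\ul X^{\mathbf E}$) preserves fs-cartesianness, because strict base change is transparent to the fs structure and because the logarithmic evaluation morphisms $\ev_\bE,\ev_\bL$ of \eqref{eq:ev} are compatible with the schematic ones. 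One then observes that the $\btau$-marked stable map moduli spaces are precisely the resulting fibre products, using the obstruction-theoretic set-up of \cite[\S4.2]{ACGSII} to match the data. The decorated case $\btau=(\tau,\bA)$ is immediate once the undecorated case is done, since fixing curve classes $\bA(v)$ is an open-and-closed condition that is respected by both splitting and gluing (it corresponds to the étale morphism $\foM(\shX,\btau)\to\foM(\shX,\tau)$ mentioned in the preliminaries), so it simply restricts the cartesian square to a union of connected components on all four corners simultaneously.

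Third, I would check that the various decorations of the $\widetilde{(-)}$-log structures — the extra puncture/node log structure recorded in \eqref{Eqn: tilde log structure} and in the saturated fine fibre product defining $\widetilde\fM'(\cX,\tau_i)$ — transport correctly to $\widetilde\scrM'(X,\tau)$ and $\widetilde\scrM'(X,\tau_i)$. Since this log structure is pulled back from the universal domain curve, and the domain curve of a stable punctured map to $X$ is (by construction) the same as that of its image in $\foM(\shX,\btau)$, Proposition~\ref{prop:reduced stacks} applies verbatim, and the nodal/punctured sections again lift to the log category after the enlargement. The main obstacle, I expect, is precisely this bookkeeping: making sure that forming the stable-map moduli space (which involves the non-strict morphism $\varepsilon$ and its relative obstruction theory) genuinely commutes with the fs fibre product along $\Delta$, rather than just the fine one. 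The saturation step in the definition of $\widetilde\fM'(\cX,\tau_i)$ is where fs vs.\ fine subtleties could in principle interfere; the resolution is that $\Delta$ is strict (it is a diagonal of log stacks), so the fs fibre product along $\Delta$ agrees with the fine one followed by saturation, and saturation commutes with the strict base change $\ul X^{\mathbf E}\to\ul\shX^{\mathbf E}$. Once that compatibility is pinned down, the analogous statement follows formally from Theorem~\ref{Thm: Gluing theorem}.
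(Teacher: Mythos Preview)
The paper does not prove this theorem; it is introduced with ``We review the key gluing results of \cite{ACGSII}'' and is stated as a quotation from that reference, with no proof given. Your opening recognition that the statement is a theorem of \cite{ACGSII} is therefore exactly the point, and nothing further is required here.

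Your subsequent sketch, attempting to derive the stable-map version from the Artin-fan version, is both unnecessary and somewhat off-target in its geometry. The bottom row of the stated diagram already involves $X$, not $\shX$: the spaces $\tfM'^{\ev}(\shX,\tau)$ are by definition fibre products over $\ul\shX^{\mathbf E}$ with $\ul X^{\mathbf E}$, so the strict base change along $\ul X\to\ul\shX$ that you propose has already been built into the statement. The passage from the $\tfM'^{\ev}(\shX,-)$-square to the $\widetilde\scrM'(X,-)$-square is not a further base change of that kind; rather, it is controlled by the separate cartesian diagram of Theorem~\ref{Prop: Gluing via evaluation spaces} (quoted as \cite[Prop.~5.15]{ACGSII}), which exhibits $\scrM(X,\tau)$ as the fibre product of $\prod_i\scrM(X,\tau_i)$ and $\foM^{\ev}(\shX,\tau)$ over $\prod_i\foM^{\ev}(\shX,\tau_i)$. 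Combining that with the present theorem does give the $\scrM$-analogue, but via a different mechanism than the one you describe.
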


We remark that in this theorem, the fact that all
fibre products are over $B$ can make the actual calculation of fibre
products more difficult than they need to be. But it is often enough
to work over $\Spec\kk$, as the following proposition 
(see \cite[Prop.~5.13]{ACGSII}) shows:

\begin{proposition}
\label{prop:relative versus absolute}
Let $B$ be an affine log scheme equipped with a global chart
$P\rightarrow \cM_B$ inducing an isomorphism $P\cong \Gamma(B,
\overline{\cM}_B)$.
Let $\tau$ be a type of tropical map for $X/B$,
with underlying graph connected.
Then there are isomorphisms $\fM(\cX,\tau)\cong \fM(\cX/\Spec\kk,
\tau)$ and $\scrM(X,\tau)\cong \scrM(X/\Spec\kk,\tau)$.\footnote{We recall
our convention that $\scrM(X,\tau)=\scrM(X/B,\tau)$.}
\end{proposition}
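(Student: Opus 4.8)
The plan is to reduce the claimed isomorphisms to a statement about the underlying moduli problems and then invoke the structure of the Artin fan $\shX = \shA_X\times_{\shA_B} B$. First I would observe that the issue is entirely about the base: a punctured log map $f:C^\circ/W\to X$ (or to $\shX$) over $B$ carries, as part of its data, a morphism $W\to B$ of log schemes, whereas a punctured log map over $\Spec\kk$ does not. So there is an evident forgetful functor $\fM(\shX,\tau)\to\fM(\shX/\Spec\kk,\tau)$, and similarly for $\scrM$; the content is that this functor is an equivalence, i.e. that the structure morphism $W\to B$ is uniquely determined by the rest of the data. The key input is that $\tau$ has connected underlying graph and $B$ has a global chart $P\xrightarrow{\sim}\Gamma(B,\overline{\cM}_B)$.

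The main step is to produce the morphism $W\to B$ canonically from a punctured log map $f:C^\circ/W\to\shX$ over $\Spec\kk$. Here I would use that $\shX=\shA_X\times_{\shA_B}B$, so a map $C^\circ\to\shX$ is the same as a map $C^\circ\to\shA_X$ together with a map to $B$ compatible over $\shA_B$; but $\shA_B$ is (étale-locally) the Artin fan $[\Spec\kk[P]/\Spec\kk[P^{\gp}]]$, and a map $\ul{C}^\circ\to\ul{\shA_B}$ together with the log structure data is the same as a map $\overline{\cM}_{B}=P \to \Gamma(C^\circ,\overline{\cM}_{C^\circ})$ compatible with the structure sheaf. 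Since the domain curve has connected underlying graph $G$ — and in particular the generic point of the core/genericpart of $\ul C$ is reached — the composite $P\to\Gamma(C^\circ,\overline\cM_{C^\circ})$ determines a map $P\to\Gamma(W,\overline\cM_W)$, hence, using the global chart on $B$, a morphism $W\to B$. Checking that this morphism is the unique one making everything a punctured log map over $B$ is then a matter of unwinding definitions: the constraint is exactly that the tropical map $h:\Gamma(G,\ell)\to\Sigma(X)$ covers a fixed map $\Sigma(W)\to\Sigma(B)$, and connectivity of $G$ forces $\Sigma(W)\to\Sigma(B)$, equivalently $W\to B$, to be the one just constructed.

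With the morphism $W\to B$ pinned down, I would check functoriality: the construction is compatible with base change in $W$ and with morphisms of punctured maps, so it upgrades to an isomorphism of stacks $\fM(\shX,\tau)\cong\fM(\shX/\Spec\kk,\tau)$. For the statement about $\scrM$, I would note that $\scrM(X,\tau)\to\fM(\shX,\tau)$ and $\scrM(X/\Spec\kk,\tau)\to\fM(\shX/\Spec\kk,\tau)$ are both obtained by the same construction (composing with $X\to\shX$, imposing stability and the curve class), so the isomorphism on the $\fM$-level lifts to an isomorphism on the $\scrM$-level; alternatively one argues directly that the relative obstruction theories for $\varepsilon$ match, but the cleanest route is to reduce to the $\shX$ case and pull back along $\varepsilon$. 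I expect the main obstacle to be the bookkeeping in the second step: making precise that ``a map to $\shX=\shA_X\times_{\shA_B}B$ over $\Spec\kk$ recovers the map to $B$'' requires care because $\shA_B$ is a stack and one must work étale-locally with the chart $P$, and one must verify that connectivity of $G$ (rather than, say, properness or some marked point lying over the right stratum) is exactly what is needed for the recovered map $P\to\Gamma(W,\overline\cM_W)$ to be well-defined and independent of choices. Once that local-to-global argument is in place, everything else is formal.
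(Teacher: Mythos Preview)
The paper does not give its own proof of this proposition; it is simply quoted from \cite[Prop.~5.11]{ACGSII}. Your outline is a reasonable sketch of the argument one finds there: the key observation is that any map $C^\circ\to\shX=\shA_X\times_{\shA_B}B$ (or $C^\circ\to X$) already carries a composite map to $B$, and connectivity of the fibres of $C^\circ\to W$ (equivalently, of the underlying graph $G$) forces this composite to descend uniquely to a structure morphism $W\to B$. The global chart $P\cong\Gamma(B,\overline\cM_B)$ is what lets one phrase this descent as producing a well-defined map $P\to\Gamma(W,\overline\cM_W)$, exactly as you describe. So your approach matches the cited source, though there is nothing in the present paper to compare against directly.
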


Finally, to make contact with stable punctured maps to $X$, we have
\cite[Prop.~5.17]{ACGSII}:

\begin{theorem}
\label{Prop: Gluing via evaluation spaces}
In the situation of Theorem~\ref{Thm: Gluing theorem} there is a cartesian
diagram
\begin{equation}
\label{diag:evaluation-gluing}
\vcenter{\xymatrix@C=30pt
{
\scrM(X,\tau) \ar[r]^(.45){\delta}\ar[d]_{\varepsilon^{\ev}} &
\prod_{i=1}^r \scrM(X,\tau_i)\ar[d]^{\hat\varepsilon=\prod_i\varepsilon^{\ev}_i}\\
\fM^{\ev(\mathbf{E})}(\cX,\tau) \ar[r]_(.45){\delta^\ev} &
\prod_{i=1}^r \fM^{\ev(\mathbf{L}_i)}(\cX,\tau_i)
}}
\end{equation}
with horizontal arrows the canonical splitting maps 
(see \cite[Prop.~5.4]{ACGSII}), and the vertical arrows the canonical 
strict morphisms of \eqref{eq:epsilon ev}. 

Analogous statements hold for decorated and for weakly marked versions of the
moduli stacks, see \cite[Def.~3.8]{ACGSII}.
\end{theorem}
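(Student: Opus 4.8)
The plan is to verify that the square is cartesian by identifying, for each log scheme $T$ over $B$, the groupoid of $T$-points of the prospective fibre product $\fM^{\ev(\mathbf{E})}(\cX,\tau)\times_{\prod_i\fM^{\ev(\mathbf{L}_i)}(\cX,\tau_i)}\prod_i\scrM(X,\tau_i)$ with that of $\scrM(X,\tau)$, compatibly with the four structure maps. This is closely related to Theorem~\ref{Thm: Gluing theorem}---one could extract it from there with some bookkeeping---but the direct check is clean, so I would give that.

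First I would unwind the vertical morphisms. Since the log structure of $X$ is pulled back from $\cX$, the morphism $X\to\cX$ is strict while $\ul X\to\ul\cX$ is smooth; hence for a punctured log map $g\colon C^{\circ}/T\to\cX$ of type $\tau$, giving a punctured log map $f\colon C^{\circ}/T\to X$ with the same domain and composite $g$ along $X\to\cX$ is the same as giving a section of the smooth $\ul C$-scheme $\ul C\times_{\ul\cX}\ul X\to\ul C$: the $\tau$-marking, the tropical type, and the log structure on $T$ are all inherited from $g$. The stability condition and the curve-class constraint then cut out an open substack. Thus $\varepsilon^{\ev}$ presents $\scrM(X,\tau)$ as the stability-open locus in the relative section stack of $\ul{\cC}\times_{\ul\cX}\ul X\to\ul{\cC}$ over $\fM^{\ev(\mathbf{E})}(\cX,\tau)$, where $\cC$ is the universal domain, and each $\varepsilon^{\ev}_i$ does the same over $\fM^{\ev(\mathbf{L}_i)}(\cX,\tau_i)$ with universal domain the $\mathbf{E}$-normalized curve $\coprod_i\cC_i$.

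The core of the argument is then as follows. The horizontal arrows are defined by normalizing the universal domain at the nodes indexed by $\mathbf{E}$, so the universal domain over the fibre product is the curve $\cC$, recovered by gluing $\coprod_i\cC_i$ along those nodes, and the extra data contributed by the factors $\scrM(X,\tau_i)$ is a family of sections $\sigma_i$ of $\ul{\cC}_i\times_{\ul\cX}\ul X\to\ul{\cC}_i$. The fibre-product condition over $\prod_i\fM^{\ev(\mathbf{L}_i)}(\cX,\tau_i)$ forces, for each $i$, that the $\cX$-map underlying $\sigma_i$ be the $i$-th piece of the splitting of $g$ and that the value $\sigma_i(p_{v,E})$ at each punctured point coincide with the $\ul X$-point at the corresponding node $E$ carried by the point of $\fM^{\ev(\mathbf{E})}(\cX,\tau)$; in particular, for $E\in\mathbf{E}$ with endpoints $v_1\in V(G_{i_1})$ and $v_2\in V(G_{i_2})$ one gets $\sigma_{i_1}(p_{v_1,E})=\sigma_{i_2}(p_{v_2,E})$. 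Since a morphism from a nodal curve to a scheme is exactly a compatible collection of morphisms from its partial normalization---each node being a pushout---a family of such sections agreeing along the glued nodes is precisely a section of $\ul{\cC}\times_{\ul\cX}\ul X\to\ul{\cC}$, hence, by the previous paragraph, a lift of $g$ to a map to $X$. One then checks that stability and type transfer in both directions: restricted to any irreducible component, the line bundle $\omega_{\ul{\cC}/T}(\text{marked and punctured points})\otimes\ul f^{*}(\text{ample})$ governing stability is unchanged by the gluing, since the $\mathbf{E}$-nodes already appear as special points beforehand, and the glued map to $X$ has type $\tau$ because its composite with $X\to\cX$ is the type-$\tau$ universal map. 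This yields the asserted equivalence of groupoids, hence the cartesian property; the passage to evaluation spaces, rather than working with the splitting map of \eqref{eq:bad splitting} directly, is exactly what makes $\delta^{\ev}$ finite and the node-matching representable.

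I expect the main obstacle to be this last step---confirming that the fibre-product structure on the bottom-right corner encodes \emph{exactly} the condition that the sections $\sigma_i$ agree at the glued nodes, which relies on the precise definitions of $\ev_{\mathbf{E}}$, of $\delta^{\ev}$, and of the upgraded evaluation morphisms of \eqref{eq:ev}---together with the verification that stability and the tropical type survive gluing and ungluing. The remaining assertions are routine: a decoration $\bA$ assigns curve classes to vertices, which splitting leaves untouched, so the decorated square is cartesian by the same argument carrying $\bA$ along; and the weak-marking conditions of \cite[Def.~3.7]{ACGSII} are imposed subcurve-by-subcurve and section-by-section, hence visibly compatible with normalizing at the $\mathbf{E}$-nodes, so the argument applies verbatim with $\scrM$, $\fM$ replaced by $\scrM'$, $\fM'$.
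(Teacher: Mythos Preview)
The paper does not prove this theorem: it is quoted verbatim from \cite[Prop.~5.15]{ACGSII}, introduced by the sentence ``Finally, to make contact with stable punctured maps to $X$, we have \cite[Prop.~5.15]{ACGSII}.'' So there is no proof in the paper to compare against.

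That said, your argument is correct and is essentially the standard way to establish such a statement; it is very likely close to what appears in \cite{ACGSII}. The key point, which you identify cleanly, is that $X\to\cX$ is strict with smooth underlying morphism, so lifting a punctured map $C^{\circ}\to\cX$ to $X$ is a purely scheme-theoretic datum---a section of $\ul C\times_{\ul\cX}\ul X\to\ul C$---and such sections glue along nodes because a nodal curve is the scheme-theoretic pushout of its partial normalization along the nodal sections. The evaluation-space enhancement on the bottom row is exactly what records the $\ul X$-value at each $\mathbf{E}$-node, so the fibre-product condition over $\prod_i\fM^{\ev(\mathbf{L}_i)}(\cX,\tau_i)$ forces the sections to agree there. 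One small point you might make explicit: since $\varepsilon^{\ev}$ is strict, once the square is cartesian on underlying stacks it is automatically cartesian in fs log stacks as well, the log structure on $\scrM(X,\tau)$ being pulled back from $\fM^{\ev(\mathbf{E})}(\cX,\tau)$. Your treatment of stability, decorations, and weak markings is fine.
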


The evaluation spaces $\foM^{\ev}(\shX,\tau)$ etc.\ play a crucial role
here. First, if instead one used the spaces $\foM(\shX,\tau)$, there
would be no way to obtain a Cartesian diagram, as the splitting map
on the level of punctured maps to Artin fans has no way to impose a matching
condition at the schematic level. Using the evaluation spaces allows
$\delta^{\ev}$ to impose both schematic and logarithmic matching conditions.

Second, the splitting map at the level of the spaces of punctured maps
to Artin fans is very poorly behaved, being neither representable nor
proper. However, $\delta^{\ev}$, as this theorem states, is in fact
finite and representable. Hence we may use it to push-forward Chow classes,
and in particular, as \cite[Thm.~5.19]{ACGSII} points out,
the compatibility of obstruction theories then allows a calculation
\[
\delta_*([\scrM(X,\tau)]^{\virt})=\hat\varepsilon^!\delta^{\ev}_*
[\foM^{\ev}(\shX,\tau)]
\]
of the virtual fundamental class of $\scrM(X,\tau)$. 

Thus the main task is finding a useful expression for $\delta^{\ev}_*
[\foM^{\ev}(\shX,\btau)]$. While the Artin stacks of the type
$\foM^{\ev}(\shX,\tau)$ may seem very forbidding, in a certain sense
they are very well-behaved: they are idealized log smooth over $B$,
see \cite[Thm.~3.25]{ACGSII}. This means that there are local descriptions 
of these stacks
as unions of strata of toric varieties. Further, these local descriptions
can be determined very explicitly from the tropical description of the
types $\tau$ and $\tau_i$.

This effective description of these moduli spaces has led Yixian Wu 
\cite{Wu},
in the case that all gluing strata are toric, to give an effective
formula for the Chow class $\delta_*[\foM^{\ev}(\shX,\tau)]$ as
a weighted sum of strata of $\prod_{i=1}^r \foM^{\ev}(\shX,\tau_i)$.
This formula has already proved to be very useful, see e.g.,
\cite{Walls}.

Here we wish to develop a gluing formalism in a complementary direction,
which, although very far from general, is also useful.

\section{Four-point lemmas: fibre products of log points}
\label{sec:four point}

The key point is to understand the gluing fibre
diagram of Theorem \ref{Thm: Gluing theorem}
by studying fibre products of log points. We carry
this study out in this section; unfortunately, this is rather dry.
An fs fibre product of log points can be quite subtle. Even 
determining whether such a fibre product is non-empty is difficult, as
a ``four-point lemma'' does not hold widely in log geometry, see
\cite[III Prop.\ 2.2.3]{Ogus} for some results. 
Here we will generally be interested in the number of connected components
of a fibre product for application to specific gluing situations.

We start with a small lemma:

\begin{lemma}
\label{lem:point saturation}
Let $W:=\Spec(Q\rightarrow\kappa)$ be a log point with $Q$ a sharp fine 
monoid. Then $W^{\mathrm{sat}}$ is a disjoint union of 
possibly non-reduced points, and
the number of connected components of $W^{\mathrm{sat}}$
is $|(Q^{\gp})_{\tors}|$.
\end{lemma}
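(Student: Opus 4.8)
The plan is to compute $W^{\mathrm{sat}}$ by unwinding the definition of saturation for a log point $W = \Spec(Q \to \kappa)$ with $Q$ sharp fine. First I would recall that saturation of a fine log scheme is a functor, and for a log point the underlying scheme only changes through base change along the saturation $Q \to Q^{\mathrm{sat}}$ of the monoid — but here one must be careful: $W^{\mathrm{sat}}$ is \emph{not} just $\Spec(Q^{\mathrm{sat}} \to \kappa)$ with the obvious structure map, because the structure map $Q \to \kappa$ (which sends all of $Q \setminus Q^\times = Q \setminus \{0\}$ to $0$, since $\kappa$ is a field and $Q$ is sharp) must be extended to $Q^{\mathrm{sat}}$, and there may be several ways to do this, or the extension may force nilpotents. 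The correct statement is that $W^{\mathrm{sat}}$ represents the functor sending a saturated log scheme $T$ to maps $T \to W$, and concretely one builds it as follows: a chart for $W$ is $Q$, and $\underline{W}^{\mathrm{sat}}$ is the fibre product $\underline{W} \times_{\Spec \ZZ[Q]} \Spec \ZZ[Q^{\mathrm{sat}}]$, i.e. $\Spec\left(\kappa \otimes_{\ZZ[Q]} \ZZ[Q^{\mathrm{sat}}]\right)$, where $Q \to \kappa$ is the ring map induced by $Q \to \kappa$ (monoid algebra to the field, sending $z^q \mapsto 0$ for $q \neq 0$, $z^0 \mapsto 1$).

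The heart of the computation is then the ring $R := \kappa \otimes_{\ZZ[Q]} \ZZ[Q^{\mathrm{sat}}] = \kappa[Q^{\mathrm{sat}}] / (z^q : q \in Q \setminus \{0\})$. I would analyze this by noting that $Q^{\mathrm{sat}} = \{ x \in Q^{\gp} : n x \in Q \text{ for some } n \geq 1\}$, so for any $x \in Q^{\mathrm{sat}}$ there is $n \geq 1$ with $n x \in Q$; if moreover $n x \neq 0$ in $Q^{\gp}$, then $z^{nx} = 0$ in $R$, so $z^x$ is nilpotent. Thus the only elements of $Q^{\mathrm{sat}}$ whose monomials survive modulo the nilradical are those $x$ with $n x = 0$ in $Q^{\gp}$ for some $n \geq 1$, i.e. exactly the torsion subgroup $(Q^{\gp})_{\tors} \subseteq Q^{\mathrm{sat}}$ (note torsion elements of $Q^{\gp}$ automatically lie in $Q^{\mathrm{sat}}$). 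Hence $R_{\red} \cong \kappa[(Q^{\gp})_{\tors}]$, the group algebra of the finite abelian group $(Q^{\gp})_{\tors}$. Since $\kappa$ is algebraically closed of characteristic $0$ — and in any case the paper works over such a field — this group algebra is, by Maschke plus the fact that $\kappa$ contains all roots of unity, isomorphic to $\kappa^{\,|(Q^{\gp})_{\tors}|}$ as a $\kappa$-algebra, so $\Spec R_{\red}$ is $|(Q^{\gp})_{\tors}|$ reduced points. Therefore $\underline{W}^{\mathrm{sat}} = \Spec R$ is a disjoint union of $|(Q^{\gp})_{\tors}|$ possibly non-reduced points, and since each connected component is local Artinian, the count of connected components is exactly $|(Q^{\gp})_{\tors}|$.

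The main obstacle I anticipate is getting the \emph{definition} of $W^{\mathrm{sat}}$ exactly right and justifying that the scheme-theoretic saturation really is computed by the monoid-algebra base change $\kappa \otimes_{\ZZ[Q]} \ZZ[Q^{\mathrm{sat}}]$ — one should cite the relevant functoriality (e.g. from \cite{Ogus} or the standard references on saturation of log schemes) rather than rederive it, and check that the resulting log structure is genuinely saturated (its ghost sheaf at each point is $Q^{\mathrm{sat}}/(\text{units}) = Q^{\mathrm{sat}}$ since $Q^{\mathrm{sat}}$ is already sharp, $Q$ being sharp and fine — indeed $(Q^{\mathrm{sat}})^\times = Q^{\mathrm{sat}} \cap (Q^{\gp})^\times$, and one checks this is trivial, or replaces $Q^{\mathrm{sat}}$ by its sharpening, which does not affect the underlying scheme). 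Everything else — the nilpotence argument and the structure of the finite group algebra — is routine. I would present the nilpotence step and the group-algebra step explicitly and keep the saturation-definition step as a short citation-backed paragraph.
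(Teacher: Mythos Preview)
Your proof is correct and follows essentially the same approach as the paper: both compute $W^{\mathrm{sat}}$ via the monoid-algebra base change $\Spec(\kappa[Q^{\mathrm{sat}}]/\langle z^q : q\in Q\setminus\{0\}\rangle)$, show that monomials $z^x$ with $x$ non-torsion are nilpotent, and identify the reduction with $\Spec\kappa[(Q^{\gp})_{\tors}]$. One small slip: $Q^{\mathrm{sat}}$ is \emph{not} sharp when $(Q^{\gp})_{\tors}\neq 0$ --- its unit group is exactly $(Q^{\gp})_{\tors}$ --- but as you note this does not affect the underlying scheme, and the paper uses this identification $(Q^{\mathrm{sat}})^\times = (Q^{\gp})_{\tors}$ in the same way.
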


\begin{proof}
By construction of the saturation, \cite[III Prop.\ 2.1.5]{Ogus},
\[
W^{\mathrm{sat}}=W\times_{\Spec\kappa[Q]} \Spec\kappa[Q^{\mathrm{sat}}],
\]
where $Q^{\mathrm{sat}}$ is the saturation of $Q$ inside $Q^{\gp}$.
Note that the morphism $W\rightarrow \Spec\kappa[Q]$
identifies $W$ with the closed point of $\Spec\kappa[Q]$ corresponding to the
maximal monomial ideal $\mathfrak{m}
=\langle z^q\,|\, q\in Q\setminus\{0\}\rangle$.

The monomial ideal $I\subseteq \kappa[Q^{\mathrm{sat}}]$ generated by the 
image of $\mathfrak{m}$
then satisfies $\sqrt{I}=\mathfrak{m}'=\langle z^q\,|\, q\in Q^{\mathrm{sat}}
\setminus (Q^{\mathrm{sat}})^{\times}\rangle$. Indeed, for any element 
$q\in Q^{\mathrm{sat}}\setminus (Q^{\mathrm{sat}})^{\times}$, there
exists a positive integer $n$ such that $nq\in Q$. Further, $nq\not=0$
since otherwise $q$ is torsion in $Q^{\mathrm{sat}}$ and hence invertible.
Thus $z^{nq}$ is a generator of $\mathfrak{m}$, so
$z^{nq}\in I$. This shows that $\mathfrak{m}'\subseteq \sqrt{I}$.
The converse holds as $I\subseteq \mathfrak{m}'$ and $\mathfrak{m}'$
is a radical ideal, as $\kappa[Q^{\mathrm{sat}}]/\mathfrak{m}'
=\kappa[(Q^{\gp})_{\tors}]$ is reduced.

Thus we see that $(W^{\mathrm{sat}})_{\red}=\Spec \kappa[Q^{\mathrm{sat}}]
/\mathfrak{m}'=\Spec \kappa[(Q^{\gp})_{\tors}]$. However, the latter
consists of $|(Q^{\gp})_{\tors}|$ points.
\end{proof}

\begin{lemma}
\label{lem:reduced product}
Let $W_1,W_2,X$ be fs log schemes with morphisms $W_1,W_2\rightarrow X$.
Then there is a canonical isomorphism
\[
(W_{1,\red}\times^{\fs}_{X_{\red}} W_{2,\red})_{\red} \cong
(W_1\times_X^{\fs} W_2)_{\red}.
\]
\end{lemma}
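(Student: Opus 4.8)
The plan is to reduce everything to a statement about underlying schemes and the combinatorics of monoids, exploiting the fact that reduction only sees the underlying scheme while the fs fibre product is governed by a pushout of monoids. Write $P_i=\overline{\cM}_{W_i}$ and $N=\overline{\cM}_X$ (working locally, at a point of the fibre product and its images; since all the schemes are log points or have trivial-enough structure after reduction, the ghost sheaves are constant). The key preliminary observation is that the formation of $(-)_{\red}$ on a log scheme does not change the log structure's ghost sheaf, only the structure sheaf: $\overline{\cM}_{W_{i,\red}}=\overline{\cM}_{W_i}|_{W_{i,\red}}$, and likewise for $X$. Hence the monoid pushout computation that produces the fs fibre product is literally the same whether we start from $W_1,W_2,X$ or from their reductions.

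First I would recall the local description of the fs fibre product (e.g.\ from \cite[III]{Ogus}): étale-locally, $W_1\times_X^{\fs}W_2$ is obtained from the ordinary fibre product $\ul{W_1}\times_{\ul X}\ul{W_2}$ of underlying schemes, base-changed along $\Spec\kappa[Q]\to \Spec\kappa[P_1\oplus_N P_2]$ where $Q=(P_1\oplus_N P_2)^{\sat}/\mathrm{torsion\ handled\ as\ in\ Lemma~\ref{lem:point saturation}}$ — more precisely one takes the fine saturated quotient of the pushout monoid. The essential point is that this recipe, applied to $W_{1,\red},W_{2,\red},X_{\red}$, uses the same monoids $P_1,P_2,N$, so it produces the scheme $(\ul{W_{1,\red}}\times_{\ul{X_{\red}}}\ul{W_{2,\red}})\times_{\Spec\kappa[\cdots]}\Spec\kappa[Q]$ with the same log structure. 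Since $\ul{W_{i,\red}}=(\ul{W_i})_{\red}$ and reduction commutes with fibre products of schemes up to nilpotents, i.e.\ $\bigl((\ul{W_1})_{\red}\times_{(\ul X)_{\red}}(\ul{W_2})_{\red}\bigr)_{\red}\cong\bigl(\ul{W_1}\times_{\ul X}\ul{W_2}\bigr)_{\red}$, and base change along a closed immersion $\Spec\kappa[Q]\hookrightarrow\Spec\kappa[\cdots]$ followed by reduction is insensitive to nilpotents in the base-change datum, both sides of the claimed isomorphism have the same reduction. I would then assemble these local isomorphisms: they are canonical (induced by the universal properties of fibre product, saturation, and reduction), hence glue to a global isomorphism of log schemes.

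The cleanest way to organize the argument may actually be purely categorical: show that for any fs log scheme $T$ with $T=T_{\red}$, a morphism $T\to W_1\times_X^{\fs}W_2$ is the same as a compatible pair of morphisms $T\to W_i$ over $X$, and that such morphisms factor uniquely through $W_{i,\red}$ and $X_{\red}$ — because a morphism from a reduced scheme to any scheme factors through the target's reduction. This shows $(W_1\times_X^{\fs}W_2)_{\red}$ and $(W_{1,\red}\times_{X_{\red}}^{\fs}W_{2,\red})_{\red}$ corepresent the same functor on reduced fs log schemes, giving the isomorphism with no local computation at all; one only needs that reduction of an fs log scheme is again an fs log scheme with the same ghost sheaf, so that the relevant $\Hom$-sets are genuinely the same.

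The main obstacle I anticipate is the bookkeeping around the log structures rather than any conceptual difficulty: one must check that $(-)_{\red}$ in the category of fs log schemes is what one expects — namely, underlying scheme reduced, ghost sheaf unchanged, structure map composed with the quotient — and that it is a right adjoint to the inclusion of reduced fs log schemes, so that it commutes appropriately with the corepresentability argument. There is also a minor subtlety that $W_i$ need not be log points in this lemma (the hypothesis is only ``fs log schemes''), so the torsion-in-$Q^{\gp}$ phenomenon of Lemma~\ref{lem:point saturation} can contribute extra components; but since we apply $(-)_{\red}$ to \emph{both} sides, these components appear identically on both sides and cause no trouble — this is exactly why the statement is phrased with reductions on the outside.
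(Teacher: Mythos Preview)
Your categorical argument (the second approach) is correct and is genuinely different from the paper's proof. The paper proceeds concretely: it first builds an explicit strict closed immersion
\[
(W_{1,\red}\times^{\fs}_{X_{\red}} W_{2,\red})_{\red} \hookrightarrow (W_1\times_X^{\fs} W_2)_{\red}
\]
by base-changing along the strict closed immersions $(W_i)_{\red}\hookrightarrow W_i$, and then verifies directly that this map is a bijection on geometric points (a strict geometric point of the right-hand side factors through the left-hand side because reduced schemes map through reductions). Your functor-of-points argument bypasses the need to construct this map by hand: once you observe that reduction of an fs log scheme is strict and that for $T$ reduced the factorizations $T\to W_i$ through $W_{i,\red}$ and $T\to X$ through $X_{\red}$ are automatic on underlying schemes and lift uniquely along strict immersions, Yoneda on reduced fs log schemes does the rest. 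This is cleaner and more conceptual; the paper's approach is more explicit and perhaps makes the geometry (closed immersion plus bijection on points) more visible, which matters since the lemma is applied later precisely to count connected components via geometric points.

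Your first approach, via the local monoid description of the fs fibre product, is sound in spirit but, as you seem to realise, would require more bookkeeping to be a complete proof---one has to track carefully what integralisation does to the underlying scheme and why this commutes with reduction. The categorical argument is the one to keep.
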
 

\begin{proof}
Considering the strict closed immersion $(W_1)_{\red}\rightarrow W_1$,
we obtain via base change a strict closed immersion 
$(W_1)_{\red}\times^{\fs}_X W_2\rightarrow W_1\times^{\fs}_X W_2$. 
Repeating with
$W_2$, we obtain a strict closed immersion 
$(W_1)_{\red}\times^{\fs}_X (W_2)_{\red} \rightarrow W_1\times^{\fs}_X W_2$. 
Since
the morphisms $(W_i)_{\red}\rightarrow X$ factor through $X_{\red}$,
the former log scheme is isomorphic to
$(W_1)_{\red}\times^{\fs}_{X_{\red}} (W_2)_{\red}$. 

Thus we obtain a strict closed immersion 
$(W_{1,\red}\times^{\fs}_{X_{\red}} W_{2,\red})_{\red} \rightarrow
(W_1\times_X^{\fs} W_2)_{\red}$, which we must now prove is an isomorphism.
We do this by showing this map induces a bijection on geometric points.
Indeed, the set of geometric points of
$W_1\times^{\fs}_X W_2$ and $(W_1\times^{\fs}_X W_2)_{\red}$ are
the same, and a strict geometric point 
point $\Spec(Q\rightarrow \kappa)\rightarrow W_1\times^{\fs}_X W_2$ clearly
induces a strict closed point 
$\Spec(Q\rightarrow\kappa)\rightarrow (W_1)_{\red}\times^{\fs}_{X_{\red}} 
(W_2)_{\red}$. This shows the claim.
\end{proof}

%
%

\begin{lemma}
\label{lem:point product}
Let $W_1, W_2, X$ be finite length connected fs log schemes over
$\Spec\kappa$ with $\kappa$ an algebraically closed field.
Write the ghost sheaf monoids as $Q_1, Q_2$ and $P$ respectively. Suppose
given morphisms $f_i:W_i\rightarrow X$ inducing $\theta_i=\bar f_i^{\flat}:
P\rightarrow Q_i$. Set
\[
\theta:=(\theta^{\gp}_1,-\theta^{\gp}_2):
P^{\gp}\rightarrow Q_1^{\gp}\oplus Q_2^{\gp}.
\]
Then
\begin{enumerate}
\item If the fs fibre product $W_1\times^{\fs}_X W_2$ is non-empty,
it has $|\coker(\theta)_{\mathrm{tors}}|$ connected components.
\item
$|\coker(\theta)_{\mathrm{tors}}|=|\coker(\theta^t)_{\mathrm{tors}}|,$
where $\theta^t:Q_1^*\oplus Q_2^*\rightarrow P^*$ denotes the transpose
map to $\theta$.
\end{enumerate}
\end{lemma}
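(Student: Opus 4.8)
The plan is to prove part (1) first, then derive part (2) as a purely lattice-theoretic statement about the torsion of a cokernel and its transpose. For part (1), I would start from the standard description of an fs fibre product of log points. Since $W_1, W_2, X$ are finite length connected log schemes over $\kappa$, each is a thickening of a strict log point, and by Lemma~\ref{lem:reduced product} it suffices to compute connected components after reduction. So I would reduce to the case where $W_1, W_2, X$ are honest log points $\Spec(Q_i\to\kappa)$, $\Spec(P\to\kappa)$. The fs fibre product in this setting is computed as follows: first form the fine fibre product, whose ghost monoid is the pushout $Q_1\oplus_P Q_2$ in the category of fine monoids (i.e.\ the image of $Q_1\oplus Q_2$ in the group $(Q_1^{\gp}\oplus Q_2^{\gp})/\theta(P^{\gp})$, which is $Q_1^{\gp}\oplus Q_2^{\gp}$ modulo the image of $\theta$ — call this group $\coker(\theta)$), and then saturate. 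By Lemma~\ref{lem:point saturation}, applied to the resulting fine (not necessarily saturated) log point, the number of connected components of the saturation is the order of the torsion subgroup of the group completion of this pushout monoid. The group completion of $Q_1\oplus_P Q_2$ is exactly $\coker(\theta)=(Q_1^{\gp}\oplus Q_2^{\gp})/\theta(P^{\gp})$, so the count is $|\coker(\theta)_{\tors}|$, as claimed. The main technical point to get right is that the underlying scheme of the fine fibre product of these log points is again a single (possibly non-reduced) point — this follows because $\underline{W_1}\times_{\underline X}\underline{W_2}$ is a finite-length local scheme and adding log structure / passing to the fine fibre product does not change the underlying scheme beyond that — so that Lemma~\ref{lem:point saturation} genuinely applies and accounts for all of the component-splitting.

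For part (2), the statement is entirely about finitely generated abelian groups: given $\theta\colon P^{\gp}\to Q_1^{\gp}\oplus Q_2^{\gp}$, one must show $|\coker(\theta)_{\tors}|=|\coker(\theta^t)_{\tors}|$, where $\theta^t\colon Q_1^*\oplus Q_2^*\to P^*$ is the dual map. I would prove this by first reducing to the case of maps of free $\ZZ$-modules of finite rank: replace each group by its torsion-free quotient where appropriate, or more cleanly, note that $\theta^t$ factors through $(Q_1^{\gp}\oplus Q_2^{\gp})^* = (Q_1^{\gp}\oplus Q_2^{\gp})_{\mathrm{free}}^*$ and only sees the torsion-free quotient of the source, so we may assume $P^{\gp}, Q_1^{\gp}, Q_2^{\gp}$ are free. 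Then, choosing bases, $\theta$ is represented by an integer matrix $M$, and Smith normal form gives $M = U D V$ with $U, V$ invertible over $\ZZ$ and $D$ diagonal with entries $d_1,\dots,d_k$ (and possibly some zero rows/columns). One computes directly that $\coker(M)_{\tors}\cong\bigoplus_i \ZZ/d_i\ZZ$, so $|\coker(M)_{\tors}|=\prod_i |d_i|$; and since $M^t = V^t D^t U^t$ is also in (transposed) Smith form with the same nonzero elementary divisors, $|\coker(M^t)_{\tors}|=\prod_i|d_i|$ as well. Hence the two torsion orders agree.

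The step I expect to be the real obstacle is \emph{not} the lattice computation in part (2), which is routine, but rather pinning down precisely that the fine fibre product of the reduced log points has a single underlying point and that Lemma~\ref{lem:point saturation} is applicable to it — in particular verifying that the fine pushout monoid is sharp (or identifying exactly its group of units so that the torsion count comes out right) and that the structure map to $\kappa$ realizes it as a log point in the sense of that lemma. A subtlety here is that the fine pushout $Q_1\oplus_P Q_2$ need not be sharp even when $Q_1, Q_2, P$ are: its units are exactly the image of $(Q_1\oplus Q_2)$ that lands in the torsion of $\coker\theta$ together with... — so one should be careful to apply Lemma~\ref{lem:point saturation} to the sharpened monoid and check that sharpening does not change the relevant torsion. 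Once that bookkeeping is done, both parts follow cleanly, and the hypothesis that $W_1\times^{\fs}_X W_2$ is non-empty is exactly what guarantees the fine fibre product (hence its saturation) is non-empty so that the component count makes sense.
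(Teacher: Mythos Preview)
Your argument for part~(1) is essentially the paper's: reduce to honest log points via Lemma~\ref{lem:reduced product}, form the fine pushout $Q=Q_1\oplus^{\mathrm{fine}}_P Q_2$ with $Q^{\gp}=\coker\theta$, and apply Lemma~\ref{lem:point saturation} to the saturation. The paper handles your concern about the underlying scheme exactly as you anticipate: integralization passes to a closed subscheme of $\Spec\kappa$, hence is either $\Spec\kappa$ or empty, and the non-emptiness hypothesis rules out the latter. Your sharpness worry is legitimate, but the paper does not address it either---it simply writes $W^{\mathrm{int}}=\Spec(Q\to\kappa)$ and invokes Lemma~\ref{lem:point saturation} directly---so you are already at the paper's level of care on this point.

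For part~(2) you take a genuinely different route. You pass to free quotients, choose bases, and use Smith normal form to see that $M$ and $M^t$ share elementary divisors, hence $|\coker(M)_{\tors}|=|\coker(M^t)_{\tors}|$. The paper instead argues homologically: the sequence $0\to\ker\theta\to P^{\gp}\to Q_1^{\gp}\oplus Q_2^{\gp}\to\coker\theta\to 0$ is a free resolution, so $(\coker\theta)_{\tors}\cong\ext^1(\coker\theta,\ZZ)$, which is computed as the middle cohomology of the dualized complex $Q_1^*\oplus Q_2^*\to P^*\to(\ker\theta)^*$; since the kernel of $P^*\to(\ker\theta)^*$ is saturated in $P^*$, this is exactly $(\coker\theta^t)_{\tors}$. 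Your Smith normal form argument is more elementary and entirely self-contained; the paper's $\ext^1$ argument is basis-free and slightly more conceptual. Both are short and correct.
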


\begin{proof}
(1) By Lemma \ref{lem:reduced product}, we may assume $W_i$ and $X$ are (reduced) log points.
According to the construction of the fs fibre product
in \cite[III, \S2.1]{Ogus}, we proceed
in a couple of steps. Let $W, W^{\mathrm{int}}$ and $W^{\mathrm{fs}}$ denote
the fibre product $W_1\times_X W_2$ in the category of log schemes, 
fine log schemes, and fs log schemes. Then $W^{\mathrm{int}}$ is the
integralization of $W$ and $W^{\mathrm{fs}}$ is the saturation of
$W^{\mathrm{int}}$.

First, $\ul{W}$ agrees with the fibre product $\ul{W}_1\times_{\ul{X}}\ul{W}_2
=\Spec\kappa$. Integralization
involves passing to a closed subscheme of $\Spec\kappa$.
Thus either $\ul{W}^{\mathrm{int}}=\Spec\kappa$ or is the empty scheme.
We rule out the latter as we have assumed that the fs fibre product
is non-empty. In this case, 
$W^{\mathrm{int}}=\Spec (Q\rightarrow \kappa)$,
where
\[
Q=Q_1\oplus^{\mathrm{fine}}_P Q_2
\]
and $\oplus^{\mathrm{fine}}$ denotes push-out in the category of fine
monoids. This is constructed (see \cite[I, Prop.\ 1.3.4]{Ogus} and
its proof)
as the fine submonoid of
\[
\coker\theta=Q_1^{\gp}\oplus_{P^\gp} Q_2^{\gp}=Q^{\gp}
\]
generated by the 
images of $Q_1$ and $Q_2$.

As $W^{\mathrm{fs}}$ is the saturation of $W^{\mathrm{int}}$,
(1) now follows from Lemma \ref{lem:point saturation}.

(2) is easy homological algebra:
$0\rightarrow \ker\theta \rightarrow P^{\gp}\rightarrow Q_1^{\gp}
\oplus Q_2^{\gp}\rightarrow \coker(\theta)\rightarrow 0$
is a free resolution of $\coker(\theta)$, and $\ext^1(\coker(\theta),
\ZZ)$ is isomorphic to the torsion part of $\coker(\theta)$. However, this
$\ext$ group is calculated as the middle cohomology of the complex
$Q_1^*\oplus Q_2^* \rightarrow P^*\rightarrow \ker(\theta)^*$,
and as the kernel of $P^*\rightarrow \ker(\theta)^*$ is a saturated
sublattice of $P^*$, the torsion part of $\coker(\theta^t)$ agrees
with $\ext^1(\coker(\theta),\ZZ)$.
\end{proof}

The following is a somewhat technically complicated criterion
for non-emptiness for $W_1\times^{\fs}_X W_2$, which is tailored for
our gluing needs.

\begin{lemma}
\label{lem:point product nonempty}
Let $W_i, X$ be as in Lemma \ref{lem:point product}, and suppose
also given log points $W_i', X'$ over $\Spec\kappa$ similarly with
maps $f_i':W_i'\rightarrow X'$. Write $Q_i', P'$ for the corresponding
monoids, $\theta_i':P'\rightarrow Q_i'$ for the induced maps.
Suppose further given a commutative
diagram
\begin{equation}
\label{eq:W1W2X}
\xymatrix@C=30pt
{
W_1\ar[d]_{g_1}\ar[r]^{f_1}&X\ar[d]_{g}&W_2\ar[l]_{f_2}\ar[d]^{g_2}\\
W_1'\ar[r]_{f_1'}&X'&W_2'\ar[l]^{f_2'}
}
\end{equation}
Suppose that:
\begin{enumerate}
\item $W_1'\times^{\fs}_{X'} W_2'$ is non-empty.
\item 
The projections
$Q_1^\vee\times_{P^{\vee}} Q_2^{\vee}\rightarrow Q_i^{\vee}$ have image
intersecting the interior of $Q_i^{\vee}$.
\item The maps $\bar g^{\flat}:P'\rightarrow P$ and
$\bar g^{\flat}_i:Q_i'\rightarrow Q_i$ are all injective,
and the map induced by $\theta$,
\begin{equation}
\label{eq:theta induced}
P^{\gp}/\bar g^{\flat}((P')^{\gp})
\rightarrow 
Q_1^{\gp}/\bar g_1^{\flat}((Q_1')^{\gp})
\oplus
Q_2^{\gp}/\bar g_2^{\flat}((Q_2')^{\gp})
\end{equation}
is injective.
\end{enumerate}
Then $W_1\times^{\fs}_X W_2$ is non-empty.
\end{lemma}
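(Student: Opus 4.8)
The plan is to reduce non-emptiness of $W_1\times^{\fs}_X W_2$ to a combinatorial statement about the monoid pushout and then verify it using the hypotheses. By Lemma~\ref{lem:reduced product} we may assume all four log schemes appearing in \eqref{eq:W1W2X} are log points, and by the analysis in the proof of Lemma~\ref{lem:point product}, the fs fibre product $W_1\times^{\fs}_X W_2$ is non-empty if and only if the fine monoid $Q:=Q_1\oplus^{\fine}_P Q_2$ is non-zero as a log structure, i.e.\ if and only if the underlying scheme of the integralization is $\Spec\kappa$ rather than $\emptyset$. The integralization is the submonoid of $\coker\theta=Q_1^{\gp}\oplus_{P^{\gp}}Q_2^{\gp}$ generated by the images of $Q_1$ and $Q_2$; the fibre product is non-empty precisely when the maximal monomial ideal $\langle z^q\mid q\in Q\setminus 0\rangle$ is proper, equivalently when $Q$ has no torsion unit forcing a relation $1=0$ — more precisely, the integralization is empty exactly when the image of some $q\in Q_i\setminus\{0\}$ becomes zero in $\coker\theta$, i.e.\ when the composite $Q_i\to\coker\theta$ fails to be injective on the relevant part. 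So the task is: show that under (1)--(3), no nonzero element of $Q_1$ or $Q_2$ maps to $0$ in $\coker\theta$, and more generally that $Q$ is a legitimate (nonzero sharp-after-further-quotient) fine monoid.

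First I would set $\ol\theta\colon P^{\gp}/\bar g^{\flat}((P')^{\gp})\to Q_1^{\gp}/\bar g_1^{\flat}((Q_1')^{\gp})\oplus Q_2^{\gp}/\bar g_2^{\flat}((Q_2')^{\gp})$, the map in \eqref{eq:theta induced}, and exploit that $g,g_1,g_2$ give a morphism of the whole fibre-product datum over $(W_1',X',W_2')$ to the one over $(W_1,X,W_2)$. Hypothesis (1) gives a geometric point of $W_1'\times^{\fs}_{X'}W_2'$, i.e.\ a sharp fs monoid $Q'$ together with maps $Q_i'\to Q'$ coequalizing $\theta_i'$; dually this is an element of the interior of $(Q_1')^{\vee}\times_{(P')^{\vee}}(Q_2')^{\vee}$. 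I want to promote this to a point of $W_1\times^{\fs}_X W_2$, i.e.\ an element of the interior of $Q_1^{\vee}\times_{P^{\vee}}Q_2^{\vee}$. Hypothesis (2) supplies elements $(\phi_1,\phi_2)\in Q_1^{\vee}\times_{P^{\vee}}Q_2^{\vee}$ with $\phi_i$ in the interior of $Q_i^{\vee}$; the difficulty is that $\phi_i$ need not be compatible with the chosen point of the primed fibre product, and conversely pulling the primed point back along $\bar g_i^{\flat}$ lands in $Q_i^{\vee}$ but generally not in its interior. The resolution is to take a suitable $\mathbb{Q}_{\ge 0}$- (then $\ZZ_{\ge 0}$-clearing-denominators) combination $\psi_i=\phi_i+N\cdot(\text{pullback of the primed point})$ for $N\gg 0$: the interior condition (2) is open and a cone, so adding anything in $Q_i^{\vee}$ keeps $\psi_i$ in the interior; and the compatibility over $P$ needs to be arranged using injectivity from (3).

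The heart of the argument — and the step I expect to be the main obstacle — is checking that the pair $(\psi_1,\psi_2)$ can be chosen simultaneously in $Q_1^{\vee}\times_{P^{\vee}}Q_2^{\vee}$ and in the interiors. Equivalently, after dualizing, I must show $\coker\theta$ has no "unexpected" torsion or positivity obstruction beyond what is already present in $\coker\theta'$. Here is where (3) does the work: injectivity of $\bar g^{\flat}$ and $\bar g_i^{\flat}$ together with injectivity of $\ol\theta$ yields a short exact sequence relating $\coker\theta'$, $\coker\theta$, and $\coker\ol\theta$ — concretely, from the snake lemma applied to the three injections $(P')^{\gp}\hookrightarrow P^{\gp}$, $(Q_i')^{\gp}\hookrightarrow Q_i^{\gp}$ one gets $0\to\coker\theta'\to\coker\theta\to\coker\ol\theta\to 0$, using that $\ol\theta$ injective forces the connecting map to vanish. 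Then a point of the primed fibre product (from (1)) lifts because $\coker\theta\to\coker\ol\theta$ is surjective with kernel $\coker\theta'$, and the interior/positivity can be corrected using (2). I would then feed the resulting interior element of $Q_1^{\vee}\times_{P^{\vee}}Q_2^{\vee}$ back through the construction in Lemma~\ref{lem:point product}: such an element is exactly a $\kappa$-point of $\Spec\kappa[Q^{\fs}]$ in the correct stratum, hence witnesses $W_1\times^{\fs}_X W_2\ne\emptyset$. The technical care will be entirely in the interplay between "lies in the submonoid $Q_i\subseteq Q_i^{\gp}$" and "lies in the interior of the dual cone," i.e.\ in making the $N\gg 0$ argument precise while respecting the fibre-product constraint over $P$; everything else is the homological algebra of (3) and bookkeeping.
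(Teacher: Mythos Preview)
There is a genuine gap. Your reduction in the first paragraph is wrong: non-emptiness of $W_1\times^{\fs}_X W_2$ is \emph{not} a purely monoid-theoretic statement. Recall that a morphism of log points $\Spec(Q\to\kappa)\to\Spec(P\to\kappa)$ is specified by two pieces of data: a local homomorphism $\bar f^\flat:P\to Q$ \emph{and} a character $\chi_f:P\to\kappa^\times$, via $f^\flat(p,t)=(\bar f^\flat(p),\chi_f(p)t)$. To produce a point of the fs fibre product, you need a log point $T$ with maps $h_i:T\to W_i$ satisfying $f_1\circ h_1=f_2\circ h_2$; this equality unpacks into a monoid condition (which your interior element of $Q_1^\vee\times_{P^\vee}Q_2^\vee$ handles) \emph{together with} the character condition
\[
(\chi_{h_1}\circ\theta_1)\cdot\chi_{f_1}=(\chi_{h_2}\circ\theta_2)\cdot\chi_{f_2}
\quad\text{in }\Hom(P,\kappa^\times).
\]
Whether this has a solution $(\chi_{h_1},\chi_{h_2})$ depends on $\chi_{f_1}/\chi_{f_2}$, which is invisible at the ghost-sheaf level. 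Concretely, under condition (2) the integralization is empty precisely when $\chi_{f_1}|_{\ker\theta}\neq\chi_{f_2}|_{\ker\theta}$ --- a condition that your criterion ``some $q\in Q_i\setminus\{0\}$ maps to $0$ in $\coker\theta$'' cannot detect. Your snake-lemma analysis of $\coker\theta',\coker\theta,\coker\ol\theta$ lives entirely on the monoid side and says nothing about this obstruction.

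The paper's proof is organised around exactly this issue. Condition (2) supplies the monoid data (interior $q_i$'s), but the substance of the argument is the characters: condition (1) is used not to extract a point of $(Q_1')^\vee\times_{(P')^\vee}(Q_2')^\vee$ as you say, but to obtain characters $\psi_i':Q_i'\to\kappa^\times$ satisfying the primed compatibility $(\psi_1'\circ\theta_1')\cdot\chi_{f_1'}=(\psi_2'\circ\theta_2')\cdot\chi_{f_2'}$. These are then lifted along the injections $\bar g_i^\flat$ of (3) to $\psi_i:Q_i^{\gp}\to\kappa^\times$ using divisibility of $\kappa^\times$; a short computation with the commutativity of \eqref{eq:W1W2X} shows the resulting defect $(\psi_1\circ\theta_1)\cdot\chi_{f_1}\cdot\big((\psi_2\circ\theta_2)\cdot\chi_{f_2}\big)^{-1}$ vanishes on $\bar g^\flat((P')^{\gp})$, and the injectivity of \eqref{eq:theta induced} (again with divisibility) lets one correct $\psi_i$ by factors $\varphi_i$ trivial on $\bar g_i^\flat((Q_i')^{\gp})$ to kill the defect entirely. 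Your proposal never engages with any of this, so as written it does not prove the lemma.
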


\begin{proof}
Given the hypotheses,
we will construct a morphism
$\Spec \kappa^{\dagger}\rightarrow W_1\times^{\fs}_X W_2$ from the standard
log point. 
Recall that giving a morphism of log points
$g:\Spec (Q\rightarrow \kappa) \rightarrow \Spec (P\rightarrow \kappa)$
is equivalent to giving $g^{\flat}:P\times \kappa^{\times}
\rightarrow Q\times \kappa^{\times}$ written as
\[
g^{\flat}(p,t)=(\bar g^{\flat}(p), \chi_g(p)\cdot t)
\]
for $\bar g^{\flat}:P\rightarrow Q$ a local homomorphism and 
$\chi_g:P\rightarrow \kappa^{\times}$ an arbitrary homomorphism.
Note also that $\theta:P\rightarrow Q$ being a local homomorphism can be 
characterized dually by the statement that $\theta^t(Q^{\vee})$ intersects
the interior of $P^{\vee}$.

First, let $Q=(Q_1\oplus^{\mathrm{fs}}_{P} Q_2)/\mathrm{tors}$.
Note that $Q$ would be the stalk of the ghost sheaf of any point of
$W_1\times^{\fs}_X W_2$ if this log scheme were non-empty. Then
$Q^{\vee}=Q_1^{\vee}\times_{P^{\vee}} Q_2^{\vee}$ 
(\cite[Proposition 6.3.5]{ACGSI}), 
with a similar expression for $(Q')^{\vee}$. 

Choose an
element $q\in \Int(Q^{\vee})$. Then necessarily the image $q_i$ of $q$ 
in $Q_i^{\vee}$ lies in $\Int(Q_i^{\vee})$ by condition (2). Further,
$q_1$ and $q_2$ have the same image in $P^{\vee}$.
Let $q_i'=(\bar g_i^{\flat})^t(q_i)$. Necessarily $q_i'\in \Int((Q_i')^{\vee})$
as $\bar g_i^{\flat}:Q_i'\rightarrow Q_i$ is a local homomorphism.
Further, $q_1'$ and $q_2'$ have the same image in $(P')^{\vee}$ 
because of commutativity of \eqref{eq:W1W2X}.
This gives an element $q':=
(q_1',q_2')$ of $(Q')^{\vee}=(Q_1')^{\vee}\times_{(P')^{\vee}} (Q_2')^{\vee}$.
In particular $q'$ lies in the interior of $(Q')^{\vee}$. 

Hence we obtain a commutative diagram
\[
\xymatrix@C=30pt
{
Q\ar[r]^q& \NN\\
Q'\ar[ru]_{q'}\ar[u]&
}
\]
with $q,q'$ local homomorphisms. Note that $Q'$ is the stalk of the 
ghost sheaf
at any point of $W_1'\times^{\fs}_{X'} W_2'$. By condition (1) this latter log
scheme is non-empty, and so there is a morphism
$\Spec\kappa^{\dagger}\rightarrow W_1'
\times^{\fs}_{X'} W_2'$ which induces the map $q':Q'\rightarrow \NN$
on stalks of ghost sheaves. Indeed,
it is sufficient to construct a morphism $\Spec\kappa^{\dagger}
\rightarrow \Spec(Q'\rightarrow\kappa)$, which is equivalent to giving
a local homomorphism $Q'\rightarrow\NN$ and a homomorphism
$Q'\rightarrow \kappa^{\times}$. We take the local homomorphism $Q'\rightarrow
\NN$ to be given by $q'$.

The chosen morphism can also be viewed as arising in a commutative diagram
\begin{equation}
\label{eq:standard point diagram}
\xymatrix@C=30pt
{
\Spec\kappa^{\dagger}\ar[r]\ar[d]&W'_2\ar[d]\\
W'_1\ar[r]&X'
}
\end{equation}
At the level of ghost sheaves, this diagram is given by
\[
\xymatrix@C=30pt
{
\NN& Q_2'\ar[l]_{q_2'}\\
Q_1'\ar[u]^{q_1'}&P'\ar[u]_{\theta_2'}\ar[l]^{\theta_1'}
}
\]
and the morphisms $\Spec\kappa^{\dagger}\rightarrow W'_i$ are then
determined by additional data of maps $\psi_i':Q_i'\rightarrow\kappa^{\times}$.
Commutativity of
\eqref{eq:standard point diagram} then comes down to the equality
\begin{equation}
\label{eq:psi12 commutativity}
(\psi_1'\circ \theta_1')\cdot \chi_{f_1'}
=
(\psi_2'\circ \theta_2')\cdot \chi_{f_2'}
\end{equation}
in $\Hom(P',\kappa^{\times})$.

We now wish to construct an analogous commutative diagram
\begin{equation}
\label{eq:standard point diagram2}
\xymatrix@C=30pt
{
\Spec\kappa^{\dagger}\ar[r]\ar[d]&W_2\ar[d]\\
W_1\ar[r]&X
}
\end{equation}
given at the level of ghost sheaves by the commutative diagram
\[
\xymatrix@C=30pt
{
\NN& Q_2\ar[l]_{q_2}\\
Q_1\ar[u]^{q_1}&P\ar[u]_{\theta_2}\ar[l]^{\theta_1}
}
\]
As all homomorphisms in this diagram are local, it is enough
to construct analogously $\psi_i:Q_i\rightarrow\kappa^{\times}$
such that
\begin{equation}
\label{eq:psi12 nonprimed commutativity}
(\psi_1\circ \theta_1)\cdot \chi_{f_1}
=
(\psi_2\circ \theta_2)\cdot \chi_{f_2}.
\end{equation}

By commutativity of \eqref{eq:W1W2X}, we have
\begin{equation}
\label{eq:chi commutativity}
\chi_g\cdot (\chi_{f_i}\circ \bar g^{\flat})=
\chi_{f_i'}\cdot (\chi_{g_i}\circ \theta_i')
\end{equation}
in $\Hom(P',\kappa^{\times})$. 

Using the assumed injectivity of $\bar g_i^{\flat}$ of condition (3),
we choose a lift $\psi_i:Q_i^{\gp}\rightarrow \kappa^{\times}$ of
$\chi_{g_i}^{-1} \cdot \psi_i':\bar g_i^{\flat}
((Q_i')^{\gp})
\rightarrow \kappa^{\times}$.
As $\kappa$ is algebraically closed, this can always be done even if
$\bar g_i^{\flat}((Q_i')^{\gp})$ is not saturated in $Q_i^{\gp}$.
Then for $p\in P' $, we have, with the third line by the
definition of $\psi_i$ and \eqref{eq:chi commutativity},
\begin{align*}
\big((\psi_i\circ \theta_i)\cdot\chi_{f_i}\big)(\bar g^{\flat}(p))
= {} &
\psi_i(\theta_i(\bar g^{\flat}(p))) \cdot \chi_{f_i}(\bar g^{\flat}(p))\\
= {} & 
\psi_i(\bar g_i^{\flat}(\theta_i'(p))) \cdot 
\chi_{f_i}(\bar g^{\flat}(p))\\
= {} & \left[ \psi_i'(\theta_i'(p))\cdot \chi_{g_i}(\theta_i'(p))^{-1}\right]
\cdot \left[\chi_g(p)^{-1}\cdot \chi_{f_i'}(p)\cdot \chi_{g_i}(\theta_i'(p))
\right]\\
= {} & \left[\psi_i'(\theta'_i(p)) \cdot\chi_{f_i'}(p)\right] \cdot
\chi_g(p)^{-1}. 
\end{align*}
By \eqref{eq:psi12 commutativity}, this is independent of $i$.

Now consider 
\[
\left[(\psi_1\circ \theta_1)\cdot\chi_{f_1}\right]
\cdot \left[(\psi_2\circ\theta_2)\cdot \chi_{f_2}\right]^{-1}
\in \Hom(P^{\gp},\kappa^\times),
\]
and note this homomorphism is the identity on $\bar g^{\flat}((P')^{\gp})$
by the previous paragraph. Thus it induces an element of
$\Hom(P^{\gp}/\bar g^{\flat}((P')^{\gp}),\kappa^{\times})$. 
As $\kappa$ is algebraically closed, $\kappa^{\times}$ is a divisible group
and hence by the injectivity of \eqref{eq:theta induced}, we obtain
a surjective map
\[
\Hom(Q_1^{\gp}/\bar g_1^{\flat}((Q_1')^{\gp}),\kappa^{\times})
\times
\Hom(Q_2^{\gp}/\bar g_2^{\flat}((Q_2')^{\gp}), \kappa^{\times})
\rightarrow
\Hom(P^{\gp}/\bar g^{\flat}((P')^{\gp}),\kappa^{\times}).
\]
Thus we may find $\varphi_i\in \Hom(Q_i^{\gp}/\bar g_i^{\flat}((Q_i')^{\gp}),
\kappa^{\times})$ such that if we replace $\psi_i$ with $\varphi_i
\cdot\psi_i$, \eqref{eq:psi12 nonprimed commutativity} holds.
\end{proof}

\section{Gluing one curve}
\label{sec:gluing one curve}
We give a first application of the material of the previous subsection.
We consider our standard gluing situation as in Notation 
\ref{not:standard gluing}.

Assume given basic punctured maps $f_i:C_i^\circ/W_i
\rightarrow X$ of type $\btau_i$ for $1\le i \le r$, with the
$W_i=\Spec(Q_i\rightarrow\kappa)$ being logarithmic points, $Q_i$
the basic monoid associated to $f_i$, and $\kappa$ algebraically closed. 
Suppose further that whenever
$E\in \mathbf{E}$ with vertices $v_1,v_2$, with corresponding
punctured points $p_{E,v_i} \in C_{i(v_i)}$,
we have
\begin{equation}
\label{eq:matching}
f_{i(v_1)}(p_{E,v_1})=f_{i(v_2)}(p_{E,v_2}),
\end{equation}
i.e., the maps $f_i$ will glue schematically. We may then ask how many
gluings exist at the logarithmic level. More precisely, we would like
to understand the scheme $W$ defined as follows:

\begin{definition} 
The \emph{gluing} $f:C^{\circ}/W\rightarrow X$ of  the punctured maps $f_i$
is defined by
\[
W := \scrM'(X,\btau)\times_{\prod_{i=1}^r \scrM'(X,\btau_i)} \prod_{i=1}^r
W_i,
\]
and $f:C^{\circ}/W\rightarrow X$ the pull-back of the universal map
over $\scrM'(X,\btau)$ to $W$.
\end{definition}

In this situation, we introduce the following notation. 
For any punctured point $p_{E,v}$ of 
$C^{\circ}_i$ indexed by a flag $v\in E\in \mathbf{E}$, let $P_{E,v}$ be the 
stalk of 
$\overline\shM_X$ at $f_{i(v)}(p_{E,v})$. 
By \eqref{eq:matching}, we have $P_{E,v_1}=P_{E,v_2}$ if $v_1,v_2$ are the
vertices of $E$, and write both as $P_E$. For any irreducible component
of $C_i$ with generic point $\eta$ corresponding to a vertex $v$ of $G_i$, write
$P_v$ for the stalk of the ghost sheaf at $f_i(\eta)$.

For each $i$, we have a family of tropical maps 
$h_i:\Gamma(G_i,\ell_i)\rightarrow
\Sigma(X)$ defined over $\tau_i$. If $\omega_v\in \Gamma(G_i,\ell_i)$
is the cone corresponding to a vertex $v\in V(G_i)$, then
we obtain by restriction a map
\begin{equation}
\label{eq:eval def}
\ev_v:\omega_{v}\rightarrow \Sigma(X)
\end{equation}
mapping into the cone $P^{\vee}_{v,\RR}\in \Sigma(X)$.
Explicitly this is defined as the transpose of 
\[
\xymatrix@C=15pt
{
P_v \ar[r]^{\bar f_i^{\flat}}\ar[r] & Q_i,
}
\]
where here $Q_i$ is identified with $\overline\shM_{C_i,\bar\eta}$.
Hence at the level of groups we also obtain a map
\[
\ev_{v}:Q_i^*\rightarrow P_v^*.
\]

\begin{definition}
\label{def:tropical gluing map}
With the notation as above, choose an orientation on each edge $E\in E(G)$
so that $E$ has vertices $v_E, v_E'$ and is oriented from $v_E$ to $v'_E$. 
We define the \emph{tropical gluing map}
\[
\Psi:\prod_{i=1}^r
Q_i^*\times \prod_{E\in \mathbf{E}}\ZZ\rightarrow \prod_{E\in \mathbf{E}} 
P_E^*
\]
by
\[
\Psi\big((q_1,\ldots,q_r), (\ell_E)_{E\in \mathbf{E}}\big)
=\big(\ev_{v_E}(q_{i(v_E)})+\ell_E \mathbf{u}(E) - 
\ev_{v'_E}(q_{i(v_E')})\big)_{E\in \mathbf{E}}.
\]
We define the \emph{tropical multiplicity} of the gluing situation
to be 
\[
\mu=\mu(\tau,\mathbf{E}):= |(\coker\Psi)_{\tors}|.
\]
\end{definition}

\begin{remark}
\label{rem:tropical gluing map}

The map $\Psi$ is called the tropical gluing map for the following reason.
Suppose given $s=\big((q_i), (\ell_E)_{E\in E(G)}\big)
\in \ker\Psi$ such that $q_i\in Q_i^{\vee}$ for each $v$ and
$\ell_E>0$ for each $E$. 
Then we may construct a tropical map
$h_s:G\rightarrow\Sigma(X)$ as follows. First, for each $i$,
let $G'_i$ be the subgraph of $G_i$ obtained by removing legs of
the form $(E,v)$ for $E\in \mathbf{E}$. Then $G_i'$ is naturally
identified with a subgraph of $G$, and we may define $h_s|_{G_i'}$
to agree with $(h_i)_{q_i}|_{G_i'}$. On the other hand, if we give
each $E\in \mathbf{E}$ the length $\ell_E$, then $s\in \ker\Psi$
guarantees we can extend $h_s$ across all edges $E\in \mathbf{E}$.

Thus it is reasonable to think of $\ker\Psi$ as the integral tangent space
to the family of glued tropical curves.
\end{remark}

\begin{theorem}
\label{thm:gluing count}
Suppose we are in the above situation.
If the gluing $W$ is non-empty, then $W$ has 
$\mu(\tau,\mathbf{E})$ connected components.
\end{theorem}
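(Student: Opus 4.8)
The plan is to fix the punctured maps $f_i$, exploit the fact that fixing them rigidifies every underlying-scheme datum of a gluing, and thereby reduce the statement to a count of connected components of an fs fibre product of log \emph{points}; that count is then delivered by Lemma \ref{lem:point product}, the genuine work being to recognise the lattice map produced by the gluing fibre square of Theorem \ref{Thm: Gluing theorem} as a transpose of the tropical gluing map $\Psi$ of Definition \ref{def:tropical gluing map}.

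\emph{Step 1: reduction to a fibre product of log points.} Since the number of connected components depends only on $W_{\red}$, I would pass to reductions throughout; then the tilde-enhancements are invisible by Proposition \ref{prop:reduced stacks} (and its evident analogue for stable maps), and by Proposition \ref{prop:relative versus absolute} (each $G_i$ is connected) one may work over $\Spec\kappa$. Unwinding the definition of $W$ via the cartesian square of Theorem \ref{Thm: Gluing theorem} pulled back along the classifying maps of the $f_i$ presents $W_{\red}$ as the reduction of an fs fibre product $\widetilde{W}\times^{\fs}_{\prod_{E\in\mathbf{E}}X\times X}\prod_{E\in\mathbf{E}}X$, where $\widetilde{W}=\prod_{i=1}^r\widetilde{W}_i$ with $\widetilde{W}_i$ equal to $W_i$ with the log structure of its split-edge punctures adjoined, and the two maps are $\ev_\bL$ and the product of diagonals. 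The schematic matching hypothesis \eqref{eq:matching} forces the underlying scheme of this fibre product to be concentrated at the single $\kappa$-point with $E$-th coordinate $x_E:=f_{i(v_E)}(p_{E,v_E})=f_{i(v_E')}(p_{E,v_E'})$, so, fs fibre products being compatible with strict base change, I may replace $\prod_E X$ and $\prod_E X\times X$ by their log points at $(x_E)_E$, namely $B':=\Spec(\bigoplus_E P_E\to\kappa)$ and $C':=\Spec(\bigoplus_E(P_E\oplus P_E)\to\kappa)$. Using Lemma \ref{lem:reduced product} to discard non-reducedness, I am left with the fs fibre product $W':=\widetilde{W}\times^{\fs}_{C'}B'$ of log points, $\widetilde{W}\to C'$ being $\ev_\bL$ and $B'\to C'$ the diagonal $(a,b)\mapsto a+b$ on each $P_E$.

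\emph{Step 2: the four-point lemma.} Lemma \ref{lem:point product}(1) gives that $W'$, being non-empty, has $|(\coker\theta)_{\tors}|$ connected components, where $\theta=(\theta_1^{\gp},-\theta_2^{\gp})\colon\bigoplus_E(P_E^{\gp}\oplus P_E^{\gp})\to\widetilde{Q}^{\gp}\oplus\bigoplus_E P_E^{\gp}$, with $\widetilde{Q}^{\gp}=\bigoplus_i\widetilde{Q}_i^{\gp}$ the groupified ghost sheaf of $\widetilde{W}$, $\theta_1=\ev_\bL^{\flat}$, and $\theta_2$ the addition map of the diagonal. Using $\theta_2$ to eliminate, for each edge, one of its two $P_E^{\gp}$-summands, one identifies $\coker\theta\cong\coker\Phi$ as abelian groups, where $\Phi\colon\bigoplus_{E\in\mathbf{E}}P_E^{\gp}\to\widetilde{Q}^{\gp}$ sends the $E$-component $a_E$ to $\bar f^{\flat}(a_E)$ evaluated at $p_{E,v_E}$ minus $\bar f^{\flat}(a_E)$ evaluated at $p_{E,v_E'}$. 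The homological computation in the proof of Lemma \ref{lem:point product}(2) applies verbatim to $\Phi$ and yields $|(\coker\theta)_{\tors}|=|(\coker\Phi^t)_{\tors}|$ with $\Phi^t\colon\widetilde{Q}^{*}\to\bigoplus_E P_E^{*}$.

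\emph{Step 3: identification with $\Psi$, and the main obstacle.} This is the technical heart. Tracing the tilde-construction, $\widetilde{Q}_i^{\gp}=Q_i^{\gp}\oplus\bigoplus_{L\in\mathbf{L}_i}\ZZ$, the $\ZZ$ attached to $L$ being the puncture direction, so $\bar f^{\flat}$ at $p_{E,v}$ decomposes as $(\alpha_{E,v},\beta_{E,v})\colon P_E^{\gp}\to Q_{i(v)}^{\gp}\oplus\ZZ$. Functoriality of tropicalisation under the generisation $\overline\shM_{C^{\circ},p_{E,v}}\to\overline\shM_{C,\eta_v}$ shows $\alpha_{E,v}$ factors as $P_E^{\gp}\twoheadrightarrow P_v^{\gp}\xrightarrow{\bar f_{i(v)}^{\flat}}Q_{i(v)}^{\gp}$, so $\alpha_{E,v}^t$ is $\ev_v$ of \eqref{eq:eval def} followed by $P_v^{*}\hookrightarrow P_E^{*}$; while $\beta_{E,v}$, read in $P_E^{*}$, is the contact order of the leg $(E,v)$, i.e.\ $\beta_{E,v_E}=\bu(E)$ and $\beta_{E,v_E'}=-\bu(E)$ for the chosen orientation (compare Remark \ref{rem:tropical gluing map}). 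Substituting into $\Phi^t$ gives, for $(q_i)_i\in\bigoplus_i Q_i^{*}$ and $(m_L)_{L\in\mathbf{L}}\in\bigoplus_L\ZZ$,
\[
\Phi^t\big((q_i)_i,(m_L)_L\big)_E=\ev_{v_E}(q_{i(v_E)})-\ev_{v_E'}(q_{i(v_E')})+\big(m_{(E,v_E)}+m_{(E,v_E')}\big)\,\bu(E),
\]
which is exactly $\Psi\big((q_i)_i,(\ell_E)_E\big)_E$ with $\ell_E=m_{(E,v_E)}+m_{(E,v_E')}$. Since the reindexing $(m_L)_L\mapsto(\ell_E)_E$ is surjective, $\coker\Phi^t=\coker\Psi$, hence $|(\coker\theta)_{\tors}|=\mu(\tau,\mathbf{E})$, as required. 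The hard part is precisely this unravelling of the \cite{ACGSII} definitions, above all pinning down that the addition map of the diagonal converts, after transposition, the two puncture-direction coordinates belonging to a split edge $E$ into the single term $\ell_E\bu(E)$. A secondary point needing care is that Proposition \ref{prop:reduced stacks} only identifies the enhanced split moduli $\widetilde{\scrM'}(X,\btau_i)$ with $\scrM'(X,\btau_i)$ on reductions, so the former—and hence $\widetilde{W}_i$—can be non-reduced or even disconnected; this is harmless for counting components and is in any case subsumed by running the argument of Lemma \ref{lem:point product} directly (forming the fine fibre product first and only then saturating, at which stage Lemma \ref{lem:point saturation} reads off the torsion count from the groupified pushout monoid $\coker\theta$).
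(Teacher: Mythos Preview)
Your argument is correct and follows the same path as the paper: reduce to an fs fibre product of log points via Theorem~\ref{Thm: Gluing theorem}, Propositions~\ref{prop:reduced stacks} and~\ref{prop:relative versus absolute}, and Lemma~\ref{lem:reduced product}, then apply Lemma~\ref{lem:point product} and identify the resulting torsion count with $|(\coker\Psi)_{\tors}|$. The only organisational difference is that you eliminate the diagonal copy of $\bigoplus_E P_E^{\gp}$ on the $\theta$-side first (obtaining your $\Phi$) and then dualize, so that the final identification $\coker\Phi^t=\coker\Psi$ is a one-line surjective reindexing $(m_{(E,v_E)},m_{(E,v_E')})\mapsto \ell_E$, whereas the paper dualizes first and handles both eliminations simultaneously via the larger snake-lemma diagram~\eqref{eq:big N diagram}.
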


\begin{proof}
By Theorem \ref{Thm: Gluing theorem}, we have an fs Cartesian diagram
\begin{equation}
\label{eq:point diagram}
\xymatrix@C=30pt
{
\widetilde W\ar[r] \ar[d] & \prod_{i=1}^r\widetilde W_i
\ar[d]^{\ev_{\mathbf{L}}}\\
\prod_{E\in \mathbf{E}} X \ar[r]_{\Delta}&\prod_{E\in\mathbf{E}} X\times
X
}
\end{equation}
Here 
\[
\widetilde W_i:=W_i\times_{\scrM'(X,\btau_i)}\widetilde\scrM'(X,\btau_i)
\]
and
\[
\widetilde W:=W\times_{\scrM'(X,\btau)}\widetilde\scrM'(X,\btau).
\]
A priori all fibre products are over $B$, but by the assumption
\eqref{eq:matching}, the composition of $f_i$ with the structure
map $X\rightarrow B$ are all constant with the same image, so we
may replace $B$ by a suitable affine neighbourhood of this image and 
apply Proposition \ref{prop:relative versus absolute}. Thus we may
replace $B$ with $\Spec\kk$ in the above discussion and thus assume
that all products in \eqref{eq:point diagram} are defined over
$\Spec\kk$.

By Proposition \ref{prop:reduced stacks},
the underlying schemes of $\widetilde W$ and $W$ agree.
Thus we need to calculate the number of connected components
of $\widetilde W$. Further, by \eqref{eq:matching}, for any edge
$E\in \mathbf{E}$ with endpoints $v_1, v_2$, the evaluation maps
$\widetilde W_{i(v)}\rightarrow X$, $\widetilde W_{i(v')}\rightarrow X$
both factor through the strict closed point $f_{i(v_1)}(p_{E,v_1})
=f_{i(v_2)}(p_{E,v_2})$. Thus we may replace, for each edge $E\in\mathbf{E}$,
the target $X$ with the corresponding log point, and hence obtain an
fs Cartesian diagram
\begin{equation}
\label{eq:point gluing product}
\xymatrix@C=30pt
{
\widetilde W\ar[r] \ar[d] & \prod_{i=1}^r\widetilde W_i
\ar[d]^{\ev_{\mathbf{L}}}\\
\prod_{E\in \mathbf{E}} \Spec(P_E\rightarrow \kk) 
\ar[r]_{\Delta}&\prod_{E\in\mathbf{E}} \Spec(P_E\rightarrow\kk)^2
}
\end{equation}
Further, by Lemma \ref{lem:reduced product}, we may replace $\widetilde W_i$
by its reduction without changing the number of connected components of
$\widetilde W$. Again by Proposition~\ref{prop:reduced stacks},  
the reduction of the underlying scheme of 
$\widetilde W_i$ agrees with the underlying scheme of $W_i$ (being a
point). Thus now $\widetilde W$ is a fibre product of log points.

Note that for $L_j=(E,v)\in \mathbf{L}_i$, 
$W_i\times_{\foM'(X,\btau_i)}\foM'_{L_j}(X,\btau_i)$
has ghost sheaf $Q_{i}^{L_j}\subseteq Q_i\oplus\ZZ$, with an equality on the
level of groups, and with the induced evaluation map $f_i\circ p_{E,v}:W_i
\rightarrow X$ yielding a map at the tropical level
of
\[
((f_i\circ p_{E,v})^{\flat})^t: (Q_i^{L_j})^{\vee}\rightarrow P_E^{\vee}
\]
taking the value on $(s,\ell)\in (Q_i^{L_j})^{\vee}\subseteq Q_i^{\vee}\oplus\ZZ$
given by
\[
\ev_v(s)+\ell \mathbf{u}(L_i).
\]
Further, from the fibre product description 
\eqref{Eqn: tilde log structure} we see that
\[
\overline\shM_{\widetilde W_i}\subseteq Q_i\oplus\bigoplus_{(E,v)\in L(G_i)}
\ZZ,
\] 
again with an equality on groups.

Of course $\Delta$ induces a map $P_E^*\rightarrow P_E^*\times
P_E^*$ given by the diagonal. Putting this together, the homomorphism
$\theta^t$ of Lemma \ref{lem:point product} then takes the form
\begin{equation}
\label{eq:theta t}
\theta^t:\prod_E P_E^* \times
\prod_{i=1}^r Q_i^* \times \prod_{v\in E\in \mathbf{E}} \ZZ
\rightarrow \prod_{v\in E\in \mathbf{E}} P_E^*
\end{equation}
given by
\[
\theta^t\big((n_E)_{E\in E(G)}, (s_i)_{1\le i\le r}, (\ell_{E,v})_{v\in E\in 
\mathbf{E}}
\big)=
(\ev_v(s_{i(v)})+\ell_{E,v}\mathbf{u}(E,v)-n_E)_{v\in E\in \mathbf{E}}.
\]
If $\widetilde W$ is non-empty,
then by Lemma \ref{lem:point product} the number of its 
connected components is the order of the torsion part of $\coker\theta^t$. 
We next compare this with the order of the torsion part of $\coker\Psi$.

We have a diagram 
\begin{equation}
\label{eq:big N diagram}
\xymatrix@C=30pt
{
\prod_{E\in \mathbf{E}}\ZZ\times \prod_{E\in\mathbf{E}} P_E^*\ar[r]^>>>>{\alpha}
\ar[d]_{\pi}&
\prod_{E\in\mathbf{E}} P_E^*\times\prod_i Q_i^*\times \prod_{v\in E\in
\mathbf{E}}\ZZ
\ar[d]_{\theta^t}\ar[r]^>>>>{\gamma}&\prod_i Q_i^*\times\prod_{E\in
\mathbf{E}}\ZZ
\ar[d]_{\Psi}\\
\prod_{E\in\mathbf{E}} P_E^*\ar[r]^{\beta}&
\prod_{v\in E\in\mathbf{E}} P_E^*\ar[d]\ar[r]^{\delta}&
\prod_{E\in\mathbf{E}} P_E^*\ar[d]\\
&\coker\theta^t\ar[r]_{\cong}& \coker\Psi
}
\end{equation}
Here $\pi$ is the surjective map given by
\[
\pi\big((\ell_E)_{E\in \mathbf{E}},(n_E)_{E\in \mathbf{E}}\big)
=\big(\ell_E \mathbf{u}(E,v_E)-n_E)_{E\in \mathbf{E}},
\]
where we use the chosen vertices $v_E,v'_E$ of Definition 
\ref{def:tropical gluing map}. Further, $\alpha,\beta$ are injections defined by
\begin{align*}
\alpha\big((\ell_E)_{E\in \mathbf{E}}, (n_E)_{E\in \mathbf{E}}\big)
= {} & \big((n_E)_{E\in \mathbf{E}},(0)_{1\le i\le r}, ((-1)^{\delta_{v,v'_E}}
\ell_E)_{v\in E\in \mathbf{E}}
\big),\\
\beta\big((n_E)_{E\in \mathbf{E}}\big)= {} & \big((n_E)_{v\in E
\in \mathbf{E}}\big).
\end{align*}
while $\gamma$ and $\delta$ are surjections defined by
\begin{align*}
\gamma\big((n_E)_{E\in \mathbf{E}}, (s_i)_{1\le i\le r}, 
(\ell_{E,v})_{v\in E\in 
\mathbf{E}} \big)= {} & 
\big((s_i)_{1\le i\le r},(\ell_{E,v_E}+\ell_{E,v'_E})_{E\in \mathbf{E}}\big),\\
\delta\big( (n_{E,v})_{v\in E\in \mathbf{E}}\big) = {} &
\big(n_{E,v_E}-n_{E,v'_E}\big)_{E\in \mathbf{E}}.
\end{align*}
Finally, $\Psi$ is the tropical gluing map of Definition 
\ref{def:tropical gluing map}.
One checks that the diagram is commutative with the top two rows
exact, and hence the
snake lemma implies that $\coker\theta^t\cong\coker\Psi$, giving
the result.
\end{proof}

\begin{example}
It is very important to note that the gluing parameter space $W$
need not be reduced, something which is quite different from
gluing ordinary stable maps. This arises via saturation in the
fs fibre product. For example, consider a target space $X$
with $\Sigma(X)=\RR_{\ge 0}^2$, and a gluing situation where
$\tau$ is a graph with two vertices, $v_1$ and $v_2$, and two edges,
$E_1$ and $E_2$, each connecting $v_1$ with $v_2$. We split at the
two edges, getting types $\tau_1,\tau_2$. We assume $\mathbf{u}(E_1,v_1)
=-\mathbf{u}(E_1,v_2) = (-w_1,w_1)$ and $\mathbf{u}(E_2,v_1)=
-\mathbf{u}(E_2,v_2) = (-w_2,w_2)$ with $\gcd(w_1,w_2)=1$ and $w_1 < w_2$.
Finally, we assume $\bsigma(v_1)=\RR_{\ge 0} \times 0$ and
$\bsigma(v_2)= 0 \times \RR_{\ge 0}$. 

A slightly tedious calculation shows that $\ul{W}=\Spec \kk[t]/(t^{w_1})$
in this case. Morally, one can think of this as follows. If the glued
curve smooths, there are smoothing parameters $u_1, u_2$ for the two nodes,
in the sense that the local structure at each node is of the form $xy=u_1$
or $xy=u_2$. The relation $u_1^{w_2}=u_2^{w_1}$ is then forced by the
logarithmic geometry of the situation. Saturation normalizes this curve, 
and the inverse image of the point with ideal $(u_1,u_2)$ under this 
normalization is the non-reduced gluing.
\end{example}

\begin{example}
As soon as the target space $X$ has strata of codimension at least three,
it is easy to find examples where the gluing does not exist. Here is
a standard local example. Let $X$ be the three-fold ordinary double
point  $\Spec\kk[x,y,z,w]/(xy-zw)$,
with toric log structure. Let $\ul{C}_1=V(x,z,y-\lambda_1w)\subseteq X$,
$\ul{C}_2=V(y,w,\lambda_2 x-z)$, for $\lambda_1,\lambda_2\in\kk^{\times}$.
In this case, $\Sigma(X)$ is the
cone in $\RR^3$ generated by $v_{x,z}=(0,0,1)$,
$v_{x,w}=(1,0,1)$, $v_{y,z}=(0,1,1)$ and $v_{y,w}=(1,1,1)$. Here each
generator generates a ray corresponding to a toric divisor of $X$, and
the subscripts indicate which coordinates are zero on that divisor.
It is possible to give the domain curves $\ul{C}_i$ the structure of
punctured log curves (ignoring the fact these curves are not proper)
so that the inclusions $\ul{C}_i\rightarrow \ul{X}$
become punctured log maps, with punctures $p_i\in C_i$ mapping
to $0\in X$ and contact orders $u_1=(1,1,0)$ and $u_2=(-1,-1,0)$.
The basic monoids $Q_1$, $Q_2$ are both $\NN$. One checks easily
that this gluing situation is not tropically transverse.

In this case, a careful analysis of the fibre product shows the fibre
product is only non-empty if $\lambda_1=\lambda_2$, i.e., $\ul{C}_1$
and $\ul{C}_2$ have the same slope. Morally, this can be seen
by blowing up the divisor $x=z=0$ in $X$ to obtain 
a small resolution $\widetilde X\rightarrow X$. The strict transforms
of $\ul{C}_1$ and $\ul{C}_2$ are then disjoint unless $\lambda_1=\lambda_2$.
Before the blow-up, the slopes are encoded in the evaluation maps
$\widetilde W_i\rightarrow X$; after the blow-up, the slopes are
already visible in the schematic evaluation maps.
\end{example}

\begin{remark}
\label{rem:ev space gluing}
Suppose instead we are given a gluing situation of maps to $\cX$ 
equipped with evaluation maps at the punctured points, i.e., 
the maps are determined by morphisms $W_i\rightarrow 
\foM'^{\ev(\mathbf{L}_i)}(\shX,
\btau_i)$ rather than $W_i\rightarrow \scrM'(X,\btau_i)$, still
with $\ul{W}_i=\Spec\kappa$.
In other words, we
are given pre-stable punctured maps $f_v:C_v^{\circ}/W_v\rightarrow \cX$ 
along with
compatible morphisms $\ul{W}_i\rightarrow \ul{X}$ for each punctured
point $p_{E,v}$. We assume further, as before, that the images of
$p_{E,v_1}$, $p_{E,v_2}$ under these maps to $\ul{X}$ agree
for each edge $E$. Then
similarly, the number of connected components of the glued space $W$
is $\mu(\tau,\mathbf{E})$. Indeed, the gluing is controlled by
precisely the same Cartesian diagram as in the situation of
Theorem \ref{thm:gluing count}.
\end{remark}

Note that Theorem \ref{thm:gluing count} says nothing about whether $W$ is 
non-empty. Here is one often useful criterion:

\begin{definition}
We say a gluing situation is \emph{tropically transverse} if the map
$\Psi$ of Definition \ref{def:tropical gluing map} has finite cokernel.
\end{definition}

\begin{theorem}
\label{thm:tropically transverse case}
In the situation of Theorem \ref{thm:gluing count}, $W$ is non-empty if the
gluing situation is tropically transverse.
\end{theorem}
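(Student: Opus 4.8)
The plan is to reduce the statement, exactly as in the proof of Theorem~\ref{thm:gluing count}, to the non-emptiness of an fs fibre product of log points, and then to apply Lemma~\ref{lem:point product nonempty}. So first I would replace $W$ by $\widetilde W=W\times_{\scrM'(X,\btau)}\widetilde\scrM'(X,\btau)$, which has the same underlying scheme by Proposition~\ref{prop:reduced stacks}; pass to $\Spec\kk$ via Proposition~\ref{prop:relative versus absolute} using \eqref{eq:matching}; replace the target $X$ at each gluing node by the log point $\Spec(P_E\to\kk)$ (again via \eqref{eq:matching}); and replace each $\widetilde W_i$ by its reduction (Lemma~\ref{lem:reduced product}). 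This presents $\widetilde W$ as the fs fibre product
\[
\widetilde W\;\cong\;\Big(\textstyle\prod_i\widetilde W_i\Big)\times^{\fs}_{\prod_E(\Spec(P_E)\times\Spec(P_E))}\Big(\textstyle\prod_E\Spec(P_E)\Big),
\]
with $\Delta$ as one of the two structure maps; the associated homomorphism on character lattices is the map $\theta^t$ of \eqref{eq:theta t}, and the snake-lemma computation in the proof of Theorem~\ref{thm:gluing count} gives $\coker(\theta^t)\cong\coker(\Psi)$.

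Next I would apply Lemma~\ref{lem:point product nonempty} to this fibre product with the trivial reference data $X'=W_1'=W_2'=\Spec\kk$ carrying trivial log structure. Condition~(1) of that lemma is then automatic. For condition~(3): the maps $\bar g^\flat,\bar g_i^\flat$ are the injections out of the zero monoid, and the map \eqref{eq:theta induced} is simply $\theta\colon P^{\gp}\to Q_1^{\gp}\oplus Q_2^{\gp}$, the transpose of $\theta^t$. Since $P=\bigoplus_E(P_E\oplus P_E)$ with every $P_E$ sharp and fs, $P^{\gp}$ is torsion-free, hence so is $\ker\theta$; therefore $\theta$ is injective $\iff$ $\ker\theta\otimes\QQ=0$ $\iff$ $\theta^t\otimes\QQ$ is surjective $\iff$ $\coker(\theta^t)\cong\coker(\Psi)$ is finite $\iff$ the gluing situation is tropically transverse. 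So condition~(3) holds by hypothesis.

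The substantive remaining point is condition~(2): that the projections $Q_1^\vee\times_{P^\vee}Q_2^\vee\to Q_i^\vee$ meet the interiors of $Q_i^\vee$. Here I would use that $\tau$ is realizable (Notation~\ref{not:standard gluing}): pick a tropical map $h\colon G\to\Sigma(X)$ in the interior of the cone $\tau$, restrict it to each $G_i$, and for each split edge $E$ choose an interior point of the segment $h(\omega_E)$ at which to break $E$ (this simultaneously fixes positive half-lengths $\ell_E^{(1)},\ell_E^{(2)}$ summing to the length of $E$). This produces an element of $Q_1^\vee\times_{P^\vee}Q_2^\vee$ whose $Q_1^\vee$-projection is interior — each restriction is of exact type $\tau_i$ with positive leg lengths — and whose $Q_2^\vee$-component at $E$ is the chosen breakpoint, lying in $\Int(\bsigma(E))$. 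The identification $\bsigma(E)=P_E^\vee$ needed to conclude that this breakpoint is interior in $P_E^\vee$ holds because the $f_i$ are \emph{basic}, so their tropical types record exactly the strata of $X$ into which the domain features map. Granting condition~(2), Lemma~\ref{lem:point product nonempty} yields that $\widetilde W$, and hence $W$, is non-empty.

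The main obstacle is condition~(2): one must describe the cones $\widetilde Q_i^\vee$ and $P_E^\vee$ precisely enough to read off their interiors in tropical terms (in particular to handle the extra leg-length coordinates introduced by $\widetilde\scrM'$), and to verify carefully that basicness of the $f_i$ forces $\bsigma=P_\bullet^\vee$ so that a realizing tropical map of type $\tau$ really does give a point with the required interiority. The rest is bookkeeping with the reductions from the proof of Theorem~\ref{thm:gluing count} together with the elementary homological algebra used to match $\coker(\theta^t)$ with $\coker(\Psi)$.
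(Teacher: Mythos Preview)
Your proposal is correct and matches the paper's proof exactly: reduce via the argument of Theorem~\ref{thm:gluing count} to the fibre product~\eqref{eq:point gluing product} of log points and apply Lemma~\ref{lem:point product nonempty} with the trivial reference data $W_i'=X'=\Spec\kk$, verifying condition~(3) through the isomorphism $\coker\theta^t\cong\coker\Psi$ of~\eqref{eq:big N diagram} and condition~(2) by realizability of $\tau$. Your flagged concern about the identification $\bsigma(E)=P_E^\vee$ is not an obstacle: since $\bar f_i^\flat\colon P_E\to\overline\shM_{C_i,p_{E,v}}$ is a local homomorphism of sharp monoids (as is any $\bar f^\flat$ arising from a morphism of log schemes), its transpose carries the interior of the domain cone into $\Int(P_E^\vee)$, so the minimal cone of $\Sigma(X)$ containing the tropical image of the leg is all of $P_E^\vee$ --- and since $f_i$ has type exactly $\tau_i$, this minimal cone equals $\bsigma(E)$.
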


\begin{proof}
We apply the non-emptiness criterion of Lemma \ref{lem:point product nonempty}
to the product of \eqref{eq:point gluing product}. In that lemma,
we take $W_i'=X'=\Spec\kk$ so that the first condition of the lemma
is trivially satisfied, and the third condition is equivalent to the
injectivity of the map $\theta$ whose transpose $\theta^t$ is given in
\eqref{eq:theta t}. However, $\theta^t$ having a finite cokernel
implies $\theta$ is injective, and \eqref{eq:big N diagram}
implies $\coker \theta^t\cong \coker\Psi$, and hence tropical 
transversality implies the third condition. Finally, $\tau$ realizable 
implies the second condition.
\end{proof}

\section{Gluing moduli spaces}
\label{sec:degeneration gluing}

\subsection{The general situation}
We continue with a standard gluing situation for $X/B$, as in
Notation \ref{not:standard gluing}. 

\begin{theorem}
\label{thm:gluing factorization}
There is a diagram
\[
\xymatrix@C=30pt
{
\scrM(X/B,\btau)\ar[d]_{\varepsilon^{\ev}}\ar[r]^{\phi'}&
\scrM^{\sch}(X/B,\btau)\ar[r]\ar[d]_{\varepsilon^{\sch}}&
\prod_{i=1}^r \scrM(X/B,\btau_i)\ar[d]^{\hat\varepsilon}\\
\fM^{\ev(\mathbf{L})}(\cX/B,\tau)\ar[r]_{\phi}&
\fM^{\sch,\ev(\mathbf{L})}(\cX/B,\tau)\ar[r]_{\Delta'}\ar[d]_{\ul{\ev}'}&
\prod_{i=1}^r \fM^{\ev(\mathbf{L}_i)}(\cX/B,\tau_i)\ar[d]^{\ul{\ev}}\\
& \prod_{E\in \mathbf{E}} \ul{X}_{\bsigma(E)}\ar[r]_{\Delta}&
\prod_{v\in E\in \mathbf{E}} \ul{X}_{\bsigma(E)}
}
\]
with all squares Cartesian in all categories, defining
the moduli spaces $\scrM^{\sch}(X/B,\btau)$
and $\fM^{\sch,\ev(\mathbf{L})}(\cX/B,\tau)$.
Further, $\phi$ is a finite representable morphism.
\end{theorem}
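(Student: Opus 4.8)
The plan is to construct the middle column of the diagram and verify the Cartesian property and finiteness of $\phi$ in stages, working from the bottom up.

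\medskip

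\textbf{Step 1: Construct the bottom square.} First I would define $\fM^{\sch,\ev(\mathbf{L})}(\cX/B,\tau)$ by the lower Cartesian square: it is the fibre product of $\prod_{i=1}^r \fM^{\ev(\mathbf{L}_i)}(\cX/B,\tau_i)$ with $\prod_{E\in\mathbf{E}}\ul{X}_{\bsigma(E)}$ over $\prod_{v\in E\in\mathbf{E}}\ul{X}_{\bsigma(E)}$, where the right vertical map $\ul{\ev}$ is evaluation at the two punctured sections $s_{E,v_1},s_{E,v_2}$ attached to each split edge $E$, and $\Delta$ is the product of diagonals. Here I should note that these evaluation maps indeed land in the correct closed stratum $\ul{X}_{\bsigma(E)}$ because the markings require $\ul f|_Z$ to factor through $X_{\bsigma(x)}$; this is where the compatibility with the definitions of $\fM^{\ev}$ recalled around \eqref{eq:epsilon ev} is used. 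Since everything in sight is an algebraic stack and $\Delta$ is a (representable, finite, unramified) diagonal, this fibre product exists as an algebraic stack. The map $\Delta'$ is then the induced map to $\prod_i \fM^{\ev(\mathbf{L}_i)}(\cX/B,\tau_i)$, and $\ul{\ev}'$ is the induced map to $\prod_E \ul{X}_{\bsigma(E)}$.

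\medskip

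\textbf{Step 2: Identify $\scrM^{\sch}$ and check the top-right square.} I would then \emph{define} $\scrM^{\sch}(X/B,\btau)$ by the top-right Cartesian square, i.e.\ as the fibre product of $\prod_i \scrM(X/B,\btau_i)$ with $\fM^{\sch,\ev(\mathbf{L})}(\cX/B,\tau)$ over $\prod_i \fM^{\ev(\mathbf{L}_i)}(\cX/B,\tau_i)$, via $\hat\varepsilon = \prod_i \varepsilon^{\ev}_i$ and $\Delta'$. This matches the informal description in the introduction: a point is a tuple of punctured maps $f_i:C_i^\circ/W_i\to X$ together with a schematic matching at the split punctures, which is exactly the data of maps to $\prod_i\scrM(X/B,\btau_i)$ whose images in $\prod_i\fM^{\ev(\mathbf{L}_i)}$ agree with a point of $\fM^{\sch,\ev}$. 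The outer rectangle (top two squares composed) being Cartesian is then automatic, so $\scrM^{\sch}$ also sits in a Cartesian square over $\prod_i\scrM(X/B,\btau_i)$ and $\prod_E\ul X_{\bsigma(E)}$. The map $\varepsilon^{\sch}$ is the canonical strict morphism induced by $\varepsilon^{\ev}_i$.

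\medskip

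\textbf{Step 3: Construct $\phi'$ and $\phi$, and the top-left square.} The existence of $\phi'$ and $\phi$ comes from Theorem~\ref{Prop: Gluing via evaluation spaces}: the composed splitting maps $\delta:\scrM(X/B,\btau)\to\prod_i\scrM(X/B,\btau_i)$ and $\delta^{\ev}:\fM^{\ev(\mathbf{E})}(\cX/B,\tau)\to\prod_i\fM^{\ev(\mathbf{L}_i)}(\cX/B,\tau_i)$, together with the matching of schematic evaluations at the two halves of each split edge (which holds tautologically, since before splitting the two punctured sections are glued to a single node), factor through $\scrM^{\sch}$ and $\fM^{\sch,\ev(\mathbf{L})}$ respectively by the universal property of the fibre products just constructed. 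That the resulting top-left square is Cartesian in all categories then follows by combining the Cartesian square of Theorem~\ref{Prop: Gluing via evaluation spaces} (which expresses $\scrM(X/B,\btau)$ as $\fM^{\ev(\mathbf{E})}(\cX/B,\tau)\times_{\prod_i\fM^{\ev(\mathbf{L}_i)}}\prod_i\scrM(X/B,\btau_i)$) with the definition of $\scrM^{\sch}$; concretely, one checks $\scrM^{\sch}\times_{\prod_i\scrM(X/B,\btau_i)}\scrM(X/B,\btau)$ — wait, rather one checks directly that $\scrM(X/B,\btau)\to\scrM^{\sch}$ and $\scrM(X/B,\btau)\to\fM^{\ev(\mathbf{L})}(\cX/B,\tau)$ exhibit the former as the fibre product of the latter two over $\fM^{\sch,\ev(\mathbf{L})}$, using that the outer and right rectangles are Cartesian and applying the pasting lemma for Cartesian squares in reverse.

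\medskip

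\textbf{Step 4: Finiteness and representability of $\phi$.} The main obstacle is the last assertion. The idea is: by \cite[Cor.~5.13]{ACGSII}, the splitting morphism $\delta^{\ev}:\fM^{\ev(\mathbf{E})}(\cX/B,\tau)\to\prod_i\fM^{\ev(\mathbf{L}_i)}(\cX/B,\tau_i)$ (equivalently its $\widetilde{(-)}$-decorated version) is finite. This $\delta^{\ev}$ factors as $\Delta'\circ\phi$. Now $\Delta'$ is a base change of the diagonal $\Delta:\prod_E\ul X_{\bsigma(E)}\to\prod_{v\in E}\ul X_{\bsigma(E)}$, which is a closed immersion (the strata $\ul X_{\bsigma(E)}$ are separated), hence $\Delta'$ is a closed immersion, in particular finite and representable. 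Therefore $\phi$, which sits in the factorization $\fM^{\ev(\mathbf{L})}\xrightarrow{\phi}\fM^{\sch,\ev(\mathbf{L})}\xrightarrow{\Delta'}\prod_i\fM^{\ev(\mathbf{L}_i)}$ of the finite morphism $\delta^{\ev}$ through a separated (indeed finite) morphism $\Delta'$, is finite: finiteness of $\Delta'\circ\phi$ together with separatedness of $\Delta'$ gives properness of $\phi$ by the usual cancellation property, and finiteness of $\Delta'\circ\phi$ gives that $\phi$ is quasi-finite and representable, and quasi-finite plus proper plus (locally) finitely presented is finite. I would spell out the cancellation argument: if $g\circ\phi$ is finite and $g$ is separated and representable, then $\phi$ is finite. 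The one point needing care is that $\delta^{\ev}$ here is the evaluation-space splitting map and I must make sure I am invoking exactly the finiteness statement of \cite[Cor.~5.13]{ACGSII}, i.e.\ for the $\ev(\mathbf{L})$-decorated moduli spaces and not the naive ones of \eqref{eq:bad splitting}; this is precisely why the evaluation spaces appear throughout, and the factorization through $\fM^{\sch,\ev(\mathbf{L})}$ respects them.
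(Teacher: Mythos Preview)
Your proposal is correct and follows essentially the same approach as the paper. The paper's proof is terser---it defines $\ul{\ev}$, observes that the Cartesian diagram of Theorem~\ref{Prop: Gluing via evaluation spaces} factors through the schematic gluing, and deduces finiteness of $\phi$ from finiteness and representability of $\Delta'\circ\phi = \delta^{\ev}$---but your more detailed unpacking via the pasting lemma and the cancellation argument (using that $\Delta'$ is a base change of a diagonal, hence a closed immersion) is exactly the content hidden behind the paper's one-line justification.
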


\begin{proof}
We define the morphism $\prod_{i=1}^r\fM^{\ev(\mathbf{L}_i)}(\cX/B,\tau_i)\rightarrow
\prod_{v\in E\in \mathbf{E}} \ul{X}_{\bsigma(E)}$ as follows.
For flag $v\in E\in \mathbf{E}$, consider the composition
\[
\prod_{i=1}^r\fM^{\ev(\mathbf{L}_i)}(\cX/B,\tau_i) \rightarrow 
\fM^{\ev(\mathbf{L}_{i(v)})}(\cX/B,\tau_{i(v)})=\fM(\cX/B,\tau_i)\times_{\prod\ul{\cX}}
\prod\ul{X}
\rightarrow \ul{X}
\]
where the first arrow is projection and the second is further
projection onto the factor $\ul{X}$ indexed by the leg $(E,v)\in\mathbf{L}_i$.
Necessarily, this morphism factors through $\ul{X}_{\bsigma(E)}$ by the
definition of a $\tau_i$-marked curve, see \cite[Def.~3.4,(1)]{ACGSII}.
This gives a morphism
\[
\ul{\ev}_{(E,v)}:
\prod_{i=1}^r\fM^{\ev(\mathbf{L}_i)}
(\cX/B,\tau_i) \rightarrow \ul{X}_{\bsigma(E)},
\]
and we define
\[
\ul{\ev}:=\prod_{v\in E\in \mathbf{E}} \ul{\ev}_{(E,v)}.
\]

The morphism $\Delta$ is the product of diagonals. It is then clear 
that the Cartesian 
diagram of Theorem \ref{Prop: Gluing via evaluation spaces}
factors as stated, as $\fM^{\sch,\ev(\mathbf{L})}(\cX/B,\tau)$
captures those curves which glue schematically. Further, $\phi$
is a finite and representable morphism as $\psi=\Delta'\circ\phi$ is
finite and representable by Theorem \ref{Prop: Gluing via evaluation spaces}.
\end{proof}

\begin{remark}
\label{rem:this is the best in general}
As stated, this is not a significant improvement over
Theorem \ref{Prop: Gluing via evaluation spaces}: it merely separates
the gluing into two steps, the first step being schematic gluing and the
second step taking into account only the gluing at the logarithmic level.

For the first step, we are in luck if (1) $\Delta$ is lci, so that 
the Gysin pull-back $\Delta^!$ exists, and (2) $\ul{\ev}$ is flat, so that 
$\Delta^!$ and $(\Delta')^!$ induce the same map
\[
\Delta^!=(\Delta')^!:A_*\left(\prod_i \scrM(X/B,\btau_i)\right)\rightarrow 
A_*\left(\scrM^{\sch}(X/B,\btau)\right).
\]
In any event, each $\fM^{\ev(\mathbf{L}_i)}(\shX/B,\tau_i)$ is pure-dimensional
by \cite[Prop.~3.30]{ACGSII},
so each $\scrM(X/B,\btau_i)$ carries a virtual
fundamental class. Further, the pull-back of the relative obstruction
theory for $\hat\varepsilon$ (the product over $i$ of the relative obstruction
theories for $\scrM(X/B,\btau_i)\rightarrow \fM^{\ev(\mathbf{L}_i)}(\shX/B,\tau_i)$)
yields a relative
obstruction theory for $\varepsilon_{\sch}$. 
If $\ul{\ev}$ is flat, then $\fM^{\sch,\ev(\mathbf{L})}(\cX/B,\tau)$
is also pure-dimensional, yielding a virtual fundamental
class $[\scrM^{\sch}(X/B,\btau)]^{\virt}$,
and if $\Delta$ is lci, we have
\begin{equation}
\label{eq:Gysin pullback}
[\scrM^{\sch}(X/B,\btau)]^{\virt}
=
\Delta^!\left(\prod_i[\scrM(X/B,\btau_i)]^{\virt}\right).
\end{equation}

Note that $\Delta$ is lci if and only if the
strata $\ul{X}_{\bsigma(E)}$ are non-singular. 
However, a deepest stratum of $X$ is always non-singular, as is
a stratum of dimension one more than a deepest stratum of $X$.
If instead the log structure on $X$ arises from an snc divisor, all
strata are non-singular.

On the other hand, flatness of $\ul{\ev}$ is only automatic when the
strata $X_{\bsigma(E)}$ are always deepest strata. Otherwise,
more care needs to be taken. Again, when the gluing strata are non-singular,
there is a tropical characterization of flatness.

\begin{theorem}
\label{thm:flatness}
Let $\tau=(G,\mathbf{g},\bsigma,\mathbf{u})$ be a realizable type of 
punctured map over $B$, $\mathbf{L}\subseteq L(G)$ a subset of legs, and let
\[
\ul{\ev}:\foM(\shX/B,\tau)\rightarrow\prod_{L\in \mathbf{L}}
\ul{\shX}_{\bsigma(L)}
\]
be the schematic evaulation map at the punctured points indexed
by $\mathbf{L}$. Suppose
the strata $\ul{\shX}_{\bsigma(L)}$ for $L\in \mathbf{L}$ are non-singular.
Then $\ul{\ev}$ is flat if and only if, for every realizable type $\tau'
=(G',\mathbf{g}',\bsigma',\mathbf{u}')$
with a contraction morphism $\phi:\tau'\rightarrow\tau$, the inequality
\[
\dim\tau' \ge \dim \tau +\sum_{L\in\mathbf{L}}(\dim \bsigma'(L)
-\dim \bsigma(L))
\]
holds. 
\end{theorem}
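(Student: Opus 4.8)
The plan is to show that $\ul{\ev}$ is flat if and only if it is \emph{equidimensional of relative dimension $\delta$}, where $\delta:=\dim\foM(\shX/B,\tau)-\dim Y$ and $Y:=\prod_{L\in\mathbf{L}}\ul{\shX}_{\bsigma(L)}$, and then to translate that equidimensionality into the stated inequality using the explicit local structure of $\foM(\shX/B,\tau)$. First I would record the basic geometry: when $B=\Spec\kk^{\dagger}$ we replace $B$ by $\Spec\kk$ via Proposition~\ref{prop:relative versus absolute}, so $\ul B$ is smooth over $\kk$; since each $\ul{\shX}_{\bsigma(L)}$ is non-singular and the product is over $B$, the target $Y$ is regular, with $\dim Y=\dim\ul B-\sum_{L}\dim\bsigma(L)$. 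On the source side, $\tau$ realizable and $B$ as in our standing hypotheses forces $\foM(\shX/B,\tau)$ to be reduced and pure-dimensional by \cite[Prop.~3.28]{ACGSII}, and it is idealized log smooth over $B$ by \cite[Thm.~3.24]{ACGSII}, so \'etale-locally (up to smooth factors and $\GG_m$-gerbes) it is a union of toric strata of an affine toric variety. For such a source and a regular target, flatness of $\ul{\ev}$ is equivalent to equidimensionality of relative dimension $\delta$ --- by miracle flatness if $\foM(\shX/B,\tau)$ is Cohen--Macaulay, and in any case by the fact that $\ul{\ev}$ is \'etale-locally a toric morphism to a regular base, for which flatness and equidimensionality coincide. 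Since the source is pure-dimensional and $Y$ regular, every non-empty fibre automatically has all components of dimension $\ge\delta$, so only the upper bound $\le\delta$ needs to be established.

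Next I would set up the dimension dictionary. By \cite[Rem.~3.29]{ACGSII}, $\foM(\shX/B,\tau)$ is stratified by the locally closed substacks $\foM^{\circ}(\shX/B,\tau')$ of points whose tropical type is exactly $\tau'$, indexed by contraction morphisms $\tau'\to\tau$ with $\tau'$ realizable over $B$; from the idealized log smooth local model (equivalently, from the dimension formula in \cite[Prop.~3.28]{ACGSII}) one reads off $\codim\!\bigl(\foM^{\circ}(\shX/B,\tau'),\,\foM(\shX/B,\tau)\bigr)=\dim\tau'-\dim\tau$, hence $\dim\foM^{\circ}(\shX/B,\tau')=\dim\foM(\shX/B,\tau)-(\dim\tau'-\dim\tau)$. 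On $\foM^{\circ}(\shX/B,\tau')$, each punctured point $p_L$ with $L\in\mathbf{L}$ maps (via the corresponding leg of $G'$) into the deeper stratum $\ul{\shX}_{\bsigma'(L)}\subseteq\ul{\shX}_{\bsigma(L)}$, so $\ul{\ev}$ carries this stratum into $Y':=\prod_{L\in\mathbf{L}}\ul{\shX}_{\bsigma'(L)}\subseteq Y$, with $\dim Y'=\dim\ul B-\sum_{L}\dim\bsigma'(L)$. Substituting these identities into $\delta$ shows that, for a fixed $\tau'\to\tau$, the stated inequality $\dim\tau'-\dim\tau\ge\sum_{L}(\dim\bsigma'(L)-\dim\bsigma(L))$ is equivalent to $\dim\foM^{\circ}(\shX/B,\tau')-\dim Y'\le\delta$.

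The direction ``flat $\Rightarrow$ inequality'' is then immediate: assuming $\ul{\ev}$ flat, fix a realizable $\tau'\to\tau$; the non-empty stratum $\foM^{\circ}(\shX/B,\tau')$ maps to $Y'$, and a general fibre of this restricted morphism has dimension $\dim\foM^{\circ}(\shX/B,\tau')-\dim\overline{\ul{\ev}(\foM^{\circ}(\shX/B,\tau'))}\ge\dim\foM^{\circ}(\shX/B,\tau')-\dim Y'$. That fibre lies inside a fibre of $\ul{\ev}$, which has dimension $\delta$, so $\dim\foM^{\circ}(\shX/B,\tau')-\dim Y'\le\delta$, i.e.\ the inequality holds. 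For the converse, I would first reduce to a finite check: in the toric local model the non-flat locus of $\ul{\ev}$ is invariant under the torus action, hence a union of stratum closures, so it suffices to bound $\dim_{\eta_{\tau'}}\ul{\ev}^{-1}(\ul{\ev}(\eta_{\tau'}))\le\delta$ at the generic point $\eta_{\tau'}$ of the closure of each stratum $\foM^{\circ}(\shX/B,\tau')$.

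The remaining point --- and this is the main obstacle --- is to compute this local fibre dimension from the idealized log smooth local model of \cite[Thm.~3.24]{ACGSII}: near $\eta_{\tau'}$, $\ul{\ev}$ is a toric morphism whose source cone (the cone attached to $\foM(\shX/B,\tau)$ at the $\tau'$-stratum, built from the basic monoid $Q_{\tau'}$, i.e.\ from $\prod_{v}\bsigma'(v)$ cut down by the edge-balancing relations) maps onto a coordinate cone of $Y$ spanned by the normal directions to $Y'$ recorded by the evaluation maps $\ev_v$ of \eqref{eq:eval def}; one checks that the fibre of this toric morphism through the $\tau'$-orbit has dimension at most $\dim\foM^{\circ}(\shX/B,\tau')-\dim Y'$, which by hypothesis is $\le\delta$. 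The hard part is precisely this toric computation: it requires pinning down the local model of $\ul{\ev}$ at the $\tau'$-stratum together with the cone map it induces, and then showing that the hypothesised inequality forces \emph{every} fibre of that toric morphism --- not merely the generic one --- to have dimension at most $\delta$; it is here, rather than in the soft dimension count above, that the combinatorics of the edge matching genuinely enters. Completing that step, together with the ``flat $\Rightarrow$ inequality'' direction and the dictionary of the second paragraph, gives the equivalence.
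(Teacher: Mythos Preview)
Your overall strategy coincides with the paper's: reduce flatness to equidimensionality via miracle flatness (Cohen--Macaulay source, regular target) and then compute fibre dimensions stratum by stratum. Your ``flat $\Rightarrow$ inequality'' direction is correct, and your dimension dictionary is the right one. The paper verifies the Cohen--Macaulay input exactly as you sketch: idealized log smoothness plus \cite[Prop.~B.4]{ACGSII} gives that $\foM(\shX/B,\tau)$ is smooth locally toric, hence Cohen--Macaulay, so \cite[Thm.~23.1]{Ma89} applies.

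The gap is in the converse. You correctly reduce to bounding the fibre dimension on each stratum, but then label the remaining step a ``hard toric computation'' involving the cone map of $\ul{\ev}$ and leave it incomplete. The paper shows this step is in fact direct and needs no analysis of the toric local model of $\ul{\ev}$ at all. Over a point $\bar x=(\bar x_L)$ with $\bar x_L$ in the open stratum of $\ul{\shX}_{\sigma_L}$, any type $\tau'$ appearing in the fibre has $\bsigma'(L)=\sigma_L$, and the dimension of the $\tau'$-piece of that fibre is computed ``similarly'' to the generic fibre: it equals
\[
\dim\foM(\shX/B,\tau')\;-\;\dim\Bigl(\hbox{open stratum of }\prod_{L}\ul{\shX}_{\bsigma'(L)}\Bigr)
\;=\;3|\mathbf{g}|-3+|L(G)|-\dim\tau'+\delta_B+\sum_{L}(\dim\bsigma'(L)-\delta_B),
\]
using only the dimension formula $\dim\foM(\shX/B,\tau')=3|\mathbf{g}|-3+|L(G)|-\dim\tau'+\delta_B$ from \cite[Prop.~3.28]{ACGSII} and the identification of the target open stratum with $B\GG_m^{N'}$, $N'=\sum_L(\dim\bsigma'(L)-\delta_B)$. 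Comparing this with the generic fibre dimension $\delta$ is then literally the inequality in the statement; upper semi-continuity of fibre dimension handles the rest. No edge-matching combinatorics enters beyond what is already packaged in the single number $\dim\tau'$.

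In short, the ``main obstacle'' you flag dissolves once you notice that the restriction of $\ul{\ev}$ to each open stratum $\foM^\circ(\shX/B,\tau')$ lands in the open stratum of $\prod_L\ul{\shX}_{\bsigma'(L)}$, where the \emph{same} dimension-formula argument that gave you the generic fibre dimension applies verbatim with $\tau$ replaced by $\tau'$. Your instinct to pass to the local toric model is not wrong, but it is unnecessary here and obscures the fact that the comparison is purely numerical.
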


\begin{proof}
Since $\foM^{\ev(\mathbf{L})}(\shX/B,\tau)$ is pure-dimensional by 
\cite[Prop.~3.30]{ACGSII},
if $\ul{\ev}$ is flat then all stack-theoretic fibres $\ul{\ev}^{-1}(\bar x)$ 
of $\ul{\ev}$ are 
pure-dimensional with dimension independent of the choice of
a geometric point $\bar x\rightarrow \prod_{L\in\mathbf{L}} \ul{\shX}_{\bsigma(L)}$. Conversely, it follows from \cite[Thm.~3.25]{ACGSII} that
$\foM(\shX/B,\tau)$ is idealized log smooth over $\Spec\kk$
(as $\mathbf{M}(G,\mathbf{g})\times B$ appearing in that theorem is
idealized log smooth over $\Spec\kk$). However, from the specific
description of the idealized structure on $\foM(\shX/B,\tau)$ in 
\cite[Prop.~3.24]{ACGSII}, it follows from \cite[Prop.~B.4]{ACGSII} that
$\foM(\shX/B,\tau)$ smooth locally is a stratum of a toric variety,
and hence in particular smooth locally is toric, hence Cohen-Macaulay.
Thus, by \cite[Thm.~23.1]{Ma89}, 
if the fibres of $\ul{\ev}$ are all pure-dimensional of dimension
$\dim \foM(\shX/B,\tau) - \dim \prod_{L\in\mathbf{L}} \ul{\shX}_{\bsigma(L)}$,
then $\ul{\ev}$ is flat. Thus the question reduces
to calculating the fibre dimension.

Note that because we assumed $\tau$ was realizable over $B$, 
$\foM(\shX/B,\tau)$
contains a dense open stratum consisting of punctured maps to
$\shX$ of type $\tau$. Further, we have by \cite[Prop.~3.30]{ACGSII} that
\[
\dim \foM(\shX/B,\tau)=3|\mathbf{g}|-3+|L(G)|-\dim\tau+\delta_B
\]
where
\[
\delta_B:=\begin{cases}
0 & B=\Spec\kk\\
1 & B=\Spec\kk^{\dagger}
\end{cases}
\]
and $|\mathbf{g}|=b_1(G)+
\sum_{v\in V(G)} \mathbf{g}(v)$.

Let $\bar x$ be a geometric point of the open stratum
of $\prod_{L\in\mathbf{L}} \ul{\shX}_{\bsigma(L)}$. Note
this open stratum is isomorphic, as an algebraic stack, to
$B\GG_m^N$ where 
\[
N=-\dim \prod_{L\in\mathbf{L}} \ul\shX_{\bsigma(L)}=
\sum_{L\in \mathbf{L}}(\dim\bsigma(L)-\delta_B),
\]
and the dimension of the fibre of $\ul{\ev}$ over $\bar x$ is then
\begin{equation}
\label{eq:general fibre dim}
\dim\foM(\shX/B,\tau)+N=
3|\mathbf{g}|-3+|L(G)|-\dim\tau+\delta_B + \sum_{L\in\bsigma(L)}(\dim\bsigma(L)
-\delta_B).
\end{equation}
Thus we need to show all fibres, when non-empty, have this dimension. So
now choose some other geometric point 
$\bar x=(\bar x_L)\rightarrow \prod_{L\in\mathbf{L}}\ul{\shX}_{\bsigma(L)}$, 
and suppose
$\bar x_L$ lies in the stratum of $\shX$ indexed by a cone $\sigma_L$. Let
$\tau'$ be a type of punctured map which appears in $\ul{\ev}^{-1}(\bar x)$,
so that there is a contraction map $\phi:\tau'\rightarrow\tau$. Note
that $\sigma_L=\bsigma'(L)$ for $L\in\mathbf{L}$.
Then the dimension of the stratum of the fibre over $x$ with
of punctured maps with this type is similarly calculated as
\begin{align}
\label{eq:special fibre dim}
\begin{split}
&3|\mathbf{g}'|-3+|L(G')|-\dim\tau'+ \delta_B + \sum_{L\in\bsigma(L)}
(\dim\sigma_L-\delta_B)\\
= {} & 3|\mathbf{g}| - 3 +| L(G)| - \dim\tau' + \delta_B +
\sum_{L\in \bsigma(L)}(\dim\bsigma'(L)-\delta_B).
\end{split}
\end{align}
Now since fibre dimension is upper semi-continuous, it is sufficient
to show that the quantity of \eqref{eq:general fibre dim} is
greater than or equal to the quantity of \eqref{eq:special fibre dim}.
But this is the inequality of the theorem.
\end{proof}

\begin{remark}
The criterion of Theorem \ref{thm:flatness} may seem imposing to check
as it in theory involves an arbitrary number of type $\tau'$. But
we may always replace $\foM(\shX/B,\tau)$ with an open subset
obtained by deleting those strata with type $\tau'$ (equipped with
a contraction $\tau'\rightarrow\tau$) such that there does not exist
a punctured map in $\scrM(X/B,\btau)$ of type $\tau''$ such that the
induced contraction map $\tau''\rightarrow \tau$ factors through
$\tau'\rightarrow\tau$. Thus for any application, it is sufficient
to restrict attention to tropical types which are contractions of
types which actually occur.
\end{remark}

The next step is to understand $\phi$. In general, we don't yet know how
to say much about it, save for the next theorem, which gives us the
degree of $\phi$ onto its image. When the image is a proper closed
substack, this tells us there are logarithmic obstructions to gluing,
and it is still not understood how to deal with these obstructions
in general. However, in the tropically transverse case,
$\phi$ is surjective.
\end{remark}

\begin{theorem}
\label{thm:general gluing degree}
The degree of $\phi$ onto its image is the tropical multiplicity 
$\mu(\tau,\mathbf{E})$ defined in 
Definition~\ref{def:tropical gluing map}. If the gluing situation
is tropically transverse and $\ul{\ev}$ is flat, then $\phi$ is dominant.
\end{theorem}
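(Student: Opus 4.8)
The plan is to reduce the computation of the degree of $\phi$ onto its image, and the dominance statement in the tropically transverse case, to the fibrewise analysis of \S\ref{sec:gluing one curve} via the base change properties of the Cartesian diagram in Theorem~\ref{thm:gluing factorization}. First I would fix a geometric point of $\scrM^{\sch}(X/B,\btau)$ in the image of $\phi$, say represented by punctured maps $f_i:C_i^{\circ}/W_i\rightarrow X$ of types $\btau_i$ which glue schematically (so \eqref{eq:matching} holds), with $\ul W_i=\Spec\kappa$. Pulling back $\phi$ along this point, the compatibility of the Cartesian squares in Theorem~\ref{thm:gluing factorization} with those in Theorem~\ref{Prop: Gluing via evaluation spaces} and Theorem~\ref{Thm: Gluing theorem} identifies the fibre of $\phi$ with exactly the gluing scheme $W$ of Definition~\ref{def:tropical gluing map}'s preceding paragraph — indeed this is precisely the observation recorded in Remark~\ref{rem:ev space gluing}, that the gluing here is controlled by the same Cartesian diagram \eqref{eq:point gluing product} as in Theorem~\ref{thm:gluing count}. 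By Theorem~\ref{thm:gluing count}, when $W$ is non-empty it has $\mu(\tau,\mathbf{E})$ connected components; since $\phi$ is finite and representable by Theorem~\ref{thm:gluing factorization}, and $W$ is zero-dimensional over the chosen point, this says the fibre of $\phi$ over that point, counted with the trivial (reduced-on-connected-components) count appropriate to degree, is $\mu(\tau,\mathbf{E})$.

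Next I would upgrade this pointwise count to the statement about the degree of $\phi$ onto its image. Since $\phi$ is finite and representable, it has a well-defined degree onto its image over the generic point of each component of the image; choosing the base point above to lie in such a generic locus and invoking Theorem~\ref{thm:gluing count} gives that this generic degree is $\mu(\tau,\mathbf{E})$. The fact that $\mu(\tau,\mathbf{E})$ depends only on the tropical data of $\btau$ and $\mathbf{E}$ — not on the particular $f_i$ — is what makes this a uniform answer; this is built into Definition~\ref{def:tropical gluing map}, since $\Psi$ is assembled from the contact orders $\mathbf u$ and the evaluation maps $\ev_v$, which are fixed by the type. One subtlety to address: $W$ may be non-reduced (see the Example following Theorem~\ref{thm:gluing count}), so I must be careful that "degree onto its image" is interpreted as the generic number of geometric points (equivalently, connected components of the fibre), which is what $\mu$ computes, rather than a length. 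Since the target $\scrM^{\sch}(X/B,\btau)$ is itself typically non-reduced in a compatible way, this is the correct notion, and I would spell this out briefly.

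For the dominance statement, suppose the gluing situation is tropically transverse and $\ul{\ev}$ is flat. I would argue that the image of $\phi$ meets every component of $\scrM^{\sch}(X/B,\btau)$. Take a geometric point of $\scrM^{\sch}(X/B,\btau)$, corresponding to schematically glued maps $f_i$; by Theorem~\ref{thm:tropically transverse case}, tropical transversality forces the associated gluing scheme $W$ to be non-empty, hence $\phi$ hits this point. To get dominance rather than mere surjectivity onto a constructible set, I would combine this with the finiteness of $\phi$ (Theorem~\ref{thm:gluing factorization}): the image is closed, and since it contains a point of every stratum of the schematic moduli space — in particular a point of the open stratum corresponding to maps of type exactly $\tau$, where flatness of $\ul{\ev}$ guarantees $\scrM^{\sch}(X/B,\btau)$ is pure-dimensional and that stratum is dense (cf.\ Remark~\ref{rem:this is the best in general}) — the image must be all of $\scrM^{\sch}(X/B,\btau)$, so $\phi$ is dominant (indeed surjective).

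The main obstacle I anticipate is bookkeeping the identification of the fibre of $\phi$ with the gluing scheme $W$: one must chase through the two-step factorization of Theorem~\ref{thm:gluing factorization} and check that restricting to a schematically-glued base point really does collapse the schematic matching conditions (which are already imposed in $\scrM^{\sch}$) and leaves precisely the logarithmic fs-fibre-product that defines $W$. The technical heart is matching the evaluation-space fs fibre product of Theorem~\ref{Thm: Gluing theorem} with the pulled-back diagram, exactly as in the proof of Theorem~\ref{thm:gluing count} where $X$ is replaced by the relevant log points $\Spec(P_E\to\kk)$; once that identification is in hand, both assertions follow immediately from Theorems~\ref{thm:gluing count} and~\ref{thm:tropically transverse case}. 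I would also need to take a moment over the role of flatness of $\ul{\ev}$: it is not needed for the degree computation, only for the purity/denseness input to the dominance claim, so I would state clearly where each hypothesis is used.
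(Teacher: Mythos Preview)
Your approach is the paper's: restrict to the open strata of maps of type exactly $\tau$ and $\tau_i$, identify the fibre of $\phi$ over a geometric point there with the gluing scheme of \S\ref{sec:gluing one curve} via Remark~\ref{rem:ev space gluing}, and invoke Theorems~\ref{thm:gluing count} and~\ref{thm:tropically transverse case}. One minor correction: the statement concerns $\phi$ on the Artin-fan side, so you should work with $\foM^{\sch,\ev(\mathbf{L})}(\shX/B,\tau)$ throughout rather than $\scrM^{\sch}(X/B,\btau)$.

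There is one genuine gap. Your treatment of the reducedness of $W$ is backwards: that the \emph{target} is ``non-reduced in a compatible way'' does not explain why the degree of $\phi$ equals the number of connected components of the fibre rather than its length. The correct argument, which the paper gives, is that the \emph{source} $\foM^{\ev(\mathbf{L})}(\shX/B,\tau)$ is reduced (this is \cite[Prop.~3.28]{ACGSII}); since the fibre cardinality $\mu(\tau,\mathbf{E})$ is independent of the chosen point in the image of $\phi|_U$, the fibre over the generic point is a reduced finite scheme, and then its degree genuinely is its number of geometric points. For the dominance claim, note also that Theorem~\ref{thm:tropically transverse case} applies only at points where the types are exactly $\tau_i$, so your sentence ``it contains a point of every stratum'' overreaches; what flatness of $\ul{\ev}$ (hence of $\ul{\ev}'$) buys you is that every irreducible component of $\foM^{\sch,\ev(\mathbf{L})}(\shX/B,\tau)$ dominates $\prod_E \ul{X}_{\bsigma(E)}$ and therefore meets this open locus, after which closedness of the image finishes the argument as you say.
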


\begin{proof}
We note that as $\tau$ is realizable over $B$, 
$\foM^{\ev(\mathbf{L})}(\shX/B,\tau)$
has a non-empty dense open stratum $U$, whose geometric points
are precisely those geometric points corresponding to a punctured 
map whose tropicalization
is a family of tropical maps of type $\tau$. Similarly,
$\foM^{\ev(\mathbf{L}_i)}(\shX/B,\tau_i)$ has a non-empty dense stratum $U_i$,
with each geometric point in this stratum corresponding to a punctured
map whose tropicalization is a family of curves of type $\tau_i$.
Certainly then $\Delta'\circ\phi(U)\subseteq\prod_i U_i$.
Further, $(\phi\circ\Delta)^{-1}(\prod_i U_i) = U$ as the type
of punctured map obtained by gluing together punctured maps of type $\tau_i$
is necessarily $\tau$. It is thus sufficient to determine the
degree of $\phi|_U$ onto its image in $(\Delta')^{-1}(\prod_i U_i)$.
To this end, pick a strict geometric point $W'\rightarrow
\foM^{\sch,\ev(\mathbf{L}_i)}(\shX/B,\tau)$ in the image of $\phi|_U$. Because
$\Delta'$ is strict, we can decompose $W'=\prod_i W_i$ as a product
over $B$, with
$W_i\rightarrow \foM^{\ev(\mathbf{L}_i)}(\shX/B,\tau_i)$ strict geometric points.
Thus we may view this as a situation of \S\ref{sec:gluing one curve}. 
In particular, the corresponding gluing is
$W'\times_{\prod_i \foM'^{\ev(\mathbf{L}_i)}(\shX/B,\tau_i)} \foM'^{\ev(\mathbf{L})}(\shX/B,\tau)$.
By Lemma \ref{lem:reduced product} and \cite[Prop.~3.33]{ACGSII}, 
the reduction of this gluing agrees with the reduction of
\[
W'\times_{\prod_i\foM^{\ev(\mathbf{L}_i)}(\shX,\tau_i)}
\foM^{\ev(\mathbf{L})}(\shX,\tau)
\cong
W'\times_{\foM^{\sch,\ev(\mathbf{L})}(\shX/B,\tau)} \foM^{\ev(\mathbf{L})}(\shX/B,\tau).
\]
By Remark \ref{rem:ev space gluing}, this fibre product has
$\mu(\tau,\mathbf{E})$ connected components. 
Since this number is independent of the choice of geometric point in the image
of $\phi|_U$ and $\foM^{\ev}(\shX/B,\tau)$ is reduced, it follows
this fibre must be always reduced. This shows the degree.

If $\ul{\ev}$ is flat, then so is $\ul{\ev}'$, and hence every irreducible
component of $\foM^{\mathrm{sch},\ev(\mathbf{L})}(\shX/B,\tau)$
dominates $\prod_{E\in \mathbf{E}} \ul{X}_{\bsigma(E)}$. The domination
of $\phi$ then similarly follows from
Theorem \ref{thm:tropically transverse case}.
\end{proof}

\subsection{Gluing from rigid tropical curves}
\label{subsec:rigid}
One of the standard situations for gluing is the degeneration
situation studied in \cite{ACGSI}. Here, one considers
a base $B$ a smooth curve or spectrum of a DVR, with log structure
the divisorial log structure induced by a closed point $b_0\in B$.
As usual, we assume $X\rightarrow B$ is projective and log smooth.
We denote by $X_0$ the fibre over $b_0$, and we view $b_0$ with
its induced log structure, i.e., $b_0$ is a standard log point.

In this case, the morphism $X\rightarrow B$ tropicalizes to a morphism
$\delta:\Sigma(X)\rightarrow \Sigma(B)=\RR_{\ge 0}$. This gives rise to
a polyhedral cell complex $\Delta(X)=\delta^{-1}(1)$. Given a class 
$\beta$
of log curve for $X/B$, we have the logarithmic decomposition formula,
see \cite[Thm.~1.2]{ACGSI}:
\[
[\scrM(X_0/b_0,\beta)]^{\virt} = \sum_{\btau=(\tau,{\bf A})}
{m_{\tau}\over |\Aut(\btau)|} j_{\tau*}[\scrM(X_0/b_0,\btau)]^{\virt}.
\]
Here $\btau$ runs over isomorphism classes of
decorated \emph{rigid tropical types}.
These are realizable tropical types $\btau$ such that the moduli space
of tropical maps of type $\btau$ is one-dimensional, with a unique
member of this one-dimensional family factoring through $\Delta(X)$,
hence the term rigid. The quantity $m_{\tau}\in\NN$ is the
smallest integer such that scaling $\Delta(X)$ by $m_{\tau}$ leads to a 
tropical map with integral vertices and edge lengths; put another way,
\[
m_{\tau} = |\coker(N_{\tau}\rightarrow N_{\Sigma(B)}=\ZZ)|.
\]

We now explore how Theorem \ref{thm:gluing factorization} may be
applied in this situation. Unfortunately,
the morphisms $\phi,\phi'$ of Theorem \ref{thm:gluing factorization}
need not  be surjective, but here we
determine the length of the inverse image of a point 
in the image of $\phi$.

We first introduce the following notation. 
For each $\sigma\in\Sigma(X)$, the morphism $\delta:\Sigma(X)\rightarrow 
\Sigma(B)$
induces homomorphisms $\delta_*:N_{\sigma}\rightarrow \ZZ$, and we define
$\overline{N}_{\sigma}$ to be the kernel of this homomorphism. 
Provided $\delta|_\sigma$ surjects onto $\Sigma(B)=\RR_{\ge 0}$, this kernel can
be identified with the space of integral tangent vectors of the corresponding
polyhedron in $\Delta(X)$.

\begin{definition}
\label{def: tropically transverse}
Let $\tau$ be a type of rigid tropical curve in $\Delta(X)$. 
Define
\[
\overline\Psi:\bigoplus_{v\in V(G)} \overline{N}_{\bsigma(v)} \oplus 
\bigoplus_{E\in E(G)} \ZZ
\rightarrow \bigoplus_{E\in E(G)} \overline{N}_{\bsigma(E)}
\]
to be the homomorphism
\[
\big((n_v)_{v\in V(G)},(\ell_E)_{E\in E(G)}\big)
\mapsto
\big(n_{v_E}+\ell_E \mathbf{u}(E)-n_{v'_E}\big)_{E\in E(G)}
\]
where for each $E\in E(G)$, we orient $E$ from a vertex $v_E$
to a vertex $v_E'$ to determine the sign of $\mathbf{u}(E)$, which necessarily
lies in $\overline{N}_{\bsigma(E)}$.
We define the \emph{tropical multiplicity} of $\tau$ to be 
\[
\mu(\tau):=|(\coker\overline\Psi)_{\tors}|.
\]
\end{definition}

\begin{theorem}
\label{thm: rigid degree} 
Let $X\rightarrow B$ be as above.
Fix a rigid tropical type $\btau=(G,\bsigma,{\bf u})$,
and let $\{\btau_v\,|\,v \in V(G)\}$ be the decorated tropical types obtained
by splitting $\btau$ at all edges. This gives a standard gluing
situation, and hence a diagram as in Theorem \ref{thm:gluing factorization}.
With notation as in that theorem,
$\phi$ is degree $\mu(\tau)/m_{\tau}$ onto its image.
\end{theorem}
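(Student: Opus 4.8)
The plan is to deduce Theorem~\ref{thm: rigid degree} from Theorem~\ref{thm:general gluing degree} by showing that the two tropical multiplicities differ by exactly the factor $m_\tau$, i.e.\ that
\[
\mu(\btau,\mathbf{E}) = m_\tau\cdot \mu(\tau),
\]
where $\mathbf{E}=E(G)$ is the full edge set (we are splitting at all edges, so each $\btau_v$ has a single vertex). Indeed, Theorem~\ref{thm:general gluing degree} already tells us that $\phi$ has degree $\mu(\btau,\mathbf{E})$ onto its image, so once the displayed identity is established the theorem follows immediately. So the entire task is a lattice computation comparing $\coker\Psi$ with $\coker\overline\Psi$.

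First I would write down the two maps side by side. The map $\overline\Psi$ of Definition~\ref{def: tropically transverse} lives on the sublattices $\overline{N}_{\bsigma(v)}=\ker(\delta_*\colon N_{\bsigma(v)}\to\ZZ)$, i.e.\ it is the ``relative'' or fibrewise version, built from tangent vectors to $\Delta(X)$. The tropical gluing map $\Psi$ of Definition~\ref{def:tropical gluing map} is built instead from the full basic monoid duals $Q_i^*$ of the vertex moduli; here the target $X/B$ is the degeneration over a trace log point, and the relevant identification is between $Q_i^*$ and the space of tangent vectors to the universal tropical curve of the single-vertex type $\tau_i$. Because each $G_i$ is a single vertex $v$ with some legs (all coming from split edges), the basic monoid $Q_i$ is dual to $N_{\bsigma(v)}$ relative to $\Sigma(B)$: more precisely $Q_i^\vee$ is the cone of tropical maps with that single vertex, and there is a short exact sequence $0\to \overline N_{\bsigma(v)}\to Q_i^* \to \ZZ\to 0$ coming from the structure morphism to $\Sigma(B)$, or a small variant thereof. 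I would make this comparison precise using the description of basic monoids from \cite[\S2.5]{ACGSI} and the discussion of $\Delta(X)$, $\delta\colon\Sigma(X)\to\RR_{\ge0}$ in \S\ref{subsec:rigid}. The extra $\ZZ$ summand per vertex, together with the fact that edge lengths in $\Psi$ are measured absolutely while in $\overline\Psi$ they are measured in $\Delta(X)$, should account for precisely the discrepancy measured by $m_\tau=|\coker(N_\tau\to\ZZ)|$.

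Concretely, the key step is to fit $\Psi$ and $\overline\Psi$ into a commutative ladder with exact rows, analogous to the snake-lemma diagram~\eqref{eq:big N diagram} in the proof of Theorem~\ref{thm:gluing count}. The top row will record the ``absolute versus relative'' difference: on the source side, $\prod_i Q_i^* \times \prod_E\ZZ$ surjects onto $\bigoplus_v \overline N_{\bsigma(v)}\oplus\bigoplus_E\ZZ$ with kernel of rank $|V(G)|$ (one $\ZZ$ per vertex), and one must track the induced map on the target side $\prod_E P_E^*$ versus $\bigoplus_E \overline N_{\bsigma(E)}$. Applying the snake lemma to this ladder yields an exact sequence relating $\coker\Psi$ and $\coker\overline\Psi$, whose extra terms are free or contribute exactly the torsion factor $m_\tau$. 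Since rigidity of $\tau$ forces $\dim\tau=1$, the relevant cokernels are finite up to a controlled free rank, so that the torsion orders multiply as claimed. I would also need to check that the ordering/orientation conventions for edges in the two definitions are compatible, which is routine.

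The main obstacle I anticipate is pinning down the precise short exact sequence linking $Q_i^*$ to $\overline N_{\bsigma(v)}$ in the degeneration setting, and in particular keeping careful track of where the factor $m_\tau$ enters: it comes from the fact that the tropical map of type $\tau$, after rescaling $\Delta(X)$ by $m_\tau$, acquires integral vertices, so the natural lattice map $\ZZ\to\bigoplus_v(\text{vertex }\ZZ\text{-summands})$ coming from ``all vertices move together in the $\Sigma(B)$ direction'' has cokernel of order $m_\tau$ after saturating. Getting the bookkeeping of this single global $\ZZ$ (the rigid direction) right against the per-vertex $\ZZ$'s in the source of $\Psi$ is the delicate part; everything else is diagram-chasing of the kind already carried out in the proof of Theorem~\ref{thm:gluing count}.
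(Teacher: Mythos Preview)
Your overall strategy is exactly the paper's: reduce to Theorem~\ref{thm:general gluing degree} and compare $\coker\Psi$ with $\coker\overline\Psi$ via a snake-lemma ladder. But two concrete slips would send the computation off course.

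First, the direction of the identity is inverted. The degree of $\phi$ is $\mu(\tau,\mathbf{E})$ by Theorem~\ref{thm:general gluing degree}, and the theorem asserts this equals $\mu(\tau)/m_\tau$; hence what must be shown is $\mu(\tau)=m_\tau\cdot\mu(\tau,\mathbf{E})$, not your $\mu(\btau,\mathbf{E})=m_\tau\cdot\mu(\tau)$. Relatedly, there is no mystery in identifying $Q_v^*$: since each split type $\tau_v$ is a single vertex with legs, the only tropical moduli is the position of $v$ in $\bsigma(v)$, so $Q_v=P_{\bsigma(v)}$ and $Q_v^*=N_{\bsigma(v)}$ on the nose. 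Thus $\overline N_{\bsigma(v)}\hookrightarrow Q_v^*$ is an inclusion with cokernel $\ZZ$ via $\delta_*$, and $\overline\Psi$ sits as a \emph{submap} of $\Psi$ (not a quotient, as one of your sentences suggests), with quotient the simplicial cochain map $\partial\colon\prod_v\ZZ\to\prod_E\ZZ$ of $G$. The snake lemma then reads
\[
0\to\ker\overline\Psi\to\ker\Psi\to H^0(G,\ZZ)\to\coker\overline\Psi\to\coker\Psi\to H^1(G,\ZZ)\to 0.
\]

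Second, the factor $m_\tau$ does not arise from a torsion cokernel of the diagonal $\ZZ\to\prod_v\ZZ$ (that cokernel is free). It enters on the \emph{kernel} side: rigidity gives $\ker\overline\Psi=0$ and $\ker\Psi\cong\ZZ$ (the rescaling direction in $\Sigma(X)$), and the map $\ker\Psi\to H^0(G,\ZZ)\cong\ZZ$ is exactly multiplication by $m_\tau$, by the very definition $m_\tau=|\coker(N_\tau\to\ZZ)|$. Since $H^1(G,\ZZ)$ is free, passing to torsion parts of the long exact sequence yields
\[
0\to\ZZ/m_\tau\ZZ\to(\coker\overline\Psi)_{\tors}\to(\coker\Psi)_{\tors}\to 0,
\]
i.e.\ $\mu(\tau)=m_\tau\cdot\mu(\tau,\mathbf{E})$, which is the correct direction.
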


\begin{proof} 
By Theorem \ref{thm:general gluing degree},
it is sufficient to compare the multiplicity $|\coker(\Psi)_{\tors}|$
as defined in Definition \ref{def:tropical gluing map} and the multiplicity
$|\coker(\overline\Psi)_{\tors}|$.
Note that as each split type $\tau_v$ consists of a single vertex
with a number of adjacent edges, the only moduli of tropical
maps of type $\tau_v$ is given by the location of $v$ in $\bsigma(v)$.
Thus one sees that the basic monoid associated to the type $\tau_v$ is
\begin{equation}
\label{eq:QvPv}
Q_v=P_{\bsigma(v)}.
\end{equation}
Thus $Q_v^*=P_{\bsigma(v)}^*=N_{\bsigma(v)}$ and $P_E^*=N_{\bsigma(E)}$
in Definition \ref{def:tropical gluing map}.
The map $\Psi$ of that definition now becomes
\[
\Psi:\bigoplus_{v\in V(G)} N_{\bsigma(v)} \oplus 
\bigoplus_{E\in E(G)} \ZZ
\rightarrow \bigoplus_{E\in E(G)} N_{\bsigma(E)},
\]
and the degree of $\phi$ onto its image is $|(\coker\Psi)_{\tors}|$.

We now have a commutative diagram of exact sequences
\[
\xymatrix@C=30pt
{
0\ar[r]&\prod_v \overline{N}_{\bsigma(v)}\times\prod_E \ZZ
\ar[d]_{\overline\Psi}\ar[r]&\prod_v N_{\bsigma(v)}\times \prod_E\ZZ
\ar[d]_{\Psi}\ar[r]^>>>>>{\delta_*} & \prod_v \ZZ\ar[r]\ar[d]^{\partial}&0\\
0\ar[r]&\prod_E \overline{N}_{\bsigma(E)}\ar[r]&\prod_E N_{\bsigma(E)}
\ar[r]_>>>>>{\delta_*}&\prod_E\ZZ\ar[r]&0
}
\]
Here the two maps labelled $\delta_*$ are induced by $\delta_*:N_{\bsigma(v)}
\rightarrow\ZZ$ and $\delta_*:N_{\bsigma(E)}\rightarrow\ZZ$ for
each vertex $v$ and edge $E$. 
The map $\overline\Psi$ 
is as defined in Definition \ref{def: tropically transverse}.
The map $\partial$ is defined by
\[
\partial\big((n_v)_{v\in V(G)}\big)=\big(n_{v_E}-n_{v'_E}\big)_{E\in E(G)}.
\]
In particular, $\partial:\prod_v \ZZ\rightarrow \prod_E\ZZ$
is the complex calculating the simplicial cohomology of $G$, 
and thus $\ker\partial=H^0(G,\ZZ)=\ZZ$, as $G$ is assumed connected.
Note $\ker\partial$ is generated by $(1)_{v\in V(G)}$.
Also $\coker\partial = H^1(G,\ZZ)=\ZZ^{b_1(G)}$.
Thus the snake lemma gives a long exact sequence
\[
0\rightarrow \ker\overline\Psi\rightarrow \ker\Psi\rightarrow
H^0(G,\ZZ)\rightarrow \coker\overline\Psi\rightarrow\coker\Psi\rightarrow
H^1(G,\ZZ)\rightarrow 0.
\]

Note that $\ker\overline\Psi$ and $\ker\Psi$ can be interpreted as the space of
integral tangent vectors to the moduli space of maps of
type $\tau$ in $\Delta(X)$ and $\Sigma(X)$ respectively.
By the assumption of rigidity of $\tau$, we thus have
$\ker\overline\Psi=0$ and $\ker\Psi=\ZZ$. Further, the map
$\ZZ\cong\ker\Psi\rightarrow H^0(G,\ZZ)\cong \ZZ$ is multiplication
by $m_{\tau}$ by definition of the latter number. This gives an exact
sequence
\[
0\rightarrow \ZZ/m_{\tau}\ZZ\rightarrow\coker\overline\Psi \rightarrow
\coker\Psi\rightarrow H^1(G,\ZZ)\rightarrow 0.
\]
Since $H^1(G,\ZZ)$ is torsion-free and $\ZZ/m_{\tau}\ZZ$ is torsion, 
we easily obtain a short exact sequence
\[
0\rightarrow \ZZ/m_{\tau}\ZZ\rightarrow
(\coker\overline\Psi)_{\tors}\rightarrow
(\coker\Psi)_{\tors}\rightarrow 0.
\]
This shows that the multiplicity $\mu(\tau,\mathbf{E})$ defined in
Definition \ref{def:tropical gluing map} agrees with $\mu(\tau)/m_{\tau}$, and
the result follows.
\end{proof}

Again, we get better behaviour in the tropically transverse case,
where now we have a slightly weaker definition for tropical
transversality.

\begin{definition}
\label{def: tropically transverse2}
Let $\tau$ be the combinatorial type of a rigid tropical curve in
$\Delta(X)$. We say that $\tau$ is \emph{tropically transverse} if the image 
of the map $\overline\Psi$ of 
Definition \ref{def: tropically transverse} has finite index.
\end{definition}

\begin{theorem}
\label{thm:tropically transverse}
In the situation of Theorem \ref{thm: rigid degree}, suppose
that the rigid decorated type $\btau$ is 
tropically transverse, $\ul{\ev}$ is flat, and the central fibre
$\ul{X}_0$ is reduced. Then $\phi$ is a finite dominant morphism
of degree $\mu(\tau)/m_{\tau}$. Further, suppose
that each $\ul{X}_{\bsigma(E)}$
is non-singular (so that $\Delta$ is an lci morphism and the Gysin
map $\Delta^!$ exists). Then 
\[
\phi'_*[\scrM(X_0/b_0,\btau)]^{\virt} = {\mu(\btau)
\over m_{\btau}}
\Delta^!\left(\prod_{v\in V(G)} [\scrM(X_0/b_0,\btau_v)]^{\virt}\right).
\]
\end{theorem}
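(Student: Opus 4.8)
The plan is to deduce the statement by orchestrating the results already established, the one genuinely new ingredient being a dominance argument for $\phi$ that uses only the rigid notion of tropical transversality of Definition~\ref{def: tropically transverse2} (finiteness of $\coker\overline\Psi$), which is strictly weaker than the notion of \S\ref{sec:gluing one curve} (finiteness of $\coker\Psi$); the reducedness of $\ul X_0$ is exactly what bridges the gap. Throughout I would use the identification $\scrM(X/B,-)\cong\scrM(X_0/b_0,-)$ valid, for the rigid type $\btau$ and for the split types $\btau_v$, because their tropical types force the domain into $X_0$; this lets Theorem~\ref{thm:gluing factorization} apply verbatim. For the first assertion, finiteness and representability of $\phi$ are Theorem~\ref{thm:gluing factorization}, and the computation of the degree of $\phi$ onto its image as $\mu(\tau)/m_\tau$ is Theorem~\ref{thm: rigid degree}; since a finite morphism has closed image, once $\phi$ is shown to be dominant its image is all of $\fM^{\sch,\ev(\mathbf{L})}(\cX/B,\tau)$ and its degree is $\mu(\tau)/m_\tau$. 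So the whole first assertion reduces to dominance of $\phi$.

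For dominance I would follow the proof of Theorem~\ref{thm:general gluing degree}: flatness of $\ul{\ev}$, hence of $\ul{\ev}'$, forces every component of $\fM^{\sch,\ev(\mathbf{L})}(\cX/B,\tau)$ to dominate $\prod_{E\in\mathbf{E}}\ul X_{\bsigma(E)}$, and the fibre of $\phi$ over a general geometric point of $\fM^{\sch,\ev(\mathbf{L})}(\cX/B,\tau)$ is, by Remark~\ref{rem:ev space gluing}, the gluing $W$ of the corresponding collection of punctured maps to $\cX$; so it suffices to prove $W\neq\emptyset$. Here Theorem~\ref{thm:tropically transverse case} does not apply directly: in the rigid situation $\coker\Psi$ has rank $b_1(G)$ whenever $\coker\overline\Psi$ is finite, which is visible in the exact sequence $0\to\ZZ/m_\tau\to\coker\overline\Psi\to\coker\Psi\to H^1(G,\ZZ)\to 0$ appearing in the proof of Theorem~\ref{thm: rigid degree}. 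Instead I would apply the non-emptiness criterion Lemma~\ref{lem:point product nonempty} to the Cartesian square \eqref{eq:point gluing product} computing $W$, but taking the auxiliary log points $W_i', X'$ of that lemma to be standard log points mapped into $B$ (and into the relevant strata of $X$) by the structure morphisms, rather than $\Spec\kk$. Condition~(1) of the lemma is then automatic; condition~(2) follows from realizability of $\tau$ over $B$ exactly as in the proof of Theorem~\ref{thm:tropically transverse case}; and condition~(3) is where reducedness of $\ul X_0$ enters: it guarantees that $\delta_*\colon N_\sigma\to\ZZ$ is surjective for every cone $\sigma$ occurring among the $\bsigma(v)$, $\bsigma(E)$, i.e.\ the $\delta$-direction is primitive, so that the structure maps $\NN\hookrightarrow Q_v$, $\NN\hookrightarrow P_E$ are injective and the quotients of $Q_v^\gp$, $P_E^\gp$ by the images of $\ZZ$ are torsion-free and canonically identified with the duals of $\overline N_{\bsigma(v)}$, $\overline N_{\bsigma(E)}$. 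With these identifications the remaining injectivity demanded by condition~(3) unwinds — via the same commutative diagram of exact sequences used in the proof of Theorem~\ref{thm: rigid degree}, now run with the lattices $\overline N$ in place of $N$ — to finiteness of $\coker\overline\Psi$, which is the tropical transversality hypothesis. Hence $W\neq\emptyset$ and $\phi$ is dominant.

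For the virtual class identity, assume in addition that each $\ul X_{\bsigma(E)}$ is non-singular, so $\Delta$ is a regular embedding and $\Delta^!$ exists. By Remark~\ref{rem:this is the best in general}, flatness of $\ul{\ev}$ gives $(\Delta')^!=\Delta^!$ together with
\[
[\scrM^{\sch}(X_0/b_0,\btau)]^{\virt}=\Delta^!\Big(\prod_{v\in V(G)}[\scrM(X_0/b_0,\btau_v)]^{\virt}\Big).
\]
It then remains to push this forward along $\phi'$. The left square of Theorem~\ref{thm:gluing factorization} is Cartesian in all categories, and the relative obstruction theory for $\varepsilon^{\ev}$ is the pullback of that for $\varepsilon^{\sch}$; since $\fM^{\ev(\mathbf{L})}(\cX/B,\tau)$ is reduced and pure-dimensional, $\fM^{\sch,\ev(\mathbf{L})}(\cX/B,\tau)$ is pure-dimensional (flatness of $\ul{\ev}$), and $\phi$ is finite, surjective, and of generic degree $\mu(\tau)/m_\tau$ over each component — the fibre over a general point being the reduced gluing of Theorem~\ref{thm:gluing count}, reduced because $\fM^{\ev(\mathbf{L})}(\cX/B,\tau)$ is — one gets $\phi_*[\fM^{\ev(\mathbf{L})}(\cX/B,\tau)]=(\mu(\tau)/m_\tau)\,[\fM^{\sch,\ev(\mathbf{L})}(\cX/B,\tau)]$ as cycles. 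Compatibility of virtual pullback with proper pushforward across the Cartesian square then yields
\[
\phi'_*[\scrM(X_0/b_0,\btau)]^{\virt}=\frac{\mu(\tau)}{m_\tau}\,[\scrM^{\sch}(X_0/b_0,\btau)]^{\virt},
\]
and combining with the displayed Gysin formula gives the claim (with $\mu(\btau)=\mu(\tau)$, $m_{\btau}=m_\tau$ on decorated types).

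The hard part is the non-emptiness of $W$: one must verify carefully that, under the reducedness of $\ul X_0$, the chosen auxiliary log points turn condition~(3) of Lemma~\ref{lem:point product nonempty} into finiteness of $\coker\overline\Psi$ rather than of $\coker\Psi$, and that the relevant fs fibre product genuinely is the gluing computed ``relative to'' the standard log point. Everything else — the degree count, the Gysin pullback, and the virtual pushforward — is a routine combination of Theorems~\ref{thm:gluing factorization}, \ref{thm: rigid degree} and \ref{thm:general gluing degree} with Remark~\ref{rem:this is the best in general}.
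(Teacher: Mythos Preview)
Your proposal is correct and follows essentially the same route as the paper. Both arguments reduce the first assertion to dominance of $\phi$, establish dominance by applying Lemma~\ref{lem:point product nonempty} with the auxiliary log points taken to be (products of) copies of the standard log point $b_0$ rather than $\Spec\kk$, use reducedness of $\ul X_0$ to make the relevant quotients torsion-free so that condition~(3) of that lemma becomes finiteness of $\coker\overline\Psi$, and then deduce the virtual identity from flatness of $\ul\ev$, the lci hypothesis on $\Delta$, and compatibility of virtual pull-back with proper push-forward. One small inaccuracy: the diagram you invoke for condition~(3) is the variant of diagram~\eqref{eq:big N diagram} from the proof of Theorem~\ref{thm:gluing count} with $\overline N$ in place of $N$, not the snake-lemma diagram from Theorem~\ref{thm: rigid degree}; the paper also spells out condition~(2) more carefully here than in Theorem~\ref{thm:tropically transverse case}, interpreting interior points of $\widetilde Q^\vee$ as rigid tropical maps together with edge-length splittings, but your reference to realizability is the right idea.
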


\begin{proof}
The statement concerning the degree of $\phi$
follows from Theorem \ref{thm: rigid degree} provided that we know
$\phi$ is dominant. To show this, we follow the argument of Theorem
\ref{thm:general gluing degree} and modify the argument of Theorem 
\ref{thm:gluing count}. In particular, we may assume given a strict
geometric point $\bar w:W'\rightarrow \foM^{\sch,\ev}(\cX_0/b_0,\tau)$.
Further, the log structure on $W'$ can be described as a product of log points
$W'=\prod_v W_v$ over $\ul{W}'$ 
with $W_v\rightarrow \fM^{\ev}(\cX_0/b_0,\tau_v)$ 
strict geometric points having image in the dense open strata
of the latter moduli spaces. We need to show that the gluing
$W'\times_{\fM^{\sch,\ev}(\cX_0/b_0,\tau)}\fM^{\ev}(\cX_0/b_0,\tau)$ 
is non-empty for general choice of $\bar w$.
To show this non-emptiness, we return to the setup and notation of the proof of 
Theorems \ref{thm:gluing count} and use the non-emptiness criterion
of Lemma \ref{lem:point product nonempty},
applied to the commutative diagram obtained from the fact that all spaces
involved are defined over $b_0$:\footnote{We note here we work with 
fibre products over $\Spec\kk$ rather than $B=b_0$, as we did in Theorem 
\ref{thm:gluing count}. Here we
apply Proposition \ref{prop:relative versus absolute} to see that we
may just as well work with the moduli spaces over $\Spec\kk$.}
\begin{equation}
\label{eq:Xb diagram}
\xymatrix@C=30pt
{
\prod_{E\in E(G_)}X_{\bsigma(E)}\ar[d]\ar[r]&
\prod_{v\in E\in E(G)} X_{\bsigma(E)}\ar[d]&
\prod_{v\in V(G)} \widetilde W_v\ar[l]\ar[d]\\
\prod_{E\in E(G)} b_0\ar[r]&\prod_{v\in E\in E(G)} b_0&
\prod_{v\in V(G)} b_0\ar[l]
}
\end{equation}
Now $\prod_E b_0\times_{\prod_{v\in E} b_0} \prod_{v} b_0$
is easily seen to be non-empty: indeed, there is a morphism
from $b_0$ to this fibre product induced by the diagonal morphisms
$b_0\rightarrow \prod_E b_0$ and $b_0\rightarrow \prod_v b_0$. (In fact,
the induced morphism is an isomorphism, so the fibre product is $b_0$,
but we don't need this.) This gives condition (1) of
the hypotheses of Lemma \ref{lem:point product nonempty}.

Condition (2) follows from realizability the tropical type
$\tau$. Indeed, at the level of dual of monoids, the fibre product
involving the top row of \eqref{eq:Xb diagram} gives a Cartesian
diagram of monoids
\[
\xymatrix@C=30pt
{
\widetilde Q^{\vee}\ar[r]\ar[d]& \prod_{v\in V(G)} \widetilde Q_v^{\vee}
\ar[d]_{\Sigma(\ev)}\\
\prod_{E\in E(G)} P_{\bsigma(E)}^{\vee}
\ar[r]_{\Sigma(\Delta)}&\prod_{v\in E\in E(G)} P_{\bsigma(E)}^{\vee}
}
\]
Here $\widetilde Q_v$ is the stalk of the ghost sheaf of $\widetilde W_v$,
with $\widetilde Q_v\subseteq Q_v\oplus\bigoplus_{v\in E\in E(G)} \ZZ$.
Recall \eqref{eq:QvPv} that $Q_v=P_{\bsigma(v)}$. We need to show that the image
of $\widetilde Q^{\vee}$ in $\prod_E P_{\bsigma(E)}^{\vee}$ or
$\prod_v \widetilde Q_v^{\vee}$ intersects the interior of that
monoid.

First we describe $\Sigma(\Delta)$ and $\Sigma(\ev)$. Indeed,
$\Sigma(\Delta)$ is just the diagonal, while
\[
\Sigma(\ev)\left((n_v)_{v\in V(G)},(\ell_{v,E})_{v\in E\in E(G)}\right)
=(n_v+\ell_{v,E}\mathbf{u}_v(E))_{v\in E\in E(G)}
\]
where $\mathbf{u}_{v}(E)$ is the contact order of the leg $(v,E)$ of
$G_v$. 
Thus the above fibre product diagram allows us to view $\widetilde Q^{\vee}$
as a submonoid of $\prod_{v\in v(G)} \widetilde Q_v^{\vee}$ consisting
of those tuples $\left((n_v)_{v\in V(G)},(\ell_{v,E})_{v\in E\in E(G)}\right)$
such that, for every edge $E\in E(G)$ with vertices $v_1,v_2$, we have
\begin{equation}
\label{eq:edge midpoint}
n_{v_1}+\ell_{v_1,E}\mathbf{u}_{v_1}(E)=n_{v_2}+\ell_{v_2,E}\mathbf{u}_{v_2}(E).
\end{equation}
We may then interpret a point $s\in \widetilde Q^{\vee}$ as giving the data of:
\begin{enumerate}
\item
The choice of a tropical map $h_s:G\rightarrow\Sigma(X)$ 
of type $\tau$. This tropical map is determined by 
$h_s(v)=n_v \in P^{\vee}_{\bsigma(v)}$,
and the condition \eqref{eq:edge midpoint} then guarantees that
the edge $E$ is still mapped to an edge in $\bsigma(E)$
with tangent vector $\mathbf{u}(E)$.
\item
For each edge $E\in E(G)$, a choice of decomposition of the length 
of $E$ as a sum $\ell_{v_1,E}+\ell_{v_2,E}$. This is given by
\eqref{eq:edge midpoint}: since $\mathbf{u}_{v_2}(E)=-\mathbf{u}_{v_1}(E)$,
\eqref{eq:edge midpoint} can be rewritten as $n_{v_2}-n_{v_1}=(\ell_{v_1,E}
+\ell_{v_2,E})\mathbf{u}_{v_1}(E)$, and thus $\ell_E=\ell_{v_1,E}
+\ell_{v_2,E}$.
\end{enumerate}
However, since $\tau$ is a type of rigid curve, there is a unique
tropical curve $h_s:\Gamma\rightarrow\Sigma(X)$ of type $\tau$, up to scaling.
By scaling this curve sufficiently so it is integrally defined and all
edge lengths are at least $2$, we may also choose non-trivial integral
decompositions of the edge lengths. This yields
an interior point of $\widetilde Q^{\vee}$, which necessarily maps
to the interior of $\prod_v \widetilde Q_v^{\vee}$ and to 
an interior point of $\prod_E P^{\vee}_{\bsigma(E)}$.
This shows condition (2) of Lemma \ref{lem:point product nonempty}.

Abusing notation, we denote by $\delta$ the generator of the ghost sheaf 
$\NN$ of $b_0$, and also denote by $\delta$ its image in any of the 
monoids $P_{\bsigma(E)}$ under the structure map $X_0\rightarrow b_0$.
Note this is largely compatible with our previous use of the notation
$\delta:\Sigma(X)\rightarrow\Sigma(B)$. To verify
condition (3) of the lemma, using the above description
of the monoids involved in the fibre product, we need to show injectivity
of
\[
\bigoplus_{v\in E\in E(G)} P_{\bsigma(E)}^{\gp}/\ZZ\delta
\rightarrow \left[\bigoplus_{E\in E(G)} P_{\bsigma(E)}^{\gp}/\ZZ\delta\right]
\times\left[\bigoplus_{v\in V(G)} P_{\bsigma(v)}^{\gp}/\ZZ\delta
\times\bigoplus_{v\in E\in E(G)}\ZZ\right]
\]
Because of the hypothesis that the central fibre $X_0$ is reduced,
all groups $P^{\gp}_{\bsigma(E)}/\ZZ\delta$ are torsion-free, and hence
injectivity is equivalent to the transpose map having finite cokernel.
However, we have a variant of diagram
\eqref{eq:big N diagram} given by replacing each $N_{\bsigma(E)},
N_{\bsigma(v)}$ with $\overline{N}_{\bsigma(E)}, \overline{N}_{\bsigma(v)}$,
which shows that the cokernel of the transpose of the above map
is finite if and only if the cokernel of $\overline\Psi$ is finite,
which is the tropically transverse condition.

For the last statement, the extra condition  implies $\Delta$ is lci, hence
$\Delta^!$ makes sense. By flatness of $\ul{\ev}$,
the Gysin pull-backs $\Delta^!$ and $(\Delta')^!$ agree. 
The result then follows from Theorem \ref{thm: rigid degree}
and properties of virtual pull-backs \cite[Thm.\ 4.1]{Mano}.
\end{proof}

\section{Punctured versus relative}
\label{sec:punctured versus relative}
In a gluing situation arising from a degeneration situation $X\rightarrow B$
as reviewed in \S\ref{subsec:rigid},
one has the moduli spaces $\scrM(X,\btau_v)$, classifying punctured
maps marked by $\btau_v$. In particular, such maps will factor through
a stratum $X_{\bsigma(v)}$ with its induced log structure. On the
other hand, $X_{\bsigma(v)}$ comes with a divisorial log structure
induced by $\partial X_{\bsigma(v)}\subseteq X_{\bsigma(v)}$, the
union of lower dimensional strata of $X$ contained in $X_{\bsigma(v)}$.
We write this different log scheme as $\overline X_{\bsigma(v)}$.
It is then natural to compare $\scrM(X,\btau_v)$ with a moduli space
of punctured log maps to $\overline X_{\bsigma(v)}$. In general, this
still a non-trivial question, and is related to double ramification
cycles with target, see \cite{JPPZ20},\cite{Lu}.  Here, however, we deal with
a special case, namely where $X_{\bsigma(v)}$ is an irreducible component of
$X_{b_0}$. We deal with somewhat more general types, however.

To set this up, let $\sigma\in\Sigma(X)$ be a non-zero cone, corresponding
to a stratum $X_{\sigma}$, and assume $X_{\sigma}\subseteq X_{b_0}$.
If $\dim\sigma=1$, then $X_{\sigma}$ is an irreducible 
component of $X_{b_0}$. As
above, we have the log scheme $\overline X_{\sigma}$, and there is a 
canonical morphism of log schemes
\[
\psi:X_{\sigma}\rightarrow \overline X_{\sigma}
\]
induced by the natural inclusion $\shM_{\overline X_{\sigma}}
\subseteq \shM_{X_{\sigma}}=\shM_X|_{X_{\sigma}}$. Indeed, smooth
locally at a geometric point $\bar x\in X_{\sigma}$, $X$ is given
as $\Spec \kk[P]$ for $P=\overline{\shM}_{X,\bar x}$, and the monoid
$P$ has a codimension $\dim\sigma$ face $F\subseteq P$ such that $X_{\sigma}$
is smooth locally $\Spec\kk[F]$ and $F=\overline\shM_{\overline X_{\sigma},\bar x}$. 
The log structure on $X_{\sigma}$ at $\bar x$ is given by a neat chart 
$P\rightarrow\cO_{X_{\sigma}}$, and the log structure 
on $\overline{X}_{\sigma}$
is given by restricting this chart to $F$. Hence we obtain the inclusion
of log structures.

Note the inclusion of faces $F\subseteq P$ dualizes to a generization
\[
P^{\vee}_{\RR} \rightarrow F^{\vee}_{\RR}=
(P^{\vee}_{\RR}+\RR\sigma)/\RR\sigma.
\]
This yields a description of the induced map of cone complexes
\[
\Sigma(\psi):\Sigma(X_{\sigma}) \rightarrow \Sigma(\overline X_{\sigma})
\]
at the level of cones of $\Sigma(X_{\sigma})$.
In particular, a type $\tau=(G,\mathbf{g},\bsigma,\bu)$ 
for tropical map to $\Sigma(X_{\sigma})$
induces a type $\bar\tau=(\bar G,\bar{\mathbf{g}},\bar\bsigma,\bar\bu)$ 
for tropical map to $\Sigma(\overline{X}_{\sigma})$.
Indeed, $\bar G$ will coincide with $G$, except some bounded
legs of $G$ may become unbounded. Of course
$\bar{\mathbf{g}}=\mathbf{g}$. For $v\in V(G)$,
we may take $\bar\bsigma(v)$ to be the minimal cone of 
$\Sigma(\overline{X}_{\sigma})$ containing $\Sigma(\psi)(\bsigma(v))$.
For an edge $E$ of $G$, we define $\bar\bu(E)= \Sigma(\psi)_*(\bu(E))$.

We remark that the strict inclusion $X_{\sigma}\hookrightarrow X$
induces a map $\Sigma(X_{\sigma})\rightarrow \Sigma(X)$. Even though
$X$ is assumed to be Zariski, this need not be an inclusion of cone
complexes. For example, if $X_0$ is a union of two $\PP^1$'s meeting
at two points and $X_\sigma$ is one of these $\PP^1$'s, we have
$\Sigma(X)$ a union of two quadrants glued along their boundary, but
$\Sigma(X_{\sigma})$ consists of two quadrants glued together along
one boundary ray. However, the map $|\Sigma(X_{\sigma})|\rightarrow
|\Sigma(X)|$ will always be injective in a neighbourhood of 
$\sigma$. 

Thus, if $\tau$ is a type of punctured map to $X/B$
such that $\sigma\subseteq \bsigma(x)$ for all $x\in V(G)\cup E(G)\cup L(G)$,
we may view $\tau$ as a type of punctured map to $X_{\sigma}/B$, and hence
as above we obtain a type $\bar\tau$ of punctured 
map to $\overline{X}_{\sigma}$.

\begin{example}
Suppose given a type $\tau$ of punctured maps to $X/B$,
with underlying graph $G$ having precisely one vertex $v$ and adjacent
legs $L_1,\ldots,L_n$. Suppose further that $\bsigma(v)=\sigma$. 
Then as above, we obtain a type $\bar\tau$ of
punctured map to $\overline X_{\sigma}$. However, in fact all
contact orders are positive. Indeed, because $\bsigma(v)=\sigma$,
any $L_i$ must have contact order $\mathbf{u}(L_i)$ lying in the
tangent wedge of $\bsigma(L_i)$ along the face $\sigma$ of $\bsigma(L_i)$.
Hence the image of $\mathbf{u}(L_i)$ in the tangent space to
$(\bsigma(L_i)+\RR\sigma)/\RR\sigma$ in fact lies in this cone.
Therefore we may view the type $\bar\tau$ as a type of logarithmic
map to $\overline{X}_{\sigma}$. Note that the $\btau_v$ occuring in the
first paragraph of this section fits into this example.
\end{example}

Still with $\tau$ a type of punctured map to $X/B$ with $\sigma\subseteq
\bsigma(x)$ for all $x\in V(G)\cup E(g)\cup L(G)$,
we have a commutative diagram
\begin{equation}
\label{eq:punctured to relative diagram}
\xymatrix@C=30pt
{
\scrM(X/B,\btau) \ar[r]\ar[d] & \scrM(\overline{X}_{\sigma},\bar\btau)\ar[d]\\
\foM(\shX/B,\tau) \ar[r] & \foM(\overline \shX_{\sigma},\bar\tau)
}
\end{equation}
where $\overline{\shX}_{\sigma}$ is defined in the same way as
$\overline{X}_{\sigma}$.
While the vertical arrows are the standard ones, the horizontal ones
are as follows. By the definition of a $\btau$-marked curve, we have
$\scrM(X/B,\btau)\cong \scrM(X_{\sigma}/B,\btau)$, i.e., any $\btau$-marked
curve factors through $X_{\sigma}$, by \cite[Def.~3.4,(1)]{ACGSII}.
Given a stable punctured map $f:C^{\circ}/W\rightarrow X_{\sigma}$,
we have $\psi\circ f: C^{\circ}/W\rightarrow\overline{X}_{\sigma}$.
In general, this is neither a pre-stable nor basic punctured map, but
we may pre-stabilize by \cite[Prop.~2.5]{ACGSII} and replace with a
basic punctured map by \cite[Prop.~2.35]{ACGSII}.
This gives the upper horizontal arrow, and a similar discussion
gives the lower horizontal arrow.

\begin{theorem}
\label{thm:punctured to relative}
Let $X\rightarrow B$, $\btau$, $\sigma$ be as in the above discussion,
and suppose further that $\dim\sigma=1$. Then the horizontal arrows
of the diagram \eqref{eq:punctured to relative diagram} are isomorphisms
at the level of underlying stacks (but not at the level of logarithmic
stacks). Furthermore these isomorphisms induce an isomorphism of
obstruction theories.
\end{theorem}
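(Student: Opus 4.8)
The plan is to reduce the statement to the level of Artin fans, where the hypothesis $\dim\sigma=1$ becomes decisive. The first observation is that the upper horizontal arrow of \eqref{eq:punctured to relative diagram} is essentially the identity on underlying stable-map data: $\psi\colon X_\sigma\to\overline{X}_\sigma$ is the identity on underlying schemes and only forgets part of the logarithmic structure, and since $\dim\sigma=1$ the part forgotten is a single rank-one piece; because $X_\sigma\subseteq X_{b_0}$ the defining equation of $X_\sigma$ in $X$ vanishes on $X_\sigma$, so this rank-one piece is precisely the log structure attached to the line bundle $\cN:=\cO_X(X_\sigma)|_{X_\sigma}=N_{X_\sigma/X}$ with its \emph{zero} section. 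Composing $f$ with $\psi$, pre-stabilizing, and passing to the associated basic map does not alter $\ul C$ or $\ul f$ (as noted just before the theorem, once one pre-stabilizes every puncture becomes an ordinary marked point, so no component is contracted). Hence it remains to show that the two horizontal arrows are equivalences of stacks, that \eqref{eq:punctured to relative diagram} is cartesian on underlying stacks (so the upper arrow follows from the lower), and that the relative obstruction theories of the two vertical maps $\varepsilon$ are carried into one another.

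The heart of the matter is the lower arrow $\ul{\foM(\shX/B,\tau)}\to\ul{\foM(\overline\shX_\sigma,\bar\tau)}$. Here I would use the explicit local description of $\foM(\shX/B,\tau)$ from \cite[Prop.~3.23, Thm.~3.24]{ACGSII}: étale locally it is a stratum of a toric variety produced from $\mathbf{M}(G,\mathbf{g})\times B$ with an idealized structure read off from $\bsigma$. Passing from $(\shX,\tau)$ to $(\overline\shX_\sigma,\bar\tau)$ removes exactly the $\cN$-direction from every ghost monoid $P_{\bsigma(x)}$ that occurs. The role of $\dim\sigma=1$ and $X_\sigma\subseteq X_{b_0}$ is that, in the \emph{relative} Artin fan $\shX=\shA_X\times_{\shA_B}B$, along the strata met by maps of type $\tau$ this extra direction is identified, modulo units, with the pullback of $\shM_B$: at a point of $X_\sigma$ all toric coordinates other than the local equation of $X_\sigma$ restrict to units on $X_\sigma$, and that equation is the image of a uniformizer of $B$ up to a unit. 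Consequently the fibre product over $\shA_B$ absorbs the corresponding $\GG_m$; since these maps factor through $X_\sigma\subseteq X_{b_0}$ the moduli lives over $b_0$, and deleting this direction leaves the underlying stack of $\foM(\shX/B,\tau)$ unchanged, so the lower arrow is an isomorphism of underlying stacks — though not of log stacks, as on the logarithmic side the idealized $\cN$-direction (equivalently, the $b_0$-factor one would lose by working over $\Spec\kk$, cf.\ Proposition~\ref{prop:relative versus absolute}) survives. As a reassurance, on a geometric point $W=\Spec(Q\to\kappa)$ the basic monoid satisfies $Q^\vee\cong\bar Q^\vee\times\RR_{\ge0}$, the extra ray recording the position of the single vertex $v$ along $\sigma$; Proposition~\ref{prop:balancing} pins the contact orders in this direction to $\deg(\ul f^*\cN)|_{\ul C_v}$, so no moduli are added, and $\ul W=\ul{\bar W}=\Spec\kappa$.

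For the descent to $\scrM$ and the obstruction theories: by the previous step $\ul\shX$ agrees with $\ul{\overline\shX_\sigma}$ in a neighbourhood of the locus hit by type-$\tau$ maps, $\ul{X_\sigma}=\ul{\overline X_\sigma}$, and both $X_\sigma\to\shX$ and $\overline X_\sigma\to\overline\shX_\sigma$ are strict with the same underlying morphism there. Therefore the lifting problem cutting $\scrM$ out of (a space over) $\foM$ is literally the same on the two sides, which yields the cartesian property of \eqref{eq:punctured to relative diagram} on underlying stacks and hence the upper isomorphism; and the relative obstruction theory for $\varepsilon$ produced in \cite[\S4]{ACGSII}, being controlled by the relative log tangent complex of $X$ over $\shX$, equals — by strictness — the ordinary relative tangent complex of $\ul{X_\sigma}$ over $\ul\shX$, with the identical identification for $\overline X_\sigma$ over $\overline\shX_\sigma$. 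Thus the obstruction theories match under the isomorphisms above, and the virtual fundamental classes coincide even though the two log stacks are not isomorphic.

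The step I expect to be hardest is the uniformity asserted in the second paragraph over the \emph{deeper} strata of $X_\sigma$ — triple points and worse — where the forgotten $\cN$-direction is a genuine toric coordinate and agrees with the pullback of $\shM_B$ only modulo units on $X_\sigma$; one has to check, using the idealized toric-chart models of \cite[Prop.~3.23, Thm.~3.24]{ACGSII}, that the $\shA_B$-fibre product kills the corresponding $\GG_m$ uniformly and produces an equivalence of stacks rather than merely a bijection on geometric points, and that the pre-stabilization and basic-ification of the first paragraph are genuinely inverse to this reconstruction, functorially in families.
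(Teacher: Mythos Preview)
Your approach is genuinely different from the paper's and contains a real gap. The paper does not argue via the Artin fan and a cartesian square; it builds an explicit inverse functor. Given a basic $\bar\btau$-marked map $\bar f:\overline C/\overline W\to\overline X_\sigma$ with basic cone $\bar\tau_{\bar w}$ at a geometric point $\bar w$, it defines an upper-convex piecewise-linear function $\alpha:\bar\tau_{\bar w}\to\RR_{\ge 0}$ (the minimal $r$ for which the tropical map $\bar h_s$ lifts into $\Delta_r\subset\Sigma(X_\sigma)$), sets $\tau_{\bar w}=\{(s,r):r\ge\alpha(s)\}$, takes $Q_{\bar w}=\tau_{\bar w}^\vee$, and then manufactures the log structure on $W$ and the puncturing on $C$ by hand from this data. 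The obstruction-theory claim is dispatched, as you do, by noting the construction does not see the log structure.

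The gap in your argument is the sentence ``this extra direction is identified, modulo units, with the pullback of $\shM_B$.'' This is true at a generic point of $X_\sigma$, but fails at deeper strata. At a double point $X_\sigma\cap X_{\sigma'}$ the ghost sheaf of $X$ is $\NN^2$; the image of $\overline\shM_B$ is the diagonal $(1,1)$, while the direction dropped in passing to $\overline X_\sigma$ is the axis corresponding to $\sigma$, say $(1,0)$. These do not coincide, so the $\shA_B$-fibre product does not ``absorb'' the forgotten $\GG_m$ in the way you claim. Correspondingly, your reassurance $Q^\vee\cong\bar Q^\vee\times\RR_{\ge 0}$ holds only for the open type $\tau$ with its single vertex on $\sigma$; for a degenerate type $\tau'$ the basic cone is $\{(s,r):r\ge\alpha(s)\}\subsetneq\bar\tau'\times\RR_{\ge 0}$, and $\alpha$ is genuinely nonzero whenever some vertex of $\bar G'_{\bar w}$ maps into a nonzero cone of $\Sigma(\overline X_\sigma)$. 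The function $\alpha$ is precisely what encodes the discrepancy between the $\sigma$-direction and the $B$-direction across all strata, and it (together with the verification that the resulting puncturing is pre-stable) is the substantive content your sketch is missing. Your final paragraph correctly identifies this as the hard step, but the mechanism you propose to resolve it is the one that fails.
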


\begin{proof}
We work with the top horizontal arrow of 
\eqref{eq:punctured to relative diagram}, as the Artin fan case is identical.
We need to construct the inverse map. Explicitly, given a basic
$\bar\btau$-marked stable punctured
map $\bar f:\overline{C}^{\circ}/\overline{W}\rightarrow 
\overline{X}_{\sigma}$, we need to construct a stable punctured $\btau$-marked
map $f:C^{\circ}/W\rightarrow X_{\sigma}$ defined over $B$, or
equivalently, over $b_0$. 
The maps $f,\bar f$ will be the same on schemes,
but we need to modify the log structures on $\overline{W}$, $\overline{C}$.

To this end, let $\overline Q_{\bar w}$ be the stalk of the ghost 
sheaf at a geometric point $\bar w \in \overline W$, so that
$\bar\tau_{\bar w}= \overline Q^{\vee}_{\bar w,\RR}$ 
parametrizes a universal family of tropical
maps $\bar h:\Gamma(\bar G_{\bar w},\bar\ell)\rightarrow \Sigma(\overline{X}_{\sigma})$,
inducing for each $s\in \bar\tau$ a tropical map $\bar h_s:\bar G_{\bar w}
\rightarrow \Sigma(\overline{X}_{\sigma})$.

For $r\in\RR_{\ge 0}$, write $\Delta_r:= \delta^{-1}(r)$, 
where $\delta:\Sigma(X_{\sigma})\rightarrow
\Sigma(B)$ is the tropicalization of $X_{\sigma}\rightarrow B$.
Of course $\Delta_1$ agrees with the underlying topological space of
$\Delta(X_{\sigma})$.
Note that we may restrict the map $\Sigma(\psi)$ to $\Delta_r$
to obtain an inclusion of topological spaces $\Delta_r
\hookrightarrow |\Sigma(\overline X_{\sigma})|$. Indeed, it is sufficient
to check that for each $\omega\in\Sigma(X_{\sigma})$, 
$\Sigma(\psi)$ induces an inclusion on $\omega_r:=\omega\cap\delta^{-1}(r)$.
But this follows immediately from the fact that $\Sigma(\psi)|_{\omega}$
is given by localizing along the ray $\sigma\subseteq\omega$ and
$\delta|_{\sigma}$ surjects onto $\Sigma(B)$. We thus view $\Delta_r$
in this way as a subspace of $\Sigma(\overline{X}_{\sigma})$.

One also easily sees that the spaces
$\Delta_r$ exhaust $|\Sigma(\overline X_{\sigma})|$, i.e., for
$y \in |\Sigma(\overline X_{\sigma})|$, there exists an $r_0\ge 0$
such that $y\in \Delta_r$ for all $r\ge r_0$.

Let $\bar G'_{\bar w}$ be the subgraph of $\bar G_{\bar w}$ 
obtained by deleting all legs of $\bar G_{\bar w}$. We define a function
$\alpha:\bar\tau_{\bar w}\rightarrow \RR_{\ge 0}$ by 
\[
\alpha(s):=\inf\{ r \in \RR_{\ge 0}\,|\, \bar h_s(\bar G'_{\bar w})
\subseteq \Delta_r\}.
\]
The set on the right-hand side is non-empty, and hence this makes sense.

\begin{claim}
$\alpha$ is an upper convex piecewise linear function on 
$\bar\tau_{\bar w}$ with rational slopes.
\end{claim}

\begin{proof}
Suppose $\omega\in\Sigma(X_{\sigma})$, with $\sigma\subseteq\omega$.
Let $F^1,\ldots,F^p\subseteq\omega$ be the codimension one faces of
$\omega$ not containing $\sigma$ with $\delta|_{F^i}$ surjective.
Let $H=\delta^{-1}(0)\cap\omega$. Note $H$ does not contain
$\sigma$.
We then have the Minkowski decomposition
\[
\omega_r = \mathrm{Conv}\{F^1_r,\ldots,F^p_r,\sigma_r\}+H
\]
where $\mathrm{Conv}$ denotes convex hull.

Let $n_{F^i}\in \omega^{\vee}$ be a rational generator of the dual 
one-dimensional face to $F^i$. Since $\sigma\not\subseteq F^i$, 
$n_{F^i}$ is positive on $\sigma\setminus\{0\}$.
So we can normalize $n_{F^i}$ so that $n_{F^i}|_{\sigma}$ agrees with 
$\delta|_{\sigma}$ by rescaling by a rational number. Thus $\delta-n_{F^i}$
vanishes on $\sigma$ and hence descends to a linear function on
$\bar\omega=(\omega+\RR\sigma)/\RR\sigma$. 
Further, $\delta-n_{F^i}$ takes the value $r$ on $F^i_r$.

We now show that
\begin{equation}
\label{eq:omegar image}
\Sigma(\psi)(\omega_r) = \{ m \in \bar\omega\,|\, 
\langle \delta- n_{F^j} ,m\rangle \le r, 1\le j\le p\}.
\end{equation}
To see the forward inclusion, we may write an element $w$
of $\omega_r$ as 
\[
\sum_{i=1}^p a_i f_i + a_{p+1} s + h
\]
with $f_i\in F^i_r$, $s\in \sigma_r$, $\sum_{i=1}^{p+1}a_i=1$, 
$a_i\ge 0$, and $h\in H$. Then 
\[
(\delta-n_{F^j})(w) = \sum_{i=1}^p a_i r -
\sum_{i=1}^p a_i \langle n_{F^j},f_i\rangle - \langle n_{F^j},h\rangle
\le r.
\]
Conversely, if $m$ lies in the right-hand side of \eqref{eq:omegar image},
choose a lift $m'$ of $m$ to $\omega$. Write $\sigma_1=\{s\}$, and 
set $m''=m'+s (r-\delta(m'))$. Then $m''$ is another lift of $m$ to
$\omega^{\gp}$, but may not lie in $\omega$. However,
$\delta(m'')=r$, and we would like to show
$m''\in\omega$. First, $\langle \delta - n_{F^j}, m''\rangle 
=\langle \delta - n_{F^j},m\rangle \le r$, so 
$\langle n_{F^j}, m''\rangle \ge \delta(m'')-r = 0$. If $K\subseteq
\omega$ is a codimension one face of $\omega$ containing $\sigma$, and
$n_{K}$ is a generator of the dual ray, then $\langle n_{K}, 
m''\rangle
=\langle n_{K}, m'\rangle \ge 0$. Finally, if $H=\delta^{-1}(0)
\cap\omega$ is a codimension one
face of $\omega$, then $\delta$ is a generator of the dual ray and
$\delta(m'')=r>0$. Thus we see that $m''$ is positive on all generators
of rays of $\omega^{\vee}$, and hence lies in $\omega$, hence
in $\omega_r$. So $m$ lies in $\Sigma(\psi)(\omega_r)$. We have
now shown \eqref{eq:omegar image}.

Now for each vertex $v\in V(\bar G_{\bar w})$, we have the
evaluation map $\ev_v:\bar\tau_{\bar w}\rightarrow
\bar\bsigma(v)$ at $v$ given by \eqref{eq:eval def}. 
Note that $\bar h_s(\bar G'_{\bar w})
\subseteq \Delta_r$ if and only if $\bar h_s(v)\in \Sigma(\psi)(\bsigma(v)_r)$ 
for all $v\in V(\bar G_{\bar w})$, where $\bsigma(v)$ is the cone of
$\Sigma(X_{\sigma})$ mapping to $\bar\bsigma(v)$ under $\Sigma(\psi)$.
In particular, 
for a given $v$, $\bar h_s(v)\in \Sigma(\psi)(\bsigma(v)_r)$
if and only if
$\langle \delta - n_{v,F^j},\ev_v(s)\rangle \le r$ for $n_{v,F^j}$
running over
the set of normal vectors to codimension one faces of $\bsigma(v)$ 
not containing $\sigma$ or contained in $\delta^{-1}(0)$ as above. 
Thus $\alpha$ is given as 
\[
\alpha(s)= \sup \{\langle \delta-n_{v,F^j},\ev_v(s)\rangle\},
\]
with the supremum running over all $v\in V(\bar G_{\bar w})$ and normal
vectors $n_{v,F^j}$, i.e., $\alpha$ agrees with the supremum
of the linear functionals $\ev_v^*(\delta-n_{v,F^j})$. This
makes $\alpha$ piecewise linear and (upper) convex.
\end{proof}

We may now define 
\[
\tau_{\bar w}:=\{(s,r)\in \bar\tau_{\bar w}\times\RR_{\ge 0}
\,|\,r\ge \alpha(s)\}.
\]
This is a rational polyhedral cone by the claim. By construction,
$\tau_{\bar w}$ parameterizes lifts of the tropical maps $\bar h_s$ to
$\Sigma(X_{\sigma})$. Indeed, if $(s,r)\in \tau_{\bar w}$,
$\bar h_s$ maps $\bar G'_{\bar w}$ into 
$\Delta_r$, which we now view as a subset of $|\Sigma(X_{\sigma})|$.
We may then extend this map as far as possible along each leg $L_i$ to
define a domain graph $G_{\bar w}$, which coincides with $\bar G_{\bar w}$ 
except that $\bar G_{\bar w}$ may have some unbounded legs which
are replaced with line segments. We then get
$h_{s,r}:G_{\bar w}\rightarrow \Sigma(X_{\sigma})$. In particular, 
an unbounded leg $L_i$
turns into a punctured leg, i.e., a line segment, if only a portion of 
$\bar h_s(L_i)$ is contained in $\Delta_r$.

Note here we may view $\tau_{\bar w}$ as type of tropical map
to $\Sigma(X_{\sigma})$, with $\bsigma$ defined in the obvious way
from $\bar\bsigma$ and $\bu(E)$ a tangent vector of $\bsigma(E)_1$ mapping
under $\Sigma(\psi)_*$ to $\bar\bu(E)$.
In particular, we obtain a universal family of tropical maps of type
$\tau_{\bar w}$,
\begin{equation}
\label{eq:lifted family}
h_{\bar w}:\Gamma(G,\ell)\rightarrow \Sigma(X_{\sigma}).
\end{equation}

We then have the corresponding basic monoid
\[
Q_{\bar w} := \tau_{\bar w}^{\vee}\cap (N_{\bar\tau_{\bar w}}^*\oplus\ZZ).
\]
Note that 
\[
Q_{\bar w} \subseteq N_{\bar\tau_{\bar w}}^*\oplus\NN
\]
since $(0,1)\in \tau_{\bar w}$.
The monoid $Q_{\bar w}$ will be the basic monoid for the point 
$\bar w$ for the punctured log map to $X_{\sigma}$ we are going to construct.
Note the projection $\tau_{\bar w}\rightarrow \bar\tau_{\bar w}$
dualizes to an inclusion $\overline{Q}_{\bar w}\hookrightarrow
Q_{\bar w}$ which identifies $\overline{Q}_{\bar w}$ with
the facet $\{(q,0)\in Q_{\bar w}\}$ of $Q_{\bar w}$. 

It is easy to check that if
$\bar w'$ is a generization of $\bar w$, the above construction of $Q_{\bar w}$ 
is compatible with generization maps, i.e., the generization map
$\overline Q_{\bar w}\rightarrow \overline Q_{\bar w'}$ 
induces a generization map $Q_{\bar w}\rightarrow Q_{\bar w'}$. Indeed,
dually,
we have an inclusion of faces $\bar\tau_{\bar w'}\subseteq \bar\tau_{\bar w}$, 
and $\alpha:\bar\tau_{\bar w}\rightarrow\RR_{\ge 0}$ 
restricts to the corresponding
map for $\bar\tau_{\bar w'}$. Hence we obtain an inclusion of faces
$\tau_{\bar w'}\subseteq\tau_{\bar w}$. From this, we see that
the monoids $Q_{\bar w}$ for
various $\bar w$ define a fine subsheaf $\overline{\cM}_W$ 
of $\overline{\cM}_{\overline{W}}^{\gp}\oplus\NN$ containing
$\overline{\cM}_{\overline{W}}\oplus\NN$.

Let $W^{\dagger}=\overline W\times b_0$, so that $\overline{\cM}_{W^{\dagger}}
= \overline{\cM}_{\overline W}\oplus\NN$, and set 
\[
\cM_W:=\overline{\cM}_W\times_{\overline{\cM}_{W^{\dagger}}^{\gp}}
\cM_{W^{\dagger}}^{\gp}.
\]
We may then define a structure morphism $\alpha_W:\cM_W\rightarrow \cO_W$
by taking $\alpha_W=\alpha_{\overline{W}}$ on $\cM_{\overline W}\oplus 0
\subseteq \cM_W$ and $\alpha_W$ taking the value $0$ on 
$\cM_W\setminus (\cM_{\overline W}\oplus 0)$. Thus we obtain an fs log scheme $W$.

We have morphisms $W\rightarrow W^{\dagger} \rightarrow \overline W$ 
by construction,
and the log smooth curve $\overline C\rightarrow \overline W$ 
pulls back to give log smooth curves
$C^{\dagger}\rightarrow W^{\dagger}$
and $C\rightarrow W$. In addition, the punctured curve
$\overline C^{\circ}\rightarrow\overline W$ pulls back to give a
punctured curve 
\[
(C^{\dagger})^{\circ}=\overline C^\circ\times b_0
\rightarrow W^{\dagger}.
\]
The morphism 
$\bar f:\overline C^{\circ}\rightarrow \overline{X}_{\sigma}$ 
induces a morphism $f^{\dagger}:(C^{\dagger})^{\circ}\rightarrow
\overline X_{\sigma}\times b_0$ defined over $b_0$.
We wish to define a punctured
structure $C^{\circ}$ on $C$ yielding a commutative diagram
\[
\xymatrix@C=30pt
{
C^{\circ}\ar[r]^f\ar[d]& X_{\sigma}\ar[d]\\
(C^{\dagger})^{\circ}\ar[r]_{f^{\dagger}}&\overline X_{\sigma}\times b_0
}
\]
As $\cM_{W^{\dagger}}^{\gp}=\cM_{W}^{\gp}$, we would have $\cM^{\gp}_{C^{\circ}}
=\cM^{\gp}_{C^{\dagger}}$ if $C^{\circ}$ exists. Also, 
$\cM_{X_{\sigma}}^{\gp}=\cM_{\overline{X}_\sigma\times b_0}^{\gp}$, so
$f^{\flat}$ and $(f^{\dagger})^{\flat}$ would have to agree at the level
of groups.
It is thus sufficient to construct a pre-stable punctured log structure 
$C^{\circ}$ on $C$ so that
\begin{equation}
\label{eq:needed inclusion}
\overline{(f^{\dagger})}^{\flat}(\overline{\cM}_{X_{\sigma}})
\subseteq \overline{\cM}_{C^{\circ}}.
\end{equation}
Note that the saturation of $\overline{\cM}_{C^{\circ}}$ over $\bar w
\in W$, as well as the map $\overline{(f^\dagger)}^{\flat}:f^{-1}
\overline\shM_{X_{\sigma}}\rightarrow\overline\shM_{C^\circ}^{\mathrm{sat}}$,
are completely determined locally near $\bar w$ by
the data of the tropical family of maps $h_{\bar w}$
of \eqref{eq:lifted family}. Thus we only need to consider
the punctured points. 

So let $p\in C_{\bar w}$ be a marked point corresponding to a leg $L$
of $\bar G_{\bar w}$ (or the corresponding leg of $G_{\bar w}$). 
We have the monoids $\overline P_{\bar\bsigma(L)}$ 
(resp.\ $P_{\bsigma(L)}$) which are the stalks
of $\overline{\cM}_{\overline{X}_{\sigma}}$ 
(resp.\ $\overline{\cM}_{X_{\sigma}}$)
at the generic point of the
strata of $\overline{X}_{\sigma}$ (resp.\ $X_{\sigma}$)
corresponding to $\bar\bsigma(L)$ (resp.\ $\bsigma(L)$).
Then $\overline P_{\bar\bsigma(L)}$ is a codimension one face
of $P_{\bsigma(L)}$, and 
we have a commutative diagram with horizontal arrows induced by
$\bar f$, $f^{\dagger}$ and the family of tropical maps parameterized by
$\tau_{\bar w}$:
\[
\xymatrix@C=30pt
{
\overline{P}_{\bar\bsigma(L)}\ar[r]\ar[d]&
\overline{\cM}_{\overline{C}^{\circ},p}
\subseteq\overline Q_{\bar w}\oplus\ZZ\ar[d]\\
\overline{P}_{\bar\bsigma(L)}\oplus\NN\ar[r]\ar[d]&
\overline{\cM}_{(C^{\dagger})^{\circ},p}\subseteq
(\overline Q_{\bar w}\oplus\NN)\oplus\ZZ
\ar[d]\\
P_{\bsigma(L)} \ar[r] & \overline{\cM}_{C^{\circ},p}\subseteq 
Q_{\bar w}\oplus \ZZ
}
\]
Here the inclusion $\overline{P}_{\bar\bsigma(L)}\oplus\NN\rightarrow 
P_{\bsigma(L)}$ identifies $P_{\bsigma(L)}$ with a submonoid of
$\overline{P}^{\gp}_{\bar\bsigma(L)}\oplus\NN$ with the facet
$\overline{P}_{\bar \bsigma(L)}$ of $P_{\bsigma(L)}$ identified with
$\overline{P}_{\bar \bsigma(L)}\oplus 0$.
To obtain a pre-stable puncturing, we take the stalk of the ghost sheaf
of $\overline{\cM}_{C^{\circ}}$ at $p$ to be the submonoid of 
$Q_{\bar w}\oplus\ZZ$
generated by $Q_{\bar w}\oplus\NN$ and the image of $P_{\bsigma(L)}$
under $\overline{(f^{\dagger})}^{\flat}$.
We just need to make sure that this defines a puncturing.

Let $\varphi_L:\overline{P}_{\bar\bsigma(L)}
\oplus\NN\rightarrow \overline Q_{\bar w}\oplus \NN$ be the composition
of $\overline{(f^{\dagger})}^{\flat}$
with the projection to $Q_{\bar w}\oplus\NN$. By construction,
$\varphi_L$ preserves the second component, i.e., $\varphi_L(p,r)=(q,r)$
for some $q$. Also by construction, $\varphi_L^{\gp}$ coincides
with the similarly defined map on groups
$P^{\gp}_{\bsigma(L)}\rightarrow \overline{\shM}_{C^{\circ},p}^{\gp}$
coming from $f^\flat$, and $\varphi_L^{\gp}
(P_{\bsigma(L)}\setminus \overline{P}_{\bar\bsigma(L)})\subseteq
Q_{\bar w}\setminus \overline{Q}_{\bar w}$.
Suppose $(\bar m_1,\bar m_2)\in Q_{\bar w}\oplus\ZZ$ is
in the image of $P_{\bsigma(L)}$ and $\bar m_2<0$. 
To show that $\shM_{C^{\circ}}$ is a puncturing, we need to show that if
$m_1$ is a lift of $\bar m_1$ to a local
section of $\shM_W$, then $\alpha_W(m_1)=0$. Now
$\bar m_1 = \varphi_L^{\gp}(\bar n_1)$
for some $\bar n_1 \in P_{\bsigma(L)}$. We have two cases.
If $\bar n_1\in
P_{\bsigma(L)}\setminus \overline{P}_{\bar\bsigma(L)}$, then
$\varphi_L^{\gp}(\bar n_1)\in Q_{\bar w}\setminus \overline{Q}_{\bar w}$.
Hence, by definition of the structure map $\alpha_W$, we have
$\alpha_W(m_1)=0$, as desired. On the other hand, if
$\bar n\in \overline P_{\bsigma(L)}$, then $\alpha_W(m_1)=\alpha_{\overline{W}}
(m_1)=0$, as $\overline{C}^{\circ}$ is a punctured curve.

We have thus now constructed a basic punctured map $f:C^{\circ}/W
\rightarrow X_{\sigma}$. This construction
is functorial and hence defines a morphism
\[
\underline{\scrM(\overline{X}_{\sigma},\bar\btau)}
\rightarrow \underline{\scrM(X_{\sigma}/B,\btau)}.
\]

This morphism is checked to be the inverse to
the morphism 
$\ul{\scrM(X_{\sigma}/B,\btau)}\rightarrow 
\underline{\scrM(\overline X_{\sigma},\bar\btau)}$
previously constructed. 
Hence these stacks are isomorphic. Note they carry different log structures.

To compare obstruction theories, note that given the universal
punctured log maps $f:C^{\circ}\rightarrow X_{\sigma}$ over
$\scrM(X_{\sigma}/B,\btau)$ and  $\overline f:\overline C^{\circ}
\rightarrow \overline X_{\sigma}$ over $\scrM(\overline X_{\sigma},\btau)$,
the obstruction theories are given by $R\pi_* f^*\Theta_{X/B}$ and
$R\bar\pi_* f^*\Theta_{\overline X_{\sigma}}$ respectively, 
where $\pi,\bar\pi$ are 
the maps from the universal curves down to their moduli spaces.
However, $\Theta_{X/B}|_{X_{\sigma}}=\Theta_{\overline X_{\sigma}}$.
Indeed, working dually with sheaves of differentials, 
$\Omega_{X/B}|_{X_{\sigma}}\cong \Omega_{X_{\sigma}/B}$ by 
\cite[IV,Cor.~2.3.3]{Ogus}. On the other hand, there is a natural
morphism $\Omega^1_{\overline{X}_{\sigma}}\rightarrow
\Omega_{X_{\sigma}/B}$ by \cite[IV,Prop.~1.2.15]{Ogus}. This is checked
to be an isomorphism explicitly on charts using \cite[IV,Prop.~1.1.4]{Ogus}.
\end{proof}

\section{The classical degeneration formula}
\label{sec:classical}

The classical degeneration situation, originally considered by Li
and Ruan in \cite{LR} and developed in the algebro-geometric context by
Jun Li in \cite{Li} is a special case of the degeneration situation of
the previous section. We have already discussed this case in the
context of the decomposition formula in \cite[\S6.1]{ACGSI}. We consider
$X\rightarrow B$ a simple normal crossings degeneration with 
$X_0=Y_1\cup Y_2$ a reduced union of two irreducible components,
with $Y_1\cap Y_2=D$ a smooth divisor in both $Y_1$ and $Y_2$.
In this case $\Sigma(X)=(\RR_{\ge 0})^2$ with map $\Sigma(X)\rightarrow
\Sigma(B)$ given by $(x,y)\mapsto x+y$, so that $\Delta(X)$ is a unit
interval. It was shown in \cite[Prop.~6.1.1]{ACGSI} 
that if $f:\Gamma\rightarrow
\Delta(X)$ is a rigid tropical map, then all vertices of $\Gamma$
map to endpoints of $\Delta(X)$ and all edges of $\Gamma$ surject
onto $\Delta(X)$. In this case, giving a rigid tropical map with target 
$\Delta(X)$ is the same information as an admissible triple of \cite{Li}.

Using the setup of \S\ref{sec:degeneration gluing}, we immdiately obtain
the logarithmic stable map version of the main result of \cite{Li},
as proved in \cite{KLR}:

\begin{theorem}
In the situation described above, let $\btau$ be a decorated
type of rigid tropical map in $\Delta(X)$. There is a diagram
of (non-logarithmic) stacks
\[
\xymatrix@C=30pt
{
\ul{\scrM(X_0/b_0,\btau)}\ar[r]^{\phi'}&
\ul{\scrM^{\mathrm{sch}}(X_0/b_0,\btau)}\ar[r]
\ar[d]& \prod_{v\in V(G)}
\ul{\scrM(\overline X_{\bsigma(v)},\bar\btau_v)}\ar[d]\\
&\prod_{E\in E(G)}\ul{D}\ar[r]_{\Delta}&
\prod_{v\in E\in E(G)} \ul{D}
}
\]
with the square Cartesian and defining the
space $\ul{\scrM^{\mathrm{sch}}(X_0/b_0,\btau)}$. 
Further, $\phi'$ is finite and
\[
\phi'_*[\scrM(X_0/b_0,\btau)]^{\virt}
= m_{\btau}^{-1}
\left(\prod_{E\in E(G)} w(E)\right) \Delta^!\left(\prod_{v\in V(G)} 
[\scrM(\overline X_{\bsigma(v)},\bar\btau_v)]^{\virt}\right),
\]
where $w(E)$ is the index (degree of divisibility) of $\mathbf{u}(E)$.
\end{theorem}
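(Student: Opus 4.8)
The plan is to derive this as a direct specialization of Theorem~\ref{thm:tropically transverse}, with two points to verify: that the gluing situation is always tropically transverse in the classical setup, and that the diagram and the multiplicity match the stated ones. First I would observe that, by \cite[Prop.~6.1.1]{ACGSI}, a rigid tropical map $f:\Gamma\to\Delta(X)$ has all vertices over the two endpoints of the unit interval $\Delta(X)$, so each $\bsigma(v)$ is a ray (one of the two copies of $\RR_{\ge0}$), each $\bsigma(E)$ is the full quadrant $(\RR_{\ge0})^2$, and each edge surjects onto $\Delta(X)$. Consequently $\overline N_{\bsigma(v)}=0$ for every $v$, while $\overline N_{\bsigma(E)}\cong\ZZ$ (generated by a primitive vector along $D$), and $\overline\Psi$ of Definition~\ref{def: tropically transverse} becomes the map $\bigoplus_{E}\ZZ\to\bigoplus_E\overline N_{\bsigma(E)}$, $(\ell_E)_E\mapsto(\ell_E\mathbf u(E))_E$. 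This map manifestly has finite cokernel (its cokernel is $\bigoplus_E \overline N_{\bsigma(E)}/\ZZ\mathbf u(E)$, which is finite of order $\prod_E w(E)$), so $\tau$ is tropically transverse in the sense of Definition~\ref{def: tropically transverse2}. Moreover $\mu(\tau)=|(\coker\overline\Psi)_{\tors}|=\prod_{E\in E(G)} w(E)$.

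Next I would check the remaining hypotheses of Theorem~\ref{thm:tropically transverse}. Flatness of $\ul\ev$: here the gluing strata $X_{\bsigma(E)}=D$ are the deepest strata of $X$ (codimension two, and $\Delta(X)$ is one-dimensional so there are no deeper strata), so $\ul\ev$ is automatically flat as noted in Remark~\ref{rem:this is the best in general}. Reducedness of $\ul X_0$ is part of the classical hypothesis $X_0=Y_1\cup Y_2$ reduced. Non-singularity of $\ul X_{\bsigma(E)}=\ul D$ holds by assumption ($D$ smooth), so $\Delta$ is lci and $\Delta^!$ exists. Thus Theorem~\ref{thm:tropically transverse} applies and gives $\phi$ finite dominant, $\phi'$ finite, and
\[
\phi'_*[\scrM(X_0/b_0,\btau)]^{\virt}
= \frac{\mu(\tau)}{m_{\btau}}\,
\Delta^!\Bigl(\prod_{v\in V(G)}[\scrM(X_0/b_0,\btau_v)]^{\virt}\Bigr).
\]
Substituting $\mu(\tau)=\prod_E w(E)$ gives the stated multiplicity.

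The remaining task is to translate the right-hand side from punctured-map moduli of the stratum into relative (logarithmic, unpunctured) moduli of $(\overline X_{\bsigma(v)},\bar\btau_v)$, and to identify the stratum targets in the diagram. Since each $\bsigma(v)$ is one-dimensional, the stratum $X_{\bsigma(v)}$ is an irreducible component $Y_j$ of $X_0$, so Theorem~\ref{thm:punctured to relative} applies verbatim: it gives an isomorphism of underlying stacks $\ul{\scrM(X_0/b_0,\btau_v)}\cong\ul{\scrM(\overline X_{\bsigma(v)},\bar\btau_v)}$ carrying the same virtual fundamental class. Feeding these isomorphisms into the previous displayed formula, and noting $\ul X_{\bsigma(E)}=\ul D$, yields exactly the diagram and the formula in the statement, with $\scrM^{\mathrm{sch}}(X_0/b_0,\btau)$ being the space $\scrM^{\sch}(X/B,\btau)$ of Theorem~\ref{thm:gluing factorization} specialized to $B=b_0$. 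I expect the only genuinely delicate point to be bookkeeping: making sure the isomorphism of Theorem~\ref{thm:punctured to relative} is compatible with the schematic evaluation maps to $\ul D$ used to form $\scrM^{\mathrm{sch}}$, so that the Cartesian square in Theorem~\ref{thm:gluing factorization} is genuinely unchanged after passing to the relative moduli spaces; this is routine since the morphism constructed in the proof of Theorem~\ref{thm:punctured to relative} is the identity on underlying schemes and in particular commutes with evaluation at the (now marked) points lying over $D$.
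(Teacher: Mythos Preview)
Your proof is correct and follows essentially the same approach as the paper: compute $\overline\Psi$ explicitly to obtain tropical transversality with $\mu(\tau)=\prod_E w(E)$, verify the remaining hypotheses of Theorem~\ref{thm:tropically transverse}, and invoke Theorem~\ref{thm:punctured to relative} to pass from punctured moduli to relative moduli. The only minor difference is in the flatness check: you argue that $D$ is a deepest stratum (so flatness is automatic per Remark~\ref{rem:this is the best in general}), whereas the paper invokes Theorem~\ref{thm:flatness} directly, noting that $\bsigma'(L)=\bsigma(L)$ always holds here since $\bsigma(L)$ is already the maximal cone of $\Sigma(X)$; both arguments are valid and amount to the same observation.
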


\begin{proof}
Using Theorem \ref{thm:punctured to relative}, 
the given diagram is a part of the diagram of Theorem 
\ref{thm:gluing factorization}. The result will follow from Theorem
\ref{thm:tropically transverse}. Thus we first verify the tropical
transversality condition, and calculate $\mu(\btau)$.

Note that $\overline{N}_{\bsigma(v)}=0$, while
each $\overline{N}_{\bsigma(E)}$ can be identified with $\ZZ\cong \ZZ (1,-1)
\subseteq \ZZ^2$ for any vertex $v$, edge $E$. Thus the morphism
$\Psi$ of Definition \ref{def: tropically transverse} takes the form
\[
\Psi:\bigoplus_{E\in E(G)}\ZZ \rightarrow \bigoplus_{E\in E(G)}\ZZ
\]
given by $\Psi\big((\ell_E)_{E\in E(G)}\big)=\big(\ell_E \mathbf{u}(E)\big)$.
In particular, the image has finite index, and this index is
$\prod_{E} w(E)$.

By Theorem \ref{thm:tropically transverse}, it is thus sufficient to
show that $\ul{\ev}$ is flat. However, this follows immediately
from Theorem \ref{thm:flatness}, as $\bsigma'(L)=\bsigma(L)$ always in
this situation.
\end{proof}

\section{Applications to wall structures for type III degenerations
of K3 surfaces}

In this section, we will work with a specific kind of degeneration
$g:X\rightarrow S$. Here, we will use $S$ for the base
log scheme rather than $B$ as is done in the previous sections
and in \cite{ACGSII}, as $B$ will notationally play a different role.
We make the following assumptions:
\begin{assumptions}
\label{ass:K3}
\begin{enumerate}
\item
$\ul{S}$ is a one-dimensional non-singular scheme with closed
point $0\in \ul{S}$ inducing the divisorial log structure on $S$.
\item $\ul{X}\rightarrow \ul{S}$
is a simple normal crossings degeneration of K3 surfaces.
\item The fibre over $0$, $X_0$, is the only singular fibre, 
necessarily a simple
normal crossings divisor, and the log structure on $X$ is the divisorial
log structure induced by $X_0$. 
\item With $D:=(X_0)_{\red}$ the reduction,
we have $K_{X/S}+D=0$. 
\item
Any intersection of irreducible components of $D$ is
connected. 
\item $D$ has a zero-dimensional stratum. 
\end{enumerate}
\end{assumptions}

Here we will give a useful inductive description for the so-called
\emph{canonical wall structure} defined in \cite{Walls}. We make use
of an observation of Ranganathan in \cite[\S6.5.2]{Ra}, following
Parker \cite{Parker}, that gluing
remains fairly easy in the case the normal crossings divisor
$D$ has at worst triple points and the domain curve is genus $0$.
Rather than give the general description of this approach in
our language, we just carry out the procedure in our particular
application.

\subsection{Review of wall types and balancing}

We begin by recalling certain concepts from \cite{Walls}. 
By the assumptions on $X\rightarrow S$ made above, all irreducible
components of $D$ are good in the sense of \cite[\S1.1]{Walls}.
Further, Assumptions 1.1 and 1.2 of \cite{Walls} hold. Thus,
in the notation of that paper, we may take $B=|\Sigma(X)|$ 
and $\P$ the set of cones of $\Sigma(X)$, so that $(B,\P)$
is a pseudo-manifold as explained in \cite[Prop.~1.3]{Walls}. We set
\[
\Delta := \bigcup_{\sigma\in\Sigma(X)\atop \codim\sigma \ge 2}
\sigma,
\]
and $B_0:=B\setminus \Delta$. Then \cite[\S1.3]{Walls} gives the structure of
integral affine manifold to $B_0$. Further, by \cite[Prop.~1.15]{Walls},
the tropicalization of $g$ induces a map
$g_{\trop}:B\rightarrow \Sigma(S)=\RR_{\ge 0}$ which is an affine
submersion. We set $B'=g_{\trop}^{-1}(1)$, and
\[
\P'=\{\sigma\cap g_{\trop}^{-1}(1)\,|\,\sigma\in\P\},
\]
a polyhedral decomposition of $B'$. In the notation of the previous
sections, $\P'$ is the set of polyhedra in $\Delta(X)$, but we wish
to avoid this notation now to avoid conflict with $\Delta$ as the
discriminant locus. Here we also set 
\[
\Delta':=B'\cap \Delta,\quad\quad B_0':=B'\setminus \Delta.
\]

We use the convention that for $\sigma\in \P$ and 
$\sigma'=\sigma\cap B'\in \P'$, we write either $X_{\sigma}$
or $X_{\sigma'}$ for the stratum of $X$ corresponding to $\sigma$.
Under this convention, irreducible components of $D$ correspond to 
vertices of $\P'$, and a vertex $v$ has integral coordinates
if and only if the corresponding irreducible component $X_v$ of
$X_0$ appears with multiplicity $1$ in $X_0$. Thus the cells of
$\P'$ are not in general lattice polytopes. In case $X_0$ is
reduced, then \cite[Prop.~1.16]{Walls} applies, but we do not wish
to make this restriction. In any event, $B'$ is still an affine manifold
with singularities.

We recall from \cite[Lem.~2.1]{Walls}:

\begin{lemma}
\label{lem:balancing}
Let $f:C^{\circ}/W\rightarrow X$ be a stable punctured map to $X$,
with $W=\Spec(Q\rightarrow\kappa)$ a geometric log point. For
$s\in \Int(Q^{\vee}_{\RR})$, let $h_s:G\rightarrow B$ be
the corresponding tropical map. If $v\in V(G)$ satisfies
$h_s(v)\in B_0$, then $h_s$ satisfies the balancing condition
at $v$. More precisely, if $E_1,\ldots,E_n$ are the legs
and edges adjacent to $v$, oriented away from $v$,
then the contact orders $\mathbf{u}(E_i)$ may be interpreted
as elements of $\Lambda_{h_s(v)}$, the stalk at $h_s(v)$ of the
local system $\Lambda$ of integral tangent vectors.
In this group, the balancing condition
\begin{equation}
\label{eq:balancing}
\sum_{i=1}^m \mathbf{u}(E_i)=0
\end{equation}
is satisfied.
\end{lemma}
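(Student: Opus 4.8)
The plan is to deduce Lemma~\ref{lem:balancing} from Proposition~\ref{prop:balancing} together with the description of $B_0$ as an integral affine manifold, essentially by testing the balancing condition against every linear functional on $\Lambda_{h_s(v)}$.

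First I would record the local picture near a vertex $v$ with $h_s(v) \in B_0$. Since $h_s(v)$ lies in the complement of the codimension~$\ge 2$ strata, the point $h_s(v)$ lies in the relative interior of a cone $\sigma = \bsigma(v) \in \Sigma(X)$ of codimension $\le 1$; in the K3 setting of Assumptions~\ref{ass:K3}, with $D$ having at worst triple points, $\sigma$ is either a ray (so $X_\sigma$ is an irreducible component of $X_0$) or a two-dimensional cone (so $X_\sigma$ is a double curve, one dimension down). In either case the stalk $\Lambda_{h_s(v)}$ is canonically $N_\sigma \oplus (\text{something})$, or more precisely it fits: the legs/edges $E_i$ adjacent to $v$ have contact orders $\mathbf{u}(E_i) \in N_{\bsigma(E_i)}$, and since $\bsigma(v) \subseteq \bsigma(E_i)$, under the identifications coming from the affine structure on $B_0$ each $\mathbf{u}(E_i)$ is naturally an element of $\Lambda_{h_s(v)}$. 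I would spell out (referring to the affine structure constructed in \cite[\S1.3]{Walls}, cited just before the lemma) why $\Lambda_{h_s(v)}$ is the correct ambient lattice and why the $\mathbf{u}(E_i)$ land in it; this is where the hypothesis $h_s(v) \in B_0$ is essential, because at points of $\Delta$ the local system $\Lambda$ has monodromy and no such clean statement holds.

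Next I would pair with functionals. A tangent vector balancing identity $\sum_i \mathbf{u}(E_i) = 0$ in the free abelian group $\Lambda_{h_s(v)}$ holds if and only if $\langle \sum_i \mathbf{u}(E_i), s\rangle = 0$ for every $s$ in a generating set of $\Lambda_{h_s(v)}^*$. Such functionals are, locally near $h_s(v)$, exactly the germs of piecewise-linear functions on $\Sigma(X)$ that are linear in a neighbourhood of the ray/cone containing $h_s(v)$; equivalently they come from sections $s \in \Gamma(X, \overline{\shM}_X^{\gp})$ (possibly after passing to a suitable open neighbourhood, using that $\overline{\shM}_X^{\gp}\otimes\QQ$ is globally generated as assumed in \S\ref{sec:preliminaries}, so that rational multiples of the needed functionals extend globally). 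For each such $s$, Proposition~\ref{prop:balancing} — applied after splitting the punctured map at all nodes contained in $\ul{C}_v$ and restricting to the component indexed by $v$, exactly as in the proof of that proposition — gives
\[
\deg(\ul{f}^*\shL_s)|_{\ul{C}_v} = -\sum_{i=1}^n \langle \mathbf{u}(E_i), s\rangle .
\]
The key extra input is that when $h_s(v) \in B_0$, the component $\ul{C}_v$ maps into the stratum $X_{\bsigma(v)}$ which meets no deeper stratum other than the ones already accounted for, so $\ul{f}^*\shL_s|_{\ul{C}_v}$ is trivial — concretely, $\shL_s$ restricted to $X_{\bsigma(v)}$ is, up to the contributions already encoded in $\mathbf{u}$, $\cO_{X_{\bsigma(v)}}$ of a divisor supported away from the image, or simply $s$ restricted to $X_{\bsigma(v)}$ is a section of $\overline{\shM}^{\gp}$ coming from a genuine line bundle whose degree on $\ul{C}_v$ we can force to vanish by the affine structure. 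Hence $\sum_i \langle \mathbf{u}(E_i), s\rangle = 0$ for all such $s$, which gives \eqref{eq:balancing}.

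The main obstacle I expect is the middle step: making precise, in a way compatible with Proposition~\ref{prop:balancing}, that for $h_s(v) \in B_0$ the restriction $\ul{f}^*\shL_s|_{\ul{C}_v}$ has degree $0$ for the relevant functionals $s$ — and dually that these $s$ really do span $\Lambda_{h_s(v)}^*$. This amounts to checking that the only obstruction to balancing comes from monodromy of $\Lambda$ around $\Delta$, i.e.\ that in the simply-connected (affine-linear) region $B_0$ the tropical map is genuinely balanced. I would handle this by a careful local computation at a ray $\sigma = \RR_{\ge 0}v_0 \in \Sigma(X)$: choose coordinates so that $\Lambda_{h_s(v)}^*$ is generated by $\delta$ (the pullback of the generator of $\Sigma(S)^*$, whose pairing with $\mathbf{u}(E_i)$ sums to zero by the degeneration being relative — this is the intersection-with-$X_0$ statement) together with functionals dual to the double-curve directions; for the latter, Proposition~\ref{prop:balancing} with $s$ a local defining section of the corresponding double curve $D_j \subseteq X_{\bsigma(v)}$ computes $\deg \ul{f}^*\cO(D_j)|_{\ul{C}_v}$, and one argues this equals $\sum_i a_{ij}$ as in Corollary~\ref{cor:114}, while the left side vanishes because $\ul{C}_v$ already lies in $X_{\bsigma(v)}$ and the curve class was chosen with the prescribed tangencies. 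Assembling these gives the vanishing of $\sum_i\mathbf{u}(E_i)$ against a full set of generators of $\Lambda_{h_s(v)}^*$, completing the proof.
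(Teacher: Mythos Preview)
The paper does not prove this lemma at all: it is stated as a recall of \cite[Lem.~2.1]{Walls}, with no argument given. So there is no ``paper's proof'' to compare against beyond that citation. Your broad strategy---apply Proposition~\ref{prop:balancing} and pair $\sum_i \mathbf{u}(E_i)$ against a spanning set of functionals on $\Lambda_{h_s(v)}^*$---is exactly the right one, and is essentially how the proof in \cite{Walls} proceeds.

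However, your execution of the key step is not correct. You assert that $\deg(\ul{f}^*\shL_s)|_{\ul{C}_v}=0$ ``because $\ul{C}_v$ already lies in $X_{\bsigma(v)}$ and the curve class was chosen with the prescribed tangencies,'' and earlier that ``we can force [it] to vanish by the affine structure.'' The first of these is circular (the prescribed tangencies are what you are trying to show sum to zero), and the second is too vague to be an argument. In the codimension-one case---say $h_s(v)$ lies in the interior of a two-cone $\rho$ with $X_\rho$ a curve---the degree $\deg(\ul{f}^*\shL_s)|_{\ul{C}_v}$ is a genuine intersection number on $X_\rho$ and is \emph{not} zero for an arbitrary section $s$ of $\overline{\shM}_X^{\gp}$.

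What actually makes the argument work is the \emph{definition} of the affine structure on $B_0$ in \cite[\S1.3]{Walls}: the integral affine charts across a codimension-one cone $\rho$ are glued precisely so that the germs of sections $s$ which are \emph{linear} in the affine structure near $h_s(v)$---i.e.\ those representing elements of $\Lambda_{h_s(v)}^*$---are exactly the ones for which $\shL_s|_{X_\rho}$ is trivial. For such $s$ one then has $\deg(\ul{f}^*\shL_s)|_{\ul{C}_v}=0$ automatically, and Proposition~\ref{prop:balancing} gives $\sum_i\langle\mathbf{u}(E_i),s\rangle=0$. Since these $s$ span $\Lambda_{h_s(v)}^*$ by construction, \eqref{eq:balancing} follows. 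You need to invoke this characterisation of the affine structure explicitly; without it, the bridge between Proposition~\ref{prop:balancing} and the balancing statement in $\Lambda_{h_s(v)}$ is missing.
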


As a consequence, we say a realizable tropical type $\tau$ of tropical map
to $\Sigma(X)/\Sigma(S)$ is \emph{balanced} if, for any $s\in\Int(\tau)$,
the corresponding tropical map $h_s:G\rightarrow B$ satisfies the
balancing condition of this lemma.

We then define:

\begin{definition}
\label{def:wall type}
A \emph{wall type} is a type $\tau=(G,\bsigma,{\mathbf u})$ 
of tropical map to $\Sigma(X)$ defined over $\Sigma(S)$ such that:
\begin{enumerate}
\item
$G$ is a genus zero graph with $L(G)=\{L_{\out}\}$ and
$u_{\tau}:={\bf u}(L_{\out})\not=0$.
\item $\tau$ is realizable and balanced.
\item Let $h:\Gamma(G,\ell)\rightarrow \Sigma(X)$ be the corresponding
universal family of tropical maps, and $\tau_{\out}\in \Gamma(G,\ell)$
the cone corresponding to $L_{\out}$. Then $\dim\tau=1$ and
$\dim h(\tau_{\out})=2$.
\end{enumerate}
A \emph{decorated wall type} is a decorated type $\btau=(\tau,{\bf A})$
with $\tau$ a wall type and 
\[
{\bf A}:V(G)\rightarrow \coprod_{\sigma\in\Sigma(X)} H_2(X_{\sigma})
\]
is a \emph{refined decoration}, 
i.e., ${\bf A}(v)\in H_2(X_{\bsigma(v)})$ for $v\in V(G)$.
\end{definition}

\begin{remarks}
(1) The notion of refined decoration, in which curve classes 
lie in the group of curve classes of the relevant stratum rather
than of $X$, gives more control over moduli spaces. In particular,
the moduli space $\scrM(X/S,\btau)$ for $\btau$ carrying a 
refined decoration is simply a union of connected components of
the moduli space where the decoration is obtained by pushing forward
all curve classes $\mathbf{A}(v)$ to $H_2(X)$.

(2) We note that, other than the issue of refined decorations,
this definition is slightly simpler than the one
given in \cite[Def.~3.6]{Walls} in that (1) we have specialized to the
case that $\dim X=3$ and all irreducible components of $D$ are
good; (2) $B$ does not have a boundary in our case; and
(3) here we insist that the
type be defined over $\Sigma(S)$, but by \cite[Prop.~3.7]{Walls},
this is implied by \cite[Def.~3.6]{Walls}. As a consequence, the two
notions of wall type coincide in this particular case.

(3) We note that because of our restriction to $\dim X=3$, given a wall
type $\tau$, there is a unique tropical map $h'_{\tau}:G\rightarrow B$
of type $\tau$ factoring through $B'$. We will write this as
$h'_{\tau}:G\rightarrow B'$, or $h'$ when clear from context. Such a 
tropical map is rigid.
\end{remarks}

It is shown in \cite[Const.~3.13]{Walls} that if $\btau$ is a 
decorated wall type, then $\scrM(X/\kk,\btau)\cong \scrM(X/S,\btau)$, and
this stack is proper over $\Spec\kk$
and carries a zero-dimensional virtual fundamental class.
As a consequence, from now on we will work over $\Spec\kk$ rather than
$S$, writing $\scrM(X,\btau)$, $\foM(\shX,\tau)$ for the relevant moduli
spaces.

We then define
\[
W_{\btau}:={\deg [\scrM(X,\btau)]^{\virt}\over |\Aut(\btau)| }.
\]
We also define the number $k_{\tau}$ as follows. The map
$h:\Gamma(G,\ell)\rightarrow \Sigma(X)$ induces a homomorphism 
\[
h_*:N_{\tau_{\out}}\rightarrow N_{\bsigma(L_{\out})}.
\]
We then define
\[
k_{\btau}:=|\coker(h_*)_{\tors}|.
\]

\begin{example}
Let $v\in \P'$ be a vertex with corresponding irreducible component
$X_v$. Let $D_v\subseteq X_v$ be the union of lower-dimensional strata
of $D$ contained in $X_v$. Suppose $X_v$ contains a rational curve
$E$ with self-intersection $-1$. Suppose further that
$E$ meets $D_v$ transversally
at one point. This point is contained in a one-dimensional
stratum $X_{\rho}$ for $\rho\in\P'$ an
edge. Necessarily, $v$ is one endpoint of $\rho$.
Let $v'$ be the other endpoint of $\rho$, and $\mu,\mu'$ be the
multiplicities of the irreducible components $X_v, X_{v'}$ in the
fibre $X_0$. Let $k$ be such that $\mu | k \mu'$.

Consider a type $\tau$ where $G$ has one vertex $w$, one leg
$L$, and no edges. We take $\bsigma(w)$ to be the ray of $\P$ corresponding to 
$v$ and $\bsigma(L)$ to be the cone of $\P$ corresponding to $\rho$.
Before specifying $\mathbf{u}(L)$, we decorate $\tau$ by taking
$\mathbf{A}(w)=k[E] \in H_2(X_v)$. Then $\mathbf{A}(w)\cdot X_{v'}=k$.
Since $E\subseteq X_v\cup X_{v'}$ and $E\cdot X_0=0$, necessarily
$\mathbf{A}(w)\cdot (\mu X_v + \mu' X_{v'})=0$. From this we conclude that
$\mathbf{A}(w)\cdot X_v = -k\mu'/\mu$, which is an integer by assumption.
Thus there is a unique choice of $\mathbf{u}(L)$ compatible with
these intersections, by Corollary \ref{cor:114}.

In this case, we may apply Theorem \ref{thm:punctured to relative} to
see that $\scrM(X/S,\btau)\cong \scrM(\overline X_v, \bar\btau)$ in
the notation of \S\ref{sec:punctured versus relative}. Here
$\overline X_v$ will be the log scheme structure on $\ul{X}_v$ induced
by the divisor $D_v\subseteq \ul{X}_v$. Note that any stable log map 
of curve class $kE$ is a multiple cover of $E$. Via the comparison
of logarithmic and relative invariants of \cite{AMW}, we may use
the calculation of \cite[Prop.~5.2]{GPS} to obtain that
$\deg [\scrM(X/S,\btau)]^{\virt} = (-1)^{k+1}/k^2$. 

On the other hand, a simple calculation shows that $k_{\tau}=k$.
\end{example}

Our goal here is to give an inductive method for computing 
$k_{\tau}W_{\tau}$, the quantity which plays a key role in
the construction of the canonical wall structure of \cite{Walls}. 
The methods here are particular to relative dimension two, and the structure
of these invariants in higher dimensions is more subtle and will
be developed in \cite{LocalGlobal}.

\subsection{An invariant of Looijenga pairs}
\label{subsect:invariant LP}

Recall a \emph{Looijenga pair} is a pair $(X,D)$ where $X$ is
a non-singular projective
rational surface and $D$ is a reduced nodal anti-canonical
divisor with at least one node. In what follows, assume that $D$
has at least three irreducible components. In this case, $D$ will be a
cycle of rational curves, and we write $D=D_1+\cdots+D_n$.
We are again in the situation of \cite{Walls},
and may again take $B=|\Sigma(X)|$ and $\P$ the set of cones
of $\Sigma(X)$. Then $B_0:=B\setminus \{0\}$ carries the structure
of an integral affine manifold, see \cite[\S1.3]{Walls} again, but
the original construction was given in \cite[\S1.2]{GHK}. Then
Lemma~\ref{lem:balancing} still holds in this case. 

We will also allow the more general possibility that $(X,D)$ is
a log smooth surface over $\Spec \kk$, so that $(X,D)$ is a
toroidal crossings pair. In this case, we require that there should be
a birational proper log \'etale morphism
$(\widetilde X, \widetilde D)\rightarrow (X,D)$ such that
$(\widetilde X,\widetilde D)$
is a Looijenga pair in the
sense of the previous paragraph.
Note the singularities of $X$ only occur
at the nodes of $D$. In this case, we
similarly obtain $B=|\Sigma(X)|$, $\P$ the set of cones of $\Sigma(X)$,
now no longer necessarily standard cones. As is standard, such a morphism
$\pi$ is described by specifying $\Sigma(\widetilde X)$ as being a 
refinement of $\Sigma(X)$. Thus
$B_0:=B\setminus\{0\}$ still carries the structure of an integral
affine manifold via the identification of supports
$|\Sigma(X)|=|\Sigma(\widetilde X)|$.

In the sequel, Looijenga pair will refer to this more general case, and we will
refer to a non-singular Looijenga pair when we impose that $X$ be non-singular.

\begin{example}
Suppose we are in the situation of Assumptions \ref{ass:K3}, with 
$v\in \P'$ a vertex and $X_v$ the corresponding irreducible component
of $D$. With $D_v$ the union of lower-dimensional strata of $D$
contained in $X_v$, it follows from the assumption that $K_{X/S}+D=0$
and adjunction that $K_{X_v}+D_v=0$. Note that $D_v$ is nodal,
and then it is easy to see, e.g., by the classification of surfaces, that
$(X_v,D_v)$ is a Looijenga pair.
\end{example}

Fix a class $\beta$ of logarithmic map of genus $0$ to a Looijenga pair $(X,D)$,
with $q+1$ marked points, and contact orders $u_1,\ldots,u_q,u_{\out}$.
Here, we assume that $u_k=w_k \nu_{i_k}$, $u_{\out}= w_{\out} \nu_{\out}$,
where $\nu_{i_k}$ is a primitive
generator of a ray $\rho_{i_k}$ of $\P$ corresponding to the 
irreducible component $D_{i_k}$ of $D$, and $w_k$ is a positive integer, 
and similarly for $w_{\out}$ and $\nu_{\out}$. We do not assume the
irreducible components $D_{i_k}$ are distinct.

We then have a schematic evaluation map
\begin{equation}
\label{eq:evaluation}
\ul{\ev}:\scrM(X,\beta)\rightarrow \prod_{k=1}^q D_{i_k}
\end{equation}
given by evaluating at the marked points with contact orders
$u_1,\ldots,u_q$. It follows from a standard virtual dimension
calculation via Riemann-Roch that the virtual dimension of
$\scrM(X,\beta)$ is $q$. We define $N_{\beta}$ by the identity
\begin{equation}
\label{eq:Nbeta def}
\ul{\ev}_*[\scrM(X,\beta)]^{\virt} = N_{\beta}\prod_{k=1}^q [D_{i_k}].
\end{equation}

As recalled above, subdivisions of $(B,\P)$ correspond to 
proper log \'etale
birational morphisms $(\widetilde X,\widetilde D)\rightarrow 
(X,D)$.
As the contact orders in $\beta$ for $X$ are integral points of
$B$, they also determine a set of contact orders for $\widetilde X$.
This set of contact orders along with a curve class $\widetilde A\in
H_2(\widetilde X)$ determines a class $\tilde\beta$ of logarithmic
map to $\widetilde X$. We have:

\begin{lemma}
\label{lem:subdivisionind}
Let $(X,D)$ be a Looijenga pair and $\beta$ a class of logarithmic
map to $(X,D)$ as above, with underlying curve class $A$.
Let $\pi:\widetilde X\rightarrow X$ be a log \'etale birational morphism with
$\widetilde X$ non-singular. If $\scrM(X,\beta)$ is non-empty, then
there exists a unique curve class $\widetilde A\in H_2(\widetilde X)$
with $\pi_*\widetilde A = A$
such that $\scrM(\widetilde X,\tilde\beta)$ is non-empty, where
$\tilde\beta$ is as above.
Further, with this choice of curve class, $N_{\beta}=N_{\tilde\beta}$.
\end{lemma}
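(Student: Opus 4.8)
The plan is to prove the two assertions of Lemma~\ref{lem:subdivisionind} in sequence: first the existence and uniqueness of the lift $\widetilde A$, then the equality $N_\beta = N_{\tilde\beta}$ of the associated invariants. For the first part, I would start from a geometric point of $\scrM(X,\beta)$, i.e.\ a stable log map $f:C^\circ/W\to X$ of class $\beta$, and observe that since $\pi:\widetilde X\to X$ is log \'etale and birational, $f$ lifts uniquely to a log map $\tilde f:C^\circ/W\to\widetilde X$ with $\pi\circ\tilde f = f$ (this is the valuative-criterion-type lifting for log \'etale morphisms; the contact orders of $\tilde f$ are exactly the integral points of $B$ determining $\beta$, now read in the refined fan of $\widetilde X$). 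The curve class $\widetilde A := \tilde f_*[C]$ then satisfies $\pi_*\widetilde A = A$. To see $\widetilde A$ is independent of the chosen point, note $\scrM(X,\beta)$ need not be connected, but any two lifts $\widetilde A, \widetilde A'$ with $\pi_*\widetilde A = \pi_*\widetilde A' = A$ differ by a class supported on the exceptional locus of $\pi$; since each such exceptional curve lies in a single stratum and is contracted by $\pi$, the intersection numbers of $\widetilde A$ with the components of $\widetilde D$ are forced by the contact orders via Corollary~\ref{cor:114} (applied to $\widetilde X$, $\widetilde D$), which pins down $\widetilde A$ completely in $H_2(\widetilde X)$ because $\widetilde X$ is a rational surface and $\widetilde D$ is big enough that the intersection pairing with components of $\widetilde D$ is injective on the relevant sublattice. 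Hence $\widetilde A$ is unique, and $\scrM(\widetilde X,\tilde\beta)$ is non-empty precisely for this choice.

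For the equality $N_\beta = N_{\tilde\beta}$, the key point is that $\pi$ induces an isomorphism of moduli stacks $\scrM(\widetilde X,\tilde\beta)\xrightarrow{\ \sim\ }\scrM(X,\beta)$ compatible with obstruction theories. The lifting construction above gives a morphism $\scrM(\widetilde X,\tilde\beta)\to\scrM(X,\beta)$ (compose with $\pi$), and the unique-lifting property gives the inverse. That these are inverse at the level of underlying stacks is formal from uniqueness of the log \'etale lift; that the virtual fundamental classes match follows because log \'etale morphisms are log smooth of relative dimension zero, so the relative obstruction theory of $\scrM$ over the stack of log curves (or over the relevant Artin fan, as in \eqref{def:varepsilon}) is unchanged under $\pi$ — this is the same mechanism used in the comparison of \cite{AMW} and in Theorem~\ref{thm:punctured to relative}, where an isomorphism of underlying stacks automatically respects obstruction theories since their construction does not see the log structure. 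Finally I would check that the evaluation maps are compatible: the marked points with contact orders $u_1,\dots,u_q$ map to the strata $D_{i_k}\subseteq X$, and their preimages in $\widetilde X$ are the corresponding boundary components $\widetilde D_{i_k}$ with $\pi|_{\widetilde D_{i_k}}:\widetilde D_{i_k}\to D_{i_k}$ an isomorphism (the contact orders $u_k = w_k\nu_{i_k}$ are along rays already present in $\P$, so these components are not blown up). Thus $\ul{\ev}$ for $\widetilde X$ agrees with $\ul{\ev}$ for $X$ under the identifications, and pushing \eqref{eq:Nbeta def} through the isomorphism gives $N_\beta = N_{\tilde\beta}$.

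The main obstacle I anticipate is the uniqueness of $\widetilde A$: one must argue carefully that the curve class of a lift is determined, not merely its pushforward. The honest content is that subdividing $(B,\P)$ only introduces exceptional curves inside strata, and the contact orders of $\tilde\beta$ — being the same integral tropical data as $\beta$, just reinterpreted — fix all intersection numbers of $\widetilde A$ against boundary components of $\widetilde X$ by the balancing Corollary~\ref{cor:114}; combined with the fact that on a rational surface the class of a curve meeting a large anticanonical-type divisor is determined by these boundary intersection numbers together with its image in $H_2(X)$, this forces $\widetilde A$. I would isolate this as a short sublemma about classes of log maps under log \'etale modifications of surfaces. Everything else — the lifting, the stack isomorphism, the obstruction-theory compatibility, the evaluation-map compatibility — is either a standard property of log \'etale morphisms or a direct transcription of arguments already present in the paper (notably the proof of Theorem~\ref{thm:punctured to relative}).
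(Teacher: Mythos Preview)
Your uniqueness argument for $\widetilde A$ is essentially the paper's: two lifts differ by a class supported on the exceptional locus, and Corollary~\ref{cor:114} determines the intersection numbers of $\widetilde A$ with all components of $\widetilde D$ from the contact orders; negative-definiteness of the exceptional intersection matrix then pins down $\widetilde A$. That part is fine.

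The gap is your claim that $\scrM(\widetilde X,\tilde\beta)\to\scrM(X,\beta)$ is an \emph{isomorphism} of stacks. It is not, in general. A stable log map $f:C\to X$ whose tropicalization meets the interior of a cone being subdivided does not simply lift along the log \'etale morphism $\pi$: the lift to $\widetilde X$ forces a subdivision of the domain curve (new nodes, new components mapping into exceptional divisors). Thus the fibres of $\scrM(\pi)$ over degenerate loci are nontrivial, and the ``unique log \'etale lift'' you invoke does not produce a stable log map with the same domain. Your analogy with Theorem~\ref{thm:punctured to relative} is misleading: there the underlying stacks genuinely coincide because only the log structure on the base changes, not the target geometry.

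The paper instead invokes the birational invariance theorem of \cite{AW}: there is a Cartesian square over the Artin-fan level, the map $\foM(\pi)$ is proper of pure degree~$1$, and hence $\scrM(\pi)_*[\scrM(\widetilde X,\tilde\beta)]^{\virt}=[\scrM(X,\beta)]^{\virt}$. Combined with the commutativity of evaluation maps (your observation that $\pi|_{\widetilde D_{i_k}}$ is an isomorphism is correct and is used here), pushing forward through $\ul\ev$ yields $N_{\tilde\beta}=N_\beta$. So the missing ingredient is precisely the virtual pushforward statement from \cite{AW}; you cannot shortcut it with a stack-level isomorphism.
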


\begin{proof}
We recall from \cite[Eq.~(1)]{AW} that there is a Cartesian diagram
(in all categories) with strict vertical arrows
\[
\xymatrix@C=30pt
{
\scrM(\widetilde X)\ar[r]^{\scrM(\pi)}\ar[d]&\scrM(X)\ar[d]\\
\foM(\widetilde\cX\rightarrow\cX)\ar[r]_>>>>>{\foM(\pi)}&\foM(\cX)
}
\]
(where the moduli spaces have 
no restriction on type). Further, \cite[Prop.~5.2.1]{AW}
shows that $\foM(\pi)$ is of pure degree $1$, while 
\cite[Lem.~4.2]{HW} shows that $\foM(\pi)$ is proper. Taken together,
this implies $\foM(\pi)$ is surjective, and hence
the union of connected components of $\scrM(\widetilde X)$ lying over
$\scrM(X,\beta)$ is non-empty. Given a class $\tilde\beta$ of
such a stable log map lying over $\scrM(X,\beta)$,
the contact orders of $\tilde\beta$ are determined by the
contact orders of $\beta$, and hence the only unknown is
the curve class $\widetilde A\in H_2(\widetilde X)$, which
necessarily satisfies $\pi_*\widetilde A= A$ by construction
of the map $\scrM(\pi)$. On the other hand, for any irreducible
component $\widetilde D_{\rho}$ of $\widetilde D$, the intersection
number $\widetilde D_{\rho}\cdot \widetilde A$ is completely determined
by the contact orders of $\tilde\beta$, by Corollary~\ref{cor:114}.
Since the intersection matrix of the set of exceptional curves of
$\pi$ is negative definite, and any two choices of lifts of $A$
differ by a linear combination of exceptional curves, it follows
that $\widetilde A$ is uniquely determined.

Identifying $\widetilde D_i$ with $D_i$ using $\pi$, we now have
a commutative diagram
\[
\xymatrix@C=30pt
{
\scrM(\widetilde X, \tilde\beta)\ar[rd]^{\widetilde{\ul{\ev}'}}\ar[d]_{\scrM(\pi)}&\\
\scrM(X,\beta)\ar[r]_{\ul{\ev}}&\prod_{k=1}^q D_{i_k}
}
\]
By \cite[Thm.~1.1.1]{AW}, 
$\scrM(\pi)_*[\scrM(\widetilde X,\tilde\beta)]^{\virt}
=[\scrM(X,\beta)]^{\virt}$. The result follows. 
\end{proof}

Alternatively, we may define $N_{\beta}$ as follows. Choose
points $x_k\in D_{i_k}$, $1\le k\le q$, and write $\mathbf{x}:=(x_1,\ldots,
x_q)$. Define
\begin{equation}
\label{eq:point constraint}
\scrM(X,\beta,\mathbf{x}):=\scrM(X,\beta)\times_{\prod_k D_{i_k}} \mathbf{x}.
\end{equation}

\begin{proposition}
\label{prop:alt def}
Let $(X,D)$ be a Looijenga pair, $\beta$ as above. Then
$\scrM(X,\beta,\mathbf{x})$ carries a virtual fundamental class
of virtual dimension zero, and $\deg [\scrM(X,\beta,\mathbf{x})]^{\virt}
=N_{\beta}$.
\end{proposition}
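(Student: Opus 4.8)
The plan is to derive Proposition~\ref{prop:alt def} from equation~\eqref{eq:Nbeta def} by a standard compatibility-of-virtual-classes argument, together with the dimension count already quoted in the text. First I would record that the virtual dimension of $\scrM(X,\beta)$ is $q$ (this was asserted just before \eqref{eq:Nbeta def} via Riemann--Roch), and that each $D_{i_k}$ is a curve, so $\prod_k D_{i_k}$ is a non-singular proper variety of dimension $q$. The inclusion $\mathbf{x}\hookrightarrow \prod_k D_{i_k}$ is a regular embedding of codimension $q$, being a point inside a smooth variety; hence the Gysin map $\mathbf{x}^!$ is defined. Since $\scrM(X,\beta,\mathbf{x})$ is by \eqref{eq:point constraint} the fibre product $\scrM(X,\beta)\times_{\prod_k D_{i_k}} \mathbf{x}$, the refined Gysin pull-back of the virtual class along $\mathbf{x}^!$ endows it with a virtual fundamental class of dimension $q - q = 0$, which gives the first assertion.

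For the degree statement, the key is the projection-formula compatibility between the evaluation map $\ul{\ev}$ and the Gysin pull-back: one has
\[
\mathbf{x}^! [\scrM(X,\beta)]^{\virt} = \mathbf{x}^! \big([\scrM(X,\beta)]^{\virt}\big)
\]
and, pushing forward to a point and using functoriality of proper pushforward together with $\ul{\ev}_* [\scrM(X,\beta)]^{\virt} = N_\beta \prod_k [D_{i_k}]$, we get
\[
\deg [\scrM(X,\beta,\mathbf{x})]^{\virt}
= \deg\big(\mathbf{x}^! \ul{\ev}_* [\scrM(X,\beta)]^{\virt}\big)
= N_\beta \cdot \deg\big(\mathbf{x}^! \textstyle\prod_k [D_{i_k}]\big).
\]
The last factor is $\deg\big(\mathbf{x}^![\prod_k D_{i_k}]\big)$, which equals $1$ because $\mathbf{x}$ is a reduced point in the smooth variety $\prod_k D_{i_k}$ (equivalently, the class of a point has degree one under the fundamental-class Gysin map). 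Hence $\deg [\scrM(X,\beta,\mathbf{x})]^{\virt} = N_\beta$.

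To make the compatibility step rigorous I would invoke the base-change / commutativity properties of refined Gysin homomorphisms against proper pushforward, as in Fulton's intersection theory (or its stacky analogue): for a Cartesian square with a regular embedding on one side and a proper map on the other, $\mathbf{x}^!$ commutes with the proper pushforward. Applying this to the square formed by \eqref{eq:point constraint} and the map $\ul{\ev}$ lets us slide $\mathbf{x}^!$ past $\ul{\ev}_*$. Since the moduli spaces here are proper Deligne--Mumford stacks (as recalled in the introduction and in \cite{ACGSII}), the pushforward to $\Spec\kk$ is well-defined and degrees make sense.

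The main obstacle I expect is purely bookkeeping rather than conceptual: one must make sure the virtual class on $\scrM(X,\beta,\mathbf{x})$ used here is literally $\mathbf{x}^![\scrM(X,\beta)]^{\virt}$ (i.e.\ that the obstruction theory used to define the virtual class of the point-constrained moduli space is the pullback of the one on $\scrM(X,\beta)$), so that the definition \eqref{eq:Nbeta def} via the non-constrained space and this definition via the constrained space genuinely agree. This is standard but should be stated explicitly; alternatively, one can take \eqref{eq:point constraint} together with the Gysin-pulled-back obstruction theory as the \emph{definition} of $[\scrM(X,\beta,\mathbf{x})]^{\virt}$, in which case the only content is the degree computation above. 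Everything else — the codimension count, regularity of the point embedding, the fact that a point has degree one — is immediate.
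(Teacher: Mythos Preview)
Your argument is correct, but it takes a somewhat different route from the paper's own proof. You define the virtual class on $\scrM(X,\beta,\mathbf{x})$ directly as $\mathbf{x}^![\scrM(X,\beta)]^{\virt}$ via the regular embedding of the point, and then invoke Fulton's compatibility of refined Gysin pull-back with proper pushforward along $\ul{\ev}$ to read off $N_\beta$. This is clean and short.

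The paper instead defines the virtual class via the relative obstruction theory $\varepsilon_{\mathbf{x}}:\scrM(X,\beta,\mathbf{x})\to\foM^{\ev}(\shX,\beta,\mathbf{x})$ pulled back from $\varepsilon$, consistent with the Artin-fan framework used throughout. To make this compatible with the Gysin approach, the paper restricts to the open locus $\prod_k D_{i_k}^{\circ}$ (deleting the double points), where the evaluation map $\ul{\ev}^{\circ}$ from $\foM^{\ev}(\shX,\beta)^{\circ}$ becomes flat; this flatness is what ensures the two refined Gysin maps $\iota^!$ and $(\iota')^!$ agree, so that the obstruction-theoretic virtual class coincides with $\iota^![\scrM(X,\beta)^{\circ}]^{\virt}$. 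In other words, the paper's extra work is exactly the verification you flag at the end as ``bookkeeping'': showing that the virtual class defined intrinsically via the obstruction theory really is the Gysin pull-back of $[\scrM(X,\beta)]^{\virt}$. Your approach buys brevity at the cost of either taking this as a definition or leaving that identification implicit; the paper's approach buys consistency with its own conventions and makes the identification explicit via the flatness step.
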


\begin{proof}
Let $D_i^{\circ}\subseteq D_i$ be the open subset obtained by deleting
the two double points of $D$ contained in $D_i$. We then have a diagram:
\[
\xymatrix@C=30pt
{
\scrM(X,\beta,\mathbf{x})\ar[r]\ar[d]_{\varepsilon_{\mathbf{x}}}&
\scrM(X,\beta)^{\circ}\ar[r]^{j''}\ar[d]_{\varepsilon^{\circ}}&
\scrM(X,\beta)\ar[d]^{\varepsilon}\\
\foM^{\ev}(\shX,\beta,\mathbf{x})\ar[d]_{\ul{\ev}_{\mathbf{x}}}\ar[r]_{\iota'}&
\foM^{\ev}(\shX,\beta)^{\circ}\ar[d]_{\ul{\ev}^{\circ}}\ar[r]_{j'}&
\foM^{\ev}(\shX,\beta)\ar[d]^{\ul{\ev}}\\
\mathbf{x}\ar[r]_{\iota}&\prod_{k=1}^q D_{i_k}^{\circ}\ar[r]_j&
\prod_{k=1}^q D_{i_k}
}
\]
Here the evaluation space $\foM^{\ev}(\shX,\beta)$ is given by
evaluation at the marked points with contact orders $u_1,\ldots,u_q$,
and all other stacks are defined by the requirement that all squares
are cartesian in the category of ordinary stacks.
The relative obstruction theory for $\varepsilon$ then pulls back to
give relative obstruction theories for $\varepsilon^{\circ}$ and
$\varepsilon_{\mathbf{x}}$.

By compatibility of flat pullback with proper pushfoward, and
compatibility of virtual pullback with flat pullback, we have
\begin{align*}
N_{\beta}\prod_k [D_{i_k}^{\circ}]=j^*\left(N_{\beta}\prod_k[D_{i_k}]\right)
= {} & j^*(\ul\ev\circ\varepsilon)_*[\scrM(X,\beta)]^{\virt}\\
= {} & (\ul{\ev}^{\circ}\circ\varepsilon^{\circ})_*
(j'')^*[\scrM(X,\beta)]^{\virt}\\
= {} & (\ul{\ev}^{\circ}\circ\varepsilon^{\circ})_*
[\scrM(X,\beta)^{\circ}]^{\virt}.
\end{align*}
Further, $\ul\ev^{\circ}$ is flat, being a base-change of an evaluation
map $\foM(\shX,\beta)^{\circ}\rightarrow \prod_k [D_{i_k}^{\circ}/\GG_m^2]$.
Hence, the Gysin maps $\iota^!$ and $(\iota')^!$ agree, and by compatibility
of Gysin maps with virtual pullback, we have
\begin{align*}
\deg [\scrM(X,\beta,\mathbf{x})]^{\virt} = 
\deg (\ul{\ev}_{\mathbf x}\circ\varepsilon_{\mathbf{x}})_*
[\scrM(X,\beta,\mathbf{x})]^{\virt} = {} & 
\deg (\ul{\ev}_{\mathbf x}\circ\varepsilon_{\mathbf{x}})_*
\iota^![\scrM(X,\beta)^{\circ}]^{\virt}\\
= {} & \deg \iota^! (\ul{\ev}^\circ\circ\varepsilon^{\circ})_*
[\scrM(X,\beta)^{\circ}]^{\virt}\\
= {} & N_{\beta},
\end{align*}
as desired.
\end{proof}

While we will only need the following result in \cite{K3}, its method
of proof will be used in \S\ref{subsec:blowups}.
The argument is now standard, going back to \cite{GPS}
and \cite{Bousseau}.

\begin{lemma}
\label{lem:vanishing}
Let $(X,D)$ be a Looijenga pair, $\beta$ as above. Suppose
given a stable log map $f:C\rightarrow X$ in $\scrM(X,\beta)$ lying
in $\ul\ev^{-1}(x_1,\ldots,x_q)$ where none of the $x_i$ are double
points of $D$. Then $f(C)\cap D$ is a finite set.
\end{lemma}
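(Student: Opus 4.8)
The plan is to argue by contradiction: if $f(C)\cap D$ is infinite, then some irreducible component $\ulC_v$ of $\ulC$ must map into $D$, i.e.\ into a one-dimensional stratum $D_i$ or into a zero-dimensional stratum. I would first invoke the balancing condition (Lemma~\ref{lem:balancing}, via Proposition~\ref{prop:balancing} and Corollary~\ref{cor:114}) to understand the tropicalization $h_s\colon G\to B$ of such a hypothetical map: a vertex $v$ with $h_s(v)$ lying in the interior of a ray $\rho_i\in\P$ corresponds to a component $\ulC_v\subseteq D_i$, and the adjacent edges/legs carry contact orders summing to zero in $\Lambda_{h_s(v)}$. The key point is that the marked points with contact orders $u_1,\dots,u_q$ are constrained to map to the chosen points $x_k\in D_{i_k}$, which are \emph{not} the double points of $D$; so any leg $L_k$ landing on such a component would force $h_s(L_k)$ to point into the interior of $B_0$ (away from the deeper strata), and $\ul f$ restricted to the closure would have to hit $x_k$, a non-torus-fixed point on $D_{i_k}$.

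The main step is a dimension/rigidity argument for the subcurve $C'\subseteq C$ consisting of components mapping into $D$. Since $(X,D)$ has $D$ of pure codimension one except at the double points, a component $\ulC_v\subseteq D_i$ meets the rest of $C$ only at the double points $D_i\cap D_j$ of $D$ (the nodes of $C$ connecting $\ulC_v$ to the complement), or else $\ulC_v$ is entirely contracted to a double point of $D$. Using Corollary~\ref{cor:114}, the intersection numbers $\ul f^*\cO_X(D_j)|_{\ulC_v}$ are determined by the contact orders of the flags at $v$; combined with the balancing/$\mathbf{g}=0$ hypothesis this pins down enough to see that $\ulC_v$ cannot carry any of the marked points $p_k$ with $\ev(p_k)=x_k\notin\{$double points$\}$, since landing on $D_i^\circ:=D_i\setminus\{\text{double points}\}$ at a prescribed point would require a positive-dimensional family of choices that is not available once the map into $D$ is genus zero and the contact/curve-class data is fixed. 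Hence all marked points with imposed point constraints lie on components \emph{not} contained in $D$; but then $f(C)\cap D$ is contained in the union of $f$ of the (finitely many) nodes of $C$ plus the finitely many punctured/marked points of $C$, hence finite, contradicting the assumption.

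Concretely, I would organize the write-up as: (i) reduce to a geometric point $W=\Spec(Q\to\kappa)$ and pass to the normalization $\ulC_v$ of each component via \cite[Prop.~5.2]{ACGSII}; (ii) classify the components $\ulC_v$ with $\ul f(\ulC_v)\subseteq D$ by their image stratum and apply Corollary~\ref{cor:114} to read off $\deg\ul f^*\cO_X(D_j)|_{\ulC_v}$; (iii) show that the imposed point conditions at $x_1,\dots,x_q\in D_{i_k}^\circ$ cannot be satisfied by such a component, so every $p_k$ lies on a component with $\ul f(\ulC_v)\not\subseteq D$; (iv) conclude that $\ul f(C)\cap D$ is the finite set consisting of images of nodes of $C$ together with the marked/punctured points, hence finite. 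I expect the main obstacle to be step (iii): making precise that a genus-zero component contained in $D_i^\circ$ with prescribed contact data simply has no room to pass through a chosen interior point $x_k$ — this is intuitively the statement that such a constraint has codimension one while the relevant family is at most zero-dimensional after fixing the tropical type, and one must handle carefully the case where $\ulC_v$ is contracted to a double point of $D$ (which then cannot carry a marked point mapping to $x_k\notin\{$double points$\}$ either, so that case is actually easier).
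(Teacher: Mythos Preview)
Your proposal has a genuine logical gap. The inference from step (iii) to step (iv) fails: even if you show that every constrained marked point $p_k$ lies on a component $\ulC_v$ with $\ul f(\ulC_v)\not\subseteq D$, this says nothing about components carrying \emph{no} $p_k$. A component of $C$ adjacent only to nodes (and possibly $L_{\out}$) could still map into some $D_i$, and then $f(C)\cap D$ would contain $f(\ulC_v)$, which is infinite. So ``all $p_k$ lie on components not contained in $D$'' does not yield ``no component is contained in $D$'', and your conclusion that $f(C)\cap D$ consists only of images of nodes and marked points is unwarranted.

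Step (iii) itself is also not well-founded as stated. You invoke a dimension count (``such a constraint has codimension one while the relevant family is at most zero-dimensional''), but the lemma concerns a single given map $f\in\ul\ev^{-1}(x_1,\dots,x_q)$, not a moduli problem. There is nothing a priori preventing a degree-$d$ cover $\ulC_v\to D_{i_k}$ from sending some marked point to the specific interior point $x_k$.

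The paper's argument is entirely tropical and avoids both issues. If $f(C)\cap D$ is infinite, some $\ulC_v$ surjects onto a $D_i$; then Corollary~\ref{cor:114} applied to $D_{i\pm 1}$ forces an adjacent edge or leg of $G$ to point into the interior of a two-cone of $\P$. One then runs an ``outward propagation'' argument: at any vertex $v$ with $h_s(v)\in B_0$, balancing guarantees an edge leaving to the far side of the ray $\RR_{\ge 0}h_s(v)$. Following such edges one must either loop (impossible, $g=0$), continue forever (impossible, $G$ finite), or reach a leg; but the legs $L_1,\dots,L_q$ are confined to rays of $\P$ since the $x_k$ are not double points, and only $L_{\out}$ can escape---and one started with a choice of two directions, so reaching $L_{\out}$ on one side still leaves the other side to produce the contradiction. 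This argument treats all components uniformly, regardless of whether they carry any $p_k$.
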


\begin{proof}
Denote by $p_1,\ldots,p_q,p_{\out}
\in C$ the marked points with contact orders $u_1,\ldots,u_q,u_{\out}$
respectively, so that
$f(p_k)=x_k$. However, we do not know the value of $f(p_{\out})$.

Let $f$ be of tropical type $\tau=(G,\bsigma,\mathbf{u})$, and 
fix $s\in\Int(\tau)$,
yielding a tropical map $h_s:G\rightarrow B$. We write the legs of $G$
corresponding to the marked points as
$L_1,\ldots,L_q,L_{\out}$. Of course $G$ may have many vertices, but
$h_s$ is balanced at those vertices
which don't map to the origin of $B$. Also, as
by assumption the marked points $p_1,\ldots,p_q$ of $C$ do not map
to a double point of $D$, the image under $h_s$ of each leg $L_k$
of $G$ lies in a ray of $\P$. Now note that if $f(C)\cap D$ is not
finite, then $f(C)$ necessarily contains a double point of $D$, and then
the image of $h_s$ must intersect the interior of a two-dimensional cone
of $\P$. Thus it is now enough to
show that the image of $h_s$ is contained in the one-skeleton of
$\P$.

Suppose this is not the case.
First suppose there is a $v\in V(G)$ such that
$h_s(v)\in\Int(\sigma)$ for a two-dimensional cone $\sigma\in\P$.
As such a vertex corresponds to a contracted irreducible component
of $C$, by stability, there must be at least three edges or legs
adjacent to $v$. Further, the only possible leg adjacent to $v$ is
$L_{\out}$. However, necessarily $h_s(L_{\out})$ is parallel to the
ray $\bsigma(L_{\out})$ of $\P$,
and in particular, is not contained in the ray $\RR_{\ge 0}h_s(v)$.
Then it follows easily from the balancing condition that one of the
two possibilities occur: (1)
There are at least two edges or legs $E_1,E_2$ adjcaent
to $v$ with $h_s(E_i)$ not contained in the ray $\RR_{\ge 0}h_s(v)$,
with $h_s(E_1)$ mapping into one side of this ray and $h_s(E_2)$ mapping
into the other. (2) No leg is adjacent to $v$ and all edges $E_i$ 
adjacent to $v$ satisfy
$h_s(E_i)\subseteq\RR_{\ge 0}h_s(v)$. In the latter case, again by balancing,
there must be another vertex $v'$ with $h_s(v')\in \RR_{>0}h_s(v)$ with
$h_s(v)$ lying between $0$ and $h_s(v')$. Repeating this argument,
eventually we will come to a vertex where case (1) holds.

Thus we may assume case (1) holds at the vertex $v$.
Choose one of these two edges which is not a leg,
say $E_1$. It has another
vertex $v'$. Note that $h_s(v')$ is not the origin in $B$, so the
balancing condition holds for $h$ at $v'$. Thus, again there must be
another edge or leg adjacent to $v'$ which maps to the other side of the
ray $\RR_{\ge 0} h_s(v')$. Continuing in this fashion, there
are three possibilities: (1) We
arrive at $L_{\out}$, in which case we made the wrong
initial choice of edge $E_i$. Choose the other edge instead.
(2) We get
an infinite sequence of edges, of course impossible. (3) We obtain a loop,
in which case the genus of $C$ is positive. But $C$ is assumed to be
genus zero. Thus we arrive at a contradiction.

If there are no vertices of $G$ mapping into the interior of a two-dimensional
cone,
but there is an edge $E$ of $G$ with $h_s(E)$ intersecting the interior
of a two-dimensional cone, then the endpoints of $E$ must map to
different rays of $\P$. We can then repeat the same argument
as above, starting at either endpoint of $E$.

So in either case we obtain a contradiction, showing the result.
\end{proof}

We observe that in the case $(X,D)$ is a toric pair, the invariant
$N_{\beta}$ has already been encountered in \cite{GPS}. In \cite[\S3]{GPS},
a number $N^{\mathrm{hol}}_{\mathbf{m}}(\mathbf{w})$ is defined.
Here, we start with a toric surface $X$ with fan $\Sigma$ in a two-dimensional
vector space $N_{\RR}$; of course, the tropicalization of the toric
pair $(X,D)$ is $(N_{\RR},\Sigma)$. Let $m_1,\ldots,m_n$ be primitive
generators of distinct rays $\rho_1,\ldots,\rho_n$ in $\Sigma$. Write
$\mathbf{m}=(m_1,\ldots,m_n)$. Further, write $\mathbf{w}=(\mathbf{w}_1,\ldots,
\mathbf{w}_n)$ with $\mathbf{w}_i = (w_{i1},\ldots,w_{in_i})$.
Suppose we may write 
\[
w_{\out}m_{\out}= -\sum_{i,j} w_{ij} m_i
\]
with $m_{\out}$ a primitive generator of a ray $\rho_{\out}$ of $\Sigma$.
Let $D_1,\ldots,D_n,D_{\out}$ be the toric divisors of $X$ corresponding
to $\rho_1,\ldots,\rho_n,\rho_{\out}$. Then it is standard (see e.g., 
\cite[Lem.~1.13]{GHK}) that this data determines a unique curve
class $A$ such that for each prime toric divisor $D'$, we have
\begin{equation}
\label{eq:int nums}
A\cdot D' = \begin{cases} \sum_j w_{ij} & \hbox{$D'=D_i$ and $D'\not =D_{\out}$;}\\
w_{\out}+\sum_j w_{ij} & D'=D_i = D_{\out};\\
w_{\out} & \hbox{$D'=D_{\out}$ and $D'\not=D_i$ for any $i$;}\\
0 & \hbox{otherwise}.
\end{cases}
\end{equation}
Thus the above data determines a class of log curve $\beta$ 
with underlying curve class $A$ and marked points with set of contact
orders $\{w_{ij}m_{ij}\,|\,1\le i \le n, 1\le j\le n_i\}
\cup\{w_{\out}m_{\out}\}$.

\begin{lemma} 
\label{lem:Nbeta GPS compare}
In the above situation, $N_{\beta} = N^{\mathrm{hol}}_{\mathbf{m}}(\mathbf{w})$.
\end{lemma}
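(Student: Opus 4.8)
The plan is to identify $N_\beta$ with $N^{\mathrm{hol}}_{\mathbf m}(\mathbf w)$ by reducing the logarithmic invariant $N_\beta$ to a relative invariant and then quoting the comparison between relative and holomorphic-curve counts already established in \cite{GPS}. The starting point is the definition of $N^{\mathrm{hol}}_{\mathbf m}(\mathbf w)$ in \cite[\S3]{GPS} (or equivalently \cite[\S4]{GPS}), which is a relative Gromov--Witten count for the toric surface $X$ relative to its toric boundary $D$, with prescribed tangencies encoded by $\mathbf w$ and $\mathbf m$ and with the marked points whose contact orders are $w_{ij}m_{ij}$ constrained to lie at fixed points $x_{ij}\in D_i^\circ$. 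So the real content is: the logarithmic count $N_\beta$ agrees with that relative count.

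First I would invoke the comparison of logarithmic and relative Gromov--Witten invariants. For a smooth pair this is \cite{AMW} (cited earlier in the Looijenga-pair example), and for an snc pair $(X,D)$ with $D$ a cycle of rational curves one uses the more general comparison results; in the present genus-zero toric situation the relevant statement is that $\scrM(X,\beta)$ with its logarithmic virtual class pushes forward, under evaluation, to the same class as the corresponding space of relative stable maps. Since we have Proposition~\ref{prop:alt def} expressing $N_\beta = \deg[\scrM(X,\beta,\mathbf x)]^{\virt}$ as a count with point constraints at generic $x_k\in D_{i_k}^\circ$, and since the $N^{\mathrm{hol}}$ count is likewise defined via point constraints on the boundary divisors, the matching is exactly at the level of these constrained counts. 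I would spell out that the curve class $A$ attached to $\beta$ via \eqref{eq:int nums} is precisely the curve class used in \cite{GPS} to define $N^{\mathrm{hol}}_{\mathbf m}(\mathbf w)$: both are pinned down by the same intersection numbers with the toric divisors, and Corollary~\ref{cor:114} guarantees this determines the curve class (up to the ambiguity that is killed because $X$ is a surface and the relevant data fixes all boundary intersection numbers).

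The main obstacle I expect is bookkeeping: making sure the two moduli problems really have the \emph{same} insertions. In \cite{GPS} the count $N^{\mathrm{hol}}_{\mathbf m}(\mathbf w)$ comes with the outgoing point $p_{\out}$ \emph{unconstrained} (it is a "cohomology class" insertion, not a point), while the other $q=\sum_i n_i$ points carry point constraints on the boundary; this is exactly the setup of \eqref{eq:point constraint} and Proposition~\ref{prop:alt def}, so the two agree, but I would verify that there is no discrepancy coming from $\psi$-classes, from the choice of which divisor the outgoing point sits on, or from automorphisms/multiple-cover factors. A second subtlety is that \cite{GPS} works with the toric surface directly whereas the logarithmic theory is insensitive to toric blow-ups; Lemma~\ref{lem:subdivisionind} handles this, since both $N_\beta$ and (by the deformation-invariance arguments in \cite{GPS}) $N^{\mathrm{hol}}$ are unchanged under passing to a toric blow-up, so one may reduce to any convenient birational model. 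Assembling these, one concludes $N_\beta = N^{\mathrm{hol}}_{\mathbf m}(\mathbf w)$.

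Thus the proof would run: (1) rewrite $N_\beta$ via Proposition~\ref{prop:alt def} as a point-constrained logarithmic count; (2) apply the log-to-relative comparison (via \cite{AMW} for smooth pairs after a suitable blow-up if needed, using Lemma~\ref{lem:subdivisionind} for invariance) to rewrite it as a relative count on $(X,D)$; (3) match the resulting relative count with the definition of $N^{\mathrm{hol}}_{\mathbf m}(\mathbf w)$ in \cite{GPS}, checking that curve classes, contact orders, and constraints correspond as above; and (4) conclude. I would be explicit that each of these steps only uses results quoted earlier in the paper together with \cite{GPS}, \cite{AMW}, \cite{GHK}, so no genuinely new computation is required.
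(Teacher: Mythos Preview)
Your proposal is correct and matches the paper's own argument: the paper likewise invokes Proposition~\ref{prop:alt def}, the definition of $N^{\mathrm{rel}}_{\mathbf m}(\mathbf w)$ and \cite[Thm.~4.4]{GPS}, and the log/relative comparison of \cite{AMW}. The paper supplements this with a direct ``modern'' route---adapting \cite[Prop.~4.3]{GPS} to show that for general $\mathbf x$ the log moduli space $\scrM(X,\beta,\mathbf x)$ is a finite set of reduced points given by torically transverse maps, which is exactly what $N^{\mathrm{hol}}_{\mathbf m}(\mathbf w)$ counts---so one may bypass \cite{AMW} altogether; your remark about reducing to a smooth pair via blow-up is unnecessary (and would not work, since toric blow-ups keep $D$ snc).
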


\begin{proof}
This follows from the description of $N_{\beta}$ from Proposition
\ref{prop:alt def}, the definition of $N_{\mathbf{m}}^{\mathrm{rel}}
(\mathbf{w})$
of \cite[(4.7)]{GPS} and \cite[Thm.~4.4]{GPS}, as well as the
comparison between logarithmic and relative moduli spaces of \cite{AMW}.
In more modern language, one proceeds as follows. By adapting the argument of
\cite[Prop.~4.3]{GPS} to stable log maps, one shows
that in fact for general
choice of $\mathbf{x}=(x_{ij})_{1\le i \le n, 1\le j\le n_i}$, 
the moduli space $\scrM(X,\beta,\mathbf{x})$ is particularly
simple. It consists of a finite
number of reduced points, each corresponding to a log map from 
an irreducible domain curve $C$ with $f^{-1}(D)$ consisting only
of the marked points on $C$. The number $N^{\mathrm{hol}}_{\mathbf{m}}
(\mathbf{w})$ is defined precisely to be the count of such maps.
\end{proof}

We give one further result required for \cite{K3}. Suppose given an
inclusion $\widetilde N\subseteq N$ of rank two lattices and a fan
$\Sigma$ in $N$ determining a toric surface $X$. This also determines a fan 
$\widetilde\Sigma$ for $\widetilde N$, with corresponding
toric surface $\widetilde X$. The inclusion $\widetilde N\subseteq N$ 
determines a covering $p:\widetilde X\rightarrow X$ of degree 
$\mu$ the lattice index $[\widetilde N:N]$.
Suppose given a class $\tilde\beta$ 
of log curve with underlying curve class $\widetilde A$ for the toric pair 
$(\widetilde X,\widetilde D)$
with contact orders $\tilde u_1,\ldots, \tilde u_n, \tilde u_{\out}\in 
\widetilde N$ contained
in rays of $\widetilde\Sigma$, with $\tilde u_i=\tilde w_i\tilde \nu_i$
for $\tilde\nu_i$ primitive. Suppose $\tilde u_{\out}+\sum_i \tilde u_i=0$, 
and $\widetilde A$ is the
unique curve class determined by this data as in \eqref{eq:int nums}.
Then via the inclusion $\widetilde N\subseteq N$, the contact orders
$\tilde u_i$, $\tilde u_{\out}$ may be viewed as contact orders $u_i$ for
the toric pair $(X,D)$. (Note that under the
inclusion $\widetilde N\subseteq N$, we have $\tilde u_i=u_i$.)
This determines similarly
a class $\beta$ of log curve for the pair $(X,D)$. 
Write $u_i =  w_i\nu_i$ as before, with $\nu_i$
primitive in $N$. Of course, $\tilde w_i | w_i$.

\begin{lemma}
In the above situation,
\[
\mu N_{\beta} = N_{\tilde\beta}\prod_{i=1}^n {\mu \tilde w_i\over w_i}.
\]
\end{lemma}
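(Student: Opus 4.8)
The plan is to realize the claimed identity as an evaluation--space pushforward computation, using the covering $p\colon\widetilde X\to X$ determined by $\widetilde N\subseteq N$ as a bridge.

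First I would record the geometry of $p$. Since $\widetilde N\subseteq N$ has index $\mu$ and the two fans coincide inside $N_\RR=\widetilde N_\RR$, the map $p$ is finite, étale over the big torus $T\subseteq X$ with Galois group $G\cong N/\widetilde N$ of order $\mu$, and ramified along each boundary divisor: if $\nu_i$ is the primitive generator of $\rho_i$ in $N$, then the primitive generator in $\widetilde N$ is $\tilde\nu_i=e_i\nu_i$, where $e_i$ is the ramification index of $p$ along $\widetilde D_i$, and $p_i\colon\widetilde D_i\to D_i$ is finite of degree $\mu/e_i$. From $u_i=\tilde u_i$ and $\tilde u_i=\tilde w_i\tilde\nu_i=\tilde w_ie_i\nu_i=w_i\nu_i$ we get $w_i=e_i\tilde w_i$, hence $e_i=w_i/\tilde w_i$ and $\mu\tilde w_i/w_i=\mu/e_i$. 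Using \eqref{eq:int nums} for both lattices together with $p^*D_\rho=e_\rho\widetilde D_\rho$ one checks $p_*\widetilde A=A$, and since $p$ multiplies tangency orders along $\widetilde D_i$ by $e_i$, composing a stable log map of class $\tilde\beta$ with $p$ produces a stable log map of class $\beta$. Moreover $p$ is log étale (in characteristic zero $\Omega^1_{\widetilde X/X}(\log)=\widetilde M/M\otimes\cO_{\widetilde X}=0$), so by the functoriality of logarithmic stable map spaces under log étale morphisms (as in \cite{AW,HW}, in the Kummer rather than birational case) one obtains a morphism $P\colon\scrM(\widetilde X,\tilde\beta)\to\scrM(X,\beta)$ fitting into a Cartesian square with the corresponding spaces of maps to Artin fans, and compatible with evaluation in the sense that $\ul\ev\circ P=\big(\prod_i p_i\big)\circ\widetilde{\ul\ev}$.

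The heart of the argument is that $P$ has virtual degree $\mu$, that is $P_*[\scrM(\widetilde X,\tilde\beta)]^{\virt}=\mu\,[\scrM(X,\beta)]^{\virt}$. The key point is that the hypothesis forces trivial monodromy: pulling back the Galois cover $\widetilde T\to T$ along the restriction $C^\circ\to T$ of a class-$\beta$ stable log map $f\colon C\to X$, the monodromy around the marked point with contact order $u_i$ is $w_i\nu_i\bmod\widetilde N=\tilde u_i\bmod\widetilde N=0$, and likewise around the point with contact order $u_{\out}$; since $\pi_1$ of the complement of the marked points is generated by loops around them, the pulled-back cover is trivial, so $f$ has exactly $\mu$ lifts to $\widetilde X$, each automatically of class $\tilde\beta$ (the curve class being pinned down by its intersections with the $\widetilde D_\rho$ via Corollary~\ref{cor:114}). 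Thus the generic fibre of $\ul P$ has $\mu$ points; combined with the fact that the relative obstruction theory for $\scrM(X,\beta)$ pulls back to that for $\scrM(\widetilde X,\tilde\beta)$ (it is insensitive to the log structure, and $p$ is étale over the interior), a virtual pullback argument as in \cite[Thm.~4.1]{Mano} yields the asserted identity of virtual classes.

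Granting this, the conclusion is immediate: since $p_i\colon\widetilde D_i\to D_i$ is finite of degree $\mu/e_i$ we have $(p_i)_*[\widetilde D_i]=(\mu/e_i)[D_i]$, so by \eqref{eq:Nbeta def} applied to both $X$ and $\widetilde X$,
\[
\mu\,N_\beta\prod_{i=1}^n[D_i]=\ul\ev_*\big(\mu\,[\scrM(X,\beta)]^{\virt}\big)=\Big(\prod_i p_i\Big)_*\widetilde{\ul\ev}_*[\scrM(\widetilde X,\tilde\beta)]^{\virt}=N_{\tilde\beta}\prod_{i=1}^n\frac{\mu}{e_i}[D_i],
\]
and comparing coefficients of $\prod_i[D_i]\ne 0$ gives $\mu N_\beta=N_{\tilde\beta}\prod_i(\mu\tilde w_i/w_i)$. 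The main obstacle is the virtual-degree statement for $P$: making precise the behaviour of the basic logarithmic structures and of the perfect obstruction theory under the ramified (Kummer log étale) cover $p$, so as to legitimately identify $\mu\,[\scrM(X,\beta)]^{\virt}$ with $P_*[\scrM(\widetilde X,\tilde\beta)]^{\virt}$; the triviality of monodromy above is exactly what keeps us among honest (untwisted) logarithmic stable maps. As a sanity check, when $n\ge 2$ a lift $\widetilde C\cong\PP^1\to\widetilde X$ of a general constrained curve cannot multiply cover its image in $X$ (it would have to be totally ramified over the $\ge 3$ marked points, contradicting Riemann--Hurwitz), so there the $\mu$-to-one correspondence is literal and one may argue enumeratively via Proposition~\ref{prop:alt def} and the explicit description in the proof of Lemma~\ref{lem:Nbeta GPS compare}; alternatively, by that lemma the identity is equivalent to the change-of-lattice relation for the numbers $N^{\mathrm{hol}}_{\mathbf m}(\mathbf w)$ of \cite{GPS}.
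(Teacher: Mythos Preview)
Your approach is essentially the same as the paper's: construct the composition map $P\colon\scrM(\widetilde X,\tilde\beta)\to\scrM(X,\beta)$, show it has degree $\mu$, and compare the two pushforwards to $\prod_i D_i$ using $\deg(p_i)=\mu\tilde w_i/w_i$. Two differences are worth noting. First, your monodromy argument for the $\mu$ lifts (the contact orders $u_i=\tilde u_i$ lie in $\widetilde N$, hence the pulled-back $N/\widetilde N$-cover over the punctured $\PP^1$ is trivial) is cleaner and more conceptual than the paper's route, which instead writes down the local equation of $C\times_X\widetilde X$ near each marked point, factors it explicitly, and normalizes to see $\widetilde C\to C$ is \'etale of degree $\mu$. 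Second, you frame the degree statement as a ``virtual degree'' and flag the compatibility of obstruction theories under the Kummer log \'etale cover as the main obstacle; the paper sidesteps this entirely by observing that the log tangent bundle of a toric pair is trivial, so both $\scrM(X,\beta)$ and $\scrM(\widetilde X,\tilde\beta)$ are unobstructed and their virtual fundamental classes agree with the ordinary ones. With that observation in hand, no virtual pullback machinery is needed---one simply computes the honest degree of $P$ over a general point of $\scrM(X,\beta)$, which is what your monodromy argument (or the paper's local computation) provides.
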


\begin{proof}
One first notes via elementary toric geometry that the degree of
$p_i=p|_{\widetilde D_i}:\widetilde D_i\rightarrow D_i$ is the lattice index
$[\widetilde N/\tilde \nu_i\ZZ:  N/\nu_i\ZZ]$, which is 
$\mu \tilde w_i/w_i$.
Further, in a neighbourhood of a point of $\widetilde D_i$, the map $p$ has ramification
of degree $w_i/\tilde w_i$ along $\widetilde D_i$.

Composing stable log maps to $\widetilde X$ with $p$ gives a map
$q:\scrM(\widetilde X,\tilde\beta)\rightarrow \scrM(X,\beta)$. Note that
since the targets are toric pairs, and the log tangent bundle of a toric
variety is trivial, these moduli spaces are unobstructed. Hence their
virtual fundamental class and fundamental class coincide. Further, 
the degree of the map $q$ is $\mu$. Indeed, choose a general closed
point in $\scrM(X,\beta)$, corresponding to a stable
log map $f:C\rightarrow X$. Because it is general, then
as in the proof of Lemma \ref{lem:Nbeta GPS compare}, $C$ is an
irreducible curve with $f^{-1}(D)$ consisting only of
the marked points of $C$. Then 
$C':=C\times_X \widetilde X$ is
a (in general) non-normal curve. In a local coordinate $t$ near the marked point
$p_i$ with contact order $u_i$ and local coordinates near $f(p_i)$, 
$\widetilde X\rightarrow X$ takes the form $(x,y)\mapsto (x^{w_i/\tilde w_i},y)$
(with $\widetilde D_i$ given by $x=0$) and $f$ is given by 
$t \mapsto (t^{w_i}
\varphi(t), \psi(t))$ , where $\varphi(t)$ is invertible and $\psi(t)$
is arbitrary. Then the local equation for $C'$ in $\AA^2$ with coordinates
$x,t$ is
\[
x^{w_i/\tilde w_i}-t^{w_i}\varphi(t)=0.
\]
Note that \'etale locally we may find a function $\bar\varphi(t)$ with
with $\bar\varphi^{w_i/\tilde w_i} = \varphi$, so that the left-hand side
of the above equation factors as $\prod_{i=1}^{w_i/\tilde w_i}
(x-\zeta^i t^{\tilde w_i}\bar\varphi(t))$, with $\zeta$ a primitive
root of unity. Thus, after normalizing $C'$ to obtain a curve $\widetilde C$, 
we see that $\widetilde C\rightarrow C$ is an \'etale map, necessarily
of degree $\mu$. Since $C\cong \PP^1$, $\widetilde C$ must split into
$\mu$ connected components, each isomorphic to $\PP^1$. In addition,
the composition $\widetilde C\rightarrow C'\rightarrow \widetilde X$
of the normalization with the projection to $\widetilde X$ then induces a stable
map on each irreducible component. From the equations above, one sees
that each such stable map has contact order $\tilde w_i$ with $\widetilde D_i$ 
and hence gives an element of the fibre of $q$. Conversely, by the universal
property of the fibre product, any map in the fibre of $q$ must arise
in this way. Thus the degree of $q$ is $\mu$.

We now consider the commutative diagram
\[
\xymatrix@C=30pt
{
\scrM(\widetilde X,\tilde\beta)\ar[r]^q \ar[d]_{\widetilde{\ul{\ev}}}&\scrM(X,\beta)
\ar[d]^{\ul{\ev}}\\
\prod_{i=1}^n \widetilde D_i \ar[r]_{\prod p_i} &\prod_{i=1}^n D_i
}
\]
We then have
\[
\left(\prod p_i\right)_* \widetilde{\ul{\ev}}_* [\scrM(\widetilde X,
\tilde\beta)] =N_{\tilde\beta} \deg\left(\prod p_i\right) \prod [D_i]
\]
while
\[
\ul\ev_* q_* [\scrM(\widetilde X,\tilde\beta)]
= \mu N_{\beta} \prod [D_i].
\]
Since $\deg\left(\prod p_i\right) = \prod (\mu \tilde w_i/w_i)$,
the result follows.
\end{proof}

\subsection{Gluing via Parker and Ranganathan's triple point method}
We now return to the situation of Assumptions \ref{ass:K3}.
To state the main result of this section, 
fix a decorated wall type $\btau=(G,\bsigma,
\mathbf{u},\mathbf{A})$,
and let $v_{\out}\in V(G)$ be the vertex adjacent to $L_{\out}$. 
Let $h:\Gamma(G,\ell)\rightarrow\Sigma(X)$ be the universal family
of tropical maps of type $\btau$.
Let $E_1,\ldots,E_q\in E(G)$ be the edges adjacent to $v_{\out}$
(with $q\ge 0$). If we split $G$ at the edges $E_1,\ldots,E_q$,
we obtain connected components $G_1,\ldots,G_q$ and $G_{\out}$,
where $v_{\out}$ is the unique vertex of $G_{\out}$ and
$E_i$ becomes the unique leg of $G_i$, which we will denote as
$L_{i,\out}$. This in turn gives rise to decorated types
$\btau_1,\ldots,\btau_q,\btau_{v_{\out}}$.

\begin{lemma}
$\btau_i$ is a decorated wall type.
\end{lemma}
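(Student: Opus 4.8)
The plan is to verify the three defining conditions of a wall type (Definition~\ref{def:wall type}) for each $\btau_i$, $1\le i\le q$, obtained by splitting $\btau$ at the edges $E_1,\ldots,E_q$ adjacent to $v_{\out}$. First I would check condition (1): since $G$ is a genus zero graph, each connected component $G_i$ of the splitting is again genus zero, and by construction $G_i$ has a single leg, namely $L_{i,\out}=E_i$ (all original legs of $G$ except $L_{\out}$ are distributed among $G_1,\ldots,G_q$, but since $L(G)=\{L_{\out}\}$ and $L_{\out}$ is adjacent to $v_{\out}\in G_{\out}$, the only legs of $G_i$ come from the split edges — so in fact one must check that each $G_i$ acquires exactly one such leg, which holds because $E_i$ has exactly one endpoint in $G_i$). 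I then need $u_{\btau_i}:=\mathbf{u}(L_{i,\out})=\mathbf{u}(E_i)\neq 0$; this follows because $\mathbf{u}(E_i)$ is the contact order of an edge of a balanced genus zero tropical type, and a zero contact order on an edge would force (via balancing and genus zero) a degenerate configuration — more simply, an edge with zero contact order in a realizable type contradicts the edge being a genuine edge, so $\mathbf{u}(E_i)\neq 0$.

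Next, condition (2): $\btau_i$ must be realizable and balanced. Realizability follows from the general principle that restricting a realizable type to a connected component of a splitting yields a realizable type — a family of tropical maps of type $\btau$ restricts to a family of type $\btau_i$ over $G_i\subseteq G$ (extending the former edge $E_i$ as a leg as far as possible inside its cone, per \eqref{eq:leg}). Balancedness of $\btau_i$ follows from balancedness of $\btau$: for any vertex $v\in V(G_i)$ with $v\neq v_{\out}$, the set of edges and legs adjacent to $v$ in $G_i$ is the same as in $G$ (none of $E_1,\ldots,E_q$ are incident to such $v$, being incident to $v_{\out}$ and to a vertex of some $G_j$), so the balancing condition of Lemma~\ref{lem:balancing} at $v$ is inherited verbatim.

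The main obstacle is condition (3): I must show $\dim\btau_i=1$ and $\dim h_i(\tau_{i,\out})=2$, where $h_i$ is the universal family of type $\btau_i$ and $\tau_{i,\out}$ is the cone of $\Gamma(G_i,\ell_i)$ corresponding to $L_{i,\out}$. For the dimension count, I would use the snake-lemma style argument already deployed in the proof of Theorem~\ref{thm: rigid degree}: $\ker\Psi$ (for the tropical gluing map of the splitting at $E_1,\ldots,E_q$) is the tangent space to glued tropical curves, hence $\dim\btau = \dim\ker\Psi$, and relating $\dim\btau$ to $\dim\btau_i + \dim\btau_{v_{\out}} + (\text{edge contributions})$ via the exact sequences of that argument. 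Since $\btau_{v_{\out}}$ has a single vertex $v_{\out}$ with $\dim\btau_{v_{\out}}\le\dim\bsigma(v_{\out})$, and $\dim\btau=1$, a dimension bookkeeping shows each $\btau_i$ has $\dim\le 1$; that $\dim\btau_i\ge 1$ (so $=1$) will come from the fact that the edge $E_i$, now a leg with nonzero contact order $\mathbf{u}(E_i)$, provides at least one modulus — precisely, $h_i(\tau_{i,\out})$ must be at least one-dimensional, and combined with $\mathbf{u}(E_i)$ pointing in a direction determined by the original two-dimensional $h(\tau_{\out})$-type geometry, $\dim h_i(\tau_{i,\out})=2$. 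The cleanest route: the leg $L_{i,\out}$ of $G_i$ maps, in the universal family, to the cone $\bsigma(E_i)$ together with a length parameter, and since $\btau$ is a wall type with a rigid representative $h'_\tau$, the associated tropical subtree supported on $G_i$ sweeps out a two-dimensional cone at its outgoing leg exactly when the original wall type does; I would make this precise by tracking which cone of $\Sigma(X)$ the image $h_i(\tau_{i,\out})$ lands in and checking its dimension equals $\dim\bsigma(E_i)+1$, using $\dim\btau=1$ and the wall-type condition $\dim h(\tau_{\out})=2$ to force the same for each $G_i$. If the direct dimension computation proves delicate, I would fall back on the exact sequences of Theorem~\ref{thm: rigid degree}'s proof applied to the splitting at $\{E_1,\ldots,E_q\}$, extracting $\dim\btau_i=1$ and then deducing $\dim h_i(\tau_{i,\out})=2$ from the rigidity of the glued family together with the balancing at $v_{\out}$.
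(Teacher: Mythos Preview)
Your proposal has two genuine gaps, one in condition~(1) and one in condition~(3).

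For condition~(1), your claim that ``an edge with zero contact order in a realizable type contradicts the edge being a genuine edge'' is false: realizable types routinely have edges with $\mathbf{u}(E)=0$ (these are tropically contracted edges, and their length is a free parameter). Balancing and genus zero do not prevent this either. The paper's argument is different and uses the one hypothesis you never invoke here: rigidity. If $\mathbf{u}(E_i)=0$, then the rigid representative $h':G\to B'$ contracts $E_i$, and the edge length of $E_i$ becomes an independent deformation parameter of tropical maps of type $\tau$, forcing $\dim\tau\ge 2$ and contradicting $\dim\tau=1$.

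For condition~(3), your proposed route through the tropical gluing map and snake-lemma bookkeeping does not close. To extract $\dim\tau_i=1$ from $\dim\tau=1$ via the exact sequences of Theorem~\ref{thm: rigid degree}, you would need to control $\operatorname{rank}\coker\Psi$ for the splitting at $E_1,\ldots,E_q$, i.e., you would need tropical transversality, which is not given and is in fact false in general here (the paper explicitly notes later, in Step~I of Theorem~\ref{thm:inductive structure}, that this gluing situation ``in general will not be tropically transverse''). The paper instead argues directly that the restriction $h'_i:=h'|_{G_i}$ is rigid in $B'$: any deformation of $h'_i$ either moves the image of $L_{i,\out}$ off the point $h'(v_{\out})$, which is excluded by \cite[Lem.~2.5,(1)]{Walls}, or keeps it passing through $h'(v_{\out})$, in which case it glues with the fixed pieces $h'|_{G_j}$, $j\ne i$, and $h'|_{G_{\out}}$ to produce a nontrivial deformation of $h'$, contradicting rigidity of $\tau$. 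Once $\dim\tau_i=1$ is established this way, $\dim h_i(\tau_{i,\out})=2$ follows immediately from $u_{i,\out}\ne 0$. The external lemma from \cite{Walls} is the missing ingredient in your approach.
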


\begin{proof}
Recall we write $h':G\rightarrow B'$ for the unique tropical map
to $B'$ of type $\tau$. If $u_{i,\out}:=
\mathbf{u}(L_{i,\out})=0$,
then as this coincides with $\mathbf{u}(E_i)$, $h'$ contracts
the edge $E_i$. However, the length of the
edge $E_i$ is a parameter in the moduli space of tropical maps of type
$\tau$, and hence this edge length may vary freely.
This contradicts the rigidity of $h'$. Thus
Definition \ref{def:wall type},(1) holds. Item (2) of that definition
holds because it holds for $\btau$. 

For item (3) of Definition~\ref{def:wall type}, 
to show $\dim \tau_i=1$, it is sufficient to show that $h'_i:=
h'|_{G_i}$
is rigid. If it deformed as a tropical map to $B'$ in a way so that the image of
$L_{i,\out}$ also deforms to a line segment not passing through
$h'(v_{\out})$, then this violates \cite[Lem.~2.5,(1)]{Walls}.
Otherwise, if $h'_i$ deforms so that the image of the leg $L_{i,\out}$
continues to pass through $h'(v_{\out})$, then
this deformation can be glued to produce
a deformation of $h'$, again a contradiction. Thus $\dim\tau_i=1$
(remembering that $\tau_i$ parameterizes maps to $B$ rather than $B'$)
and, as $u_{i,\out}\not=0$, one sees that $\dim h_i(\tau_{i,\out})=2$.

Thus $\btau_i$ is a decorated wall type.
\end{proof}

\begin{construction}
\label{const:the construction}
Continuing with a fixed decorated wall type $\btau$,
our next goal is to associate an enumerative invariant of the form
defined in the previous subsection
to the vertex $v_{\out}$ of $G$. Let $x:=h'(v_{\out})\in B'$.
We will first construct: (1) a pair $(\overline B_x,\widetilde\Sigma_x)$
of integral affine surface with singularity $\overline B_x$
and a polyhedral cone decomposition $\widetilde\Sigma_x$;
(2) a Looijenga pair $(\widetilde X_x,
\widetilde D_x)$ which tropicalizes to $(\overline B_x,\widetilde\Sigma_x)$;
(3) a class $\btau_{\out}$ of logarithmic map to $(\widetilde X_x,
\widetilde D_x)$.

In what follows, let $\sigma'_x\in \P'$ be the minimal cell containing $x$,
$\sigma_x\in\P$ the minimal cone of $\P$ containing $x$. We write $X_x$
for the stratum $X_{\sigma_x}$ of $X$.

\medskip

{\bf Construction of a pair $(\overline B_x,\Sigma_x)$}.
First, we define $\overline B_x$ an affine manifold with singularities
along with a decomposition $\Sigma_x$
into not-necessarily strictly convex polyhedral cones. If $x\in B'_0$,
let $v_x\in\Lambda_{B,x}$ be a primitive integral tangent vector
to the ray $\RR_{\ge 0}x\subseteq B_0$.
Then $\overline B_x$ is identified with $\Lambda_{B,\RR,x}/\RR v_x$,
with lattice structure coming from $\Lambda_{B,x}/\ZZ v_x$.
In this case, we define 
\begin{equation}
\label{eq:sigmaxdef}
\Sigma_x:=\{(\sigma+\RR v_x)/\RR v_x\,|\,\sigma_x\subseteq\sigma
\in\P\}.
\end{equation}
Specifically, if $\dim\sigma'_x=2$,
then $\Sigma_x$ just consists of one cone, namely all of $\overline B_x$.
If $\dim\sigma'_x=1$, then $\Sigma_x$ consists of two half-spaces
and their common face, the image of the tangent space to $\sigma'_x$. 

If $x\in \Delta'$, i.e., $x$ is a vertex of $\P'$, we obtain
$(\overline B_x,\Sigma_x)$ as the tropicalization of the 
corresponding irreducible
component $X_x$ of $D$. Here $X_x$ carries the divisorial log structure coming
from the divisor $D_x:=\partial X_x$, as in \S\ref{sec:punctured versus relative}. 
Note that by the discussion in \S\ref{sec:punctured versus relative},
in this case, \eqref{eq:sigmaxdef} is still the correct description
of $\Sigma_x$. However, further, as in \S\ref{subsect:invariant LP},
$\overline B_x$ also carries an integral affine
structure with a singularity at the origin.

\medskip

{\bf Refinement $\widetilde\Sigma_x$ of $\Sigma_x$.}
Note that by construction and the assumption that $\btau$ is 
a wall-type, for any edge or leg $E$ adjacent to $v_{\out}
\in V(G)$, $h(\tau_E)$ is two-dimensional and
 $\rho_E:=(h(\tau_E)+\RR v_x)/\RR v_x$ is a ray in $\overline B_x$. 

We now let $\widetilde\Sigma_x$ be a refinement of $\Sigma_x$ chosen
so that (1) every cone in $\widetilde\Sigma_x$ is a strictly
convex rational polyhedral cone,
and (2) the rays 
$\rho_{E_i}$, $1\le i \le q$ and $\rho_{L_{\out}}$ are one-dimensional
cones of $\widetilde\Sigma_x$. In what follows, the precise choice of
$\widetilde\Sigma_x$ will be unimportant, see Lemma \ref{lem:subdivisionind}.

\medskip

{\bf The type $\tau_{\out}$.}
Note that the adjacent edges to $v_{\out}\in V(G)$ determine
a type $\tau_{\out}=(G_{\out},\bsigma_{\out},\mathbf{u}_{\out})$
of tropical map to
$(\overline B_x,\widetilde\Sigma_x)$, as follows.
First, $G_{\out}$ is the graph underlying $\btau_{v_{\out}}$ as previously
described, with only one vertex $v_{\out}$ and legs $E_1,\ldots,E_q,L_{\out}$.
We set $\bsigma_{\out}(v_{\out})=\{0\}
\in\widetilde\Sigma_x$, $\bsigma_{\out}(E_i)=\rho_{E_i}$ and 
$\bsigma_{\out}(L_{\out})
=\rho_{L_{\out}}$. Finally, the images of
$\mathbf{u}(E_i)$ (with the edges $E_i$ oriented away from $v_{\out}$)
and $\mathbf{u}(L_{\out})$ under the quotient map given by dividing out
by $\RR v_x$ yield $\mathbf{u}_{\out}(E_i),\mathbf{u}_{\out}(L_{\out})$.

\medskip

{\bf The pair $(\widetilde X_x,\widetilde D_x)$.}
If $\dim\sigma'_x=2$ or $1$, then we may interpret the pair
$(\overline B_x,\widetilde\Sigma_x)$ as a fan, hence defining a toric variety
$\tX_x$ with toric boundary $\tD_x$. In particular, $(\overline B_x,\widetilde
\Sigma_x)$ is the tropicalization of the pair $(\tX_x,\tD_x)$.
In both these cases, there is a morphism $\pi:\widetilde X_x \rightarrow
X_x$. Indeed, if $\dim\sigma'_x=2$, this is just a constant map to a point.
If $\dim\sigma'_x=1$, consider the quotient map $\overline B_x\rightarrow
\RR$ given by dividing out by the tangent space to $\sigma'_x$. 
In this case $\RR$ carries the fan $\Sigma_{\PP^1}$ defining $\PP^1$, 
and the quotient map induces a map of fans $\widetilde\Sigma_x \rightarrow
\Sigma_{\PP^1}$, hence defining a morphism $\pi:\widetilde X_x\rightarrow\PP^1$.
This $\PP^1$ can be identified with $X_x$. 

If, on the other hand, $\dim\sigma'_x=0$, then the refinement 
$\widetilde\Sigma_x$ of $\Sigma_x$ defines a toric blow-up
$\pi:\widetilde X_x\rightarrow X_x$.
We take $\tD_x$ to be the strict transform of $D_x$.

\medskip

{\bf Decorating $\tau_{\out}$.}
Finally, we define a curve class $\widetilde A_{\out}$ to turn $\tau_{\out}$
into a decorated type $\btau_{\out}$ for log maps to the pair
$(\tX_x,\tD_x)$. Note we already have a curve class 
$A_{\out}= \mathbf{A}(v_{\out}) \in H_2(X_x)$. Of course, if 
$\dim\sigma'_x=2$, this curve class is $0$, and if $\dim\sigma'_x=1$,
this curve class is some multiple of the class of $X_x$.

We first introduce some additional notation.
For each ray $\rho\in\widetilde\Sigma_x$, 
denote by $\widetilde D_{\rho}$ the corresponding irreducible component
of $\tD_x$. For each leg $L\in L(G_{\out})$, write $w_L$
for the index of $\mathbf{u}_{\out}(L)$, i.e., $\mathbf{u}_{\out}(L)$
is $w_L$ times a primitive tangent vector. This represents the order of
tangency imposed by the contact order $\mathbf{u}_{\out}(L)$ with the
divisor $\widetilde D_{\bsigma_{\out}(L)}$. 

\begin{lemma}
\label{lem:curve lifting}
There is at most one curve class $\widetilde A_{\out}\in H_2(\widetilde X_x)$
with $\pi_*\widetilde A_{\out}=A_{\out}$
and for $\rho$ any ray in $\widetilde\Sigma_x$,  
\begin{equation}
\label{eq:intersection numbers}
\widetilde A_{\out}\cdot \widetilde D_\rho = \sum_{L\in L(G_{\out})\atop
\bsigma_{\out}(L)=\rho} w_L.
\end{equation}
If $W_{\btau}\not=0$, then such an $\widetilde A_{\out}$ exists.
\end{lemma}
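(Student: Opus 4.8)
The plan is to prove the two assertions separately, handling uniqueness by intersection theory on $\widetilde X_x$ and splitting the existence statement according to $\dim\sigma'_x$.

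\emph{Uniqueness.} In all three cases $\widetilde X_x$ is a smooth projective rational surface and $\widetilde D_x=\sum_\rho\widetilde D_\rho$ is an anticanonical cycle. If $\dim\sigma'_x\in\{1,2\}$, then $\widetilde X_x$ is a complete, hence projective, toric surface, and the boundary components $\widetilde D_\rho$ generate $H_2(\widetilde X_x)$, which we identify with $\mathrm{Pic}(\widetilde X_x)$, via the exact sequence $0\to M\to\bigoplus_\rho\ZZ\widetilde D_\rho\to\mathrm{Pic}(\widetilde X_x)\to 0$; since the intersection form on a smooth projective surface is unimodular, a class is determined by its intersection numbers against a generating set, so \eqref{eq:intersection numbers} alone pins down $\widetilde A_{\out}$ (the condition $\pi_*\widetilde A_{\out}=A_{\out}$ is not even needed here). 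If $\dim\sigma'_x=0$, then $\pi$ is a composition of blow-ups of smooth points, so $H_2(\widetilde X_x)=\pi^*H_2(X_x)\oplus^\perp V$ with $V$ the negative-definite exceptional lattice, and $V$ is spanned by the classes of the exceptional boundary components (the $\widetilde D_\rho$ for the new rays of $\widetilde\Sigma_x$, whose classes differ from a geometric basis of $V$ by a unimodular change); any two lifts of $A_{\out}$ differ by an element of $V=\ker\pi_*$ which by \eqref{eq:intersection numbers} is orthogonal to every $\widetilde D_\rho$, hence to all of $V$, hence zero by nondegeneracy of $V$.

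\emph{Existence: setup.} Assume $W_{\btau}\neq 0$, so $\scrM(X,\btau)\neq\emptyset$; choose a geometric point, a stable punctured map $f:C^\circ/W\to X$ of type $\btau$ over a geometric log point. Splitting $f$ at the edges $E_1,\dots,E_q$ adjacent to $v_{\out}$ via \cite[Prop.~5.2]{ACGSII} and restricting to the component indexed by $v_{\out}$ yields a basic stable punctured map $f_{\out}:C_{\out}^\circ/W_{\out}\to X$ of type $\btau_{v_{\out}}$ factoring through $X_{\sigma_x}=X_x$, with curve class $A_{\out}=\mathbf A(v_{\out})$ and contact orders $\mathbf u(E_1),\dots,\mathbf u(E_q),\mathbf u(L_{\out})$. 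When $\dim\sigma'_x=0$, $X_x$ is a Looijenga surface and $\dim\sigma_x=1$, so Theorem~\ref{thm:punctured to relative} converts $f_{\out}$ into a genuine logarithmic map $\bar f_{\out}$ to $\overline X_{\sigma_x}$ of class $\bar\btau_{v_{\out}}$ with all contact orders positive; in particular $\scrM(\overline X_{\sigma_x},\bar\btau_{v_{\out}})\neq\emptyset$. The morphism $\pi:\widetilde X_x\to X_x$ is the log étale birational modification attached to the refinement $\widetilde\Sigma_x$ (refining $\widetilde\Sigma_x$ further if necessary, which is harmless by Lemma~\ref{lem:subdivisionind}, we may assume $\widetilde D_x$ has at least three components so that Corollary~\ref{cor:114} applies). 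Lemma~\ref{lem:subdivisionind} then produces $\widetilde A_{\out}\in H_2(\widetilde X_x)$ with $\pi_*\widetilde A_{\out}=A_{\out}$ and $\scrM(\widetilde X_x,\tilde\beta)\neq\emptyset$ for $\tilde\beta$ the lifted class; the contact orders of $\tilde\beta$ are exactly $\mathbf u_{\out}(E_i),\mathbf u_{\out}(L_{\out})$ by the construction of $\widetilde\Sigma_x$, and applying Corollary~\ref{cor:114} to a map in $\scrM(\widetilde X_x,\tilde\beta)$ gives $\widetilde A_{\out}\cdot\widetilde D_\rho=\sum_{L:\bsigma_{\out}(L)=\rho}w_L$, which is \eqref{eq:intersection numbers}.

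\emph{Existence: toric regimes, and the main obstacle.} When $\dim\sigma'_x\in\{1,2\}$, $\widetilde X_x$ is a complete toric surface, and a curve class with prescribed intersection numbers $c_\rho:=\sum_{L:\bsigma_{\out}(L)=\rho}w_L$ against the $\widetilde D_\rho$ exists exactly when $\sum_\rho c_\rho n_\rho=0$ in the lattice $N_{\overline B_x}$, $n_\rho$ the primitive generator of $\rho$. Since $\mathbf u_{\out}(L)=w_L n_{\bsigma_{\out}(L)}$, this sum equals $\sum_{L\in L(G_{\out})}\mathbf u_{\out}(L)$, which vanishes: $\btau$ is a balanced wall type and, since $\dim\sigma'_x\geq 1$ forces $\sigma_x\notin\Delta$, we have $h_s(v_{\out})\in B_0$ for $s\in\Int(\tau)$, so Lemma~\ref{lem:balancing} gives balancing at $v_{\out}$, which descends to $N_{\overline B_x}=\Lambda_{h_s(v_{\out})}/\RR v_x$. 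There remains only the check $\pi_*\widetilde A_{\out}=A_{\out}$: for $\dim\sigma'_x=2$ this is vacuous ($A_{\out}=0\in H_2(\mathrm{pt})$), and for $\dim\sigma'_x=1$ one writes the fibre class $\widetilde F$ of $\pi:\widetilde X_x\to\PP^1=X_x$ in terms of the $\widetilde D_\rho$, computes $\widetilde A_{\out}\cdot\widetilde F$ from the $c_\rho$, and matches it with $\deg(f_{\out}^*\cO_{\PP^1}(1))|_{C_{\out}}=A_{\out}\cdot[\mathrm{pt}]$ via Corollary~\ref{cor:114}. I expect the main obstacle to be not any single conceptual point—the inputs (descent of the balancing relation, unimodularity and negative-definiteness of the relevant lattices) are light—but the bookkeeping: keeping the contact orders, the decoration $A_{\out}$, and the quotient by $\RR v_x$ straight across the three geometric regimes, and correctly identifying the class produced by Lemma~\ref{lem:subdivisionind} with the combinatorial data defining $\btau_{\out}$.
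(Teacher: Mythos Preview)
Your proposal is correct and essentially follows the paper's own proof. The paper organises the argument case by case (handling uniqueness and existence together for each of $\dim\sigma'_x=2,1,0$), whereas you separate uniqueness from existence, but the ingredients are the same: balancing at $v_{\out}$ (descended modulo $\RR v_x$) to produce the toric curve class, negative-definiteness of the exceptional lattice for uniqueness in the $\dim\sigma'_x=0$ case, Theorem~\ref{thm:punctured to relative} to convert the punctured map to a genuine log map when $\dim\sigma_x=1$, and the Abramovich--Wise lifting argument of Lemma~\ref{lem:subdivisionind} to produce $\widetilde A_{\out}$ with the correct intersection numbers via Corollary~\ref{cor:114}. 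One small caveat: Lemma~\ref{lem:subdivisionind} as stated assumes the contact orders of $\beta$ lie in rays of $\P$, which need not hold for $\bar\btau_{v_{\out}}$ relative to $\Sigma_x$; the paper accordingly invokes the \emph{proof} of that lemma rather than the lemma itself, and you should do the same.
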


\begin{proof}
If $\dim\sigma'_x=2$ or $1$, the unique existence of a class
$\widetilde A_{\out}$ satisfying \eqref{eq:intersection numbers}
is easy. Indeed, there is a unique
tropical map of type $\tau_{\out}$, and by the balancing condition
at $v_{\out}$, this defines a balanced
tropical map to $\overline B_x$ with all legs 
mapping to rays of $\widetilde\Sigma_x$.
As is standard, this defines a curve class $\widetilde 
A_{\out}\in H_2(\widetilde X_x)$,
see for example \cite[Lem.~1.13]{GHK}. Furthermore, it is characterized
precisely by the intersection numbers with the boundary divisors as
in \eqref{eq:intersection numbers}.

If $\dim\sigma_x'=2$, then $\pi_*\widetilde A_{\out}=A_{\out}$ is trivial
as $A_{\out}=0$, and thus the last statement on the existence of
$\widetilde A_{\out}$ is vacuous.

If $\dim\sigma_x'=1$, then we need to verify that if
$W_{\btau}\not=0$, then $\pi_*\widetilde A_{\out}=A_{\out}$. 
Let $\btau_{v_{\out}}$ be the type of punctured map to $X/S$ corresponding
to the vertex $v_{\out}$ after splitting $\btau$ at the edges $E_1,
\ldots,E_q$.
Necessarily, if $W_{\btau}\not=0$, then the moduli space
$\scrM(X/S,\btau_{v_{\out}})$ is non-empty. The requirement that
this moduli space be non-empty then allows us to determine 
$A_{\out}$, using Corollary~\ref{cor:114}. In particular,
$A_{\out}=d[X_x]$ for some $d\ge 0$, and $d$ can be
determined by intersecting $A_{\out}$ with an irreducible
component of $D$ transverse to $X_v$. This is calculated as follows.
Let $\sigma_1',\sigma_2'\in\P'$ be the two two-cells containing $\sigma'_x$,
with additional vertices $v_1,v_2$ respectively not contained in 
$\sigma'_x$. Also write $\sigma_1,\sigma_2\in\P$ for the corresponding
cones in $\P$. Then the corresponding irreducible components $X_{v_i}$
of $D$ each meet $X_x$ transversally in one point.

By Corollary~\ref{cor:114}, we may now calculate $d=X_{v_1}\cdot 
\mathbf{A}(v_{\out})$ as follows. Let $\delta:\Lambda_{B,x}\rightarrow
\ZZ$ be the quotient map by the tangent space to $\sigma_x\in\P$, with
sign chosen so that elements of $\Lambda_{B,x}$ pointing into $\sigma_1$
map to positive integers. Then
\[
d=\sum_{E} \delta(\mathbf{u}(E)),
\]
where $E$ runs over edges and legs adjacent to $v_{\out}$ with $\bsigma(E)=
\sigma_1$. On the other hand, a simple toric argument shows that
given our definition of $\widetilde A_{\out}$, $\pi_* \widetilde 
A_{\out}= d[X_x]$ for
the same choice of $d$. This completes the argument in the $\dim\sigma'_x=1$
case.

Finally, consider the case $\dim\sigma'_x=0$.
For the uniqueness statement, we argue as in the proof of Lemma 
\ref{lem:subdivisionind}:
the intersection matrix of the exceptional locus is
negative definite. In particular, any two lifts $\widetilde A_{\out}$
of $A_{\out}$ differ by a linear combination of exceptional divisors.
The uniqueness of the lift with the given intersection numbers with the
boundary divisors then follows.

For the second statement, suppose $W_{\btau}\not=0$.
Let $\btau_{v_{\out}}$ be as in the $\dim\sigma'_x=1$ case,
so that if $W_{\btau}\not=0$, then the moduli space
$\scrM(X/S,\btau_{v_{\out}})$ is non-empty. By Theorem
\ref{thm:punctured to relative}, this moduli space is isomorphic to 
$\scrM(X_x,\bar\btau_{v_{\out}})$, where $\bar\btau_{v_{\out}}$
is the type of tropical map to $(\overline B_x,\Sigma_x)$
constructed from $\btau_{v_{\out}}$ as in 
\S\ref{sec:punctured versus relative}. On the other hand,
the type $\tau_{\out}$ is a type of tropical map to 
$(\overline B_x,\widetilde\Sigma_x)$ with the same underlying graph
and contact orders as $\bar\btau_{v_{\out}}$. 
It then follows as in the proof of Lemma 
\ref{lem:subdivisionind} that there must be at a curve class
$\widetilde A_{\out}$ with $\pi_*\widetilde A_{\out}=A_{\out}$,
determining a type $\btau_\out$ with $\scrM(\widetilde X_x,\btau_{\out})$
non-empty. However, necessarily $\widetilde A_{\out}$ satisfies
\eqref{eq:intersection numbers} by Corollary~\ref{cor:114}.
\end{proof}

{\bf The invariant $N_{\btau_{\out}}$.} We have now
constructed a pair $(\widetilde X_x,\widetilde D_x)$, a curve class
$\widetilde A_{\out}\in H_2(\widetilde X_x)$, and hence a class
$\btau_{\out}$ of log map to $\widetilde X_x$. Thus we obtain an
invariant $N_{\btau_{\out}}$ as defined in \eqref{eq:Nbeta def},
independent of the choice of $\widetilde\Sigma_x$ by Lemma
\ref{lem:subdivisionind}.
\end{construction}

The application of our gluing formalism is then:

\begin{theorem}
\label{thm:inductive structure}
We have
\begin{equation}
\label{eq:main result}
k_{\tau}W_{\btau} = {w_{L_{\out}}N_{\btau_{\out}} \prod_{i=1}^q k_{\tau_i} W_{\btau_i}
\over |\Aut(\btau_1,\ldots,\btau_q)|},
\end{equation}
where the automorphism group of the denominator is the set of permutations 
$\sigma$ of $\{1,\ldots,q\}$ such that $\btau_{i}$ is isomorphic to
$\btau_{\sigma(i)}$ as decorated types.
\end{theorem}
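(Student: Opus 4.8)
The plan is to treat $\btau$, together with the splitting set $\mathbf{E}=\{E_1,\dots,E_q\}$ of edges adjacent to $v_{\out}$, as a standard gluing situation (Notation~\ref{not:standard gluing}), apply the factorization of Theorem~\ref{thm:gluing factorization}, and then combine the resulting degree formula with the identification of the vertex contribution at $v_{\out}$ in terms of $N_{\btau_{\out}}$. Splitting $G$ at $E_1,\dots,E_q$ produces exactly the connected components $G_1,\dots,G_q$ --- carrying the decorated wall types $\btau_1,\dots,\btau_q$, with $E_i$ becoming the leg $L_{i,\out}$ of $G_i$ --- together with the one-vertex graph $G_{\out}$ carrying $\btau_{v_{\out}}$. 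Because $D$ has at worst triple points, each gluing stratum $\ul X_{\bsigma(E_i)}$ is a double curve or a triple point of $D$, hence non-singular of dimension at most one, so the diagonal $\Delta$ of Theorem~\ref{thm:gluing factorization} is an lci morphism and $\Delta^!$ exists; that theorem then supplies the finite representable morphism $\phi$ and exhibits $\scrM^{\sch}(X/S,\btau)$ as the moduli of tuples $(f_1,\dots,f_q,f_{v_{\out}})$ gluing schematically.

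Next I would check that the gluing situation is tropically transverse --- as in the verification of condition~(2) in the proof of Theorem~\ref{thm:tropically transverse}, realizability of the wall type $\btau$ together with the balancing condition of Lemma~\ref{lem:balancing} furnish, after suitable scaling, the required interior point of the relevant fs fibre product of monoids --- and that $\ul{\ev}$ is flat, via Theorem~\ref{thm:flatness} and the rigidity properties of wall types recalled in \cite{Walls}. Granting these, Theorem~\ref{thm:general gluing degree} (degree of $\phi$ equal to $\mu(\tau,\mathbf{E})$, and $\phi$ dominant), the Gysin formula of Remark~\ref{rem:this is the best in general} (using flatness to identify $\Delta^!=(\Delta')^!$), and virtual pullback \cite[Thm.~4.1]{Mano} combine to give
\[
\phi'_*[\scrM(X,\btau)]^{\virt}=\mu(\tau,\mathbf{E})\,\Delta^!\!\Bigl(\bigl(\textstyle\prod_{i=1}^q[\scrM(X,\btau_i)]^{\virt}\bigr)\times[\scrM(X,\btau_{v_{\out}})]^{\virt}\Bigr),
\]
with $\mu(\tau,\mathbf{E})=|(\coker\Psi)_{\tors}|$ as in Definition~\ref{def:tropical gluing map}; since both sides are $0$-cycles and $\phi'$ is proper, passing to degrees gives a numerical identity.

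I would then identify the factor $[\scrM(X,\btau_{v_{\out}})]^{\virt}$ with $N_{\btau_{\out}}$. When $\dim\sigma'_x=0$, so that $X_x$ is an irreducible component of $X_0$, Theorem~\ref{thm:punctured to relative} gives an isomorphism $\ul{\scrM(X,\btau_{v_{\out}})}\cong\ul{\scrM(\overline X_x,\bar\btau_{v_{\out}})}$ matching the obstruction theories, $(\overline X_x,D_x)$ is a Looijenga pair, and Lemma~\ref{lem:subdivisionind} --- applied with the unique lift $\widetilde A_{\out}$ of Lemma~\ref{lem:curve lifting} --- identifies the associated invariant with $N_{\btau_{\out}}$; the cases $\dim\sigma'_x=1,2$ are analogous but simpler, $X_x$ being then a rational curve resp.\ a point, so that $\scrM(X,\btau_{v_{\out}})$ is directly comparable to a moduli of log maps to the toric surface $\widetilde X_x$, again left unchanged by the refinement by Lemma~\ref{lem:subdivisionind}. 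Feeding this into the displayed identity, using Proposition~\ref{prop:alt def} (which realizes $N_{\btau_{\out}}$ as a point-constrained degree) and the fact that each $[\scrM(X,\btau_i)]^{\virt}$ is a $0$-cycle, the Gysin product collapses to
\[
\deg[\scrM(X,\btau)]^{\virt}=\mu(\tau,\mathbf{E})\,N_{\btau_{\out}}\,\prod_{i=1}^q\deg[\scrM(X,\btau_i)]^{\virt}.
\]

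Finally I would match the normalizations. Every automorphism of $\btau$ fixes the distinguished leg $L_{\out}$, hence $v_{\out}$, and so restricts to automorphisms of the $\btau_i$ up to a permutation of the isomorphic pieces, giving $|\Aut(\btau)|=|\Aut(\btau_1,\dots,\btau_q)|\prod_{i=1}^q|\Aut(\btau_i)|$; and a snake-lemma computation relating the tropical gluing map $\Psi$ to the maps $h_*$ defining $k_\tau$ and the $k_{\tau_i}$, in the spirit of the proof of Theorem~\ref{thm: rigid degree}, should yield $k_\tau\,\mu(\tau,\mathbf{E})=w_{L_{\out}}\prod_{i=1}^q k_{\tau_i}$. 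Combining the last two displays with these identities and the definition $W_{\btau}=\deg[\scrM(X,\btau)]^{\virt}/|\Aut(\btau)|$ (and likewise for the $\btau_i$) produces \eqref{eq:main result}. I expect the main obstacle to be this last step --- establishing $k_\tau\,\mu(\tau,\mathbf{E})=w_{L_{\out}}\prod_i k_{\tau_i}$ and confirming that no stray index factors enter at the gluing nodes --- together with the uniform verification of tropical transversality and flatness across the three cases $\dim\sigma'_x\in\{0,1,2\}$, where the geometry of $v_{\out}$ (an irreducible component, a double curve, or a triple point) changes the nature of the gluing strata.
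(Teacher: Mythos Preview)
Your outline has two genuine gaps, both of which the paper's argument is specifically designed to avoid.

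First, you split $\btau$ at all $q$ edges simultaneously. The flatness hypothesis of Theorem~\ref{thm:general gluing degree} then requires that the evaluation map $\foM^{\ev}(\shX,\tau_{v_{\out}})\to\prod_{i=1}^q \ul X_{\bsigma(E_i)}$ at the $q$ legs of the star $\btau_{v_{\out}}$ be flat. Theorem~\ref{thm:flatness} is \emph{not} automatic here once $q\ge 2$: the criterion involves a sum over $q$ legs, and there are degenerations $\tau'\to\tau_{v_{\out}}$ for which $\dim\tau'$ does not keep pace with $\sum_i(\dim\bsigma'(E_i)-\dim\bsigma(E_i))$. The paper's footnote in Step~V is precisely this point: the Parker--Ranganathan trick works because one glues \emph{one edge at a time}, so that each side has a single new leg and the bound $\dim\bsigma'(L)-\dim\bsigma(L)\le 1$ (forced by $\dim B=3$) suffices. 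The paper therefore introduces intermediate types $\tilde\btau_i$ (the result of gluing $\btau_1,\dots,\btau_i$ onto $\btau_{v_{\out}}$) and glues $\tilde\btau_{i-1}$ to $\btau_i$ along the single edge $E_i$, computing $\mu(\tilde\tau_i,\{E_i\})=k_{\tau_i}\,|\widetilde Q_{i-1}^*/\widetilde Q_i^*|^{-1}$ by a snake lemma at each step; telescoping these is what actually yields $k_\tau\cdot(\text{product of multiplicities})=w_{L_{\out}}\prod_i k_{\tau_i}$, the identity you flagged as the likely obstacle.

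Second, you work directly on $X$ and treat the three cases $\dim\sigma'_x\in\{0,1,2\}$ in parallel. But the paper opens its proof by noting that without modification the gluing situation is in general \emph{not} tropically transverse; and when $\dim\sigma'_x>0$, Theorem~\ref{thm:punctured to relative} does not apply (it requires $X_{\bsigma(v_{\out})}$ to be an irreducible component of $X_0$, i.e.\ $\dim\sigma=1$), so your proposed identification of $[\scrM(X,\btau_{v_{\out}})]^{\virt}$ with $N_{\btau_{\out}}$ has no justification in those cases. The paper resolves both issues at once by a preliminary log \'etale modification $\widetilde X\to X$ (Step~I) making $x$ a vertex of $\widetilde\P'$ and every edge or leg adjacent to $v_{\out}$ land in an edge of $\widetilde\P'$, then invoking \cite{SJ} (Step~II) to lift $\btau$ and reduce \eqref{eq:main result} to the lifted types. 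Only after this reduction does the inductive single-edge gluing of Steps~III--V go through.
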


\begin{proof}
We have a standard gluing situation obtained by splitting $\btau$
at the edges $E_1,\ldots,E_q$. In general this gluing
situation will not be tropically transverse. However, in this case
a relatively mild birational \'etale modification of $X$ takes care of this.
Flatness of the map $\ul{\ev}$ in Theorem \ref{thm:gluing factorization} 
also is an
issue, but this is dealt with via Parker \cite{Parker} and Ranganathan's 
approach \cite{Ra} to this situation.

\medskip

{\bf Step I.} \emph{Refining $\Sigma(X)$.} Note that a refinement of
the polyhedral decomposition $\P'$ of $B'$ into rational convex 
polyhedra gives a refinement of $(B,\P)$, i.e., of $\Sigma(X)$, and hence
a log \'etale modification $\pi:\widetilde X\rightarrow X$.
In particular, we choose
a refinement $\widetilde\P'$
of $\P'$ with the following properties: (1) $x$ is a vertex of $\widetilde
\P'$; (2) the integral affine manifold with singularity 
$(\overline B_x, \Sigma_x)$ coincides
with $(\overline B_x,\widetilde\Sigma_x)$ in the notation
of Construction \ref{const:the construction}. It is not difficult 
to see (see e.g., the argument of \cite[Lem.~4.3]{AK})
that this
can be done as a series of toric blow-ups, and hence $\widetilde X\rightarrow
X$ is projective. We omit the details. However, we note that
we do not require $\widetilde X\rightarrow S$ to be normal crossings,
but now only log smooth.

\medskip

{\bf Step II.} \emph{Lifting the type $\btau$.} Having 
chosen the log \'etale modification $\widetilde X\rightarrow X$,
we use \cite{SJ} to choose a \emph{lift} of $\tau$ to $\Sigma(\widetilde
X)$. In general, there may be many choices of lift of a type, but
in the rigid case, the description of lifts given in \cite[\S4]{SJ}
reduces to a quite simple procedure. Given the map $h':G\rightarrow B'$,
we first take the minimal refinement $\widetilde G$ 
of the graph $G$ (i.e., subdivide edges
or legs via the addition of vertices) with the property that for any
edge or leg $E$ of $\widetilde G$, $h'(E)$ is contained in an element
of $\widetilde\P'$. There remains, however, some ambiguity as to
the treatment of $L_{\out}$, as it might have been subdivided into
a number of edges and one leg. We choose to discard all but the edge (or
leg) adjacent to $v_{\out}$, so that it is now a leg, which we take
to be the unique leg of $\widetilde G$. This provides a type $\tilde\tau=
(\widetilde G,\tilde\bsigma,\tilde\bu)$,
a lifting of $\tau$ in the terminology of \cite[\S4]{SJ}.

Now it is shown in the proof of \cite[Cor.~9.4]{SJ} that 
\begin{equation}
\label{eq:sams sum}
k_{\tau}W_{\btau} = k_{\tilde\tau}\sum_{\tilde\btau} W_{\tilde\btau},
\end{equation}
where the sum is over all decorations $\tilde\btau$ of $\tilde\tau$ with
$\pi_*\widetilde{\mathbf{A}}=\mathbf{A}$. What this means is that
if $v\in V(\widetilde G)$ is contained in the interior of an edge or
leg $E$ of $G$, then $\pi_*:H_2(\widetilde X_{\tilde\bsigma(v)})\rightarrow
H_2(X_{\bsigma(E)})$ satisfies $\pi_*\widetilde{\mathbf{A}}(v)=0$.
If on the other hand $v\in V(\widetilde G)$ is a vertex which is
also a vertex of $G$, then $\pi_*:H_2(\widetilde X_{\tilde\bsigma(v)})
\rightarrow H_2(X_{\bsigma(v)})$ satisfies 
$\pi_*\widetilde{\mathbf{A}}(v)=\mathbf{A}(v)$.

We now note that for a given type $\tilde\btau$ appearing in
\eqref{eq:sams sum}, $W_{\tilde\btau}=0$ unless
$\widetilde{\mathbf{A}}(v_{\out})$ coincides with the curve class
$\widetilde A_{\out}$ constructed from $\btau$ in 
Lemma \ref{lem:curve lifting}. Thus we may assume this equality
and the condition of Lemma \ref{lem:curve lifting} in the sequel.

The type $\tilde\btau$ now gives rise to the invariant 
$N_{\tilde\btau_{\out}}$, and
$N_{\btau_{\out}}=N_{\tilde\btau_{\out}}$ by Lemma \ref{lem:subdivisionind}.
We also have formula \eqref{eq:sams sum} for $W_{\btau_i}$ using the 
lift $\tilde\tau_i$ of $\tau_i$ induced by restricting the lift $\tilde\tau$
to $G_i$. Here we use what \cite{SJ} calls a maximally extended lift,
in that we don't remove any segments from the subdivided leg $L_{i,\out}$. 
We now observe it is sufficient to prove \eqref{eq:main result} after replacing
$\btau$ with $\tilde\btau$ and $\btau_i$ with $\tilde\btau_i$. Indeed,
if \eqref{eq:main result} has been proved in this case, then
the left-hand-side of \eqref{eq:main result} can be expanded
using \eqref{eq:sams sum}, giving
\[
k_{\tau}W_{\btau} = \sum_{\tilde\btau}
w_{L_\out} N_{\btau_{\out}} {\prod_{i=1}^q k_{\tilde\tau_i} W_{\tilde\btau_i}
\over |\Aut(\tilde\btau_1,\ldots,\tilde\btau_q)|}.
\]
Here $\tilde\btau_1,\ldots,\tilde\btau_q$ are the decorated types
induced by $\tilde\btau$. If instead, one sums over all choices of
decorations of the lifts $\tilde\tau_1,\ldots,\tilde\tau_q$, the
same choice of $\tilde\btau$ will occur
\[
|\Aut(\btau_1,\ldots,\btau_q)|/|\Aut(\tilde\btau_1,\ldots,\tilde\btau_q)|
\]
times. Thus we obtain
\begin{align*}
k_{\tau}W_{\btau} = {} & \sum_{\tilde\btau_1,\ldots,\tilde\btau_q}
w_{L_{\out}} N_{\btau_{\out}} \prod_{i=1}^q {k_{\tilde\tau_i}W_{\tilde\btau_i}
\over |\Aut(\btau_1,\ldots,\btau_q)|}\\
= {} & {w_{L_{\out}} N_{\btau_{\out}}\over |\Aut(\btau_1,\ldots,\btau_q)|}
\prod_{i=1}^q\sum_{\tilde\btau_i} k_{\tilde\tau_i}W_{\tilde\btau_i},
\end{align*}
which gives \eqref{eq:main result} using \eqref{eq:sams sum} again.

Thus, replacing $X$ with $\widetilde X$, we may assume that $x$
is a vertex of $\P'$ and that every edge or leg of $G$ adjacent to $v_{\out}$
maps to an edge of $\P'$ under $h'$. 

\medskip

{\bf Step III}. \emph{The gluing situation}. We now are assuming
$\btau$ satisfies $x=h'(v_{\out})$ is a vertex of $\P'$ and
every edge or leg adjacent to $v_{\out}$ is mapped to an edge of $\P'$ under
$h'$. Let $\btau_1,\ldots,\btau_q,\btau_{v_{\out}}$ be as usual,
and denote by $\tilde\btau_i$ the decorated type obtained by gluing
$\btau_1,\ldots,\btau_i$ and $\btau_{v_{\out}}$. Alternatively,
after splitting $\btau$ at the edges $E_{i+1},\ldots,E_q$, 
$\tilde\btau_i$ is the decorated type corresponding to the connected
component containing $v_{\out}$. In particular, $\tilde\btau_0=\btau_{v_{\out}}$
and $\tilde\btau_q=\btau$. For each edge $E_j$, write
$D_j:=\ul{X}_{\bsigma(E_j)}$.

Let $\ul{\ev}_i:\scrM(X,\tilde\btau_i)\rightarrow \prod_{j=i+1}^q 
D_j$
be the schematic evaluation map at the punctured points
corresponding to the legs $E_{i+1},\ldots,E_q$ of $\tilde\btau_i$.
Define $N_i$ to be the rational number such that
\[
\ul{\ev}_{i,*}[\scrM(X,\tilde\btau_i)]^{\virt} = N_i \prod_{j=i+1}^q
[D_j].
\]
Thus (using Theorem \ref{thm:punctured to relative}) $N_0=N_{\btau_{\out}}$
and $N_q=\deg [\scrM(X,\btau)]^{\virt}
=W_{\btau}|\Aut(\btau)|$. We note a simple calculation shows that the 
virtual dimension of $\scrM(X,\tilde\btau_i)$ is $q-i$, but this also
follows from the computations below.

We will now inductively determine $N_i$ from $N_{i-1}$.
We consider the gluing situation given by $\tilde\btau_i$
with set of splitting edges $\mathbf{E}=\{E_i\}$. After
splitting, we obtain the types $\tilde\btau_{i-1}$ and $\btau_i$.

\medskip

{\bf Step IV.} \emph{Calculating the tropical multiplicity.}
We calculate $\mu(\tilde\tau_i,\mathbf{E})$.
Let $Q_i$ be the basic monoid for the type $\tau_i$ and 
$\widetilde Q_i$
the basic monoid for the type $\tilde\tau_i$. 
Note that by rigidity of the types $\tau_i$ we have $Q_i^*=\ZZ$ for
all $i$. Similarly, the types $\tilde\tau_i$ are rigid, so
$\widetilde Q_i^*=\ZZ$. Further, $N_{\tau_{i,\out}}$ can be identified with
$Q_i^*\oplus\ZZ$ in such a way so that the map $Q_i^*\oplus \ZZ
\rightarrow P_{E_i}^*$ given by $(q_i,\ell_i)\mapsto \ev_{v_i}(q_i)+\ell_i
\mathbf{u}(L_{i,\out})$ coincides with $h_{\tau_i,*}:N_{\tau_{i,\out}}
\rightarrow N_{\bsigma_i(L_{i,\out})}$. We then obtain a commutative
diagram
\[
\xymatrix@C=30pt
{
&&0\ar[d]&0\ar[d]&\\
&0\ar[r]\ar[d]&\widetilde Q_i^*\ar[r]\ar[d]&\widetilde Q_{i-1}^*\ar[d]^=\\
0\ar[r]&N_{\tau_{i,\out}}\ar[d]^{h_{\tau_i,*}}\ar[r]&
\widetilde Q_{i-1}^*\times N_{\tau_{i,\out}}\ar[d]^{\Psi_i}\ar[r]&\widetilde
Q_{i-1}^*
\ar[d]\ar[r]&0\\
0\ar[r]&  P_{E_i}^*\ar[d]\ar[r]_=
& P_{E_i}^*\ar[d]\ar[r] & 0 &\\
& \coker h_{\tau_i,*}\ar[r]\ar[d]&\coker\Psi_i\ar[d]&&\\
&0&0&&
}
\]
giving a long exact sequence
\[
0\longrightarrow \widetilde Q_i^*\longrightarrow\widetilde Q_{i-1}^*
\longrightarrow
\coker h_{\tau_i,*} \longrightarrow \coker\Psi_i\longrightarrow
0.
\]
Thus we see that $\coker\Psi_i$ is finite, hence we are in a tropically
transverse gluing situation, and 
\begin{equation}
\label{eq:part way 1}
\mu(\tilde\tau_i,\mathbf{E})=|\coker\Psi_i| = k_{\tau_i}
|\widetilde Q_{i-1}^*/\widetilde Q_i^*|^{-1}.
\end{equation}

\medskip

{\bf Step V.} \emph{Gluing.} We have two evaluation maps
\[
\hbox{$\foM^{\ev}(\shX,\tau_i)\rightarrow \ul{X}_{\bsigma(E_i)}$
and $\foM^{\ev}(\shX,\tilde\tau_{i-1})\rightarrow \ul{X}_{\bsigma(E_i)}$}
\]
whose product gives the morphism $\ul{\ev}$ in Theorem 
\ref{thm:gluing factorization}. Since we are only evaluating at
one leg, the criterion for flatness of Theorem \ref{thm:flatness}
holds automatically. For example, for the leg $L_{i,\out}$ of $\btau_i$
and contraction $\phi:\tau'_i\rightarrow\tau_i$, we necessarily
have $\dim\bsigma'(L)-\dim\bsigma(L)\le 1$. Indeed, $\dim\bsigma(L)=2$
and $\dim\bsigma'(L)\le 3$ as $B$ is three-dimensional.\footnote{From
our point of view, this
is the fundamental point of Parker's and Ranganathan's
observation about gluing in the genus zero, triple point case:
the fact that the boundary $D$ only has triple points means this
numerical criterion for flatness always holds when gluing along one edge.}

Thus we may apply Theorems \ref{thm:gluing factorization} and
\ref{thm:general gluing degree} to see that
\begin{align}
\label{eq:part way 2}
\begin{split}
\phi'_*[\scrM(X,\tilde\btau_i)]^{\virt}
= {} & \mu(\tilde\tau_i,\mathbf{E})
[\scrM^{\sch}(X,\tilde\btau_i)]^{\virt}\\
= {} &
\mu(\tilde\tau_i,\mathbf{E}) \Delta^!([\scrM(X,\btau_i)]^{\virt}
\times [\scrM(X,\tilde\btau_{i-1})]^{\virt}).
\end{split}
\end{align}
On the other hand, we have a diagram
\[
\xymatrix@C=30pt
{
\scrM^{\sch}(X,\tilde\btau_i)\ar[d]_{\ul{\ev}''}\ar[r]&\scrM(X,\btau_i)\times
\scrM(X,\tilde\btau_{i-1})\ar[d]^{\ul{\ev}'}\\
D_i \times \prod_{j=i+1}^q D_j\ar[r]\ar[d]&
D_i\times \prod_{j=i}^qD_j\ar[d]\\
D_i\ar[r]_{\Delta} &D_i\times D_i
}
\]
where $\ul{\ev}'$ is the product of the evaluation map
$\scrM(X,\btau_i)\rightarrow D_i$ at $L_{i,\out}$ and
the evaluation map $\ul{\ev}_{i-1}$. The bottom two vertical arrows
are the obvious projections, and the composition of the right-hand
vertical arrows agrees with the morphism $\ul{\ev}\circ\hat\varepsilon$
of Theorem \ref{thm:gluing factorization}. Next, by definition
\[
\ul{\ev}'_*([\scrM(X,\btau_i)]^{\virt}\times
[\scrM(X,\tilde\btau_{i-1})]^{\virt})=
(\deg [\scrM(X,\btau_i)]^{\virt})N_{i-1}\left([p]\times
\prod_{j=i}^q [D_j]\right),
\]
where $p\in D_i$ is any closed point.
By compatibility of push-forward and Gysin pull-back, we see that
\begin{align*}
\ul{\ev}''_*[\scrM^{\sch}(X,\tilde\btau_i)]^{\virt}
= {} & \Delta^!\ul{\ev}'_*([\scrM(X,\btau_i)]^{\virt}\times[\scrM(X,
\tilde\btau_{i-1})]^{\virt})\\
= {} &
(\deg [\scrM(X,\btau_i)]^{\virt})N_{i-1}\left([p]\times
\prod_{j=i+1}^q [D_j]\right).
\end{align*}
Putting this together with \eqref{eq:part way 1} and \eqref{eq:part way 2}, 
we conclude that
\[
N_i = k_{\tau_i}|\widetilde Q_{i-1}^*/\widetilde Q_i^*|^{-1} 
\deg[\scrM(X,\btau_i)]^{\virt} N_{i-1}.
\]
Hence, inductively we obtain
\begin{equation}
\label{eq:almost there}
\deg [\scrM(X,\btau)]^{\virt} = N_q 
= N_{\btau_{\out}}|Q_0^*/Q_q^*|^{-1}\prod_{i=1}^q k_{\tau_i}
\deg[\scrM(X,\btau_i)]^{\virt}.
\end{equation}
Now note that $Q_0^*$ can be identified with $N_{\bsigma(v_{\out})}$,
as the vertex $v_{\out}$ of $G_{\tau_{\out}}$ can be placed at any
integral point of
$\bsigma(v_{\out})$. Further, $N_{\bsigma(L_{\out})}/N_{\bsigma(v_{\out})}$
can be identified with the integral tangent vectors of the ray 
$\bsigma_{\out}(L_{\out})$ in $\widetilde\Sigma_x$,
while $w_{L_{\out}}$ is the index of the image $\mathbf{u}_{\out}(L_{\out})$
of $\mathbf{u}(L_{\out})$ in $N_{\bsigma(L_{\out})}/N_{\bsigma(v_{\out})}$.
Thus $w_{L_{\out}} |Q_0^*/Q_q^*|$ coincides with $k_{\tau}$.

Next note that $\Aut(\btau)=\Aut(\btau_1,\ldots,\btau_q)
\times \prod_{i=1}^q \Aut(\btau_i)$. 
Thus multiplying both sides of \eqref{eq:almost there}  by
$k_{\tau}$ and dividing by $|\Aut(\btau)|$ gives the desired result.
\end{proof}

\subsection{Behaviour of $N_{\beta}$ under blow-ups.}
\label{subsec:blowups}

Here we will give another sample application of our gluing
formalism. This material does not represent anything radically new;
rather, it is a modern version of an argument appearing in \cite[\S5]{GPS},
albeit carried out there using older technology in a somewhat
restrictive circumstance. There are also related arguments in
\cite{Mandel}. The formula given here will
be essential in \cite{K3}.

As in \S\ref{subsect:invariant LP}, we fix a Looijenga pair $(X,D)$,
which we now take to be non-singular.
As before, we assume $D$ has at least three nodes. 
Write $D=D_1+\cdots+D_n$ the irreducible decomposition, in cyclic order,
so that $D_i\cdot D_{i+1}=1$, with indices taken modulo $n$. This
gives rise as before to an integral affine manifold with singularities 
$B$ along with its cone decomposition $\P$.

Consider a collection of distinct boundary divisors
$D_{j_1},\ldots,D_{j_s}$, which for convenience we assume are pairwise disjoint.
Choose, for each $k$,
distinct points $p_{k1},\ldots,p_{kn_k}\in D_{j_k}$, also distinct
from the nodes of $D$. Denote by $\pi:\widetilde X\rightarrow X$
the blow-up of $X$ at all of these points. Denote by
$E_{k\ell}$ the exceptional curve over $p_{k\ell}$. Let $\widetilde D$
be the strict transform of $D$, so that $(\widetilde X,\widetilde D)$
is also a Looijenga pair. This gives rise to an integral affine manifold
with singularities $\widetilde B$ along with its cone decomposition
$\widetilde\P$. Note that $\pi$ induces a bijection between the
components of $\widetilde D$ and $D$, which in turn induces a natural
piecewise linear identification between $(\widetilde B,\widetilde\P)$
and $(B,\P)$.

We fix a class $\tilde\beta$ of logarithmic map to $\widetilde X$
of genus zero and $q+1$ marked points with non-zero 
contact orders $u_1,\ldots,u_q,u_{\out}\in \widetilde B(\ZZ)$, 
all non-zero and contained in rays of $\widetilde\P$. 
As in \S\ref{subsect:invariant LP}, we assume $u_k$ 
is contained in a ray $\rho_{i_k}$
corresponding to the divisor $\widetilde D_{i_k}$. The attached
curve class is $\widetilde A\in H_2(\widetilde X)$. We thus obtain
the invariant $N_{\tilde\beta}$ of \eqref{eq:Nbeta def}.

In this situation, set 
\begin{equation}
\label{eq:wkl def}
w_{k\ell}:= \widetilde A\cdot E_{k\ell}.
\end{equation} 
Assume from now on that: 
\begin{equation}
\label{eq:w is nonnegative}
\hbox{$w_{k\ell}\ge 0$ for all $k,\ell$.}
\end{equation}
We denote
by $\mathbf{P}_{k\ell}=P_{k\ell 1}+\cdots+P_{k\ell \mu}$ an unordered
 partition of $w_{k\ell}$ into $\mu$ positive
integers, for some $\mu\ge 0$ (with $\mu=0$ only if $w_{k\ell}=0$).
Write $\mathbf{P}=(\mathbf{P}_{k\ell})$ a collection of partitions of
all $w_{k\ell}$. We write
$\Aut(\mathbf{P}_{k\ell})$ for the subgroup of permutations
$\sigma$ of $\{1,\ldots,\mu\}$ with $P_{k\ell\sigma(m)}=P_{k\ell m}$
for $1\le  m\le \mu$, and write $\Aut(\mathbf{P})=\prod_{k\ell} 
\Aut(\mathbf{P}_{k\ell})$.

For a given $\mathbf{P}$, we write $\beta(\mathbf{P})$ for the class
of logarithmic map to $X$ defined as follows. The curve class is
$A(\mathbf{P})=\pi_*\widetilde A$. The class has $q+1$ marked points,
still with contact orders $u_1,\ldots,u_q,u_{\out}$ using the 
piecewise linear identification of $(\widetilde B,\widetilde\P)$
and $(B,\P)$. It has an additional marked point for every $P_{k\ell m}$,
with contact order $P_{k\ell m} \nu_{j_k}$, where $\nu_{j_k}$ is
the primitive generator of the ray of $\P$ corresponding to $D_{j_k}$.

We then have:

\begin{theorem}
\begin{equation}
\label{eq:wowee}
N_{\tilde\beta} = \sum_{\mathbf P} {N_{\beta(\mathbf{P})}\over |\Aut(\mathbf{P})|}
\prod_{k,\ell,m} {(-1)^{P_{k\ell m}} \over P_{k\ell m}},
\end{equation}
where the sum is over all collections of partitions $\mathbf{P}$.
\end{theorem}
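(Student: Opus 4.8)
The plan is to set up a gluing situation that separates each exceptional curve $E_{k\ell}$ from the ``main'' part of the curve, exactly in the spirit of Theorem~\ref{thm:inductive structure} but now for Looijenga pairs rather than for wall types in a K3 degeneration. First I would use Lemma~\ref{lem:subdivisionind} freely: since $\pi:\widetilde X\to X$ is a log \'etale birational morphism (a blow-up at boundary points), and since \eqref{eq:w is nonnegative} ensures the relevant curve classes are effective, I can replace $\widetilde X$ and $X$ by common toric refinements whenever convenient, with $N_{\tilde\beta}$ and the various $N_{\beta(\mathbf P)}$ unchanged. The real content is local near the divisors $D_{j_1},\dots,D_{j_s}$, where $\pi$ looks like an ordinary point blow-up. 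The strategy is to peel off the exceptional curves one at a time, or rather all at once, via a degeneration-to-the-normal-cone / gluing argument: deform $\widetilde X$ so that each exceptional curve $E_{k\ell}$ bubbles off into its own component of a degenerate target, together with the ``coordinate'' $\PP^1$-bundle neighbourhood of $\widetilde D_{j_k}$, and apply the gluing formalism of \S\ref{sec:degeneration gluing}.

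Concretely, I would fit this into Theorem~\ref{thm:gluing factorization}: choose a rigid tropical type $\btau$ (for a suitable toric model $\widehat X\to \widetilde X$) whose dual graph has a central vertex $v_0$ carrying essentially the class $\widetilde A$ minus the exceptional contributions, and, for each $(k,\ell)$, a vertex $v_{k\ell}$ mapping into the stratum corresponding to $\widetilde D_{j_k}$ and carrying the class of the exceptional curve $E_{k\ell}$ (whose multiple-cover invariant is $\prod_m (-1)^{P_{k\ell m}}/P_{k\ell m}$ by the usual Bryan--Pandharipande / \cite[Prop.~5.2]{GPS} computation, precisely as in the Example after Lemma~\ref{lem:balancing}). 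Splitting $\btau$ at the edges joining $v_0$ to the $v_{k\ell}$'s puts us in a standard gluing situation; tropical transversality holds for the same ``relative dimension two / triple point'' reason as in the proof of Theorem~\ref{thm:inductive structure} (each split edge is glued along a single leg, so the flatness criterion of Theorem~\ref{thm:flatness} is automatic), and Theorem~\ref{thm:tropically transverse} applies. The central vertex's contribution, after the schematic matching of points on $\widetilde D_{j_k}$ is imposed, is exactly $N_{\beta(\mathbf P)}$ for the partition $\mathbf P$ read off from the contact orders of the edges, the factor $1/|\Aut(\mathbf P)|$ comes from the automorphisms of the splitting as in Theorem~\ref{thm:inductive structure}, and the tropical multiplicities $k_\tau/m_\tau$ all turn out to be $1$ here because the split types $\tau_{k\ell}$ are ``coordinate'' types with trivial cokernel (the analogue of the computation $Q_v=P_{\bsigma(v)}$ in the proof of Theorem~\ref{thm: rigid degree}).

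The bookkeeping that needs care is twofold. First, one must check that the only stable maps contributing are those in which each exceptional curve $E_{k\ell}$ genuinely splits off, i.e.\ that a general point of $\scrM(\widetilde X,\tilde\beta)$, after the point constraints of Proposition~\ref{prop:alt def}, meets $\widetilde D$ only at marked points and meets each $E_{k\ell}$ in the expected configuration; this is the analogue of Lemma~\ref{lem:vanishing} and of the argument in the proof of Lemma~\ref{lem:Nbeta GPS compare}, and I would run the same genus-zero balancing argument on the tropicalization to rule out the map hitting a deeper stratum or wrapping around a loop. Second, one must match the combinatorial factors: summing over all degenerate configurations means summing over all ways of distributing the intersection multiplicity $w_{k\ell}=\widetilde A\cdot E_{k\ell}$ among the connected components of the preimage of $E_{k\ell}$, which is exactly summing over partitions $\mathbf P_{k\ell}$, and the multiple-cover factor for a partition part $P_{k\ell m}$ of a $(-1)$-curve is $(-1)^{P_{k\ell m}}/P_{k\ell m}$. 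Assembling these over all $k,\ell,m$ and all $\mathbf P$ yields \eqref{eq:wowee}.

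\emph{The main obstacle.} The hardest part will be the first bookkeeping point combined with producing the right toric degeneration: one must construct a projective toric (or log smooth) model in which ``blowing up a boundary point of a Looijenga pair'' is realized as a single edge of a rigid tropical type, verify rigidity and realizability of that type, and — crucially — verify that the $\ul{\ev}$ map of Theorem~\ref{thm:gluing factorization} is flat and that $\Delta$ is lci, so that Theorem~\ref{thm:tropically transverse} delivers the Gysin-pullback formula $\phi'_*[\scrM]^{\virt}=\mu(\btau)m_{\btau}^{-1}\Delta^!(\prod_v[\scrM(\cdots,\btau_v)]^{\virt})$. Because we are gluing along the smooth divisors $\widetilde D_{j_k}$, the lci condition is automatic, and flatness reduces to the triple-point-style numerical check as in the proof of Theorem~\ref{thm:inductive structure}; but confirming that the degenerate-target component associated to $v_{k\ell}$ carries precisely the local $(-1)$-curve geometry (rather than something more complicated coming from the affine structure near a singular point of $B$) requires the same care that \cite[\S5]{GPS} exercised, and is where most of the work lies. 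Once that local model is pinned down, the rest is an application of the gluing package exactly as in Theorem~\ref{thm:inductive structure}, together with the identity $N_\beta=N_{\tilde\beta}$ of Lemma~\ref{lem:subdivisionind} to pass between toric refinements.
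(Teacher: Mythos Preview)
Your strategy matches the paper's: build a degeneration of $\widetilde X$ in which a $\PP^1$-bundle neighbourhood of each $\widetilde D_{j_k}$ bubbles off, apply the decomposition formula, classify the rigid tropical types, and glue using the tropically transverse package. The paper realizes this concretely by blowing up $X\times\AA^1$ along $\bigcup_k D_{j_k}\times\{0\}$ and then along the strict transforms of $\{p_{k\ell}\}\times\AA^1$, obtaining a threefold $\widetilde\shX\to\AA^1$; the rigid types live in $\Delta(\widetilde\shX)$, not in a toric model of $\widetilde X$ itself, so your phrase ``for a suitable toric model $\widehat X\to\widetilde X$'' is misplaced.

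There are two genuine gaps in your outline. First, your claim that ``the tropical multiplicities $k_\tau/m_\tau$ all turn out to be $1$'' is false and would produce the wrong formula. In the paper's computation (Step~VI) one has $\mu(\tau)=\prod_{E\in E(G)} w_E$, with $w_E=P_{k\ell m}$ for each edge to a univalent vertex. The raw multiple-cover invariant of the $(-1)$-curve $f_k-e_{k\ell}$ in degree $P_{k\ell m}$ is $(-1)^{P_{k\ell m}}/P_{k\ell m}^{\,2}$, not $(-1)^{P_{k\ell m}}/P_{k\ell m}$; it is precisely the tropical multiplicity factor $P_{k\ell m}$ that converts the former into the latter and yields \eqref{eq:wowee}. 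Second, your description of the contributing rigid types is incomplete: besides the univalent vertices carrying $P_{k\ell m}(f_k-e_{k\ell})$, there are \emph{bivalent} vertices mapping to $v_k$ carrying $d\,f_k$, one for each input leg $L_i$ whose contact order points along $\rho_{j_k}$. Their invariant is $1/d$ (a single unobstructed $d$-fold cover of a smooth fibre of $\widetilde\PP_k\to D_{j_k}$), and this cancels against the corresponding edge weight $d$ in $\mu(\tau)$, so they contribute trivially---but you must account for them, or the classification of rigid types is wrong.

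Finally, the bulk of the paper's proof (Steps~III--V) is not the gluing itself but the \emph{exclusion}: showing that every other rigid type contributes zero. This uses the weak balancing at the singular vertices $v_k$ (where ordinary balancing fails) together with a case analysis (Observations~A--D and Cases~(V.1)--(V.2)) to force each vertex over $v_k$ to be uni- or bivalent with the prescribed curve class. Your proposal gestures at Lemma~\ref{lem:vanishing} for this, which is indeed the mechanism for forcing the image into the $1$-skeleton, but the further reduction to the specific rigid types above requires the additional arguments of Steps~IV--V and is where you should expect most of the work to lie.
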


\begin{proof}
{\bf Step I.} \emph{Building the degeneration}. We build a log smooth
degeneration $\widetilde\shX\rightarrow \AA^1$ of $\widetilde X$,
following \cite[\S3.1]{HDTV}, which in turn is inspired by \cite{GPS},
as follows.\footnote{During this proof, we waive the typographic
convention that $\shX$ is the Artin fan of $X$.} We first construct a blow-up
$\shX \rightarrow X\times \AA^1$, blowing up the closed subscheme 
$\bigcup_{k=1}^s D_{j_k}\times \{0\}$. Next, we form the blow-up
$\widetilde\shX \rightarrow \shX$ with center the strict transform
of $\bigcup_{k,\ell} \{p_{k\ell}\}\times \AA^1$. Thus $\widetilde\shX_t\cong
\widetilde X$ for $t\not=0$. Let $\widetilde\shD\subseteq\widetilde\shX$ 
be the union of
$\widetilde\shX_0$ and the strict transform of $D\times \AA^1$.
We also write $\widetilde\shD_i$ for the strict transform of $D_i\times\AA^1$.
Then
$(\widetilde \shX,\widetilde \shD)$ is a log Calabi-Yau pair, and
the composition of 
\[
\tilde\pi:\widetilde\shX\rightarrow\shX \rightarrow X\times
\AA^1
\]
with the projection to $\AA^1$ gives a log smooth morphism
$g:\widetilde \shX\rightarrow \AA^1$, with $\AA^1$ carrying the divisorial
log structure given by $0\in\AA^1$.

We have the map $\Sigma(g):\Sigma(\widetilde\shX)\rightarrow \Sigma(\AA^1)
=\RR_{\ge 0}$, with $\Delta(\widetilde\shX)$ the inverse image of
$1$, a polyhedral complex. On the other hand, in the notation of 
\cite{Walls}, $\widetilde B:=|\Sigma(\widetilde\shX)|$ carries an
integral affine structure with singularities, and we may write
$\P_{\widetilde{\shX}}$ for the collection of cones of $\Sigma(\widetilde\shX)$.
Similarly, we write $\widetilde B':=|\Delta(\widetilde\shX)|$ for the fibre
of $g_{\trop}=\Sigma(g):\widetilde B\rightarrow \RR_{\ge 0}$ over $1$, carrying
an integral affine structure with singularities and the polyhedral
decomposition $\P'_{\widetilde\shX}$
 consisting of the cells of $\Delta(\widetilde
\shX)$. See \cite[Prop.~1.16]{Walls}.

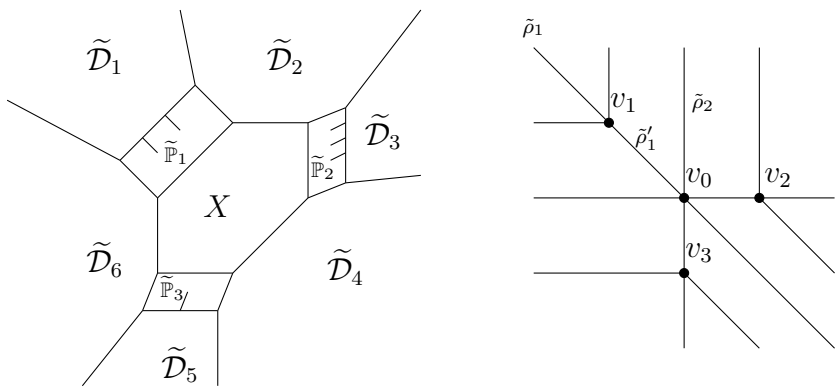
\begin{figure}
\centering
\begin{tikzpicture}
\draw (-1,-1) -- (-1,0);
\draw (-1,0) -- (0,1);
\draw (0,1) -- (1,1);
\draw (1,1) -- (1,0);
\draw (1,0) -- (0,-1);
\draw (0,-1) -- (-1,-1);
\draw (-1,0) -- (-1.5,.5);
\draw (-1.5,.5) -- (-3,1.3);
\draw (-1.5,.5) -- (-.5,1.5);
\draw (-1.2,.8) -- (-1,.6);
\draw (-.9,1.1) -- (-.7,.9);
\draw (-.5,1.5) -- (-.7, 2.5);
\draw (-.5,1.5) -- (0,1);
\draw (1,1) -- (1.5,1.2);
\draw (1.5, 1.2) -- (2.5, 2.5);
\draw (1.5,1.2) -- (1.5,.2);
\draw (1.5,1) -- (1.3,.9);
\draw (1.5,.8) -- (1.3,.7);
\draw (1.5,.6) -- (1.3,.5);
\draw (1.5,.2) -- (2.5,.3);
\draw (1.5,.2) -- (1,0);
\draw (0,-1) -- (-.2,-1.5);
\draw (-.2, -1.5) -- (-.2,-2.5);
\draw (-.2,-1.5) -- (-1.2,-1.5);
\draw (-.7,-1.5) -- (-.6, -1.25);
\draw (-1.2,-1.5) -- (-2.0,-2.5);
\draw (-1.2,-1.5) -- (-1,-1);

\node[above] at (-.2,-.4) {$X$};
\node[above] at (-1.7,1.5) {$\widetilde\shD_1$};
\node[above] at (.7,1.5) {$\widetilde\shD_2$};
\node[above] at (2.0,.5) {$\widetilde\shD_3$};
\node[above] at (1.5,-1.3) {$\widetilde\shD_4$};
\node[above] at (-.7,-2.6) {$\widetilde\shD_5$};
\node[above] at (-1.7,-1.2) {$\widetilde\shD_6$};

\node[above] at (-.75,.3) {$\scriptstyle \widetilde\PP_1$};
\node[above] at (1.2,0.1) {$\scriptstyle \widetilde\PP_2$};
\node[above] at (-.8,-1.5) {$\scriptstyle \widetilde\PP_3$};

\draw (8,0) -- (4,0);
\draw (6,2) -- (6,-2);
\draw (4,2) -- (8,-2);
\draw (5,1) -- (4,1);
\draw (5,1) -- (5,2);
\draw (7,0) -- (7,2);
\draw (7,0) -- (8,-1);
\draw (6,-1) -- (7,-2);
\draw (6,-1) -- (4,-1);

\draw (6,0)[black, fill=black] circle (.06cm);
\draw (5,1)[black, fill=black] circle (.06cm);
\draw (7,0)[black, fill=black] circle (.06cm);
\draw (6,-1)[black, fill=black] circle (.06cm);

\node[above] at (6.2,0) {$v_0$};
\node[above] at (5.2,1) {$v_1$};
\node[above] at (7.25,0) {$v_2$};
\node[above] at (6.2,-1) {$v_3$};

\node[above] at (4,2) {$\scriptstyle \tilde\rho_1$};
\node[above] at (5.5,.5) {$\scriptstyle \tilde\rho_1'$};
\node[above] at (6.25,1) {$\scriptstyle \tilde\rho_2$};
\end{tikzpicture}
\caption{The boundary divisor $\widetilde\shD$ of $\widetilde\shX$ and
the corresponding affine manifold $\widetilde B'$. The vertices are
all singular points of the affine structure.}
\label{fig}
\end{figure}
See Figure \ref{fig} for what $\widetilde\shD$ and $\widetilde B'$ will 
look like. In particular, the central fibre is a union 
\[
\widetilde\shX_0 = X\cup \bigcup_{k=1}^s \widetilde\PP_k,
\]
where $\PP_k$, $1\le k\le s$, is the exceptional divisor of the
first blow-up lying over $D_{j_k}\times\{0\}$, and $\widetilde\PP_k$
is the strict transform of $\PP_k$ under the second blow-up. Note
that $\PP_k$ is a Hirzebruch surface and $\widetilde\PP_k\rightarrow\PP_k$
is a blow-up at $n_k$ points. The exceptional curves of this blowup,
which we write as $e_{k1},\ldots,e_{kn_k}$, are by construction disjoint
from $X$. We also write $f_k\in H_2(\widetilde\PP_k)$ for the the class 
of a fibre of the ruling $\widetilde \PP_k\rightarrow D_{j_k}$.

We write the vertices of $\P'_{\widetilde\shX}$ as $v_0,\ldots,v_s$,
corresponding to the irreducible components $X,\widetilde\PP_1,\ldots,
\widetilde\PP_s$ respectively. We also have rays $\tilde\rho_i$, $1\le i\le 
n$ corresponding to the one-dimensional stratum $X\cap \widetilde\shD_i$
if $i\not \in \{j_1,\ldots,j_s\}$, and corresponding to 
the one-dimensional stratum $\widetilde\PP_k\cap \widetilde\shD_i$ if $i=j_k$.
Further, we have a segment $\tilde\rho_{j_k}'$ connecting $v_0$ and $v_k$,
corresponding to the stratum $\widetilde\PP_k\cap X$.

The structure of $\widetilde B'$ near
the vertices $v_1,\ldots,v_s$ has been analyzed in detail in 
\cite[\S3.3.1]{HDTV}. We note that in that reference, $X$ was a toric
variety, but the analysis away from the vertex $v_0$ remains the same.
In particular, consider the two one-cells $\tilde\rho_{j_k}$,
$\tilde\rho_{j_k}'$. Then by \cite[Cor.~3.7]{HDTV}, the tangent spaces to these
two one-cells are left invariant under affine monodromy about $v_k$.
More precisely, choose a point $x$ near $v_k$. Then
we may view the group
$\Lambda_{\tilde\rho_{j_k}}$ (resp.\ $\Lambda_{\tilde\rho_{j_k}'}$)
of integral tangent vectors to
$\tilde\rho_{j_k}$ (resp.\ $\tilde\rho_{j_k}'$)
as a well-defined sublattice of $\Lambda_x$ via parallel transport.
Furthermore, under these identifications, 
$\Lambda_{\tilde\rho_{j_k}}$ and $\Lambda_{\tilde\rho_{j_k}'}$ agree.
Thus $\tilde\rho_{j_k}\cup\tilde\rho_{j_k}'$ may be viewed as a straight line
through $v_k$, despite the singularity in the affine structure at $v_k$.

Given a punctured map $f:C^{\circ}/W\rightarrow\widetilde\shX$ 
defined over $\AA^1$ with $W$ a log point and type $\tau$, let $t\in\tau$
be such that the tropicalization $h_t:G\rightarrow \widetilde B$ factors
through $\widetilde B'$. Let $v\in V(G)$ be a vertex with $h_t(v)=v_k$ for
some $k>0$. If $E_1,\ldots,E_\ell$ are the edges or legs adjacent to $v$,
oriented away from $v$, let $u_i$ be the image of $\mathbf{u}(E_i)$ in
$\Lambda_x/\Lambda_{\tilde\rho_{j_k}}$, well-defined under parallel
transport. Here $x$ is the chosen point near $v_k$.
Then $h_t$ satisfies the weaker balancing condition
\begin{equation}
\label{eq:weak balancing}
\hbox{$\sum_{i=1}^\ell u_i=0$ in $\Lambda_x/\Lambda_{\tilde\rho_{j_k}}$}
\end{equation}
by \cite[Prop.~3.10]{HDTV}. 

As $\widetilde\shX$ is non-compact, we should be more precise about
what group our curve classes live in. Here we will take, for any
stratum $Y \subseteq \widetilde\shX$, $H_2(Y) := \Pic(Y)^*$. Note that
as $X$ is a rational surface, when $Y$ is a compact
stratum of $\widetilde\shX$, $H_2(Y)$ coincides
with the usual integral singular homology group $H_2(Y,\ZZ)$.

\medskip

{\bf Step II.} \emph{Degenerating the enumerative problem.}
We begin by choosing points $x_k \in \widetilde D_{i_k}$, $1\le k \le q$,
not coinciding with any double point of $\widetilde D$ and
not coinciding with any of the points $p_{k\ell}$. With
$\mathbf{x}=(x_1,\ldots,x_q)$, consider the moduli space
$\scrM(\widetilde X,\tilde \beta,\mathbf{x})$ of \eqref{eq:point constraint}.
Then $N_{\tilde\beta}=\deg[\scrM(\widetilde X,\tilde\beta,\mathbf{x})]^{\virt}$
by Proposition \ref{prop:alt def}.
Similarly, we may consider sections $\tilde x_k\subseteq \widetilde\shX$
which are the strict transforms of $x_k \times \AA^1$, and obtain
\[
\scrM(\widetilde\shX/\AA^1,\tilde\beta,\tilde{\mathbf{x}}):=
\scrM(\widetilde\shX/\AA^1,\tilde\beta)\times_{\prod_k \widetilde\shX}
\prod_{k=1}^q \tilde x_k.
\]
Of course, for $0\not=t\in\AA^1$, we have
\[
\scrM(\widetilde X,\tilde\beta,\mathbf{x})=\scrM(\widetilde\shX/\AA^1,
\tilde\beta,\tilde{\mathbf{x}})\times_{\AA^1} t,
\]
so as in \cite[Thm.~1.1]{ACGSI}, we have 
\[
\deg [\scrM(\widetilde X,\tilde\beta,\mathbf{x})]^{\virt} = 
\deg [\scrM(\widetilde \shX_0/0,\tilde\beta,\tilde{\mathbf{x}}_0)]^{\virt}.
\]
Here $\tilde{\mathbf{x}}_0=((\tilde x_k)_0)$ denotes the tuple of points
with $(\tilde x_k)_0=\tilde x_k \cap g^{-1}(0)$.
As in \cite[Thm.~5.4]{ACGSI}, we then have a decomposition
\begin{equation}
\label{eq:beta decomp}
N_{\tilde\beta}=\deg [\scrM(\widetilde\shX_0/0,\tilde\beta,
\tilde{\mathbf{x}}_0)]^{\virt}
=
\sum_{\btau} {m_{\tau}\over |\Aut(\btau)|} \deg
[\scrM(\widetilde\shX_0/0,\btau,\tilde{\mathbf{x}}_0)]^{\virt},
\end{equation}
where the sum is over all rigid decorated tropical maps to
$\widetilde B'$ which can arise as a degeneration of the class $\tilde\beta$.
As in 
Definition \ref{def:wall type},
we assume the decoration functions $\mathbf{A}$ are refined, i.e., have
$\mathbf{A}(v)\in H_2(\widetilde\shX_{\bsigma(v)})$ rather than
in $H_2(\widetilde\shX)$.

\medskip

{\bf Step III}.  \emph{Classifying rigid tropical maps: first steps}.
Let $h:G\rightarrow \widetilde B'$ be a rigid tropical map of
some type $\tau$ and decoration $\mathbf{A}$
contributing to the decomposition \eqref{eq:beta decomp}. 
Let $L_1,\ldots,L_q\in L(G)$ be
the legs corresponding to the marked points mapping to the tuple of
points of $\tilde{\mathbf{x}}_0$. Note $G$ has one additional leg, denoted
$L_{\out}$. Because of the assumption
that the $x_k$ are not double points of $\widetilde D$, necessarily
$h(L_k)$ is contained in the ray $\tilde\rho_{i_k}$. We may now apply the
proof of Lemma \ref{lem:vanishing} in this setting. 
Specifically, in that proof, we showed that the image of $h_s$
was contained in the one-skeleton of $\P$. Here,
the target affine manifold
with singularities $\widetilde B'$ is slightly more complicated than 
the $B$ of the proof of the lemma, but the same argument works as
the weaker balancing condition \eqref{eq:weak balancing} at the 
vertices $v_1,\ldots, v_s$ is still sufficient. This allows us to conclude 
that in any event, whether or not $h$
is rigid, its image lies in $\bigcup_{i=1}^n\tilde\rho_i\cup 
\bigcup_{k=1}^s \tilde\rho_{j_k}'$. However, rigidity also then implies that
vertices of $G$ must map to vertices of $\widetilde\P'$, as otherwise
the location of these vertices may be freely moved along the edge containing
their image. 

We now note that it is immediate that each such rigid type $\tau$
is tropically transverse in the sense of Definition 
\ref{def: tropically transverse2}. As in Step V of the proof of Theorem
\ref{thm:inductive structure}, since again $\widetilde\shD$ only 
has triple points,
we do not need to worry about the flatness hypothesis of
Theorem \ref{thm:tropically transverse}, and will use this gluing result
in what follows without reference to these hypotheses.

\medskip

{\bf Step IV.} \emph{Balancing at each vertex mapping to
$v_k$}.
Continue with the notation of the previous step, and let $v\in V(G)$
be a vertex such that $h(v)=v_k$ for some $k>0$. Let
$E_1,\ldots,E_m$
be the edges or legs adjacent to $v$ mapping to $\tilde\rho_{j_k}$. Note
these in fact must necessarily be legs, as if $E_i$ were an edge, the
other vertex of $E_i$ would map to the interior of $\tilde\rho_{j_k}$.
Similarly, let $E_1',\ldots,E_{m'}'$ be the edges or legs adjacent to 
$v$ mapping to $\tilde\rho_{j_k}'$. Note these are necessarily edges
with opposite endpoint mapping to $v_0$. Let $w_i$ (resp.\ $w_i'$) be
the index of $\mathbf{u}(E_i)$ (resp.\ $\mathbf{u}(E_i')$), and let
$w=\sum w_i$, $w'=\sum w_i'$. We now make a sequence of observations.

\medskip

\noindent \emph{Observation A}.
No edge or leg of $G$ maps to the one-dimensional cells of 
$\P'_{\widetilde\shX}$ adjacent to $v_k$ which are not $\tilde\rho_{j_k}$
or $\tilde\rho_{j_k}'$. Indeed, as these cells are unbounded, only
legs may map to such cells. As the
images of the legs $L_i$ all map to cells of the form $\tilde\rho_j$, 
the only possibility is
that $L_{\out}$ maps to such a cell. However, that would violate
the balancing condition \eqref{eq:weak balancing}.

\medskip 

\noindent \emph{Observation B.} Suppose $f:C\rightarrow \widetilde\shX_0$
is a punctured map defined over a log point in the moduli space
$\scrM(\widetilde\shX_0/0,\btau,\tilde{\mathbf{x}}_0)$. 
Let $\ul{C}_v\subseteq \ul{C}$ be the union of irreducible
components of $\ul{C}$ correponding to $v$ in the marking by $\btau$.
Then $f(\ul C_v)$ is of curve class $A:=\mathbf{A}(v)\in H_2(\widetilde\PP_k)$.
By Corollary~\ref{cor:114}, necessarily the intersection number of $A$ with
$\widetilde\shD_{j_k}\cap \widetilde \PP_k$ is $w$ and the intersection
number of $A$ with $\widetilde\PP_k\cap X$ is $w'$. Meanwhile, by
Observation A, the
intersection number of $A$ with the other two strata of $\widetilde\PP_k$
must be zero. As a consequence, $A$ is a linear combination of $f_k$
and the $e_{k\ell}$. Since $\ul{C}_v$ is connected, there is no choice but
for $A= w(f_k-e_{k\ell})+w' e_{k\ell}$ for some $\ell$.
In particular, the image $f(\ul{C}_v)$ is contained in a fibre of
$\pi|_{\widetilde\PP_k}:\widetilde\PP_k\rightarrow D_{j_k}$, 
reducible if $w\not=w'$.

\medskip

\noindent \emph{Observation C}.
We will now show that the contribution to $N_{\tilde\beta}$ is zero
unless either one of the following hold:
\begin{itemize}
\item[(IV.1)] $m=0$, i.e., there are no legs adjacent to $v$;
\item[(IV.2)] $m=1$ and $E_1=L_{\out}$; or 
\item[(IV.3)] $w=w'$. 
\end{itemize}
Case (IV.3) is really balancing
at $v$, which can now be viewed as a well-defined equality in 
$\Lambda_{\tilde\rho_j}$ by Step I.

Assume we are not in case (IV.1) or (IV.2), so that there is at least one 
leg $L_i$
adjacent to $v$. Assume that
$\scrM(\widetilde \shX_0/0,\btau,\tilde{\mathbf{x}}_0)$
is non-empty so that there exists a punctured map as in Observation B.
By Observation B, we have $f(\ul{C}_v)$ is a fibre of
$\widetilde\PP_k \rightarrow D_{j_k}$, reducible if $w\not=w'$. 
On the other hand, by the assumption that $L_i$ is
adjacent to $v$, there is 
a marked point of $C$ on $C_v$ map to $(\tilde x_i)_0$ under $f$.
However, since the points $x_i$ were chosen to be distinct from any
$p_{k\ell}$, $f(\ul{C}_v)$ cannot be reducible. Hence $w=w'$.

\medskip

\noindent \emph{Observation D}. We have $m'>0$. Indeed, if 
$m'=0$, then since $G$ is connected, the only possibility is
$G$ has one vertex $v$, which maps to $v_k$, and a number of legs,
mapping to $\tilde\rho_{j_k}$. By Observation B, we then have
$\mathbf{A}(v)= w e_{k\ell}$ for some $\ell$, and then necessarily
the total curve class of the type is also $c e_{k\ell}$. But this
curve class deforms to $w E_{k\ell}$ on a general fibre of $\widetilde
\shX\rightarrow \AA^1$, and then $w_{k\ell}=-w<0$, contradicting
\eqref{eq:w is nonnegative}.

\medskip

{\bf Step V.} \emph{Univalency or bivalency of each vertex mapping to $v_j$.}

Continuing with the notation of Step IV, assume that $\scrM(\widetilde
\shX_0/0,\btau,\tilde{\mathbf{x}}_0)$ is non-empty. We first observe that
we can't have two distinct legs $L_i,L_{i'}$ adjacent to $v$.
Indeed, by Observation B above,
$f(\ul{C}_v)$ lies
in a fibre of $\widetilde\PP_j\rightarrow D_{j_k}$. Thus
the corresponding marked points of $C$ contained in $\ul{C}_v$ 
must map to the same point of
$\widetilde\shD_{j_k}\cap \widetilde \PP_k$. However, since the $x_i$
are chosen to be distinct, this is not possible.

Now suppose that one of the following two cases hold.
\begin{itemize}
\item[(V.1)] Either (a) $L_{\out}$ is adjacent to $v$ 
and another leg $L_i$ is adjacent to $v$ or (b) 
$L_{\out}$ is adjacent to $v$ and balancing fails at $v$, i.e., $w\not=w'$
or (c) $m'>1$ and balancing fails at $v$. 
\item[(V.2)]
$m'>1$.
\end{itemize}
We will show the contribution to \eqref{eq:beta decomp} from the type
$\btau$ is zero in these cases.

We may split $\btau$ along the edges
$E_1',\ldots,E'_{m'}$, giving decorated types $\btau_v$ and
$\btau_{1},\ldots,\btau_{m'}$ with underlying graphs 
$G_v,G_1,\ldots,G_{m'}$. We note that here we use the fact
the genus of the underlying graph is zero, so that splitting at any edge
produces an additional connected component. In either Case (V.1) or (V.2), 
since $m'>0$ in any event by Observation D,
there is a $j$ such that $L_{\out}\not\in L(G_j)$.
Thus all legs in $G_j$ other
than the leg corresponding to $E_j'$ correspond to marked points
which are constrained to map to some subset of the points $\{(\tilde x_i)_0\}$.
From \cite[Prop.~3.30 and (4.18)]{ACGSII}, taking into account the 
just-mentioned
point-constraints, the virtual dimension of $\scrM(\widetilde\shX_0/0,
\btau_j,\tilde{\mathbf{x}}_0)$ is zero. 

If we are in Case (V.1), $\ul{C}_v$ maps to a fixed fibre of
$\widetilde\PP_k\rightarrow D_{j_k}$ (a reducible fibre in the unbalanced
case). Thus the image of the evaluation 
map $\scrM(\widetilde\shX_0/0, \btau_v,\tilde{\mathbf{x}}_0)\rightarrow
\widetilde\PP_k\cap X=D_{j_k}$ at the punctured point corresponding
to an edge $E_i'$ is a point, being either the image
of $(\tilde x_i)_0$ under the projection map 
$\widetilde\PP_k\rightarrow D_{j_k}$ in the case (a)
or a point $p_{k\ell}$ in the case (b) and (c). It then follows immediately
in this case that 
\[
\Delta^!\big([\scrM(\widetilde\shX_0/0,\btau_v,\tilde{\mathbf{x}}_0)]^{\virt}
\times [\scrM(\widetilde\shX_0/0,\btau_j,\tilde{\mathbf{x}}_0)]^{\virt}\big)=0,
\]
with $\Delta$ as in Theorem \ref{thm:gluing factorization} for the
gluing of $\btau_v$ and $\btau_j$. Hence by Theorem 
\ref{thm:tropically transverse}, the contribution from the type
$\btau$ is zero.

Next suppose we are in Case (V.2), but not Case (V.1).  
The only remaining possibility is that $m'>1$, there is only one leg
adjacent to $v$, and the type $\btau$ is balanced at $v$. 
Suppose $L_{\out}$ is the only leg adjacent to $v$. 
In this case, the evaluation map $\ul{\ev}_v:\scrM(\widetilde\shX_0/0,
\btau_v,\tilde{\mathbf{x}}_0)\rightarrow D_{j_k}$ will be
surjective. On the other hand, in this case $\scrM(\widetilde\shX_0/0,
\btau_i,\tilde{\mathbf{x}}_0)$ is virtual dimension $0$ for all $i$,
and another application of Theorem \ref{thm:tropically transverse}
shows the contribution from the type $\btau$ vanishes. A similar argument
applies if $L_i$ is the leg adjacent to $v$, this time because
the evaluation map $\ul{\ev}_v$ has image a point and the virtual
dimension of $\scrM(\widetilde\shX_0/0,
\btau_i,\tilde{\mathbf{x}}_0)$ is zero for at least one $i$.

In summary, if $\btau$ does contribute non-trivially to
\eqref{eq:beta decomp}, either $v$ is univalent, necessarily with 
$\mathbf{A}(v)$ a positive multiple of $f_k-e_{k\ell}$ for some $k$,
or $v$ is bivalent and balanced, with $\mathbf{A}(v)$ a positive
multiple of $f_k$.

\medskip

{\bf Step VI.} \emph{Calculating the non-trivial contributions from 
rigid tropical maps}. The only remaining possibility for a rigid
tropical map contributing non-trivially to the right-hand side of
\eqref{eq:beta decomp} is now as follows. There is one vertex
$w$ with $h(w)=v_0$. Attached to $w$ are those legs $L$ amongst
$L_1,\ldots,L_q,L_{\out}$ for which $\bsigma(L)=\tilde\rho_k$ with
$k\not\in\{j_1,\ldots,j_s\}$. For those legs $L$ with
$\bsigma(L)=\tilde\rho_{j_k}$, we instead have an edge adjacent to $w$
mapping surjectively to $\tilde\rho_{j_k}'$, with opposite vertex $v_L$
bivalent and adjacent to $L$, which maps surjectively to $\tilde\rho_{j_k}$.
Finally, there may be an additional set of edges adjacent to $w$
mapping to the various $\tilde\rho_{j_k}'$, with opposite vertex being
univalent. For each $k,\ell$, we will have some number $m_{k\ell}$
of such univalent vertices mapping to $v_j$, with attached curve classes
$P_{k\ell 1}(f_k-e_{k\ell}),
\ldots, P_{k\ell m_{k\ell}}(f_k-e_{k\ell})$, for some
positive integers $P_{k\ell 1},\ldots,P_{k\ell m_{k\ell}}$. Note these curve 
classes
also determine the contact order for the edge adjacent to such a univalent
vertex, again by Corollary~\ref{cor:114}.

Recall from Step I that curve classes take values in the duals of the
Picard groups of strata. In particular, $H_2(X\times\AA^1)\cong H_2(X)$.
If $\btau$ is a decorated type contributing to \eqref{eq:beta decomp} and
$A(\btau)$ is the total curve class of the decorated type $\btau$,
then under the blow-down $\tilde\pi:\widetilde\shX\rightarrow X\times\AA^1$,
$\pi_*A(\btau)$ must agree with
$A:=\pi_*\widetilde A$. Since the curve class $\mathbf{A}(v)$ is 
contracted by $\tilde\pi$ for any $v \not=w$, we see that $\mathbf{A}(w)=A$.
Further, since $\widetilde A\cdot E_{k\ell}=w_{k\ell}$ (see
\eqref{eq:wkl def})
it follows by intersecting $A(\btau)$
with the exceptional divisor over
the strict transform of $p_{k\ell}\times\AA^1$ that
$\mathbf{P}_{k\ell}=P_{k\ell 1}+\cdots+P_{k\ell m_{k\ell}}$ is a partition of 
$w_{k\ell}$. In conclusion, the type $\btau_w$ associated with the vertex
$w$ may now be viewed as a class of map to $X$, and is precisely 
the type $\beta(\mathbf{P})$, $\mathbf{P}=(\mathbf{P}_{k\ell})$.

We are now ready to compute the contribution of this type to the
right-hand-side of \eqref{eq:beta decomp}. First, we note that for
each univalent vertex $v$ with associated curve class 
$P_{k\ell m} (f_k-e_{k\ell})$, we may apply Theorem 
\ref{thm:punctured to relative} and \cite[Prop.~5.2]{GPS} to see that
$\scrM(\widetilde\shX_0/0,\btau_w,\tilde{\mathbf{x}}_0)$ is
virtual dimension zero with virtual fundamental class of degree
$(-1)^{P_{k\ell m}}/P_{k\ell m}^2$. On the other hand, if $v$ is a bivalent
vertex mapping to $v_k$ with associated curve class $d f_k$, then
$\scrM(\widetilde\shX_0/0,\btau_w,\tilde{\mathbf{x}}_0)$ is
easily seen to consist of one curve mapping $d:1$ to a non-singular fibre of
$\widetilde\PP_k\rightarrow D_{j_k}$, totally branched over
$\widetilde\PP_k\cap \widetilde \shD_{j_k}$ and $\widetilde\PP_k\cap X$.
This map is unobstructed, and the fundamental class of the moduli space
is degree $1/d$.

A simple calculation now shows that the multiplicity $\mu(\tau)$
is $\prod_{E\in E(G)} w_E$, where $w_E$ is the index of the contact
order $\mathbf{u}(E)$. If $E$ is an edge adjacent to a bivalent vertex
with associated curve class $d f_k$, then $w_E=d$, while if $E$ is
an edge adjacent to a univalent vertex with associated curve class
$P_{k\ell m}(f_k-e_{k\ell})$, we have $w_E=P_{k\ell m}$. The desired
contribution to $N_{\tilde\beta}$ from $\btau$ in \eqref{eq:beta decomp}
now agrees with the summand in \eqref{eq:wowee} corresponding to the
collection of partitions $\mathbf{P}$. Indeed, this follows from the
description of the virtual degrees of the moduli spaces above, the
calculation of the multiplicity $\mu(\tau)$, the definition of 
$N_{\beta(\mathbf{P})}$, and Theorem \ref{thm:tropically transverse}.
\end{proof}


\end{document}